\colorlet{color1}{blue}
\colorlet{color2}{red!50!black}
\newtheorem{assumption}{Assumption}
\newcommand{\upDelta}{\mathrm{\Delta}}
\newcommand{\upLambda}{\mathrm{\Lambda}}
\newcommand{\upXi}{\mathrm{\Xi}}
\newcommand{\upPi}{\mathrm{\Pi}}
\newcommand{\upSigma}{\mathrm{\Sigma}}
\newcommand{\upUpsilon}{\mathrm{\Upsilon}}
\newcommand{\upOmega}{\mathrm{\Omega}}
\title{ODE-based Learning to Optimize}
\author{Zhonglin Xie \and Wotao Yin \and Zaiwen Wen}
\institute{
Zhonglin Xie
\at
Beijing International Center for Mathematical
Research, Peking University\\
E-mail: zlxie@pku.edu.cn
\and
Wotao Yin
\at
Alibaba US, DAMO Academy\\
E-mail: wotao.yin@alibaba-inc.com
\and
Zaiwen Wen
\at
Beijing International Center for Mathematical
Research, Peking University. Research supported in part by the NSFC grant 12331010. \\
E-mail: wenzw@pku.edu.cn
}
\begin{document}

\date{Received: date / Accepted: date}
\maketitle

\begin{abstract}
    Recent years have seen a growing interest in understanding acceleration methods through the lens of ordinary differential equations (ODEs). Despite the theoretical advancements, translating the rapid convergence observed in continuous-time models to discrete-time iterative methods poses significant challenges. In this paper, we present a comprehensive framework integrating the inertial systems with Hessian-driven damping equation (ISHD) and learning-based approaches for developing optimization methods through a deep synergy of theoretical insights. We first establish the convergence condition for ensuring the convergence of the solution trajectory of ISHD. Then, we show that provided the stability condition, another relaxed requirement on the coefficients of ISHD, the sequence generated through the explicit Euler discretization of ISHD converges, which gives a large family of practical optimization methods. In order to select the best optimization method in this family for certain problems, we introduce the stopping time, the time required for an optimization method derived from ISHD to achieve a predefined level of suboptimality. Then, we formulate a novel learning to optimize (L2O) problem aimed at minimizing the stopping time subject to the convergence and stability condition. To navigate this learning problem, we present an algorithm combining stochastic optimization and the penalty method (StoPM). The convergence of StoPM using the conservative gradient is proved. Empirical validation of our framework is conducted through extensive numerical experiments across a diverse set of optimization problems. These experiments showcase the superior performance of the learned optimization methods.
\end{abstract}

\section{Introduction}
\label{sec:intro}

In recent years, there has been substantial work aimed at understanding the nature of acceleration methods using ordinary differential equations (ODEs). The seminal work of Su, Boyd, and Candes \cite{suDifferentialEquationModeling} proposes a second-order differential equation as the continuous-time counterpart of the Nesterov accelerated gradient method (NAG) \cite{nesterov1983method}. This work provides theoretical insights into the nature of NAG. By combining the continuous-time model of NAG with Hessian-driven damping, a differential equation called the inertial system with Hessian-driven damping (ISHD) has been investigated in \cite{attouchFirstorderOptimizationAlgorithms2020,attouch2016fast}:
\begin{equation}\label{eq:ISHD}
\ddot{x}(t)+\frac{\alpha}{t}\dot{x}(t)+\beta(t)\nabla^2f(x(t))\dot{x}(t)+\gamma(t)\nabla f(x(t))=0,
\end{equation}
where \( x(t) \) belongs to \( \mathbb{R}^{n} \) and \( f \) is twice differentiable and convex. Here, \( t_0, \alpha > 0 \), and \( \beta, \gamma \) are non-negative continuous functions defined on \( [t_0,+\infty) \). By choosing different values for \( \alpha, \beta \), and \( \gamma \), the equation \eqref{eq:ISHD} can coincide with many other optimization-inspired ODEs, providing a unified framework for understanding various optimization methods.

Although the ODE viewpoint has been successful in understanding optimization methods, significant practical gaps exist between iterative optimization methods and their continuous-time counterparts. The fast convergence rate of $x(t)$ in continuous-time cases may not be translated directly to the sequence $\{x_{k}\}_{k=0}^{\infty}$ obtained via discretizing the corresponding ODE. Designing iterative optimization methods using the ODE viewpoint often requires a case-by-case discussion and equation-specific discretization schemes.

For example, in \cite{wibisonoVariationalPerspectiveAccelerated2016}, it focuses on a subfamily of \eqref{eq:ISHD} with $\alpha=p+1,\beta(t)\equiv 0$, and $\gamma(t)=Cp^2t^{p-2}$, where $p,C>0$. According to \cite{wibisonoVariationalPerspectiveAccelerated2016}, the convergence rate of the trajectory $x(t)$ is given by $f(x(t)) - f_{\star} \leq \mathcal{O}(1/t^{p})$. However, the arbitrary nature of the parameter $p$ suggests that the convergence rate of the trajectory cannot be bounded by \emph{any inverse polynomial function} for any convex differentiable functions. This finding starkly contrasts with the well-known lower bound of $\mathcal{O}(1/k^2)$ for first-order methods when the function $f$ is $L$-smooth convex, as discussed in \cite{nesterovLecturesConvexOptimization2018}. The underlying issue is attributed to the notation $x(t)$, which glosses over the challenges posed by differential equations with varying curvatures. In numerical computations, achieving a solution of comparable accuracy significantly depends on the curvature of the objective functions. It is misleading to directly compare the convergence rates of continuous-time models with their discrete-time counterparts. The empirical results show that a naive explicit Euler discretization scheme diverges even for a quadratic objective function. A stable discretization that recovers the $\mathcal{O}(1/k^p)$ rate requires an implicit update using extra higher-order gradient. Hence, a core problem in this field remains:
\begin{center}
    (\textbf{P1})
    \textit{``Can we translate the fast convergence properties of the trajectory of ODEs into the discretized sequences using explicit discretization schemes?''}
\end{center}
%

We provide a positive answer to this problem by giving conditions for discretizing the ODE \eqref{eq:ISHD} stably using the explicit Euler scheme. Our emphasis on explicit schemes, such as the explicit Euler discretization, stems from the desire to develop a general framework for translating continuous-time ODE properties into discrete-time algorithms. By doing so, we can avoid the need for ad hoc, equation-specific designs and case-by-case analyses, which are often required for implicit schemes. This exploration of explicit Euler discretization serves as a foundational step towards adapting more sophisticated explicit schemes, such as Runge-Kutta methods, to preserve the properties of continuous trajectories in discrete iterative algorithms.


Given the flexibility of the coefficients \(\alpha, \beta(t)\), and \(\gamma(t)\) in the ODE \eqref{eq:ISHD}, the choice of these coefficients can significantly impact the convergence rate and stability of the ODE solution, which in turn affects the performance of the discretized iterative algorithm. While the ODE \eqref{eq:ISHD} provides a unifying framework, finding the optimal coefficients for a given problem is a non-trivial task. This leads to a crucial question:
\begin{center}
    (\textbf{P2})
\textit{``What are the optimal coefficients of the ODE \eqref{eq:ISHD} for specific problems?''}
\end{center}

This issue, referred to as the ``best tuning of the coefficients'', has also been highlighted by \cite{attouch2022fast}. We demonstrate that the optimal coefficients can be determined numerically using learning to optimize.

\subsection{Related works}
\subsubsection{ODE viewpoint of optimizetion methods}
\label{sec:related-ODE}

The ODE viewpoint of optimization methods have a long history. The work by Su, Boyd, and Candes \cite{suDifferentialEquationModeling} has revived interest in using ODEs to understand acceleration methods. The concept of gradient correction, which uncovers the mechanism behind acceleration, is found within the high-resolution differential framework \cite{shiUnderstandingAccelerationPhenomenon2021}. This framework notably explains the linear convergence of FISTA in composite optimization problems when the smooth function are strongly convex \cite{li2024linear}. These advancements pave the way for further research and development in optimization algorithms. It is crucial to recognize that, despite their similar forms, the high-resolution differential equation and ISHD have different motivations. The high-resolution differential equation is designed for NAG, whereas the second-order information in ISHD is derived from Newton's method \cite{attouch2014dynamical}, which is used to analyze the forward-backward algorithm.

Besides the high-resolution framework and the ISHD, the acceleration methods have also been explained through the lens of numerical stability \cite{luoDifferentialEquationSolvers2021,zhang2021revisiting}. Additionally, some studies consider closed-loop ODEs as analogous to adaptive optimization methods. For instance, a closed-loop dynamical system has been proposed to analyze high-order tensor algorithms for unconstrained smooth optimization from a control-theoretic perspective \cite{linControltheoreticPerspectiveOptimal2021}. Another study explores the development of fast optimization methods through inertial continuous dynamics with nonlinear damping \cite{attouch2023closed}.

There are several works focused on designing iterative optimization methods by discretizing ODEs. In \cite{zhangDirectRungeKuttaDiscretization2018}, the authors demonstrate that to ensure the $l$-th Runge-Kutta integrator applied to a subfamily of \eqref{eq:ISHD} with \(\alpha = 2p + 1\), \(\beta(t) \equiv 0\), and \(\gamma(t) = Cp^2t^{p-2}\), where \(p, C > 0\), is convergent, the objective function must be sufficiently flat and the step size must be diminishing. Other methods that develop iterative optimization techniques by discretizing corresponding ODEs typically only consider strongly convex cases \cite{cortes2019ratematching,vaquero2023resource,zhang2019discretization}, which are not general enough and may difficult to generalize to other scenarios.

Some research, such as \cite{taylor2023systematic}, considers the optimal selection of ODEs through the performance estimation problem (PEP). However, this approach does not focus on designing practical iterative optimization methods. Another straightforward approach is to tune the coefficients analytically. By exploiting the geometric properties of the function \( f \) in \eqref{eq:ISHD}, several studies focus on tuning coefficients analytically with provable fast convergence rate \cite{aujolFastConvergenceInertial2022,aujolOptimalConvergenceRates2019,aujolConvergenceRatesHeavyBall2022,aujol2023fista}. Two main properties used in these works are the growth condition and flatness condition. When these conditions are absent, we show that the coefficients can be tuned numerically using machine learning techniques.

\subsubsection{Learning to optimize}

Learning to optimize (L2O) is a paradigm that leverages machine learning to automate the design of optimization algorithms. This approach aims to uncover the underlying structure of a collection of optimization problems through machine learning \cite{chen2022learning}. L2O can be broadly categorized into two classes: model-based methods and model-free methods. Model-based methods, which are derived from iterative optimization methods, involve identifying learnable parameters. Two prominent classes of model-based methods, algorithm unrolling \cite{monga2021algorithm} and plug-and-play \cite{ahmad2020plugandplay,kamilov2023plugandplay}, have been successfully applied to various tasks in signal processing and image processing.

Seminal work of model-based methods has demonstrated that the iterative shrinkage-thresholding algorithm (ISTA) can be viewed as a recurrent neural network (RNN), and by introducing learnable parameters, significant improvements in sparse coding can be achieved \cite{lecun2010learning}. This concept has been extended by unrolling the alternating direction method of multipliers (ADMM) algorithm to construct a neural network with prior knowledge of compressive sensing problems \cite{yang2020admmcsnet}. Further, by designing an efficient algorithm to compute the proximal operator induced by sparsity regularity, ISTA has been reformulated as a deep neural network in \cite{zhang2018istanet}, which demonstrates substantial improvements in compressed sensing performance.

In contrast, model-free methods do not rely on existing models. By treating designing an optimization algorithm as a policy search within a reinforcement learning framework, the improved optimization algorithms has been discovered \cite{li2017l2o}. The generalization of the L2O paradigm within a derivative-free black-box optimization context was realized through the use of RNNs \cite{2017l2owithoutgd}. Further advancements were made by \cite{2016l2obygd}, who employed long short-term memory networks (LSTMs) to implement learned optimization methods, achieving superior performance compared to generic algorithms. In \cite{chen2023lion}, the design of optimization algorithms is approached as a program search problem. It utilizes evolutionary search to develop the LION algorithm, which exhibits performance comparable to Adam. More recently, a transformer-based neural network has been proposed to represent the update step, incorporating a preconditioning matrix to enhance efficiency, as reported in \cite{luke2023transformerl2o}.


In spite of the significant performance improvement, only a few works establish a non-trivial convergence guarantee for learned optimization methods. For instance, the learned ISTA has been thoroughly analyzed, with \cite{liu2018theoreticallinear} showing that there exist weights that achieve a theoretical linear convergence rate on the LASSO problem, surpassing ISTA theoretically. An explicit formula to calculate the optimal learnable weights is further provided \cite{liu2019alista}, effectively removing most of the computational overhead of training, and numerical experiments verify the theoretical results. The necessary condition for a learned optimization method to converge is investigated in \cite{liu2023mathstructure}. The result partially addresses the primary difficulty in designing learned optimization algorithms with convergence guarantee. Other methods exist that search for parameters while ensuring convergence. For example, a general formulation with learnable parameters for solving nonsmooth composite optimization problems is presented \cite{banertDataDrivenNonsmoothOptimization2020}. The conditions for establishing the convergence property of the learned methods are then derived, and unsupervised learning is employed to find more efficient methods under these conditions. The effectiveness of this methodology in ensuring the convergence of learned optimizers on smooth unconstrained optimization and composite optimization problems is also demonstrated \cite{banert2024siopt}.

\subsection{Our contributions}
\label{sec:contribution}
The framework of this paper is presented in Figure \ref{fig:framework-total}.
\begin{figure}[htbp]
    \centering
    \includegraphics[width=0.7\textwidth]{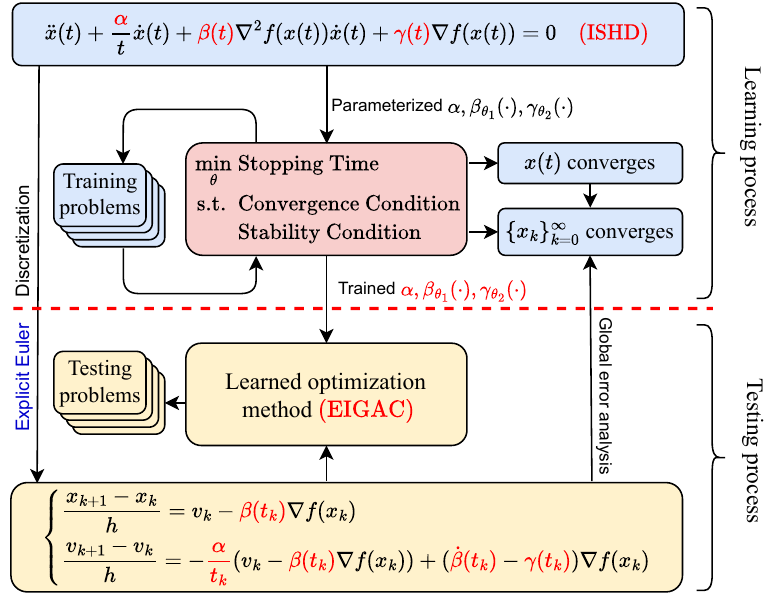}
    \caption{Our learning and testing framework.}
    \label{fig:framework-total}
\end{figure}

We address the first problem (\textbf{P1}) by providing conditions that ensure the convergence of the sequence $\{x_{k}\}_{k=0}^{\infty}$, which is generated by applying the explicit Euler discretization to equation \eqref{eq:ISHD}. These conditions comprises a convergence condition and a stability condition. The convergence condition ensures that the continuous-time trajectory $x(t)$ converges. The stability condition ensures that when applying the explicit Euler scheme to the ODE \eqref{eq:ISHD}, which satisfies the convergence condition, the discretization remains stable. The requirements of these conditions on the coefficients are relaxed, allowing us to generate numerous iterative optimization methods.

For the second problem (\textbf{P2}), we utilize a L2O framework and develop a corresponding algorithm to solve the L2O problem numerically. L2O is a paradigm that automatically designs optimization methods using machine learning techniques. The core idea is to leverage a training dataset of diverse optimization problems to tune the parameters of optimization methods, in order to improve their performance while ensuring convergence. L2O uses a metric of the efficiency of the optimization methods as the training loss. In our framework, we take the ``stopping time'' as the training loss, which measures the number of iterations that the the algorithms generated by discretizing the ODE \eqref{eq:ISHD} need to achieve a predefined suboptimality. This measure generalizes complexity from discrete-time to continuous-time cases. We then define the probability distribution of a parameterized function family by establishing equivalence with corresponding parameters. The training formulation of our L2O framework is posed as a problem that minimizes the expected stopping time under the expectation constraints that ensure the convergence and stability conditions hold for each function. Due to the existence of the expectation constraints, we combine the stochastic optimization methods with the penalty function method to solve the training formulation. Additionally, we derive conservative gradients of the stopping time and constraint functions, making our algorithm more robust and general. We also provide convergence guarantees for the training algorithm under the sufficient decrease assumption, using only the conservative gradients. Finally, we get the learned optimization methods by applying the explicit Euler scheme to the ODE \eqref{eq:ISHD} with learned coefficients.

Our contributions can be divided into two major parts. The first part involves an ODE methodology that derives the conditions necessary to ensure the convergence of the sequence obtained through discretization. The second part utilizes the L2O approach to find the optimal coefficients under these conditions. Although the convergence rate of the sequence is $\mathcal{O}(1/k)$ under these conditions, we argue that these conditions allow for more flexible coefficient selection and facilitate the design of various practical iterative optimization methods. We also provide default values that satisfy these conditions. Numerical experiments demonstrate that the sequence with these default values exhibits comparable performance to other optimization methods with a theoretical $\mathcal{O}(1/k^2)$ convergence rate. In contrast to most L2O methods, our framework has a solid theoretical foundation, which is a crucial step towards designing practical optimization methods. Compared to other methods that ensure the convergence of learned optimization methods \cite{banertDataDrivenNonsmoothOptimization2020,banert2024siopt}, our result also includes an explicit convergence rate, whereas their results do not. Furthermore, the theoretical linear rate in \cite{liu2018theoreticallinear} relies on the specific form of LASSO, whereas our result can be applied to various unconstrained convex smooth minimization problems. We summarize the key contributions of this paper as follows:
\begin{itemize}
\item We establish conditions under which the sequence $\{x_{k}\}_{k=0}^{\infty}$, obtained by applying the explicit Euler discretization to equation \eqref{eq:ISHD}, is guaranteed to converge. In the proof, we take a fresh perspective based on the global error analysis from numerical solutions of differential equations.

\item We introduce a metric named ``stopping time'', which is the continuous-time counterpart of the complexity. We show that ``stopping time'' is differentiable under mild conditions hence can be optimized numerically.

\item We propose a general L2O framework to select optimal coefficients for the ISHD, addressing the challenge of optimally tuning damping coefficients. An algorithm with convergence guarantee for this L2O framework is also derived.
\end{itemize}

We conduct extensive numerical experiments to evaluate our algorithms. The optimization problems include the logistic regression and $\ell_{p}^{p}$ minimization on real world datasets, e.g. \texttt{a5a}, \texttt{mushrooms}, \texttt{w3a}, \texttt{covtype}, and \texttt{phishing}. The training process demonstrates that our algorithm for the training formulation converges to a feasible stationary point of the training formulation, verifying the effectiveness of our theory. The numerical experiments on the testing process highlight the advantages of the learned algorithms. The code and datasets used to produce the results and analyses presented in this paper are available in the GitHub repository: \url{https://github.com/optsuite/O2O}.

\subsection{Organization}
The rest of the paper is organized as follows. We begin by analyzing the conditions for stabilizing the explicit Euler discretization of \eqref{eq:ISHD} in sec. \ref{sec:stabilize-explicit}. In sec. \ref{sec:model}, we present our methodology for finding efficient optimization methods that guarantee worst-case convergence, which leads to an stochastic optimization problem with expectation constraints. Next, sec. \ref{sec:conservative-grad} focuses on deriving expressions for the conservative gradients of the stopping time and constraint functions. We then analyze our model under modest hypotheses in sec. \ref{sec:theory}. The effectiveness of our training algorithm is demonstrated through numerical experiments in sec. \ref{sec:training-experiments}. Further numerical experiments are conducted in sec. \ref{sec:experiments} to showcase the efficiency and theoretical consistency of the learned optimization algorithms. Finally, we conclude our work and discuss future directions in sec. \ref{sec:conclusion}.

\section{Conditions that ensure a stable discretization of ISHD}
\label{sec:stabilize-explicit}

\subsection{Preliminaries}
To simplify the presentation, we bundle the coefficients \( \alpha \), \( \beta \), and \( \gamma \) into a single collection denoted as \( \upXi \). Considering the coefficients \( \upXi \), the function \( f \), and the given time \( t \), we define the trajectory associated with the system \( \eqref{eq:ISHD} \) as follows:
\begin{equation}\label{eq:def-flow}
\begin{aligned}
    X\colon&\mathbb{R}_{+}\times\mathcal{C}(\mathbb{R},\mathbb{R}_{+})\times\mathcal{C}(\mathbb{R},\mathbb{R}_{+})\times\mathbb{R}\times\mathcal{C}(\mathbb{R}^n,\mathbb{R}) \to \mathbb{R}^n,\\
    &(\upXi, t, f) \to X(\upXi, t, f).
\end{aligned}
\end{equation}
We set the initial conditions for system \( \eqref{eq:ISHD} \) as follows: the initial time \( t_0 > 0 \), initial position \( x(t_0) \), and initial velocity \( \dot{x}(t_0) \) are all fixed.

For our notation, \( \mathbb{S}^n \) and \( \mathbb{S}^n_+ \) represent the sets of all \( n \times n \) symmetric matrices and symmetric positive semidefinite matrices, respectively. We denote the convex hull of a set $S$ by $\operatorname{co}(S)$. The notation $[\cdot]_+$ represents the maximum of a value and zero, defined as $[\cdot]_+ := \max\{\cdot, 0\}$. The set \( \{0, 1, \ldots, M\} \), where \( M \) is a non-negative integer, is denoted by \( [M] \).

\subsection{A condition that ensures the trajectory of ISHD converges}
\label{sec:convergent}
Throughout this paper, we make the following basic assumptions about \eqref{eq:ISHD}.
\begin{assumption}\label{assump:differentiable}
    $\alpha> 1,t_0>0,\varepsilon>0$ are real numbers, $\beta$ and $\gamma$ are nonnegative continuously differentiable functions defined on $[t_0,+\infty)$. The function $f$ is twice differentiable convex with its domain $\operatorname{dom}f=\mathbb{R}^n$. The set of minimizers of $\min_{x} f(x)$ is not empty and the corresponding optimal value is $f_{\star}$.
\end{assumption}
Assumption \ref{assump:differentiable} also appears in the literature \cite{attouch2022fast,shiUnderstandingAccelerationPhenomenon2021} and is not restrictive. Given $\kappa\in(0,1],\lambda\in(0,\alpha-1]$, we define
\begin{equation}\label{eq:abbreviate-}
\begin{aligned}
&\delta(t)=t^2(\gamma(t)-\kappa\dot{\beta}(t)-\kappa\beta(t)/t)+(\kappa (\alpha-1-\lambda)-\lambda(1-\kappa))t\beta(t),\\
&w(t)=\gamma(t)-\dot{\beta}(t)-\beta(t)/t.
\end{aligned}
\end{equation}
Then, we provide a condition that ensures the convergence of the solution trajectory of \eqref{eq:ISHD}. 

\begin{theorem}\label{thm:continuous-convergence}
    Suppose that Assumption \ref{assump:differentiable} and the following conditions hold true:
    \begin{equation}\label{eq:converge-condition}
        \delta(t)>0,\quad \text{and}\quad \dot{\delta}(t)\leq \lambda tw(t).
    \end{equation}
    Then, the solution trajectory of \eqref{eq:ISHD}, $x(t)$, is bounded and the following inequalities can be derived:
    \begin{align}
    &f(x(t))-f_\star\leq\mathcal{O}\left(\frac{1}{\delta(t)}\right),\; \|\nabla f(x(t))\|\leq\mathcal{O}\left(\frac{1}{t\beta(t)}\right),\; \|\dot{x}(t)\|\leq\mathcal{O}\left(\frac{1}{t}\right),\label{eq:point-wise-estimate}\\
    &\int_{t_{0}}^{\infty}(\lambda tw(t)-\dot{\delta}(t))(f(x(t))-f_\star)\,\mathrm{d}t<\infty,\label{eq:value-integrable}\\
    &\int_{t_{0}}^{\infty}t(\alpha-1-\lambda)\|\dot{x}(t)\|^2\,\mathrm{d}t<\infty,\label{eq:velocity-integrable}\\
    &\int_{t_{0}}^{\infty}t^2\beta(t)w(t)\|\nabla f(x)\|^2\,\mathrm{d}t<\infty,\label{eq:grad-norm-integrable}\\
    &\int_{t_{0}}^{\infty}t^2\beta(t)\langle\nabla^2 f(x(t))\dot{x}(t),\dot{x}(t)\rangle\,\mathrm{d}t<\infty.\label{eq:hessian-velocity-integrable}
    \end{align}
\end{theorem}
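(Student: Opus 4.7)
\smallskip
\noindent\textbf{Proof proposal.}
The plan is a standard but delicate Lyapunov argument tailored to the ISHD equation \eqref{eq:ISHD}. I will construct an energy functional $\mathcal{E}(t)$ whose nonnegativity and nonincreasing behaviour, together with the conditions \eqref{eq:converge-condition}, simultaneously yield all five conclusions of the theorem.

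First, I would exploit the identity
\[
\beta(t)\nabla^2 f(x(t))\dot{x}(t) \;=\; \beta(t)\tfrac{\mathrm{d}}{\mathrm{d}t}\nabla f(x(t)),
\]
so that \eqref{eq:ISHD} can be rewritten in the ``integrated'' form
\[
\tfrac{\mathrm{d}}{\mathrm{d}t}\!\left(t\dot{x}(t)+t\beta(t)\nabla f(x(t))\right)+(\alpha-1)\dot{x}(t)+t\,w(t)\,\nabla f(x(t))\;=\;0,
\]
with $w$ as in \eqref{eq:abbreviate-}. This identity motivates introducing the anchor vector
\[
v(t)\;=\;\lambda\bigl(x(t)-x_\star\bigr)+t\dot{x}(t)+\kappa\, t\beta(t)\nabla f(x(t)),
\]
where $x_\star$ is a minimizer and $\kappa,\lambda$ are the parameters appearing in \eqref{eq:abbreviate-}. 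A direct computation using the rewritten ODE then expresses $\dot v$ purely in terms of $\dot x$ and $\nabla f(x)$, which is the linchpin of the subsequent calculation.

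Next, I would define the Lyapunov candidate
\[
\mathcal{E}(t)\;=\;\delta(t)\bigl(f(x(t))-f_\star\bigr)+\tfrac{1}{2}\|v(t)\|^2+\tfrac{\xi}{2}\|x(t)-x_\star\|^2,
\]
for a suitably chosen constant $\xi=\lambda(\alpha-1-\lambda)\geq 0$, and differentiate. The derivative of the first term produces $\dot{\delta}(t)(f(x)-f_\star)+\delta(t)\langle\nabla f(x),\dot x\rangle$; the derivative of $\tfrac12\|v\|^2$ uses the formula for $\dot v$ above; the last term is handled with the identity $\tfrac{\mathrm{d}}{\mathrm{d}t}\tfrac12\|x-x_\star\|^2=\langle x-x_\star,\dot x\rangle$. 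After substitution I will group terms by type: those proportional to $(f(x)-f_\star)$, $\|\dot x\|^2$, $\|\nabla f(x)\|^2$, $\langle\nabla^2 f\dot x,\dot x\rangle$ (coming from expansion of the cross term $\langle t\dot x,\kappa t\beta\nabla f\rangle$'s derivative), and a cross term $\langle x-x_\star,\nabla f(x)\rangle$ which is dominated using convexity: $\langle\nabla f(x),x-x_\star\rangle\geq f(x)-f_\star$. The algebra is designed so that the coefficient of $(f(x)-f_\star)$ becomes $\dot{\delta}(t)-\lambda t w(t)$, which is $\leq 0$ by \eqref{eq:converge-condition}, while the coefficients of $\|\dot x\|^2$, $\|\nabla f(x)\|^2$, and $\langle\nabla^2 f\dot x,\dot x\rangle$ become $-t(\alpha-1-\lambda)$, $-t^2\beta(t)w(t)$, and $-t^2\beta(t)$ (up to harmless positive constants), all nonpositive. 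This shows
\[
\dot{\mathcal{E}}(t)\;\leq\;-\bigl(\lambda tw(t)-\dot{\delta}(t)\bigr)\bigl(f(x)-f_\star\bigr)-t(\alpha-1-\lambda)\|\dot x\|^2-t^2\beta(t)w(t)\|\nabla f(x)\|^2-t^2\beta(t)\langle\nabla^2 f\dot x,\dot x\rangle.
\]

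Finally, integrating this inequality from $t_0$ to $t$ gives two kinds of information at once. On one hand, $\mathcal{E}(t)\leq\mathcal{E}(t_0)$ for all $t\geq t_0$; since $\delta(t)>0$, this yields $f(x(t))-f_\star\leq\mathcal{O}(1/\delta(t))$, boundedness of $\|x(t)-x_\star\|$, and, by extracting the components of $\|v(t)\|\leq\mathcal{O}(1)$ together with the boundedness of $\|x-x_\star\|$, both $\|\dot x(t)\|\leq\mathcal{O}(1/t)$ and $\|\nabla f(x(t))\|\leq\mathcal{O}(1/(t\beta(t)))$, establishing \eqref{eq:point-wise-estimate}. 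On the other hand, the negativity of $\int_{t_0}^{\infty}\dot{\mathcal{E}}\,\mathrm{d}t$ together with the lower bound $\mathcal{E}(t)\geq 0$ yields exactly the four integrability statements \eqref{eq:value-integrable}--\eqref{eq:hessian-velocity-integrable}.

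The main obstacle is the bookkeeping in the derivative of $\tfrac12\|v\|^2$: the presence of the Hessian-driven damping term inside $v$ generates several cross-terms that must be cancelled by the precise combination of $\kappa$ and $\lambda$ baked into the definition of $\delta(t)$, and the choice $\xi=\lambda(\alpha-1-\lambda)$ is forced by the requirement of cancelling the cross term $\langle x-x_\star,\dot x\rangle$. Verifying that the resulting coefficient of $(f(x)-f_\star)$ is precisely $\dot\delta(t)-\lambda tw(t)$ is where the algebraic form of $\delta$ in \eqref{eq:abbreviate-} is used in its full strength; everything else is essentially bookkeeping.
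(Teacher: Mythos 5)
Your overall strategy is the same as the paper's (a Lyapunov argument built around the anchor vector $\lambda(x-x_\star)+t\dot{x}+\kappa t\beta\nabla f(x)$, integrated in time to extract both the pointwise rates and the four integrability statements), but your energy functional is missing two terms that the paper's proof needs, and without them the computation does not close. The paper's Lyapunov function is
\[
E(t)=\delta(t)(f(x)-f_\star)+\tfrac12\|v(t)\|^2+\tfrac{\lambda(\alpha-1-\lambda)}{2}\|x-x_\star\|^2
+\lambda(1-\kappa)\,t\beta(t)\langle\nabla f(x),x-x_\star\rangle+\tfrac{\kappa(1-\kappa)}{2}\|t\beta(t)\nabla f(x)\|^2,
\]
i.e.\ your $\mathcal{E}(t)$ plus the last two terms. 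When you differentiate $\tfrac12\|v\|^2$, the expansion produces the indefinite cross terms $-\lambda(1-\kappa)\,t\beta(t)\langle\nabla^2 f(x)\dot{x},x-x_\star\rangle$ and $-\kappa(1-\kappa)\,t^2\beta(t)^2\langle\nabla^2 f(x)\dot{x},\nabla f(x)\rangle$. These have no sign and do not belong to any of the nonpositive ``buckets'' you list; they are cancelled exactly by the time derivatives of the two extra terms above, not (as you assert) by the combination of $\kappa$ and $\lambda$ baked into $\delta(t)$ --- that combination only arranges the vanishing of the $\langle\nabla f(x),\dot{x}\rangle$ coefficient. So for $\kappa<1$ your $\dot{\mathcal{E}}$ retains uncontrolled terms, and for $\kappa=1$ the coefficient of $\langle\nabla^2 f\,\dot{x},\dot{x}\rangle$ degenerates to zero and \eqref{eq:hessian-velocity-integrable} is lost.

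A second, related gap is in how you extract the pointwise estimates. From $\|v(t)\|\le\mathcal{O}(1)$ alone you cannot separate $t\dot{x}$ from $\kappa t\beta\nabla f(x)$, so neither $\|\nabla f(x(t))\|\le\mathcal{O}(1/(t\beta(t)))$ nor $\|\dot{x}(t)\|\le\mathcal{O}(1/t)$ follows. The paper obtains the gradient bound directly from the $\tfrac{\kappa(1-\kappa)}{2}\|t\beta(t)\nabla f(x)\|^2$ term sitting inside $E(t)\le E(t_0)$, and only then deduces the velocity bound via $\|\dot{x}\|\le\tfrac1t(\sqrt{2E(t_0)}+\lambda\|x-x_\star\|)+\kappa\beta\|\nabla f(x)\|$. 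This is precisely why those ``unusual'' terms are in the energy; the paper even remarks on it. Your proposal would be repaired by augmenting $\mathcal{E}$ with the two missing terms (and then checking that the augmented $E$ remains nonnegative, which requires the convexity inequality $\langle\nabla f(x),x-x_\star\rangle\ge 0$ and the specific constants used).
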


The proof of Theorem \ref{thm:continuous-convergence} is presented in sec. \ref{sec:theory}. This theorem extends \cite[Theorem 1]{attouchFirstorderOptimizationAlgorithms2020} by providing estimations \eqref{eq:value-integrable}-\eqref{eq:hessian-velocity-integrable} for $f(x(t))-f_{\star}$, $\|\dot{x}(t)\|$, $\|\nabla f(x(t))\|$, and $\|\dot{x}(t)\|_{\nabla^2 f(x(t))}^2$. They guarantee that these quantities are integrable when coupling with certain coefficients. The results on \eqref{eq:point-wise-estimate} imply that the convergence rate of $f(x(t))-f_{\star}$, $\nabla f(x(t))$ can be controlled by the coefficients $\alpha,\beta(\cdot)$, and $\gamma(\cdot)$. Using the integrability with the convergence rate in \eqref{eq:point-wise-estimate}, we can analyze the long-time behavior of the solution trajectory $x(t)$. This provides the guidance for deriving the conditions that guarantee the convergence of the sequence discretized form the equation \eqref{eq:ISHD}.

\subsection{A condition that ensures the stability of the explicit Euler discretization}
\label{sec:stable}
Let $v(t_0)=x(t_0)+\beta(t_0)\nabla f(x(t_0))$ and
\begin{equation}\label{eq:psi}
    \psi_{\upXi}(x(t),v(t),t)
    =
    \left(
    \begin{gathered}
    v(t)-\beta(t)\nabla f(x(t))\\
    -\frac{\alpha}{t}(v(t)-\beta(t)\nabla f(x(t)))+(\dot{\beta}(t)-\gamma(t))\nabla f(x(t))
    \end{gathered}\right).
\end{equation}
The equation \eqref{eq:ISHD} can be reformulated as the first-order system
\begin{equation}\label{eq:first-order}
    \begin{pmatrix}
        \dot{x}(t)\\
        \dot{v}(t)
    \end{pmatrix}=\psi_{\upXi}(x(t),v(t),t).
\end{equation}
We denote the flow associated with this system as $s(t,s_{0},\upXi,f)$ with $s_{0}=(x_{0},v_{0})$. Let $h$ be the step size, $t_{k}=t_{0}+kh,k\geq 0$. The explicit Euler scheme of \eqref{eq:ISHD} writes
\begin{equation}\label{eq:first-order-discrete}
    \left\{
    \begin{aligned}
        &\frac{x_{k+1}-x_{k}}{h}=v_{k}-\beta(t_{k})\nabla f(x_{k}),\\
        &\frac{v_{k+1}-v_{k}}{h}=-\frac{\alpha}{t_{k}}\left(v_{k}-\beta(t_{k})\nabla f(x_{k})\right)+(\dot{\beta}(t_{k})-\gamma(t_{k}))\nabla f(x_{k}).
    \end{aligned}
    \right.
\end{equation}
The sequence $\{x_{k}\}_{k=0}^{\infty}$ denotes the position, while $\{v_{k}\}_{k=0}^{\infty}$ corresponds to the auxiliary variable sequence. Eliminating the auxiliary variable sequence $\{v_{k}\}_{k=0}^{\infty}$ in the equation \eqref{eq:first-order-discrete} gives
\begin{equation}
    \begin{aligned}
    x_{k+2}=&x_{k+1}-h^2\left(\gamma(t_{k})-\dot{\beta}(t_{k})+\frac{\beta(t_{k+1})-\beta(t_{k})}{h}\right)\nabla f(x_{k})\\
    &+\left(1-\frac{\alpha h}{t_{k}}\right)\underbrace{(x_{k+1}-x_{k})}_{\mathrm{inertia}}-h\beta(t_{k+1})\underbrace{\left(\nabla f(x_{k+1})-\nabla f(x_{k})\right)}_{\mathrm{gradient\ correction}}.
    \end{aligned}
\end{equation}
For clarity in our discussion, we reformulate the equation \eqref{eq:first-order-discrete} in Algorithm \ref{algo:EIGAC}. Since this algorithm is derived through the explicit Euler discretization of the ISHD, it includes an inertial term from the temporal discretization of the damping \(\dot{x}(t)\) and a gradient correction term from the temporal discretization of the Hessian-driven damping \(\nabla^2 f(x(t))\dot{x}(t)\). We name it the Explicit Inertial Gradient Algorithm with Correction (EIGAC). This algorithm is intended for the unconstrained smooth convex minimization problems:
\begin{equation}\label{eq:unconstrained-minimization}
\min_{x\in \mathbb{R}^{n}}\quad f(x),
\end{equation}
where $f$ satisfies Assumption \ref{assump:differentiable}. Importantly, the temporal discretization of the Hessian-driven damping $\nabla^2 f(x(t))\dot{x}(t)$ only contains first-order information, which makes EIGAC a first-order algorithm. Through the selection of various coefficients, Algorithm \ref{algo:EIGAC} offers a diverse range of optimization methods.

\begin{algorithm}[htbp]
    \caption{Explicit Inertial Gradient Algorithm with Correction (EIGAC)} \label{algo:EIGAC}
    \begin{algorithmic}[1]
    \State \textbf{Input:}
    the function $f$,
    the initial time $t_0$,
    the initial values $x_{0}$ and $v_{0}$,
    the step size $h$,
    the coefficients $\upXi=\{\alpha,\beta(\cdot),\gamma(\cdot)\}$.
    \For{ $k=1,2,\ldots$ }
        \State Update the time: $t_k=t_0+kh$.
        \State Update $x_{k}$ by \eqref{eq:first-order-discrete}.
        \State Update $v_{k}$ by \eqref{eq:first-order-discrete}.
    \EndFor
    \State \textbf{Output:} the solution $x_{\star}$ reaching the given accuracy.
    \end{algorithmic}
\end{algorithm}

Building upon the foundation established by Theorem \ref{thm:continuous-convergence}, we introduce a criterion termed the \emph{stability condition}. This condition ensures the convergence of the Algorithm \ref{algo:EIGAC}.

\begin{theorem}[Convergence rate]
    \label{thm:stable}
    Suppose Assumption \ref{assump:differentiable} and the convergence condition \eqref{eq:converge-condition} hold. Given an initial time $t_{0}$, an initial value $s_{0}$, and a step size $h$, the sequence $\{x_k\}_{k=0}^{\infty}$ is generated by Algorithm \ref{algo:EIGAC}. We denote the continuous time interpolation $\bar{x}(t)$ as
    \begin{equation}
        \label{eq:x-bar}
        \bar{x}(t)=x_{k}+\frac{x_{k+1}-x_{k}}{h}(t-t_{k}),\qquad t\in [t_{k},t_{k+1}).
    \end{equation}
    Assume three constants $0 \leq C_1$, $0 < C_2 \leq 1/h - 1/t_0$, and $0 < C_3$ fulfill the growth condition:
    \begin{equation}
        |\dot{\beta}(t)|\leq C_{1}\beta(t),\quad |\dot{\gamma}(t)-\ddot{\beta}(t)|\leq C_2 (\gamma(t)-\dot{\beta}(t)),\quad \beta(t)\leq C_3 w(t).\label{eq:trun-error-growth-condition-1}
    \end{equation}
    Then, it holds $f(x_k) - f_{\star} \leq \mathcal{O}(1/k)$ under the following stability condition:
    \begin{align}
        &\upLambda(x,f)\geq\|\nabla^2 f(x)\|,\quad {\alpha}\beta(t)/{t}\leq\gamma(t)-\dot{\beta}(t)\leq\beta(t)/h,\label{eq:stab-condition-1}\\
        &\sqrt{\int_{0}^{1}\upLambda((1-\tau)X(t,\upXi,f)+ \tau \bar{x}(t),f)\,\mathrm{d}\tau}\leq\frac{\sqrt{\gamma(t)-\dot{\beta}(t)}+\sqrt{\gamma(t)-\dot{\beta}(t)-\frac{\alpha}{t}\beta(t)}}{\beta(t)}.\label{eq:stab-condition-2}
    \end{align}
\end{theorem}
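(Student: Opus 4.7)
My plan is to combine Theorem~\ref{thm:continuous-convergence} with a global error analysis of the explicit Euler scheme applied to the first-order system \eqref{eq:first-order}. Let $\bar s(t) := (\bar x(t), \bar v(t))$ denote the piecewise-linear interpolation of the iterates and $s(t) := (X(t,\upXi,f), V(t))$ denote the exact flow issued from the same initial data. Using $L$-smoothness with $L = \upLambda$ on the relevant segment, I would split
\begin{equation*}
f(x_k) - f_\star \le \bigl[f(X(t_k)) - f_\star\bigr] + \|\nabla f(X(t_k))\|\,\|\bar x(t_k) - X(t_k)\| + \tfrac{\upLambda}{2}\|\bar x(t_k) - X(t_k)\|^2.
\end{equation*}
Theorem~\ref{thm:continuous-convergence} controls the first bracket and also provides $\|\nabla f(X(t_k))\| = \mathcal{O}(1/(t\beta(t)))$, so the task reduces to a sharp bound on the global discretization error $\bar x(t_k) - X(t_k)$.

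\textbf{Local truncation error.} A Taylor expansion of $s(t)$ at $t_k$ yields a per-step truncation error of size $\tfrac{h^2}{2}\|\ddot s(\xi_k)\|$. Differentiating \eqref{eq:first-order} once writes $\ddot s$ as a linear combination of $\nabla f(X)$, $\nabla^2 f(X)\dot X$, and $\dot X$, with coefficients built from $\alpha/t,\beta,\dot\beta,\ddot\beta,\gamma,\dot\gamma$. The growth condition \eqref{eq:trun-error-growth-condition-1} caps $|\dot\beta|$ and $|\dot\gamma-\ddot\beta|$ by multiples of $\beta$ and $\gamma-\dot\beta$, so that the summed (weighted) truncation errors are controlled by the integrable quantities \eqref{eq:velocity-integrable}--\eqref{eq:hessian-velocity-integrable} already shown to be finite in Theorem~\ref{thm:continuous-convergence}.

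\textbf{Stability of the linearized one-step map.} The Jacobian of $\psi_{\upXi}$ in $(x,v)$ is
\begin{equation*}
J(x,v,t) = \begin{pmatrix} -\beta(t)\nabla^2 f(x) & I \\ \bigl(\tfrac{\alpha\beta(t)}{t} + \dot\beta(t) - \gamma(t)\bigr)\nabla^2 f(x) & -\tfrac{\alpha}{t}\,I \end{pmatrix},
\end{equation*}
and in the scalar case $\nabla^2 f = \lambda$ its eigenvalues $\mu$ satisfy $\mu^2 + (\beta\lambda + \alpha/t)\mu + (\gamma-\dot\beta)\lambda = 0$. A short calculation shows that the discriminant vanishes exactly when $\lambda$ equals the square of the right-hand side of \eqref{eq:stab-condition-2}; below this threshold the roots are complex conjugates and
\begin{equation*}
|1+h\mu|^2 = 1 - h\alpha/t + h\lambda\bigl(h(\gamma-\dot\beta) - \beta\bigr) \le 1 - h\alpha/t,
\end{equation*}
the last bound being exactly $\gamma-\dot\beta \le \beta/h$ from \eqref{eq:stab-condition-1}. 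The nonlinear map is linearized via the mean-value formula $\psi_{\upXi}(\bar s,t) - \psi_{\upXi}(s,t) = \bigl(\int_0^1 J((1-\tau)s+\tau\bar s,t)\,\mathrm{d}\tau\bigr)(\bar s - s)$, which is precisely where the $\tau$-averaged Hessian in \eqref{eq:stab-condition-2} enters, so \eqref{eq:stab-condition-2} simply replaces the scalar $\lambda$ above by the Hessian operator averaged along the segment between $s(t)$ and $\bar s(t)$.

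\textbf{Conclusion and main obstacle.} Combining Steps~2 and~3 yields a discrete Gr\"onwall recursion $e_{k+1} \le (1 - ch/t_k)e_k + \tau_k$ on $e_k := \|\bar s(t_k) - s(t_k)\|$ in an appropriate energy norm, with $\sum_k \tau_k < \infty$; the compounded contraction $\prod_j(1 - ch/t_j) \sim (t_0/t_k)^c$ together with the summability of $\tau_k$ then yields $\|\bar x(t_k) - X(t_k)\| = \mathcal{O}(1/k)$. Meanwhile, the stability constraint $(\alpha/t)\beta \le \gamma-\dot\beta$ in \eqref{eq:stab-condition-1} together with \eqref{eq:abbreviate-} forces $\delta(t) \ge ct$ for large $t$, so $\delta(t_k) = \Omega(k)$ and Theorem~\ref{thm:continuous-convergence} gives $f(X(t_k)) - f_\star = \mathcal{O}(1/k)$; substituting both estimates into the split above yields the claimed $\mathcal{O}(1/k)$ rate. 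The main obstacle will be selecting the correct energy norm (likely weighted by $\gamma-\dot\beta$ on positions and the identity on velocities, mirroring the Lyapunov functional behind Theorem~\ref{thm:continuous-convergence}) in which the spectral-radius contraction of Step~3 survives as a genuine norm contraction despite the non-normality of $J$, and closing the circular dependence in \eqref{eq:stab-condition-2}, where the averaged-Hessian bound itself depends on $\bar x$; the latter likely requires a bootstrap argument that assumes a tentative bound on $\bar x - X$, derives the stability estimate, and then verifies self-consistency.
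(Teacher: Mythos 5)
Your proposal follows essentially the same route as the paper's proof: the same three-term split of $f(x_k)-f_\star$ into the continuous-time gap plus linear and quadratic terms in the global error, the same control of the summed local truncation errors through the integrability estimates \eqref{eq:velocity-integrable}--\eqref{eq:hessian-velocity-integrable} of Theorem \ref{thm:continuous-convergence}, the same root-location analysis of the quadratic $\mu^2+(\beta\lambda+\alpha/t)\mu+(\gamma-\dot\beta)\lambda=0$ identifying the discriminant threshold with \eqref{eq:stab-condition-2} and yielding the per-step contraction $1-\alpha h/(2t)$, and the same discrete Gr\"onwall/induction (your ``bootstrap'' for the circular dependence on $\bar x$ is exactly the paper's induction hypothesis $H_1(k)\Rightarrow H_0(k+1)$). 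The only discrepancy is that you claim $\|\bar x(t_k)-X(t_k)\|=\mathcal{O}(1/k)$, whereas the paper's Cauchy--Schwarz estimate only delivers $\mathcal{O}(1/\sqrt{t_k})$; this still suffices since the error enters the function-value bound quadratically and, in the linear term, multiplied by the decaying gradient norm.
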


\begin{remark}
    Theorem \ref{thm:stable} demonstrates that for an $L$-smooth convex function $f$, selecting $\upLambda(x,f)\equiv L$, $\alpha > 3$, $\beta(t) = (4 - 2\alpha h/t)/L$, and $\gamma(t) = \beta(t)/h$ guarantees the convergence of Algorithm \ref{algo:EIGAC}. This encompasses a fairly large family of algorithms.
\end{remark}
\begin{remark}
    The condition \eqref{eq:trun-error-growth-condition-1} is not restrictive, since it only requires the growth rate of the coefficients does not excess the exponential rate. In practice, they are often dominated by some polynomials. Our stability condition shows that the Hessian-driven term $\nabla^2 f(x(t))\dot{x}(t)$ is crucial for achieving the stable discretization. When $\beta\equiv 0$, which corresponding to the vanishing Hessian-driven damping, the condition \eqref{eq:stab-condition-1} cannot be satisfied.
\end{remark}
\begin{remark}
    When $\nabla^2 f$ is $L_{H}$-Lipschitz continuous, the condition \eqref{eq:stab-condition-2} can be simplified as
    \begin{equation}
        \beta(t)\sqrt{\upLambda(X(t,\upXi,f),f)+\frac{L_{H}M_{3}}{2\sqrt{t}}}\leq {\sqrt{\gamma(t)-\dot{\beta}(t)}+\sqrt{\gamma(t)-\dot{\beta}(t)-\frac{\alpha}{t}\beta(t)}},\label{eq:stab-condition-2-simplified}
    \end{equation}
    where $M_{3}$ is a constant determined by $f,\upXi,x_{0},t_{0},\kappa$, and $\lambda$. We prove this conclusion as a corollary of Theorem \ref{thm:stable} in sec. \ref{sec:theory}. The simplified condition \eqref{eq:stab-condition-2-simplified} is important for reducing the computation overhead.
\end{remark}

The proof of Theorem \ref{thm:stable} is deferred in sec. \ref{sec:theory}. We establish this result by controlling the deviation between the sequences $\{x_{k}\}_{k=0}^{\infty}$ and $\{X(\upXi,t_{k},f)\}_{k=0}^{\infty}$. The key step involves decomposing the global discretization error and adjusting the coefficients to prevent it from exploding during the propagation process. This results in the stability condition that aligns with the conventional wisdom in numerical solutions of ODEs: the curvature of the trajectory cannot be excessively large.

The observation in sec. \ref{sec:intro} underscores that the convergence rate in Theorem \ref{thm:stable} cannot be directly analogous to its continuous-time counterpart, $\mathcal{O}(1/\delta(t))$, as presented in Theorem \ref{thm:continuous-convergence}. A stable discretization, achievable over a broad range of coefficients, comes at the cost of a reduced convergence rate. The primary factor in this reduction is the error introduced during the discretization process. To the best of our knowledge, in comparison with other conditions that ensure the stable discretization of \eqref{eq:ISHD}, our results offer distinct advantages: 1) They allow for more flexible requirements on the coefficients, whereas other approaches may mandate specific values for the coefficients; 2) They introduce tools from the numerical solution of differential equations to analyze iterative optimization algorithms, which we believe is a valuable effort in bridging these two fields; 3) They theoretically establish the stability of an explicit discretization with a constant step size applied to optimization-inspired ODEs in a general convex setting, contrasting with other convergence results that necessitate modifying the setting to strong convexity or diminishing step size \cite{cortes2019ratematching,vaquero2023resource,zhangDirectRungeKuttaDiscretization2018,zhang2019discretization}.

\section{Selecting the best coefficients of ISHD using L2O}
\label{sec:model}

\subsection{The problem formulation of L2O}

Before introducing the loss function used in the problem formulation of L2O, we briefly review \emph{oracle complexity}. It assesses the efficiency of optimization methods by the computational effort required to achieve a specified level of suboptimality. Suppose $\mathcal{F}$ is a collection that contains a family of functions with certain structure, e.g., convex functions, $L$-smooth functions. The iterative algorithm $\mathcal{M}(\cdot,\cdot,\cdot)$ maps the triplet $(f,x_{0},N)$ to the point $x_{N}=\mathcal{M}(f,x_{0},N)$. The computational cost for obtaining the output $\mathcal{M}(f,x_{0},N)$ is proportional to $N$. Let $m(f,x_n)$ be an optimality measure. For example, a popular choice in unconstrained smooth optimization is $m(f,x_n)=\|\nabla f(x_n)\|$. Then, the complexity for method $\mathcal{M}$ applied to function class $\mathcal{F}$ is defined as
\begin{equation}\label{eq:complexity-measure}
    N_{\mathcal{F}}:=\inf\{N \colon m(f,\mathcal{M}(f,x_{0},N))\leq\varepsilon,\text{ for all }f\in\mathcal{F}\}.
\end{equation}

The analysis of complexity has motivated the development of numerous efficient optimization algorithms, as comprehensively discussed in \cite{nemirovskiProblemComplexityMethod1983}. Inspired by this paradigm of developing efficient algorithms through complexity analysis, we define the continuous-time counterpart of complexity using the trajectory of \eqref{eq:ISHD} as a surrogate for optimization methods in the continuous-time case, which serves as the loss function in the problem formulation of L2O.

\begin{definition}[Stopping Time]
\label{def:stopping-time}
    Given the initial time $t_0$, the initial value $x_0$, the initial velocity $\dot{x}(t_{0})$, the trajectory $X(\upXi,t,f)$ of the ISHD \eqref{eq:ISHD}, and a tolerance $\varepsilon$, the stopping time of the criterion $\|\nabla f(x)\|\leq \varepsilon$ is
    \begin{equation}
        T(\upXi,f)=\inf\{t\mid \|\nabla f(X(\upXi,t,f))\|\leq \varepsilon,t\geq t_{0}\}.
    \end{equation}
\end{definition}

From the definition, we have $T=\infty$ if $\inf_{t\geq t_0}\|\nabla f(X(\upXi,t,f))\|\geq \varepsilon$. We mention that while $x(t)$ glosses over the challenges in numerical computation, the stopping time measures the efficiency of Algorithm \ref{algo:EIGAC} when the corresponding ODE can be stably discretized with a fixed step size.  The stopping time measures the efficiency of Algorithm \ref{algo:EIGAC}, when the sequence converges to the optimal value.

As we will see in sec. \ref{sec:conservative-grad}, the stopping time is differentiable with respect to $\upXi$. This fact gives us the hope of integrating the stopping time into L2O. In L2O, a Bayesian approach is used to describe a (parametric) function class $\mathcal{F}$. Consider a mapping from a set of parameters to a set of functions:
\begin{equation}
\mathcal{H}: \upOmega\to \mathcal{F},\quad\zeta \to f(\cdot;\zeta).
\end{equation}
To ensure measurability, we assume that $\mathcal{H}$ is a bijection, which allows us to define a probability distribution on $\mathcal{F}$ through the probability distribution on $\upOmega$.

\begin{definition}[Induced Probability Space]
\label{def:func-distribution}
    Given the probability space of the parameter $\zeta$, \((\upOmega, \mathscr{A}, \mathbb{P})\), where $\upOmega$ is the sample space, $\mathscr{A}$ is the $\sigma$-algebra, and $\mathbb{P}$ is the corresponding probability. We define a \(\sigma\)-algebra \(\mathcal{H}(\mathscr{A})\) as
    \begin{equation}
        \mathcal{H}(\mathscr{A}) = \{\mathcal{H}(A) \mid A \in \mathscr{A}\}.
    \end{equation}
    The probability for an event \(\mathcal{B} \in \mathcal{H}(\mathscr{A})\) is given by \(\mathbb{P}_{\mathcal{H}}(\mathcal{B}) = \mathbb{P}(\{\upOmega \mid \mathcal{H}(\upOmega) \in \mathcal{B}\})\). Consequently, the space \((\mathcal{F}, \mathcal{H}(\mathscr{A}), \mathbb{P}_{\mathcal{H}})\) is isomorphic to \((\upOmega, \mathscr{A}, \mathbb{P})\), and it is referred to as the induced probability space of \(\mathcal{F}\).
\end{definition}

In this paper, to support intuitive understanding, we assume that when discussing the distribution of functions, integrability over this distribution is maintained, and that the differentiation operator with respect to variables and the expectation operator with respect to the functions are interchangeable. A thorough examination of the interchangeability between the integral and differentiation operators will be left for future work.

Using this definition, we obtain an explicit expression for the expectation of $f$. This expression is exemplified by using the stopping time:
\[
\mathbb{E}_{f}[T(\upXi,f)]=\int_{f}T(\cdot,f)\,\mathrm{d}\mathbb{P}_{\mathcal{H}}(f)=\int_{\zeta}T(\cdot,\mathcal{H}^{-1}(f))\,\mathrm{d}\mathbb{P}(\zeta)=\mathbb{E}_{\zeta}[T(\cdot,f(\cdot,\zeta))].
\]
In this way, we can directly calculate the expectation with respect to a probability distribution of functions. Given that the spaces \((\mathcal{F}, \mathcal{H}(\mathscr{A}), \mathbb{P}_{\mathcal{H}})\) and \((\upOmega, \mathscr{A}, \mathbb{P})\) are isomorphic, we frequently interchange their notation without causing confusion.

Now, we present the problem formulation for selecting the best coefficients of the equation \eqref{eq:ISHD}. Given the probability space of the parameterized functions $(\mathcal{F},\mathcal{H}(\mathscr{A}),\mathbb{P}_{\mathcal{H}})$, we use the expectation of stopping time over $\mathbb{P}$ to measure the efficiency of the trajectory of the equation \eqref{eq:ISHD} with coefficients $(\alpha,\beta(\cdot),\gamma(\cdot))$. To make the stopping time also be a reasonable metric of the efficiency of Algorithm \ref{algo:EIGAC}, we add the stability condition \eqref{eq:stab-condition-1}, \eqref{eq:stab-condition-2} and convergence condition \eqref{eq:converge-condition} for each $f\in\mathcal{F}$ as constraints. These constraints ensure that the sequence $\{x_{k}\}_{k=0}^{\infty}$ generated by Algorithm \ref{algo:EIGAC} with the learned coefficients converges for each $f\in\mathcal{F}$.

To simplify the notations used in our L2O problem, with $w(t),\delta(t)$ defined in \eqref{eq:converge-condition}, and an integrable function satisfying $\upLambda(x,f)\geq \|\nabla^2 f(x)\|$, we introduce
\begin{align}
    &p(x,\bar{x},\upXi,t,f)=\Bigg[\beta(t)\sqrt{\int_{0}^{1}\upLambda((1-\tau)x+\tau \bar{x},f)\,\mathrm{d}\tau}-\sqrt{\gamma(t)-\dot{\beta}(t)}\notag\\
    &\qquad\qquad\qquad\qquad\qquad\qquad\qquad\qquad\qquad\qquad-\sqrt{\gamma(t)-\dot{\beta}(t)-\frac{\alpha}{t}\beta(t)}\Bigg]_+,\label{eq:def-p}\\
    &q(\upXi,t)=\left[\gamma(t)-\dot{\beta}(t)-\beta(t)/h\right]_++\left[\dot{\beta}(t)+\alpha\beta(t)/t-\gamma(t)\right]_+\notag\\
    &\qquad\qquad\qquad\qquad\qquad\qquad\qquad\qquad\qquad\qquad+\left[\dot{\delta}(t)-\lambda tw(t)\right]_++\left[-\delta(t)\right]_+.\label{eq:def-q}
\end{align}
Using notations \eqref{eq:def-p} and \eqref{eq:def-q}, we set
\begin{equation}
    P(\upXi,f)=\int_{t_{0}}^{T(\upXi,f)}p(X(t,\upXi,f),\bar{x}(t),\upXi,t,f)\,\mathrm{d}t,\quad Q(\upXi,f)=\int_{t_{0}}^{T(\upXi,f)}q(\upXi,t)\,\mathrm{d}t.\label{eq:def-PQ}
\end{equation}
Here $\upXi$ represents the triplet of coefficients $(\alpha,\beta(\cdot),\gamma(\cdot))$. Using the property of integration and notations in \eqref{eq:def-PQ}, simply setting $P(\upXi,f)\leq 0$ and $Q(\upXi,f)\leq 0$ ensures that the conditions \eqref{eq:converge-condition}, \eqref{eq:stab-condition-1}, and \eqref{eq:stab-condition-2} hold almost surely in $[t_0,T(\upXi,f)]$.

The L2O problem writes
\begin{subequations}
    \label{eq:learn-continuous}
    \begin{align}
    \min_{\upXi}\quad&\mathbb{E}_{f}[T(\upXi,f)],\label{eq:learn-obj-continuous}\\
    \text{s.t.}\quad
    &\mathbb{E}_{f}[P(\upXi,f)]\leq 0,\label{eq:constr-P}\\
    &\mathbb{E}_{f}[Q(\upXi,f)]\leq 0.\label{eq:constr-Q}
    \end{align}
\end{subequations}
Using the property of the expectation, constraints \eqref{eq:constr-P} and \eqref{eq:constr-Q} guarantee that $P(\upXi,f)\leq 0$ and $Q(\upXi,f)\leq 0$ hold almost surely for functions in $\mathcal{F}$. As we have argued, $P(\upXi,f)\leq 0$ and $Q(\upXi,f)\leq 0$ are sufficient to guarantee that the convergence condition \eqref{eq:converge-condition} and stability condition \eqref{eq:stab-condition-1}, \eqref{eq:stab-condition-2} hold almost surely in interval $[t_{0},T(\upXi,f)]$. Hence, given the integrability of \eqref{eq:constr-P}, \eqref{eq:constr-Q}, these constraints ensure that the learned ODE is convergent and can be stably discretized until it achieves the specific suboptimality metric $\varepsilon$, for every function that can be sampled from the distribution $\mathbb{P}$.

In a word, the L2O problem \eqref{eq:learn-continuous} selects the best ODE by minimizing the expectation of stopping time over a distribution of functions while keeping the stability of the discretization using scheme \eqref{eq:first-order-discrete} for every function that can be sampled from the distribution. We argue that the constraints of the problem are indispensable. Without the guarantee for the stability, one may achieve arbitrary fast convergence rate in continuous time. However, this rate can not be translated into discrete-time cases, and does not derive a practical optimization method.

\subsection{Solving the L2O problem using penalty method and stochastic optimization}
\label{sec:algorithm}

To solve problem \eqref{eq:learn-continuous} numerically, we parameterize $\beta(\cdot)$ as $\beta_{\theta_1}(\cdot)$ and $\gamma(\cdot)$ as $\gamma_{\theta_2}(\cdot)$, and collectively denote the parameters $(a, \theta_1, \theta_2)$ as $\theta$, the total count of all parameters as $n_{\theta}$. For instance, when $\beta$ and $\gamma$ are parameterized through neural networks, $\theta_1$ and $\theta_2$ represent the trainable weights. Alternatively, one might opt for different models, such as polynomials, for parameterization. To streamline the notation, we substitute the symbol $\upXi$ with $\theta$ to distinguish between models with and without parameterization. Consequently, the mapping $\psi_{\upXi}$, delineated in \eqref{eq:psi}, is rearticulated as $\psi_{\theta}$. The functions $P$ and $Q$, initially introduced in \eqref{eq:def-PQ}, are now expressed as $P(\theta,f)$ and $Q(\theta,f)$, respectively.

In problem \eqref{eq:learn-continuous}, both the objective function and the constraint functions are expectations of the same probability distribution of functions $f$. Based on the linearity of expectation, it is appealing to use an exact penalty method for solving problem \eqref{eq:learn-continuous}. Given the penalty parameter $\rho$, the $\ell_{1}$ exact penalty problem for framework \eqref{eq:learn-continuous} writes
\begin{equation}
\label{eq:penalty-function}
\begin{aligned}
    \min_{\theta}\;\upUpsilon(\theta)&=\mathbb{E}_{f}[T(\theta,f)]+\rho\left(\mathbb{E}_{f}[P(\theta,f)]+\mathbb{E}_{f}[Q(\theta,f)]\right)\\
    &=\mathbb{E}_{f}\left[T(\theta,f)+\rho\left(P(\theta,f)+Q(\theta,f)\right)\right].
\end{aligned}
\end{equation}
We omit the $\ell_{1}$ norm of the constraints, since they are nonnegative functions. The penalty function has an expectation form. Hence, we can solve problem \eqref{eq:penalty-function} using a suitable stochastic optimization method.

Before approaching the algorithm, we give the formulas for calculating the gradients of different components of $\upUpsilon(\theta)$ directly. We implicitly assume that the required level of smoothness is satisfied. When the parameterization is based on neural networks, this can be achieved via using differentiable activation functions like SoftPlus \cite{zheng2015improving}. The formula for  of the stopping time writes
\begin{equation}
    \label{eq:nablaT}
    \frac{\mathrm{d} T}{\mathrm{d}\theta}=\left(\nabla f(X)^\top\nabla^2 f(X)\left(v(T)-X-\beta(T)\nabla f(X)\right)\right)^{-1}\int_{t=t_0}^{T}r^\top\frac{\partial \psi_{\theta}}{\partial\theta} \,\mathrm{d}t,
\end{equation}
The notation $X$ represents the position of the trajectory at time $T$ and $v$ is calculated through the equation \eqref{eq:first-order}. The function $r$ is a solution of the following ODE
\begin{equation}\label{eq:backward}
    \left\{
    \begin{aligned}
    &\dot{r}(t)=-r(t)^\top\frac{\partial\psi_{\theta}}{\partial s}(s(t),t) \text{ from } t_1 \text{ to } t_0,\\
    &r(t_1)=(\nabla f(X)^\top\nabla^2 f(X),\mathbf{0}^\top)^\top.
    \end{aligned}
    \right.
\end{equation}
Applying the formula of the gradient of the stopping time and the chain rule to \eqref{eq:def-PQ}, we have
\begin{equation}
\label{eq:nabla_PQ}
    \begin{aligned}
\frac{\mathrm{d}P(\theta,f)}{\mathrm{d} \theta} &=\int_{t_{0}}^{T(\theta,f)}\frac{\partial \psi_{\theta}(s(t),t,f)}{\partial \theta}w(t)+ \frac{\partial p(x(t),\bar{x}(t),\theta,t,f)}{\partial \theta}\,\mathrm{d}t\\
&\qquad\qquad\qquad\qquad\qquad +p(x(T),\bar{x}(T),\theta,T,f)\frac{\mathrm{d}T(\theta,f)}{\mathrm{d} \theta},\\
\frac{\mathrm{d}Q(\theta,f)}{\mathrm{d} \theta} &=\int_{t_{0}}^{T(\theta,f)}\frac{\mathrm{d} q(\theta,t)}{\mathrm{d} \theta}\,\mathrm{d}t+q(\theta,T)\frac{\mathrm{d}T(\theta,f)}{\mathrm{d} \theta},
    \end{aligned}
\end{equation}
where $\bar{x}(\cdot)$ is the interpolation defined in \eqref{eq:x-bar}, $w(\cdot)$ is the solution of the following differential equation
\begin{equation}
    -\begin{pmatrix}
        \frac{\partial p(x(t),\bar{x}(t),\theta,t,f)}{\partial x}\\
        0_{n\times 1}
    \end{pmatrix}-\frac{\partial \psi_{\theta}(s(t),t,f)}{\partial s}w(t)=\dot{w}(t),\quad w(T(\theta,f))=0_{2n\times 1}.
\end{equation}
The gradient \(\partial p/\partial x\) are contingent on the estimator $\upLambda(x,f)$ for the local Lipschitz constant of \(\nabla f\). A detailed discussion is reserved for sec. \ref{sec:lip-estimator}.

The derivation of these formulas are postponed in sec. \ref{sec:conservative-grad}. All of them can be computed using the automatic differentiation, which largely reduces the complexity in practical implementation. We also offer a version for cases where the parameterized coefficients lack smoothness. This alternative is grounded in a recently developed concept known as the conservative gradient, a generalized derivative that accommodates the use of non-smooth activation functions in neural network parameterization. Using conservative gradients, we can also confirm that our implementation based on the automatic differentiation is reasonable and has a sound theoretical foundation.

Given the formula for the gradients with respect to the objective function and constraint functions, we use stochastic gradient descent to minimize the exact \(\ell_1\) penalty function \eqref{eq:penalty-function}. The procedure is outlined in Algorithm \ref{algo:SEPM}. In iteration \(k\), a function \(f_{k}\) is sampled from the distribution of functions, and the sampled penalty function is constructed. Then, the gradient is calculated, followed by a stochastic gradient update. This algorithm could be enhanced with mini-batch training, variance reduction, and adaptive optimization methods. For simplicity, we implement the basic form of SGD.

\begin{algorithm}[htbp]
    \caption{Stochastic Penalty Method (StoPM) for Problem \eqref{eq:learn-continuous}}
    \label{algo:SEPM}
    \begin{algorithmic}[1]
    \State \textbf{Input:}
    step size for Algorithm \ref{algo:EIGAC}: $h$,
    step size for stochastic gradient descent: $\eta$,
    initial weight: $\theta_{0}$,
    penalty coefficient: $\rho$,
    a probability space of functions: $(\upOmega,\mathscr{A},\mathbb{P})$.
    \While{ Not\texttt{(Stopping Condition)} }
        \State Sample a function in $f_{k}\in\upOmega$ according to the probability $\mathbb{P}$.
        \State Computing the (conservative) gradients $\frac{\mathrm{d}T}{\mathrm{d} \theta},\frac{\mathrm{d}P}{\mathrm{d} \theta}$ and $\frac{\mathrm{d}Q}{\mathrm{d} \theta}$ using \eqref{eq:nablaT} and \eqref{eq:nabla_PQ} (\eqref{eq:conservative-T}, \eqref{eq:conservative-P}, and \eqref{eq:conservative-Q}).
        \State Update the parameter: $\theta_{k+1}\gets\theta_{k}-\eta \left(\frac{\mathrm{d}T}{\mathrm{d} \theta}+\rho\left(\frac{\mathrm{d}P}{\mathrm{d} \theta}+\frac{\mathrm{d}Q}{\mathrm{d} \theta}\right)\right)$.
        \State Update index: $k\gets k+1$.
    \EndWhile
    \State \textbf{Output:} the trained weight $\theta_{\star}$.
    \end{algorithmic}
\end{algorithm}

We note that step 4 in Algorithm \ref{algo:SEPM} allows for the use of conservative gradients in nonsmooth cases. While the conservative gradient does not possess an explicit formula, it can be computed through automatic differentiation. This approach, albeit a bit indirect, is viable and practical.

\section{Deriving the conservative gradient of the penalty function}
\label{sec:conservative-grad}

In this section, our primary goal is to enhance the theoretical rigor of Algorithm \ref{algo:SEPM}. We utilize the conservative gradient to characterize the outputs obtained from applying automatic differentiation to $\upUpsilon(\theta)$. This characterization becomes essential when the parameterization includes nonsmooth components such as the ReLU and max operators, which are prevalent in neural network architectures. The conservative Jacobian offers a generalized approach to derivatives in scenarios where traditional differentiability does not apply. This is particularly pertinent to both forward and backward nonsmooth automatic differentiation techniques crucial in machine learning, especially within backpropagation algorithms. The parameterization discussed here aligns with that introduced in sec. \ref{sec:algorithm}. For readers more interested in the computational and implementation aspects of the algorithm, it is sufficient to directly apply the corollaries following each theorem, as they are intuitive and straightforward.

\subsection{The conservative Jacobian of the flow $X(t,\theta,f)$}

This subsection investigates the conservative Jacobian for the flow $X(t,\upXi,f)$, starting with basic concepts in set-valued maps. The double arrow notation $\rightrightarrows$ used in set-valued maps like ${D}: \mathbb{R}^{d_{1}} \rightrightarrows \mathbb{R}^{d_{2}}$ indicates that each element in the domain maps to a subset of the codomain, highlighting the multi-valued aspect of these mappings. A set-valued map is termed \emph{locally bounded} if, for any $x \in \mathbb{R}^{d_{1}}$, there exists a neighborhood $V_x$ such that the union $\bigcup_{y \in V_x} {D}(y)$ remains a bounded subset of $\mathbb{R}^{d_{2}}$. Furthermore, such a map is \emph{graph closed} if for any converging sequences \( (x_k)_{k \in \mathbb{N}} \subseteq \mathbb{R}^{d_{1}} \) and \( (v_k)_{k \in \mathbb{N}} \subseteq \mathbb{R}^{d_{2}} \), with \( v_k \in D(x_k) \), the limit $\lim_{k \rightarrow \infty} v_k$ belongs to $D(\lim_{k \rightarrow \infty} x_k)$. These properties are critical in defining and understanding the conservative Jacobian, as detailed below.

\begin{definition}[Conservative Jacobian]\label{def:conservative-jacobian}
Consider a nonempty, locally bounded, and graph-closed set-valued map $D: \mathbb{R}^{d_{1}} \rightrightarrows \mathbb{R}^{m \times d_{1}}$, and a locally Lipschitz continuous function $F: \mathbb{R}^{d_{1}} \rightarrow \mathbb{R}^m$. The map $D$ is termed the conservative Jacobian of $F$ if and only if
$$
\frac{\mathrm{d}}{\mathrm{d}t}F(r(t)) = A\dot{r}(t), \quad \text{for all } A \in D(r(t)),
$$
holds for any absolutely continuous curve $r: [0, 1] \rightarrow \mathbb{R}^{d_{1}}$ and almost every $t \in [0, 1]$.
When $m = 1$, we refer to $D$ as a conservative gradient.
\end{definition}

Based on conservative Jacobian, we give the definition of path differentiability. This concept is introduced in \cite{bolte2021conservative} and enables the application of the chain rule and various calculus operations on nonsmooth functions.

\begin{definition}[Path differentiability]\label{def:path-diff}
    A function $F: \mathbb{R}^d \rightarrow \mathbb{R}^m$ is termed path differentiable if there exists a set-valued map $D$ that serves as a conservative Jacobian for $F$.
\end{definition}

To investigate the relationship between the path differentiability of the vector field and the flow in a general ODE, we consider the ordinary differential equation for $t_{1}\geq t_{0}>0$
\begin{equation}\label{eq:general-ODE}
    \dot{Y}(t)=F(Y(t)),\quad Y(0)=y_{0},\quad\forall t\in[t_{0},t_{1}],
\end{equation}
where $F\colon\mathbb{R}^d\to\mathbb{R}^{d}$ is a path differentiable Lipschitz function and $y_{0}\in\mathbb{R}^d$. Let $\phi(y_{0},t)$ be the flow associated with \eqref{eq:general-ODE} and $D^{F}$ be a uniformly bounded convex valued conservative Jacobian of function $F$, i.e., $\sup_{y\in\mathbb{R}^{d},J\in D^{F}(y)}\|J\|_{\mathrm{op}}\leq K$ for certain $K>0$. Then, the adjoint sensitivity equation of \eqref{eq:general-ODE} based on conservative Jacobian writes
\begin{equation}\label{eq:sensitivity-equation}
    \begin{aligned}
        &\dot{A}(t)\in D^{F}(\phi(y_{0}, t))A(t), & \text{for almost all } t \in [t_{0},t_{1}], \\
        &A(t_{0})= I \in \mathbb{R}^{d \times d}.
    \end{aligned}
\end{equation}
Since $D^{F}$ is uniformly bounded, the existence of the solutions of \eqref{eq:sensitivity-equation} is established by \cite[Theorem 4, p. 101]{aubin1984differential}.

Using these notations, we now present a fundamental theorem on the path differentiability of ODE flows. For a comprehensive proof, which is both lengthy and complex, the reader is referred to Theorem 1 of \cite{marx2022path}.

\begin{theorem}[Path differentiability of ODE flows]
    \label{thm:path-diff}
    Define the set-valued map $U: \mathbb{R}^d \times [t_0, t_1] \to \mathbb{R}^{d \times d}$ as:
    \begin{equation}
        \begin{aligned}
            U\colon \mathbb{R}^d \times [t_{0},t_{1}] &\to \mathbb{R}^{d \times d},\\
            (y_{0}, t)&\rightrightarrows U(y_{0}, t):=A(t), \quad A(\cdot)\text{ is a solution to \eqref{eq:sensitivity-equation}}.
        \end{aligned}
    \end{equation}
    The map $y_0 \rightrightarrows U(y_0, t_1)$ constitutes a conservative Jacobian for the mapping $y_0 \mapsto \phi(y_0, t_1)$.
\end{theorem}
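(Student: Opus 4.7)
The plan is to identify $U(y_0,t_1)$ as a conservative Jacobian by tying the adjoint inclusion \eqref{eq:sensitivity-equation} to a nonsmooth variation-of-constants formula for the flow. First I would verify that $U$ is well-defined as a set-valued map in the sense of Definition \ref{def:conservative-jacobian}: nonemptiness follows from the existence theorem for differential inclusions with convex-compact upper-semicontinuous right-hand sides (already invoked via \cite{aubin1984differential} for \eqref{eq:sensitivity-equation}); local boundedness is a Gronwall estimate using the uniform bound $K$ on $D^{F}$; graph-closedness follows from stability of solutions of differential inclusions under uniform convergence of data, combined with graph-closedness of $D^{F}$ itself.

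Next, fix an absolutely continuous curve $r\colon[0,1]\to\mathbb{R}^{d}$ and set $\Phi(s,t) = \phi(r(s),t)$. Lipschitz continuity of $F$ together with Gronwall yield absolute continuity of $s\mapsto\Phi(s,t)$ uniform in $t\in[t_{0},t_{1}]$, so $\partial_{s}\Phi(s,t_{1})$ exists for a.e.\ $s$. The claim then reduces to the identity $\partial_{s}\Phi(s,t_{1}) = B\,\dot r(s)$ for a.e.\ $s$ and \emph{every} $B\in U(r(s),t_{1})$. Existence of at least one such $B$ would come from mollification: approximate $F$ by smooth $F_{\varepsilon}\to F$ whose Jacobians satisfy $DF_{\varepsilon}(y)\in\operatorname{co}\bigl(\bigcup_{z\in B_{\varepsilon}(y)} D^{F}(z)\bigr)$, apply the classical sensitivity theorem to $F_{\varepsilon}$ to obtain $\partial_{s}\Phi_{\varepsilon}(s,t_{1}) = A_{\varepsilon}(t_{1})\dot r(s)$, and then pass to the limit using continuous dependence of ODE solutions on the vector field plus graph-closedness of $D^{F}$; the limit matrix $A(\cdot)$ solves \eqref{eq:sensitivity-equation} and satisfies $\partial_{s}\Phi(s,t_{1}) = A(t_{1})\dot r(s)$.

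The main obstacle is upgrading ``some $B\in U(r(s),t_{1})$'' to ``every $B\in U(r(s),t_{1})$''. To handle this I would exploit the defining property of the conservative Jacobian $D^{F}$: along any absolutely continuous trajectory in $\mathbb{R}^{d}$, \emph{all} selections of $D^{F}$ produce the same derivative almost everywhere. Applied to the two-parameter family $t\mapsto\Phi(s,t)$ indexed by $s$, and combined with a Fubini argument that manages the interacting null sets in $(s,t)$, this forces any two solutions $A,\tilde A$ of \eqref{eq:sensitivity-equation} started from the identity to satisfy $A(t_{1})\dot r(s) = \tilde A(t_{1})\dot r(s)$ for a.e.\ $s$. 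This intertwining of nonsmooth calculus with differential-inclusion theory — in particular, disentangling the two almost-everywhere quantifiers that depend on each other through $\Phi$ — is the technical heart of the argument and is precisely why the proof in \cite{marx2022path} is lengthy; the formulation given here allows us to cite that result as a black box.
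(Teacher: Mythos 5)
The paper does not actually prove this theorem --- it states it and defers entirely to Theorem 1 of \cite{marx2022path} for ``a comprehensive proof, which is both lengthy and complex'' --- and your proposal, after outlining the expected structure (well-posedness of the inclusion \eqref{eq:sensitivity-equation}, a mollification argument producing one admissible selection, and the genuinely hard step of showing that \emph{every} solution of \eqref{eq:sensitivity-equation} yields the same derivative at a.e.\ point of an absolutely continuous curve), ends by invoking that same reference as a black box. Your sketch is consistent with the cited argument and correctly locates the technical heart in the interacting almost-everywhere quantifiers, so there is nothing substantive to compare: both you and the paper ultimately rest on \cite{marx2022path}.
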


This theorem demonstrates that a path differentiable vector field ensures the path differentiability of the corresponding ODE flow. We have adapted the notation for clarity and to maintain consistency with the conventions of this paper. To apply it in \eqref{eq:first-order}, we need the following projection lemma, adopted from \cite{marx2022path}, to focus on the part of the conservative Jacobian we are interested in.

\begin{lemma}[The projection preserves the conservativity]
    \label{lem:proj-conservative}
    Let \( G(y, z) : \mathbb{R}^{d_{y}+d_{z}} \rightarrow \mathbb{R}^m \) be a path differentiable function whose conservative Jacobian is denoted by \( D^{G} : \mathbb{R}^{d_{y}+d_{z}} \rightarrow \mathbb{R}^{m \times (d_{y}+d_{z})} \). Consider
    \[ \upPi_{z}D^{G}(y, z) := \{M_2 \in \mathbb{R}^{m \times d_{z}}, \exists M_1 \in \mathbb{R}^{m \times d_{y}}, (M_1, M_2) \in D^{G}(y, z)\}. \]
    Then, for all \( y \in \mathbb{R}^{d_{y}} \), \( \upPi_{z}D^{G}(y, z) \) is a conservative Jacobian for the function \( z \mapsto G(y, z) \).
\end{lemma}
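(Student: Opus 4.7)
The plan is to verify directly that $\Pi_z D^G(y,\cdot)$ satisfies the three structural requirements of Definition \ref{def:conservative-jacobian} (nonempty, locally bounded, graph closed) together with the chain-rule identity along absolutely continuous curves. The Lipschitz regularity of $z \mapsto G(y,z)$ is inherited immediately from the Lipschitz regularity of $G$, so the only nontrivial work lies in these four items.

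For the structural properties, I would first note that $\Pi_z D^G(y,z)$ is nonempty because $D^G(y,z)$ is, and that local boundedness transfers under a coordinate projection: a bound $\|(M_1,M_2)\|\le K$ on $D^G$ over a neighborhood of $(y,z)$ yields the bound $\|M_2\|\le K$ on $\Pi_z D^G$ over the corresponding slice. For graph closedness, I would take sequences $z_k\to z$ and $M_2^k\to M_2$ with $M_2^k\in\Pi_z D^G(y,z_k)$, pick witnesses $M_1^k$ so that $(M_1^k,M_2^k)\in D^G(y,z_k)$, and exploit the local boundedness of $D^G$ near $(y,z)$ to extract a convergent subsequence $M_1^{k_j}\to M_1$. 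Graph closedness of $D^G$ then gives $(M_1,M_2)\in D^G(y,z)$, hence $M_2\in \Pi_z D^G(y,z)$.

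The conservativity identity is then obtained by lifting. Given any absolutely continuous curve $r\colon [0,1]\to\mathbb{R}^{d_z}$, define $\tilde r(t)=(y,r(t))$, which is absolutely continuous with $\dot{\tilde r}(t)=(0,\dot r(t))$ almost everywhere. By the assumption that $D^G$ is a conservative Jacobian of $G$, for almost every $t$ and every $J=(M_1,M_2)\in D^G(\tilde r(t))=D^G(y,r(t))$,
\begin{equation*}
\frac{\mathrm{d}}{\mathrm{d}t}G(y,r(t)) = J\,\dot{\tilde r}(t) = M_1\cdot 0 + M_2\,\dot r(t) = M_2\,\dot r(t).
\end{equation*}
Since every $M_2\in\Pi_z D^G(y,r(t))$ arises as the second block of some such $J$, this equality holds for every element of $\Pi_z D^G(y,r(t))$, which is exactly the required identity.

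The main obstacle, if any, is the graph-closedness step, because the projection could in principle destroy closedness when the ``hidden'' coordinate $M_1$ escapes to infinity; the resolution is precisely the uniform local boundedness of $D^G$, which compactifies the fiber of the projection and lets us invoke a Bolzano--Weierstrass subsequence argument. Everything else is essentially bookkeeping: the Lipschitz property of the section $z\mapsto G(y,z)$ is inherited from $G$, and the chain rule step reduces to observing that the $y$-component of the velocity of the lifted curve vanishes.
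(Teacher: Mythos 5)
Your proof is correct. Note, however, that the paper does not actually prove this lemma: it is stated as ``adopted from'' Marx and Pauwels and used as a black box, so there is no in-paper argument to compare against. Your write-up is a self-contained verification that the cited result holds, and all four ingredients check out. The structural properties (nonemptiness, local boundedness, graph closedness of $\upPi_{z}D^{G}(y,\cdot)$) transfer as you say, and you correctly identify the one place where something could go wrong --- the hidden block $M_1$ escaping to infinity in the graph-closedness step --- and correctly resolve it via the local boundedness of $D^{G}$, which makes the fiber of the projection relatively compact and permits the subsequence extraction. The conservativity identity via the lifted curve $\tilde r(t)=(y,r(t))$ is exactly the right move: $\tilde r$ is absolutely continuous with $\dot{\tilde r}=(0,\dot r)$ almost everywhere, the defining identity for $D^{G}$ along $\tilde r$ kills the $M_1$ block, and since every element of $\upPi_{z}D^{G}(y,r(t))$ is the second block of some element of $D^{G}(y,r(t))$, the identity propagates to the projected map. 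The Lipschitz continuity of the section $z\mapsto G(y,z)$ is indeed inherited from that of $G$ (required by Definition \ref{def:conservative-jacobian}), so nothing is missing. In short: the argument is complete and supplies a proof the paper delegates to the literature.
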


For clarity, we denote the conservative Jacobian of function \(G\) with respect to (projected onto) \(z\) as \(D^{G}_{z}(y,z) = \upPi_{z}D^{G}(y,z)\). Note that we may omit some variables when they are clear from the context. Applying Theorem \ref{thm:path-diff} and Lemma \ref{lem:proj-conservative}, we confirm the path differentiability of the flow $X(t,\theta,f)$ associated with \eqref{eq:first-order}. We consider the parameterized vector field $\psi_{\theta}$ in equation \eqref{eq:first-order} as a mapping from \(\mathbb{R}^{2n+1+n_{\theta}}\) to \(\mathbb{R}^{2n}\), defined as:
\[
\psi\colon (s,t,\theta) \mapsto \psi_{\theta}(s,t).
\]
Here, we use $\psi$ to highlight that the parameter $\theta$, previously considered fixed, is now treated as a variable alongside \(s\) and \(t\). This allows us to analyze the behavior of the vector field as both the input variables and the parameter vary.

\begin{theorem}\label{thm:path-diff-customized}
Assume that \(\psi\) is path differentiable, its corresponding conservative Jacobian \(D^{\psi} \colon \mathbb{R}^{2n+1+n_{\theta}} \rightrightarrows \mathbb{R}^{2n \times (2n+1+n_{\theta})}\) exists and $(x(t),v(t))$ is the flow of the equation \eqref{eq:first-order}. Let \(J_{s}^{\psi}(t)\) and \(J_{\theta}^{\psi}(t)\) be measurable selections such that for all \(t \in [t_{0}, t_{1}]\), \(J_{s}^{\psi}(t) \in D^{\psi}_{s}(x(t),v(t),t,\theta)\) and \(J_{\theta}^{\psi}(t) \in D^{\psi}_{\theta}(x(t),v(t),t,\theta)\) (existence ensured by \cite[Lemma 2]{marx2022path}). Consider \(A(\cdot)\) as the solution to the matrix differential inclusion
\begin{equation}\label{eq:conservative-jacobian}
    \begin{aligned}
        &\dot{A}(t)=J_{s}^{\psi}(t)A(t)+J_{\theta}^{\psi}(t),\\
        &A(t_{1})=0_{2n\times n_{\theta}}\quad\text{for all } t\in[t_{0},t_{1}].
    \end{aligned}
\end{equation}
Then, the map \(\theta \mapsto A(t_{0})\) is a conservative Jacobian of \(\theta \to s(t,s_{0},\theta,f)\), where $s$ is the flow associated with the equation \eqref{eq:first-order}.
\end{theorem}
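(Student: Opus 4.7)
The plan is to reduce Theorem \ref{thm:path-diff-customized} to the general ODE result of Theorem \ref{thm:path-diff} via the standard augmentation trick: treat the parameter $\theta$ as a frozen state variable with trivial dynamics $\dot{\theta}\equiv 0$, and then apply Lemma \ref{lem:proj-conservative} to extract the block corresponding to $\theta$. Concretely, I would introduce the augmented state $\tilde{s}(t)=(s(t),\theta)\in\mathbb{R}^{2n+n_{\theta}}$ and the augmented vector field
\[
\tilde{\psi}(\tilde{s},t)=\begin{pmatrix}\psi_{\theta}(s,t)\\ 0_{n_{\theta}\times 1}\end{pmatrix},
\]
so that the flow of the augmented ODE from the initial condition $(s_{0},\theta)$ is $(s(t,s_{0},\theta,f),\theta)$. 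Sensitivity of the original flow with respect to the parameter $\theta$ thus becomes sensitivity of the augmented flow with respect to the $\theta$-block of its initial condition, which is exactly the setting governed by Theorem \ref{thm:path-diff}.

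Next, I would build a uniformly bounded, convex-valued conservative Jacobian $D^{\tilde{\psi}}$ of $\tilde{\psi}$ out of $D^{\psi}$ by assembling the selections $J^{\psi}_{s}$ and $J^{\psi}_{\theta}$ into a $(2n+n_{\theta})\times(2n+n_{\theta})$ block matrix and padding with zero rows for the $\theta$-component (whose dynamics are trivial). Verification that this is indeed a conservative Jacobian in the sense of Definition \ref{def:conservative-jacobian} reduces, along any absolutely continuous curve, to the conservativity of $D^{\psi}$ restricted to the first $2n$ coordinates, so path differentiability of $\psi$ transfers directly to $\tilde{\psi}$. Uniform boundedness on a neighborhood of the augmented trajectory on the compact interval $[t_{0},t_{1}]$ follows from the hypothesized local boundedness of $D^{\psi}$ together with compactness, placing us inside the scope of Theorem \ref{thm:path-diff} and the existence result for \eqref{eq:sensitivity-equation}.

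I would then apply Theorem \ref{thm:path-diff} to the augmented ODE on $[t_{0},t_{1}]$: the associated matrix inclusion \eqref{eq:sensitivity-equation} has a block-triangular structure because the $\theta$-rows of $D^{\tilde{\psi}}$ vanish, so the off-diagonal block $A(t)$ capturing $\partial s/\partial \theta$ satisfies exactly $\dot{A}(t)\in J_{s}^{\psi}(t)A(t)+J_{\theta}^{\psi}(t)$. Solving this block system and matching the boundary data gives the same set of admissible $A(t_{0})$ as the inclusion \eqref{eq:conservative-jacobian} formulated in the theorem (the terminal-condition $A(t_{1})=0$ being the backward / adjoint-style reformulation of the forward Duhamel integral, equivalent via variation of constants). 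Finally, Lemma \ref{lem:proj-conservative} applied to the conservative Jacobian of the augmented flow with respect to the full initial condition $(s_{0},\theta)$ produces a conservative Jacobian with respect to $\theta$ alone, which coincides with $\theta\mapsto A(t_{0})$. The main obstacle I expect is step three, namely carefully reconciling the forward propagation of \eqref{eq:sensitivity-equation} against the backward formulation \eqref{eq:conservative-jacobian}, and verifying that the two generate the same set-valued map of admissible $A(t_{0})$; the remaining work is essentially bookkeeping on block matrices and the projection lemma.
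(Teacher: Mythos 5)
Your proposal follows essentially the same route as the paper: augment the state with the frozen parameter $\theta$ (the paper also appends $t$ so that Theorem~\ref{thm:path-diff} applies to an autonomous system), invoke Theorem~\ref{thm:path-diff} for the augmented flow, read off the $\theta$-block of the block-triangular sensitivity inclusion, and extract the relevant component via Lemma~\ref{lem:proj-conservative}. The forward/backward reconciliation you flag as the main obstacle is a genuine issue, but it lies in the theorem's own statement rather than in your argument: the paper's proof actually derives the forward condition $A_{3}(t_{0})=0_{2n\times n_{\theta}}$ and identifies the conservative Jacobian with the value at $t_{1}$, which does not literally match the terminal condition $A(t_{1})=0_{2n\times n_{\theta}}$ written in \eqref{eq:conservative-jacobian}.
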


\begin{proof}
    Concatenate the vector as \(Y(t) = (x(t), v(t), t, \theta)\), giving the autonomous form of \eqref{eq:first-order} as \(\dot{Y}(t) = F(Y(t))\) where
    \begin{equation}
    \label{eq:def-F}
    F(Y(t)) = \begin{pmatrix}
        \psi_{\theta}(x(t), v(t), t)\\
        1\\
        0_{n_{\theta} \times 1}
    \end{pmatrix}.
    \end{equation}
    With \(\psi\) being path differentiable, the function \(F\) also exhibits path differentiability. A conservative Jacobian of $F$ writes
    \[
        \begin{aligned}
            D^{F}\colon\mathbb{R}^{2n}\times\mathbb{R}\times\mathbb{R}^p&\rightrightarrows\mathbb{R}^{(2n+1+n_{\theta})\times(2n+1+n_{\theta})}\\
            (\hat{x},\hat{v},t,\theta)&\rightrightarrows\begin{pmatrix}
                J_{s} & J_{t} & J_{\theta}\\
                0_{1\times 2n} & 1 & 0_{1\times n_{\theta}}\\
                0_{n_{\theta}\times 2n} & 0_{n_{\theta}\times 1} & 0_{n_{\theta}\times n_{\theta}}
            \end{pmatrix},\quad \forall (J_{s},J_{t},J_{\theta})\in D^{\psi}.
        \end{aligned}
    \]
    Invoking Theorem \ref{thm:path-diff}, we partition the matrix $A(t)$ appearing in the sensitivity equation \eqref{eq:sensitivity-equation} as
    \[
        A(t)=\begin{pmatrix}
            A_{1}(t)&A_{2}(t)&A_{3}(t)\\
            A_{4}(t)&A_{5}(t)&A_{6}(t)\\
            A_{7}(t)&A_{8}(t)&A_{9}(t)
        \end{pmatrix},
    \]
    where $A_{1}(t)\in\mathbb{R}^{2n\times 2n},A_{5}(t)\in\mathbb{R},A_{9}(t)\in\mathbb{R}^{n_{\theta}\times n_{\theta}}$. Combining the partition above with the boundary condition $A(t_{1})=I_{(2n+1+n_{\theta})\times(2n+1+n_{\theta})}$ yields
    \[
        \begin{aligned}
            &A_{4}(t)\equiv 0_{1\times 2n},&&A_{5}(t)=\exp(t-t_{1}),&&A_{6}(t)\equiv 0_{1\times n_{\theta}},\\
            &A_{7}(t)\equiv 0_{n_{\theta}\times 2n},&&A_{8}(t)\equiv 0_{n_{\theta}\times 1},&&A_{9}\equiv I_{n_{\theta}\times n_{\theta}}.
        \end{aligned}
    \]
    The matrix differential inclusion \eqref{eq:sensitivity-equation} resolves to
    \begin{align}
        &\dot{A}_{1}(t)=J_{s}(Y(t))A_{1}(t), &&A_{1}(t_0)=I_{2n\times 2n},\label{eq:conservative-flow}\\
        &\dot{A}_{2}(t)=J_{s}(Y(t))A_{2}(t)+J_{t}(Y(t))\exp(t-t_{1}),&&A_{2}(t_0)=0_{2n\times 1},\notag\\
        &\dot{A}_{3}(t)=J_{s}(Y(t))A_{3}(t)+J_{\theta}(Y(t)),&&A_{3}(t_0)=0_{2n\times n_{\theta}},\label{eq:diff-inclu}\\
        &\text{for all }(J_{s},J_{t},J_{\theta})\in D^{\psi},t\in[t_0,t_{1}].\notag
    \end{align}
    Applying Lemma \ref{lem:proj-conservative}, we know $A_{3}$ is a conservative Jacobian of $\theta\to X(t_{1},\theta,f)$. Omitting the subscription of $A_{3}$ leads to \eqref{eq:conservative-jacobian}.\qed
\end{proof}

Theorem \ref{thm:path-diff-customized} demonstrates that the flow $s$ maintains the path differentiability of the vector field $\psi_{\theta}$ and gives the form of the conservative gradient $D^{s}_{\theta}(t,s_{0},\theta,f)$. By considering the part associated with $x$ of $D^{s}_{\theta}$, say the first $n$ rows, we get $D^{X}_{\theta}(t,\theta,f)$, the conservative gradient of $X(t,\theta,f)$, where the influence of \(x_0\) and \(v_0\) omitted as they are fixed. Consequently, we primarily need to verify the conditions ensuring the path differentiability of $\psi_{\theta}$. Notably, when parameterization employs neural networks, the vector field $\psi_{\theta}$ is almost invariably path differentiable. This follows from the observation that each element of $\psi_{\theta}$ is derived through finite operations of addition, subtraction, multiplication, and division applied to $t$, $\nabla f(x)$, $v$, $\alpha$, and the neural networks $\dot{\beta}_{\theta_{1}}$, $\beta_{\theta_{1}}$, and $\gamma_{\theta_{2}}$. Assuming the path differentiability of $\nabla f$ with respect to $x$, and based on the principle that the product and composition of path differentiable functions are also path differentiable, the remaining task is to ensure the path differentiability of the parameterized $\dot{\beta}{\theta{1}}$, $\beta_{\theta_{1}}$, and $\gamma_{\theta_{2}}$. Indeed, the path differentiability of most neural networks can be substantiated using \cite[Theorem 8]{bolte2021conservative}.

We now give a definition of the $o$-minimal structures, following \cite{coste2000introduction,van1996geometric}.
\begin{definition}[$o$-minimal structure]\label{def:o-minimal}
    An $o$-minimal structure is a collection \( \{\mathcal{O}_d\}_{d\in\mathbb{N}} \), where $\mathcal{O}_d$ is a family of subsets of \( \mathbb{R}^d \) such that for each \( d \in \mathbb{N} \):
    \begin{enumerate}
        \item $\mathcal{O}_{d}$ contains $\mathbb{R}_{d}$ and is stable under the operations of complementation, finite union, and finite intersection, meaning that any set resulting from these operations on elements of \(\mathcal{O}_{p}\) will also be a member of \(\mathcal{O}_{p}\).
        \item if \( A \) belongs to \( \mathcal{O}_d \), then \( A \times \mathbb{R} \) and \( \mathbb{R} \times A \) belong to \( \mathcal{O}_{d+1} \);
        \item if \( \upPi \colon \mathbb{R}^{d+1} \to \mathbb{R}^d \) denotes the coordinate projection on the first $d$ coordinates, then for any \( A \in \mathcal{O}_{d+1} \), we have $\upPi(A)\in\mathcal{O}_d$;
        \item \( \mathcal{O}_d \) contains all sets of the form \( \{ x \in \mathbb{R}^d \colon p(x) = 0\} \), where \( p \) is a polynomial on \( \mathbb{R}^d \);
        \item the elements of \( \mathcal{O}_1 \) are exactly the finite unions of intervals (possibly infinite) and points.
    \end{enumerate}
\end{definition}
The sets \( A \) belonging to \( \mathcal{O}_d \), for some \( d \in \mathbb{N} \), are called definable in the $o$-minimal structure. A set valued mapping (or a function) is said to be definable in $\mathcal{O}$ whenever its graph is definable in $\mathcal{O}$. In algorithmic aspect, it has been proved in \cite{bolte2021conservative} that the forward and backward automatic differentiation outputs a conservative Jacobian for the corresponding definable function \cite[Theorem 8]{bolte2021conservative}.

A typical example of the $o$-minimal structures is given by semialgebraic sets. A set \( A \subseteq \mathbb{R}^d \) is called \textit{semialgebraic} if it is a finite union of sets of the form
\begin{equation}
    \{ x \in \mathbb{R}^d : p_i(x) = 0, i = 1, \ldots, k; p_i(x) < 0, i = k+1, \ldots, m\},
\end{equation}
where $p_{f}$ are real polynomial functions and \( k \geq 1 \). We remark that the property 3 of Definition \ref{def:o-minimal} for semialgebraic sets is not trivial and can be obtained from the Tarski-Seidenberg theorem. Another profound result by Wilkie \cite{wilkie1996model} shows that there exists an $o$-minimal structure that contains both the exponential function $x\mapsto e^{x}$ and all semialgebraic functions. According to the inverse function theorem of definable functions appearing in Chapter 7.3 of \cite{dries1998tame}, the logarithm function $x\mapsto \log(x)$ is also definable to Wilkie's $o$-minimal structure. The Proposition 1.6 of \cite{coste2000introduction} ensures the composition of two definable maps is definable.

Using these results, we conclude that the following functions are definable using Wilkie's $o$-minimal structure.
\begin{align*}
    t &\mapsto t,              & t &\mapsto t^2,              & t &\mapsto \log(1 + \exp(t)), \\
    t &\mapsto \max\{0, t\},   & t &\mapsto \tanh(t),         & t &\mapsto \frac{1}{1 + \exp(-t)}.
\end{align*}
Additionally, all polynomials and functions whose domains can be segmented into finitely many intervals, coinciding with the previously mentioned functions, are definable. These results confirm that almost all activation functions utilized in deep learning are definable. Consequently, \cite[Theorem 8]{bolte2021conservative} validates the existence of conservative fields for the parameterized $\psi_{\theta}$ within neural networks.

\subsection{The conservative gradient of the stopping time $T(\theta,f)$}

We present a characterization of the conservative gradient of the stopping time, which is grounded in the integration by parts formula for absolutely continuous functions.
\begin{lemma}[Integration by parts]\label{lem:int-parts}
    Suppose $ a,b\colon [t_{0},t_{1}]\to\mathbb{R}^{d} $ are absolutely continuous functions. We have
    \[
    \int_{t_{0}}^{t_{1}} a(t)^{\top} \dot{b}(t)\,\mathrm{d}t=a(t_{1})^{\top} b(t_{1})-a(0)^{\top} b(0)-\int_{t_{0}}^{t_{1}} \dot{a}(t)^{\top} b(t)\,\mathrm{d}t.
    \]
\end{lemma}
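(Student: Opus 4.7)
The plan is to reduce the vector-valued statement to the classical scalar integration-by-parts formula for absolutely continuous functions, applied coordinate-wise, and then invoke the Lebesgue Fundamental Theorem of Calculus on the product $a(t)^{\top} b(t) = \sum_{i=1}^{d} a_{i}(t) b_{i}(t)$.

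First, I would note that each coordinate function $a_{i}, b_{i} \colon [t_{0}, t_{1}] \to \mathbb{R}$ is absolutely continuous, hence bounded on the compact interval $[t_{0}, t_{1}]$, differentiable almost everywhere, and equal to the integral of its almost-everywhere derivative. Next, I would verify that each product $a_{i} b_{i}$ is again absolutely continuous on $[t_{0}, t_{1}]$: given $\varepsilon > 0$ and a uniform bound $M$ on $|a_{i}|$ and $|b_{i}|$, the elementary estimate
\[
|a_{i}(s) b_{i}(s) - a_{i}(r) b_{i}(r)| \le M \bigl(|a_{i}(s) - a_{i}(r)| + |b_{i}(s) - b_{i}(r)|\bigr)
\]
lets me invoke the absolute continuity of $a_{i}$ and $b_{i}$ separately with tolerance $\varepsilon / (2M)$ on a common finite collection of disjoint subintervals and obtain the claim.

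Second, the standard almost-everywhere product rule for functions that are differentiable almost everywhere yields $(a_{i} b_{i})'(t) = \dot{a}_{i}(t) b_{i}(t) + a_{i}(t) \dot{b}_{i}(t)$ for almost every $t \in [t_{0}, t_{1}]$. Applying the Lebesgue Fundamental Theorem of Calculus to the absolutely continuous function $a_{i} b_{i}$ then gives
\[
a_{i}(t_{1}) b_{i}(t_{1}) - a_{i}(t_{0}) b_{i}(t_{0}) = \int_{t_{0}}^{t_{1}} \bigl(\dot{a}_{i}(t) b_{i}(t) + a_{i}(t) \dot{b}_{i}(t)\bigr) \, \mathrm{d}t .
\]
Summing over $i = 1, \ldots, d$ and rearranging produces the stated identity (with $a(t_{0})^{\top} b(t_{0})$ in place of the $a(0)^{\top} b(0)$ in the statement, which I read as a minor typo for the lower endpoint).

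There is no substantive obstacle here; the only delicate step is verifying that the product of two absolutely continuous scalar functions on a compact interval is itself absolutely continuous, which relies on boundedness of both factors and splitting the tolerance as above, but is otherwise routine. Everything else is a textbook consequence of Lebesgue's theory of the integral.
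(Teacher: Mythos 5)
Your proof is correct and is essentially the argument behind the reference the paper cites (Royden, Chapter 6, Theorem 10): reduce to coordinates, show the product of two absolutely continuous functions is absolutely continuous via boundedness, apply the a.e.\ product rule and the Lebesgue Fundamental Theorem of Calculus, and sum. You are also right that $a(0)^{\top}b(0)$ in the statement is a typo for $a(t_{0})^{\top}b(t_{0})$.
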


The proof of Lemma \ref{lem:int-parts} is provided in Theorem 10 of Chapter 6 in \cite{royden1968real}. Utilizing this lemma in conjunction with Theorem \ref{thm:path-diff-customized}, we derive the conservative gradient of \(\|\nabla f(X(T, \theta, f))\|^2\), which serves as a fundamental component in the characterization of the conservative gradient of the stopping time.

\begin{proposition}\label{prop:conservative-grad-time}
    Adopting the notations from Theorem \ref{thm:path-diff-customized}, let $D^{s}_{s_{0}}$ and $D^{s}_{\theta}$ denote the conservative Jacobians of $s(t,\theta,s_{0},f)$ with respect to $s_{0}$ and $\theta$, respectively. Assume $J_{s_{0}}^{s}\colon [t_{0},T]\to\mathbb{R}^{2n\times 2n}$ and $J^{s}_{\theta}\colon [t_{0},T]\to\mathbb{R}^{2n\times n_{\theta}}$ are measurable selections satisfying $J_{s_{0}}^{s}(t) \in D^{s}_{s_{0}}(t,\theta,s_{0},f)$ and $J_{\theta}^{s}(t) \in D^{s}_{\theta}(t,\theta,s_{0},f)$ for all $t \in [t_{0},T]$. According to \cite[Chapter 0, Theorem 2]{filippov1988differential}, there exists a unique absolutely continuous solution $ \omega\colon [t_0,T]\to\mathbb{R}^{2n} $ to the differential equation
    \begin{equation}
        \label{eq:sensitivity-equation-time}
        \begin{aligned}
            &\dot{\omega}(t) = -J_{s_{0}}^{s}(t)\omega(t),\\
            &\omega(T) = (\nabla f(X)^\top\nabla^2 f(X),0_{1\times n})^\top. 
        \end{aligned}
    \end{equation}
    Then, an element of $D^{\|\nabla f(X(T,\cdot,f))\|^2}$ is given by $\int_{t=t_{0}}^{T} \omega(t)^\top J_{\theta}^{\psi}(t)\,\mathrm{d}t$.
\end{proposition}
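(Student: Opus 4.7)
The plan is to combine the chain rule for conservative Jacobians with an adjoint identity obtained via integration by parts, mirroring the classical sensitivity argument but adapted to the path-differentiable setting. First, I would invoke Theorem \ref{thm:path-diff-customized} together with the projection lemma (Lemma \ref{lem:proj-conservative}) to obtain a measurable selection from the conservative Jacobian of $\theta \mapsto X(T,\theta,f)$: with $A(\cdot)$ denoting the solution of the matrix differential inclusion \eqref{eq:conservative-jacobian} under the initialization $A(t_0)=0$, the first $n$ rows of $A(T)$ form such a selection, and the driving measurable selections $J_{s}^{\psi}(t)$ and $J_{\theta}^{\psi}(t)$ are fixed throughout the argument.

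Because $f$ is twice differentiable, the outer map $g(x)=\|\nabla f(x)\|^{2}$ is classically $C^{1}$ with gradient $2\nabla^{2} f(x)\nabla f(x)$, which also serves as its conservative gradient. The composition rule for conservative Jacobians, applied to $g\circ X(T,\cdot,f)$, then produces an element of $D^{\|\nabla f(X(T,\cdot,f))\|^{2}}(\theta)$ of the form $\nabla g(X(T,\theta,f))^{\top}$ multiplied by any selection from the conservative Jacobian of $X$. Together with the terminal condition prescribed in \eqref{eq:sensitivity-equation-time}, this element can be written compactly as $\omega(T)^{\top}A(T)$. I would then apply Lemma \ref{lem:int-parts} to the absolutely continuous map $t\mapsto \omega(t)^{\top}A(t)$: absolute continuity of $\omega$ is delivered by the Filippov existence result cited in the proposition, while that of $A$ follows from \eqref{eq:conservative-jacobian}. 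The boundary term at $t_{0}$ vanishes because $A(t_{0})=0$. Substituting the adjoint equation \eqref{eq:sensitivity-equation-time} for $\dot{\omega}$ and \eqref{eq:conservative-jacobian} for $\dot{A}$ in the resulting integrand, the two contributions involving the state Jacobian cancel almost everywhere on $[t_{0},T]$, leaving only $\omega(t)^{\top}J_{\theta}^{\psi}(t)$ and yielding the claimed identity $\omega(T)^{\top}A(T)=\int_{t_{0}}^{T}\omega(t)^{\top}J_{\theta}^{\psi}(t)\,\mathrm{d}t$.

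The main obstacle is the measure-theoretic bookkeeping required to make the cancellation rigorous: the measurable selections appearing in \eqref{eq:conservative-jacobian} and \eqref{eq:sensitivity-equation-time} must be fixed compatibly so that the state-Jacobian contributions in the integrand match pointwise almost everywhere on $[t_{0},T]$, and one must verify that the resulting conservative-Jacobian element is independent of the admissible choices. A secondary issue is invoking the composition rule for conservative Jacobians with a $C^{1}$ outer function and a path-differentiable inner map, ensuring that the product $\nabla g^{\top}J_{\theta}^{X}(T)$ genuinely lies in a conservative Jacobian of the composition rather than in a merely set-valued approximation.
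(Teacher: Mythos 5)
Your proposal is correct and follows essentially the same route as the paper: integration by parts (Lemma \ref{lem:int-parts}) applied to $t\mapsto\omega(t)^{\top}A(t)$ with the vanishing initial condition $A(t_{0})=0$, the adjoint equation \eqref{eq:sensitivity-equation-time} chosen to cancel the state-Jacobian terms, and the chain/product rule for conservative gradients to identify $\omega(T)^{\top}A(T)$ as an element of $D^{\|\nabla f(X(T,\cdot,f))\|^{2}}$. The only cosmetic discrepancy is the factor of $2$ in $\nabla\|\nabla f(\cdot)\|^{2}$, which the paper's own terminal condition also omits, so your argument matches the intended proof.
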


\begin{proof}
    Consider the solution $A(t)$ of the sensitivity equation \eqref{eq:conservative-jacobian} for computing the conservative Jacobian of $X(t,\theta,f)$ in Theorem \ref{thm:path-diff-customized}. For any absolutely continuous function $\omega\colon [t_0,T]\to\mathbb{R}^{2n}$, applying Lemma \ref{lem:int-parts} and the initial condition $A(t_{0})=0_{2n\times n_{\theta}}$ gives
    \[
        \begin{aligned}
            \int_{t_{0}}^{T}\omega(t)^\top \dot{A}(t)\,\mathrm{d}t
            &=\omega(T)^\top {A}(T)-\int_{t_{0}}^{T}A(t)^\top\dot{\omega}(t)\,\mathrm{d}t\\
            &=\int_{t_{0}}^{T}\omega(t)^\top J_{s}^{\psi}(t)A(t)+\omega(t)^\top J_{\theta}^{\psi}(t)\,\mathrm{d}t.
        \end{aligned}
    \]
    Setting $\omega$ as the solution of \eqref{eq:sensitivity-equation-time} results in:
    \[
        \nabla f(X)^\top \nabla^2 f(X)A(T) = \int_{t_{0}}^{T}\omega(t)^\top J_{\theta}^{\psi}(t)\,\mathrm{d}t.
    \]
    Given that the product of conservative gradients maintains the conservative gradient property \cite[Lemma 5]{bolte2021conservative}, the expression on the left hand side of the equation qualifies as an element of the conservative gradient in question. This completes the proof.\qed
\end{proof}

The next theorem derives the conservative gradient of $T(\theta, f)$ with respect to $\theta$, applying the formal non-smooth implicit differentiation via the conservative Jacobian, as detailed in \cite[Theorem 2]{bolte2021implicit}.

\begin{proposition}\label{prop:conservative-implicit}
    Let the squared gradient norm $\|\nabla f(X(\cdot, \cdot, f))\|^2\colon\mathbb{R}\times\mathbb{R}^{n_{\theta}}\to\mathbb{R}$ be path differentiable on an open set $\mathcal{U}\times\mathcal{V}\subset\mathbb{R}\times\mathbb{R}^{n_{\theta}} $ and $T(\theta,f)\colon\mathcal{V}\to\mathcal{U} $ a locally Lipschitz function such that, for each $\theta\in\mathcal{V}$
    \begin{equation}
        \|\nabla f(X(T(\theta,f),\theta,f))\|^2=\varepsilon^2.
    \end{equation}
    Assume that for each $\theta\in\mathcal{V}$ and for each $B\in D^{\|\nabla f(X(\cdot,\cdot,f))\|^2}_{t}(T(\theta,f),\theta)$, the matrix $B$ is invertible. Then, $T:\mathcal{V}\to\mathcal{U}$ is path differentiable, with its conservative gradient given by:
    \begin{equation}\label{eq:conservative-T}
    D^{T}(\theta):=\{-B^{-1}A : \forall A\in D^{\|\nabla f(X(\cdot,\cdot,f))\|^2}_{\theta}, B\in D^{\|\nabla f(X(\cdot,\cdot,f))\|^2}_{t}\},
    \end{equation}
    where $D^{\|\nabla f(X(\cdot,\cdot,f))\|^2}_{\theta}$ is the conservative gradient of $\|\nabla f(X(T(\theta,f),\cdot,f))\|^2$, as specified by Proposition \ref{prop:conservative-grad-time}.
\end{proposition}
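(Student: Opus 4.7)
The plan is to recognize this statement as a direct application of the nonsmooth implicit function theorem via conservative Jacobians (\cite[Theorem 2]{bolte2021implicit}) to the defining equation
\begin{equation*}
G(t,\theta) := \|\nabla f(X(t,\theta,f))\|^2 - \varepsilon^2 = 0,
\end{equation*}
which holds identically on $\mathcal{V}$ when $t = T(\theta,f)$. The stopping time $T$ is precisely the implicit function produced by solving $G = 0$ for $t$ in terms of $\theta$, so the cited theorem should deliver both the path differentiability of $T$ and the closed form for $D^{T}(\theta)$.

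First, I would verify that each hypothesis of the cited implicit function theorem is met. The path differentiability of $G$ on $\mathcal{U}\times\mathcal{V}$ is inherited from the path differentiability of $\|\nabla f(X(\cdot,\cdot,f))\|^2$ assumed in the statement, together with the elementary fact that subtracting the constant $\varepsilon^2$ preserves the conservative gradient. The local Lipschitz regularity of $T\colon\mathcal{V}\to\mathcal{U}$ is given. The invertibility hypothesis on $B\in D^{\|\nabla f(X(\cdot,\cdot,f))\|^2}_{t}(T(\theta,f),\theta)$ is exactly the nondegeneracy condition required by \cite[Theorem 2]{bolte2021implicit}; here $B$ is a scalar so invertibility reduces to $B\neq 0$.

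Second, I would invoke the theorem, which yields that $T$ is path differentiable and provides the conservative gradient as the Schur-type combination $\{-B^{-1}A\}$ ranging over the respective partial conservative gradients of $G$. Since $\partial_{t}G = \partial_{t}\|\nabla f(X(\cdot,\cdot,f))\|^2$ and $\partial_{\theta}G = \partial_{\theta}\|\nabla f(X(\cdot,\cdot,f))\|^2$, this yields exactly the formula \eqref{eq:conservative-T}. The $\theta$-part can then be made explicit by substituting the integral representation from Proposition \ref{prop:conservative-grad-time}, confirming that the set on the right-hand side is well defined and computable from the adjoint equation \eqref{eq:sensitivity-equation-time}.

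The main subtlety is ensuring the cited implicit function theorem is truly applicable at every $\theta\in\mathcal{V}$ rather than only generically. The critical point is that the invertibility assumption is imposed uniformly over all selections $B$ from the conservative gradient with respect to $t$, not only for a particular Clarke subgradient; this uniform nondegeneracy is precisely what makes the set-valued inversion $B^{-1}$ unambiguous and the set $\{-B^{-1}A\}$ a bona fide conservative Jacobian. I do not expect significant new calculation beyond checking this match between our hypotheses and those of \cite[Theorem 2]{bolte2021implicit}; the remainder is bookkeeping to align notation between the cited abstract statement and the concrete map $G$ above.
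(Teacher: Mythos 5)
Your proposal is correct and follows essentially the same route as the paper, which likewise frames the result as an application of the nonsmooth implicit function theorem via conservative Jacobians from \cite[Theorem 2]{bolte2021implicit}. The only cosmetic difference is that the paper writes out the one-step verification explicitly (differentiating $\|\nabla f(X(T(\theta(\iota),f),\theta(\iota),f))\|^2=\varepsilon^2$ along an absolutely continuous loop $\theta(\iota)$ and applying the chain rule to obtain $A\dot{\theta}(\iota)+B\frac{\mathrm{d}T}{\mathrm{d}\theta}\dot{\theta}(\iota)=0$), whereas you invoke the cited theorem as a black box after checking its hypotheses; both arguments are the same in substance.
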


\begin{proof}
    Let $\theta(\iota)\colon[0,1]\to\mathcal{U}$ be an absolutely continuous loop. Since $T$ is locally Lipschitz, the composition function $T\circ \theta$ must also be absolutely continuous. Consequently, $\|f(X(\theta(\iota),T(\theta(\iota),f),f))\|^2$ is differentiable with respect to $\iota$ almost everywhere. By differentiating it and applying the chain rule, for almost every $\iota\in[0,1]$ and for any $A\in D^{\|\nabla f(X(\cdot,\cdot,f))\|^2}_{\theta}$, $B\in D^{\|\nabla f(X(\cdot,\cdot,f))\|^2}_{t}$, we have:
    \begin{equation}
    A\dot{\theta}(\iota) + B\frac{\mathrm{d}T}{\mathrm{d}\theta}\dot{\theta}(\iota) = 0.
    \end{equation}
    Thus, according to the definition of a conservative gradient \ref{def:conservative-jacobian}, we conclude that $-B^{-1}A$ is an element of the conservative gradient of $T(\theta,f)$.\qed
\end{proof}

While the conservative gradient of $T$ can indeed be computed using automatic differentiation, the process is not inherently intuitive or straightforward. To facilitate a clearer grasp of the underlying theories, we give a direct derivation of the gradient of $T$ in smooth cases, starting with a corollary of Proposition \ref{prop:conservative-grad-time}. This corollary calculates the gradient of $\|\nabla f(X(t_{1},\theta,f))\|^2$ in smooth cases.

\begin{corollary}\label{prop:adjoint}
    Given $\psi_{\theta}$ is continuously differentiable with respect to $\theta$, $r\in \mathcal{C}^{1}([t_0,t_1],\mathbb{R}^{2n})$ is the solution of \eqref{eq:backward}, we have
    \begin{equation}\label{eq:sensitivity}
        \nabla f(X)^\top\nabla^2 f(X){\frac{\partial X}{\partial \theta}}=\int_{t=t_0}^{t_1}r^\top\frac{\partial\psi_{\theta}}{\partial\theta} \,\mathrm{d}t.
    \end{equation}
\end{corollary}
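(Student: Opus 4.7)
The plan is to deduce this smooth corollary as the classical adjoint sensitivity identity, mirroring the structure of the proof of Proposition \ref{prop:conservative-grad-time} but with genuine derivatives in place of selections from a conservative Jacobian. Since $\psi_{\theta}$ is continuously differentiable with respect to all arguments, standard smooth ODE theory guarantees that the sensitivity matrix $A(t) = \partial s(t, \theta, s_0, f)/\partial \theta$ exists, is continuously differentiable in $t$, and satisfies the linear variational equation
\begin{equation*}
\dot{A}(t) = \frac{\partial \psi_{\theta}}{\partial s}(s(t), t)\, A(t) + \frac{\partial \psi_{\theta}}{\partial \theta}(s(t), t), \qquad A(t_0) = 0_{2n \times n_{\theta}},
\end{equation*}
which is the smooth analogue of \eqref{eq:conservative-jacobian}.

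Next I would pair $A$ with the adjoint variable $r$ defined by \eqref{eq:backward}. Multiplying the variational equation by $r^\top$ on the left and integrating from $t_0$ to $t_1$ yields
\begin{equation*}
\int_{t_0}^{t_1} r(t)^\top \dot{A}(t)\,\mathrm{d}t = \int_{t_0}^{t_1} r(t)^\top \frac{\partial \psi_{\theta}}{\partial s}(s(t),t)\, A(t)\,\mathrm{d}t + \int_{t_0}^{t_1} r(t)^\top \frac{\partial \psi_{\theta}}{\partial \theta}(s(t),t)\,\mathrm{d}t.
\end{equation*}
I would then apply integration by parts (Lemma \ref{lem:int-parts}) to the left-hand side and substitute the defining ODE of $r$, namely $\dot{r}(t)^\top = -r(t)^\top \partial \psi_{\theta}/\partial s$, which cancels the first integral on the right. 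Using the initial condition $A(t_0) = 0$ collapses the boundary terms to $r(t_1)^\top A(t_1)$, giving
\begin{equation*}
r(t_1)^\top A(t_1) = \int_{t_0}^{t_1} r(t)^\top \frac{\partial \psi_{\theta}}{\partial \theta}(s(t), t)\,\mathrm{d}t.
\end{equation*}

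The last step is to interpret $r(t_1)^\top A(t_1)$ in the coordinates $s = (x, v)$. Since the terminal condition in \eqref{eq:backward} is $r(t_1) = (\nabla f(X)^\top \nabla^2 f(X),\, \mathbf{0}^\top)^\top$, only the top $n$ rows of $A(t_1)$ contribute, and those rows are exactly $\partial X/\partial \theta$ by the definition of the flow. Substituting this yields the claimed identity \eqref{eq:sensitivity}. I do not anticipate a genuine obstacle here: the calculation is a clean application of integration by parts once the variational and adjoint equations are written down correctly. The only care needed is bookkeeping for the block structure of $r$ and $A$ (position versus auxiliary variable), so that the product $r(t_1)^\top A(t_1)$ is correctly identified with $\nabla f(X)^\top \nabla^2 f(X)\,\partial X/\partial \theta$ rather than the full $\partial s/\partial \theta$.
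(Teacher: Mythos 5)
Your proposal is correct and follows essentially the same route as the paper: both arguments pair the (smooth) variational equation for $\partial s/\partial\theta$ with the adjoint variable $r$, integrate by parts via Lemma \ref{lem:int-parts}, use $\partial s(t_0)/\partial\theta = 0$ to kill the boundary term at $t_0$ and the adjoint ODE to cancel the $\partial\psi_\theta/\partial s$ terms, and then read off the top block of $r(t_1)^\top\,\partial s(t_1)/\partial\theta$ from the terminal condition in \eqref{eq:backward}. The only cosmetic difference is that you invoke the variational equation directly from smooth ODE theory, whereas the paper derives it by differentiating the integral identity $\int r^\top\,\mathrm{d}s = \int r^\top\psi_\theta\,\mathrm{d}t$ with respect to $\theta$ and interchanging the order of differentiation.
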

\begin{proof}
    Consider the ODE system
    \[
        \frac{\mathrm{d}s}{\mathrm{d}t}(t) = \psi_{\theta}(s(t),t), \quad t\in [t_0,t_1],\quad s(t_0) = (x_0^\top,v_0^\top)^\top.
    \]
    For any continuously differentiable function $r(t)\in \mathcal{C}^{1}([t_0,t_1],\mathbb{R}^n)$, we have
    \[
    \int_{t=t_0}^{t_1}r(t)^\top \,\mathrm{d}s(t)=\int_{t=t_0}^{t_1}r(t)^\top \psi_{\theta}(s(t),t)\,\mathrm{d}t.
    \]
    Applying ${\mathrm{d}}/{\mathrm{d}\theta}$ to above equation and using $\frac{\mathrm{d}}{\mathrm{d}\theta}\frac{\mathrm{d}}{\mathrm{d}t}s(t)=\frac{\mathrm{d}}{\mathrm{d}t}\frac{\mathrm{d}}{\mathrm{d}\theta}s(t)$ yields
    \[
    \int_{t=t_0}^{t_1}r^\top \,\mathrm{d}\left(\frac{\mathrm{d}s}{\mathrm{d}\theta}\right)=\int_{t=t_0}^{t_1}r^\top \left(\frac{\partial\psi_{\theta}}{\partial s}\frac{\mathrm{d}s}{\mathrm{d}\theta}+\frac{\partial\psi_{\theta}}{\partial\theta}\right) \,\mathrm{d}t.
    \]
    Here we omit the variables for simplicity. Integrating by parts and using the fact that $\mathrm{d} s(t_0)/\mathrm{d} \theta\equiv 0$ gives
    \begin{equation}\label{eq:mediate-form}
    r(t_1)^\top \frac{\mathrm{d}s}{\mathrm{d}\theta}(t_1)=\int_{t=t_0}^{t_1}\left(\dot{r}^\top + r^\top \frac{\partial\psi_{\theta}}{\partial s}\right)\frac{\mathrm{d}s}{\mathrm{d}\theta}+r^\top\frac{\partial\psi_{\theta}}{\partial\theta} \,\mathrm{d}t.
    \end{equation}
    It should be noted that $\mathrm{d}s/\mathrm{d}\theta$ includes the term $\mathrm{d}x/\mathrm{d}\theta$. To evaluate the value of $\nabla f(X)^\top\nabla^2 f(X){\partial X}/{\partial \theta}$, we solve the backward ODE \eqref{eq:backward}. Plugging the solution into equation \eqref{eq:mediate-form} gives the formula \eqref{eq:sensitivity}.\qed
\end{proof}
\begin{remark}\label{rmk:direct-general-adjoint}
    Using the same technique in Corollary \ref{prop:adjoint}, we can give a direct derivation of Theorem \ref{thm:path-diff-customized} in smooth case through choosing $r(t_{1})=I_{:,i}$ for $i=1,2,\ldots,n$.
\end{remark}

Similar to the process for deriving conservative gradients, once the gradient of $\|\nabla f(X(t,\theta,f))\|^2$ is established, the gradient of $T$ with respect to $\theta$ can be computed using implicit function theorem. This calculation serves as a corollary to Proposition \ref{prop:conservative-implicit}.

\begin{corollary}\label{coro:nabla_T}
    Let \( T = T(\theta, f) \) and \( X = X(T(\theta, f), \theta, f) \). Assuming that \(\psi_{\theta}\) is continuously differentiable with respect to \(\theta\) and that \(\nabla f(X)^\top \nabla^2 f(X) \frac{\partial X}{\partial \theta}\) is non-zero, the gradient of \( T \) is then given by the formula \eqref{eq:nablaT}.
\end{corollary}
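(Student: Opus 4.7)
The plan is to apply the classical implicit function theorem to the defining relation of $T(\theta,f)$, namely
\[
\|\nabla f(X(T(\theta,f),\theta,f))\|^2 = \varepsilon^2,
\]
which is exactly the smooth specialization of Proposition \ref{prop:conservative-implicit}. Because $\psi_{\theta}$ is assumed continuously differentiable in $\theta$, the flow $X(t,\theta,f)$ of the first-order system \eqref{eq:first-order} is $C^{1}$ jointly in $(t,\theta)$ by standard ODE regularity, so the left-hand side of the defining relation is smooth jointly in $(T,\theta)$ in a neighborhood of the evaluation point.

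Next I would differentiate both sides with respect to $\theta$, applying the chain rule to the composite $\theta \mapsto X(T(\theta,f),\theta,f)$, which yields
\[
2\,\nabla f(X)^{\top}\nabla^{2} f(X)\Bigl(\tfrac{\partial X}{\partial t}\big|_{t=T}\tfrac{\mathrm{d}T}{\mathrm{d}\theta} + \tfrac{\partial X}{\partial \theta}\big|_{t=T}\Bigr)=0.
\]
The partial time derivative $\partial X/\partial t|_{t=T}$ equals $\dot{x}(T)$, which the first block of \eqref{eq:first-order} combined with \eqref{eq:psi} identifies with $v(T)-\beta(T)\nabla f(X)$; this supplies the scalar coefficient appearing in the denominator of \eqref{eq:nablaT}.

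It remains to handle the term $\nabla f(X)^{\top}\nabla^{2} f(X)\,\partial X/\partial \theta$ without having to maintain a costly forward sensitivity matrix. For this I would invoke Corollary \ref{prop:adjoint}, which, taking $t_{1}=T$, gives the adjoint representation
\[
\nabla f(X)^{\top}\nabla^{2} f(X)\,\tfrac{\partial X}{\partial \theta} = \int_{t_{0}}^{T} r^{\top}\tfrac{\partial \psi_{\theta}}{\partial \theta}\,\mathrm{d}t,
\]
with $r$ solving the backward ODE \eqref{eq:backward}. Substituting this into the differentiated identity and solving for $\mathrm{d}T/\mathrm{d}\theta$, using the hypothesis that the scalar factor is nonzero (which validates the invertibility condition of Proposition \ref{prop:conservative-implicit} in the smooth regime), produces exactly \eqref{eq:nablaT}. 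No single step is a serious obstacle; the only bookkeeping care needed is in cleanly separating the total-time and partial-parameter contributions when applying the chain rule to the composite $X(T(\theta,f),\theta,f)$, and in recognizing that the adjoint representation of the sensitivity replaces an otherwise expensive forward Jacobian computation with a single backward solve.
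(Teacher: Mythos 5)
Your proposal is correct and follows essentially the same route as the paper: apply the implicit function theorem to $\|\nabla f(X(T(\theta,f),\theta,f))\|^2=\varepsilon^2$, use the chain rule to isolate $\mathrm{d}T/\mathrm{d}\theta$, identify $\partial X/\partial t|_{t=T}$ with $\dot{x}(T)$ via the first block of \eqref{eq:first-order}, and substitute the adjoint representation from Corollary \ref{prop:adjoint} for the parameter sensitivity. The only deviation is that you read $\dot{x}(T)=v(T)-\beta(T)\nabla f(X)$ literally off \eqref{eq:psi}, whereas \eqref{eq:nablaT} and the paper's proof use $v(T)-X-\beta(T)\nabla f(X)$; this is an internal inconsistency of the paper (the initial condition $v(t_0)=x(t_0)+\beta(t_0)\nabla f(x(t_0))$ suggests the latter is intended), not a gap in your argument.
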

    \begin{proof}
        From the definition of the stopping time and the continuity of $\|\nabla f(\cdot)\|$, we have $\|\nabla f(X(T(\theta, f), \theta, f))\|^2- \varepsilon^2\equiv 0$. The implicit function theorem implies
    \begin{equation}\label{eq:IFT}
        \begin{aligned}
            \left.\frac{\partial \|\nabla f(X(t,\theta,f))\|^2}{\partial t}\right|_{t=T}\frac{\mathrm{d} T}{\mathrm{d}\theta}+\nabla f(X)^\top\nabla^2 f(X)\frac{\partial X}{\partial \theta}=0.
        \end{aligned}
    \end{equation}
    The second term can be calculated using Proposition \ref{prop:adjoint}. The chain rule gives
    \[
        \left.\frac{\partial \|\nabla f(X(t,\theta,f))\|^2}{\partial t}\right|_{t=T}=\nabla f(X)^\top\nabla^2 f(X)\left.\frac{\partial X}{\partial t}\right|_{t=T}.
    \]
    Using equation \eqref{eq:first-order}, we have
    \[
    \left.\frac{\partial X}{\partial t}\right|_{t=T}=\dot{x}(T)=v(T)-X-\beta(T)\nabla f(X),
    \]
    where $x(t)=X(t,\theta,f)$. Combining these results we get the formula \eqref{eq:nablaT}.\qed
\end{proof}

\subsection{The conservative gradients of the constraint functions $P(\theta,f)$ and $Q(\theta,f)$}
\label{sec:grad-PQ}

In this subsection, we present the conservative gradients of $P(\theta,f)$ and $Q(\theta,f)$. These functions can be expressed as definite integrals of the functions $p(\theta,t,f)$ and $q(\theta,t)$ over the interval $[t_0,T(\theta,f)]$. Assuming that both $p$ and $q$ are path differentiable, our results rely on a characterization of the conservative gradient of the integral of a path differentiable function. For the sake of completeness, we present this characterization here, while the proof can be found in \cite[Theorem 2]{marx2022path}. The notations used throughout this section are consistent with those introduced in Theorem \ref{thm:path-diff}.

\begin{theorem}
    \label{thm:forward-integral}
    Let $\delta\colon \mathbb{R}^{d}\to\mathbb{R}$ be a locally Lipschitz continuous and path differentiable function. Let $D^{\delta}\colon\mathbb{R}^{d}\to\mathbb{R}^{d}$ be a conservative gradient for $\delta$ with convex values. For $t_{1}>0$, set
    \[
        \upDelta(y_{0})=\int_{t_{0}}^{t_{1}}\delta(\phi(y_{0},t))\,\mathrm{d}t,
    \]
    where $\phi$ is the flow of the equation \eqref{eq:general-ODE}. Then, given any solution of \eqref{eq:sensitivity-equation}, denoted by $A(t)$, the following set valued field is a conservative gradient for $\upDelta$,
    \begin{equation*}
        D^{\upDelta}\colon y_{0}\rightrightarrows\left\{\int_{t_{0}}^{t_{1}}A(t)^\top w(t)\,\mathrm{d}t,w\in\mathcal{W}(y_{0})\right\}
    \end{equation*}
    where $\mathcal{W}(y_{0})$ is the set of measurable selections $w(t)\in D^{\delta}(\phi(y_{0},t))$.
\end{theorem}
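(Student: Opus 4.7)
The plan is to verify that $D^{\upDelta}$ satisfies Definition \ref{def:conservative-jacobian} directly. Concretely, for an arbitrary absolutely continuous curve $r\colon[0,1]\to\mathbb{R}^{d}$ and an arbitrary measurable selection $s\mapsto G(s)\in D^{\upDelta}(r(s))$, I must establish the chain-rule identity
\[
\frac{\mathrm{d}}{\mathrm{d}s}\upDelta(r(s)) = \langle G(s), \dot{r}(s)\rangle\quad\text{for almost every } s\in[0,1].
\]
The strategy is twofold: first, show that the outer integral defining $\upDelta$ commutes with differentiation along $r$; second, apply the chain rule for conservative gradients pointwise in $t$, exploiting that $y_{0}\mapsto\phi(y_{0},t)$ already admits the sensitivity-equation solution $U(y_{0},t)$ as a conservative Jacobian via Theorem \ref{thm:path-diff}.

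First I would install the regularity prerequisites. Since $r$ is absolutely continuous on a compact interval, its image is bounded, and the assumed uniform bound on $D^{F}$ together with a Grönwall argument for \eqref{eq:sensitivity-equation} gives a uniform bound on $\|A(t;s)\|_{\mathrm{op}}$ on $[t_{0},t_{1}]\times[0,1]$. Local Lipschitz continuity of $\delta$, combined with the Lipschitz dependence of $\phi(\cdot,t)$ on its initial datum on bounded sets (uniformly in $t\in[t_{0},t_{1}]$), produces an $L^{1}$ envelope for the difference quotients of $s\mapsto\delta(\phi(r(s),t))$, so dominated convergence justifies
\[
\frac{\mathrm{d}}{\mathrm{d}s}\upDelta(r(s)) = \int_{t_{0}}^{t_{1}}\frac{\mathrm{d}}{\mathrm{d}s}\delta(\phi(r(s),t))\,\mathrm{d}t\quad\text{for a.e.\ }s.
\]

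Next I would apply the composition rule for conservative gradients inside the $t$-integral. By Theorem \ref{thm:path-diff} the map $y_{0}\mapsto\phi(y_{0},t)$ is path differentiable with conservative Jacobian $U(\cdot,t)$, and $D^{\delta}$ is conservative for $\delta$. A standard product-of-conservative-fields argument (as in \cite[Lemma 5]{bolte2021conservative}) then yields, for a measurable selection $w(s,t)\in D^{\delta}(\phi(r(s),t))$ and any measurable selection $A(t;s)\in U(r(s),t)$,
\[
\frac{\mathrm{d}}{\mathrm{d}s}\delta(\phi(r(s),t)) = w(s,t)^{\top}A(t;s)\dot{r}(s).
\]
Integrating in $t$ and noting that, by construction, $A(\cdot;s)$ is itself a solution of the sensitivity equation along $\phi(r(s),\cdot)$ and $w(s,\cdot)\in\mathcal{W}(r(s))$, the quantity $\int_{t_{0}}^{t_{1}}A(t;s)^{\top}w(s,t)\,\mathrm{d}t$ is exactly an element of $D^{\upDelta}(r(s))$. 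Since any element of $D^{\upDelta}(r(s))$ arises this way, this yields the sought identity.

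The principal obstacle is measurability and null-set management. The pointwise chain-rule identity inside the integral initially holds only outside a set of $s$-measure zero whose dependence on $t$ must be handled: here I would invoke Fubini together with a measurable selection theorem of Aumann--Kuratowski--Ryll-Nardzewski type to extract jointly measurable selections $(s,t)\mapsto w(s,t)$ and $(s,t)\mapsto A(t;s)$ and then find a single $s$-null set outside of which the identity holds for almost every $t$. A secondary point, also required by Definition \ref{def:conservative-jacobian}, is to verify that $D^{\upDelta}$ is nonempty, locally bounded and graph closed: nonemptiness follows from the measurable selection result applied to $D^{\delta}\circ\phi$; local boundedness follows from the Grönwall bound on $A$ combined with local boundedness of $D^{\delta}$; and graph closedness follows from the continuity of $\phi$ in $y_{0}$, graph closedness of $D^{\delta}$ and $D^{F}$, and a diagonal extraction argument on the defining integrals.
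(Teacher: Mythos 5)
Your outline is workable, but be aware that the paper does not prove this statement itself: it is quoted for completeness from \cite[Theorem 2]{marx2022path}, and the proof there (consistent with how the paper reuses the idea in the proof of Proposition \ref{prop:conservative-P}) goes by \emph{state augmentation} rather than by your direct verification of Definition \ref{def:conservative-jacobian}. One appends a scalar coordinate $z$ with $\dot z=\delta(y)$, $z(t_0)=0$, so that $\upDelta(y_0)$ is the last component of the augmented flow at time $t_1$; the augmented vector field has a block conservative Jacobian with $D^{F}$ in the upper-left corner and $(D^{\delta})^\top$ in the lower-left, and solving the corresponding sensitivity equation \eqref{eq:sensitivity-equation} blockwise gives the off-diagonal block $\int_{t_0}^{t_1}A(t)^\top w(t)\,\mathrm{d}t$, which is exactly the claimed field once Theorem \ref{thm:path-diff} is applied to the augmented system. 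That route buys all of the measurability, null-set and graph-closedness bookkeeping for free, because it is already packaged inside Theorem \ref{thm:path-diff}; your route redoes that work by hand.

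Two of the steps you defer are exactly where the effort lives, so you should not treat them as routine. First, the interchange of $\frac{\mathrm{d}}{\mathrm{d}s}$ and $\int_{t_0}^{t_1}\cdot\,\mathrm{d}t$ is not a plain dominated-convergence statement: the difference quotients of $s\mapsto\delta(\phi(r(s),t))$ are controlled by $L\,|r(s+h)-r(s)|/h$, which is only an $L^1$ envelope in $s$, not a uniform one. The clean argument is to show $s\mapsto\upDelta(r(s))$ is absolutely continuous, write the increment as $\int_{t_0}^{t_1}\int_{s_1}^{s_2}\partial_\sigma\delta(\phi(r(\sigma),t))\,\mathrm{d}\sigma\,\mathrm{d}t$, apply Fubini, and invoke Lebesgue differentiation. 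Second, your graph-closedness argument must use the standing convexity of the values of $D^{\delta}$ (and of $D^{F}$): passing to weak-$*$ limits of the selections $w^{n}$ requires convexity for the limit to remain a selection of $D^{\delta}(\phi(y_0,\cdot))$, and closedness of the solution set of the sensitivity inclusion likewise relies on convex values. You never indicate where convexity enters, yet it is the one structural hypothesis in the statement beyond path differentiability; a complete write-up has to make that dependence explicit.
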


Based on Theorem \ref{thm:forward-integral}, we provide the conservative gradients of $P$ and $Q$, given that the conservative gradients $D^{p}_{\theta}$ and $D^{q}_{\theta}$ can be readily evaluated.

\begin{proposition}
\label{prop:conservative-Q}
    Assume that the function $q(\theta,t)$ is path differentiable and $J^{q}_{\theta}(t)$ is a measurable selection of the conservative gradient $D^{q}_{\theta}(\theta,t)$, satisfying $J^{q}_{\theta}(t) \in D^{q}_{\theta}(\theta,t)$ for $t\in [t_{0}, T(\theta,f)]$. Then, the following holds:
    \begin{equation}
        \label{eq:conservative-Q}
        \int_{t_{0}}^{T(\theta,f)} J_{\theta}^{q}(t) \, \mathrm{d}t + q(\theta,T(\theta,f)) J^{T}_{\theta} \in D^Q,
    \end{equation}
    where $D^Q$ is the conservative gradient of $Q(\theta,f)$.
\end{proposition}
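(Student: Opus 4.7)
The plan is to view $Q(\theta,f)=\int_{t_0}^{T(\theta,f)}q(\theta,t)\,\mathrm{d}t$ as the composition of two path differentiable operations and combine their conservative gradients through the chain rule for conservative Jacobians. Introduce the auxiliary function $G\colon\mathbb{R}^{n_{\theta}}\times\mathbb{R}_{+}\to\mathbb{R}$ defined by
\[
G(\theta,\tau)=\int_{t_{0}}^{\tau}q(\theta,t)\,\mathrm{d}t,
\]
so that $Q(\theta,f)=G(\theta,T(\theta,f))$. The argument then reduces to computing a conservative gradient of each coordinate of $G$ and propagating through the outer dependence $T(\theta,f)$.

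For the dependence on the upper limit $\tau$, continuity of $q(\theta,\cdot)$ together with the fundamental theorem of calculus yields the classical derivative $\partial G/\partial\tau=q(\theta,\tau)$, which is in particular a valid conservative gradient. For the dependence on $\theta$, the idea is to apply Theorem \ref{thm:forward-integral} to the trivial autonomous system $\dot{y}(t)=0$ with initial condition $y(t_{0})=\theta$; its flow is $\phi(\theta,t)\equiv\theta$ and the associated sensitivity matrix solving \eqref{eq:sensitivity-equation} is the identity for every $t$. Invoking the theorem then identifies $\int_{t_{0}}^{\tau}J^{q}_{\theta}(t)\,\mathrm{d}t$ as an element of the conservative gradient of $\theta\mapsto G(\theta,\tau)$, where $J^{q}_{\theta}(\cdot)$ is any measurable selection of $D^{q}_{\theta}(\theta,\cdot)$.

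Finally, Proposition \ref{prop:conservative-implicit} supplies path differentiability of $T(\theta,f)$ with conservative gradient $J^{T}_{\theta}$, so applying the calculus for conservative Jacobians (Lemma 5 of \cite{bolte2021conservative}) to the composition $Q=G\circ(\mathrm{id},T)$ produces
\[
q(\theta,T(\theta,f))\,J^{T}_{\theta}+\int_{t_{0}}^{T(\theta,f)}J^{q}_{\theta}(t)\,\mathrm{d}t\in D^{Q},
\]
which is exactly \eqref{eq:conservative-Q}.

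The delicate step is the conservativity of $\int_{t_{0}}^{\tau}J^{q}_{\theta}(t)\,\mathrm{d}t$ as a gradient of $\theta\mapsto G(\theta,\tau)$: one must verify that $q(\theta,t)$, defined in \eqref{eq:def-q} via $[\cdot]_{+}$ applied to sums and products of path differentiable pieces, admits a locally bounded, graph-closed, convex-valued conservative gradient in $\theta$ on which a measurable selection $J^{q}_{\theta}(\cdot)$ exists, so that the hypotheses of Theorem \ref{thm:forward-integral} are met along the trivial flow. This follows from the stability of path differentiability under finite sums, products, compositions and the $[\cdot]_{+}$ operation, together with standard measurable selection theorems; once granted, the remaining chain rule and fundamental theorem of calculus steps are routine.
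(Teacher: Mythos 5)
Your proposal is correct and follows essentially the same route as the paper: both invoke Theorem \ref{thm:forward-integral} with the observation that $q$ does not depend on the state $(x(t),v(t))$, so the sensitivity on the $\theta$-block is trivial (the paper does this by substituting the augmented vector field $F$ from \eqref{eq:def-F}, you by the constant flow $\dot y=0$, which is the same reduction), and both account for the $\theta$-dependent upper limit $T(\theta,f)$ through the chain rule, producing the boundary term $q(\theta,T)J^{T}_{\theta}$. Your write-up is merely more explicit about the decomposition $Q=G(\theta,T(\theta,f))$ and the path differentiability of $q$ itself, which the paper leaves implicit.
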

\begin{proof}
    The conservative gradient of $Q$ is determined by substituting $F$ from equation \eqref{eq:def-F} into Theorem \ref{thm:forward-integral}. Notably, since $q$ does not involve $x(t)$ or $v(t)$ as variables, this substitution leads directly to equation \eqref{eq:conservative-Q}.
    \qed
\end{proof}

Next, we explore the conservative gradient of $P$. The steps here are similar to those in Proposition \ref{prop:conservative-Q}. However, a key difference is that $p$ involves the estimator $\upLambda(x,f)$, with $X(t,\theta,f)$ as an input. This estimator is significant both theoretically and practically. We will discuss this term and its corresponding conservative gradient in the next subsection and directly apply $D^{q}_{x}$ in the following proposition.

\begin{proposition}
\label{prop:conservative-P}
    Assume that the functions $p(x,\bar{x},\theta,t,f)$ is path differentiable, $J_{x}^{p}(t)$ is a measurable selection of $D^{p}_{x}(X(t,\theta,f),\bar{x}(t),\theta,t,f)$, $J_{\theta}^{X}(t)$ is a measurable selection of $D^{X}_{\theta}(t,\theta,f)$, and $J_{\theta}^{p}(t)$ is a measurable selection of the conservative gradient $D^{p}_{\theta}(X(t,\theta,f),\bar{x}(t),\theta,t,f)$, $\omega$ is the unique solution to the following equation
    \begin{equation}\label{eq:sens-eq-constraints}
        -J^{p}_{s}(t)-J^{\psi}_{s}(t)\omega(t)=\dot{\omega}(t),\qquad \omega(T(\theta,f))=0_{2n\times 1}.
    \end{equation}
    We have $\int_{t_{0}}^{T(\theta,f)}J_{\theta}^{s}(t)^\top J_{s}^{p}(t)\,\mathrm{d}t=-\int_{t_{0}}^{T(\theta,f)}J^{\psi}_{\theta}(t)^\top w(t)\,\mathrm{d}t$ and
    \begin{equation}
        \label{eq:conservative-P}
        \int_{t_{0}}^{T}J^{\psi}_{\theta}(t)^\top w(t)+ J_{\theta}^{p}(t)\,\mathrm{d}t+p(X(T,\theta,f),\bar{x}(T),\theta,T,f)J^{T}_{\theta}\in D^P.
    \end{equation}
    For simplicity, we abbreviate \( T(\theta, f) \) as \( T \).
\end{proposition}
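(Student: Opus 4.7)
The plan is to combine Theorem \ref{thm:forward-integral} (used in exactly the same way as in the proof of Proposition \ref{prop:conservative-Q}) with the integration-by-parts/adjoint identity underlying both Proposition \ref{prop:conservative-grad-time} and Corollary \ref{prop:adjoint}. The new ingredient, absent from Proposition \ref{prop:conservative-grad-time}, is that the integration limit $T(\theta,f)$ now depends on $\theta$, which produces an additional boundary term involving $J^T_\theta$ from Proposition \ref{prop:conservative-implicit}.

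Concretely, I would first split the $\theta$-dependence of $P(\theta,f)$ into three channels: through the flow $X(t,\theta,f)$ entering the first argument of $p$; through the explicit appearance of $\theta$ in $p$; and through the upper limit $T(\theta,f)$. For the third channel the Leibniz rule, combined with the product-of-conservative-gradients rule of \cite[Lemma 5]{bolte2021conservative}, delivers the boundary contribution $p(X(T,\theta,f),\bar{x}(T),\theta,T,f)\,J^T_\theta$. For the first two channels, enlarging $\psi$ to the autonomous form \eqref{eq:def-F} and invoking Theorem \ref{thm:forward-integral} exactly as in Proposition \ref{prop:conservative-Q} yields the sum $\int_{t_0}^{T} J^s_\theta(t)^\top J^p_s(t)\,\mathrm{d}t + \int_{t_0}^{T} J^p_\theta(t)\,\mathrm{d}t$, where $J^s_\theta$ is a measurable selection of the conservative sensitivity Jacobian satisfying \eqref{eq:conservative-jacobian}.

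Next, to establish the first identity in the statement and to replace the sensitivity integral by an integral against $\omega$, I would replay the integration-by-parts argument of Proposition \ref{prop:conservative-grad-time}: pair $\omega(t)$ with the sensitivity equation $\dot{A}(t)=J^\psi_s(t)A(t)+J^\psi_\theta(t)$ satisfied by $J^s_\theta$, apply Lemma \ref{lem:int-parts} on $[t_0,T]$, and exploit the vanishing boundary conditions $J^s_\theta(t_0)=0$ (from \eqref{eq:conservative-jacobian}) and $\omega(T)=0$ (from \eqref{eq:sens-eq-constraints}) to kill both endpoint contributions. The adjoint equation \eqref{eq:sens-eq-constraints} is designed precisely so that the $J^\psi_s$-terms on the two sides of the resulting identity cancel, leaving $\int_{t_0}^{T} J^s_\theta(t)^\top J^p_s(t)\,\mathrm{d}t=-\int_{t_0}^{T} J^\psi_\theta(t)^\top\omega(t)\,\mathrm{d}t$. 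Substituting this identity into the expression obtained from Theorem \ref{thm:forward-integral} and adding the boundary term from channel (iii) produces \eqref{eq:conservative-P}.

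The main obstacle I expect is the nonsmooth bookkeeping rather than the calculation itself. Each step that is routine in the smooth prototype of Corollary \ref{prop:adjoint}---Leibniz's rule for a variable upper limit, the chain rule through $X(t,\theta,f)$, and the interchange of differentiation with the integral---must instead be justified using the composition, product, and integral rules for conservative gradients from \cite{bolte2021conservative} and \cite{marx2022path}, and the various measurable selections $J^\psi_s$, $J^\psi_\theta$, $J^p_s$, $J^p_\theta$, $J^s_\theta$ must be chosen in a compatible way so that \cite[Chapter 0, Theorem 2]{filippov1988differential} yields the unique absolutely continuous $\omega$ of \eqref{eq:sens-eq-constraints} on $[t_0,T(\theta,f)]$. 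Once these measurability and convexity considerations are in place, the integration-by-parts manipulation proceeds verbatim as in the smooth case.
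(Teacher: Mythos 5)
Your proposal matches the paper's own proof essentially step for step: both obtain the integral-plus-boundary expression from Theorem \ref{thm:forward-integral} applied to the augmented autonomous system \eqref{eq:def-F} (with the $p(\cdots)J^{T}_{\theta}$ term coming from the $\theta$-dependent upper limit and Proposition \ref{prop:conservative-grad-time}/\ref{prop:conservative-implicit}), rewrite $J_{\theta}^{X}(t)^\top J_{x}^{p}(t)$ as $J_{\theta}^{s}(t)^\top J_{s}^{p}(t)$, and then eliminate the large sensitivity Jacobian by pairing with the adjoint $\omega$ of \eqref{eq:sens-eq-constraints} via Lemma \ref{lem:int-parts}, using $J^{s}_{\theta}(t_{0})=0$ and $\omega(T)=0$ to kill the endpoint terms. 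Your plan is correct and takes essentially the same route as the paper, with the only (cosmetic) difference being that you make the Leibniz-rule origin of the boundary term more explicit.
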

\begin{proof}
    The existence of the conservative gradients of $P,Q$ are ensured by Theorem \ref{thm:path-diff-customized} through augmenting the function $F$ with two additional dimensions $(p,q)$. Since the flow inherits the path differentiability of the corresponding vector field, we must have $P,Q$ are path differentiable given $p,q$ are path differentiable. Substituting the $F$ defined at the equation \eqref{eq:def-F} in Theorem \ref{thm:forward-integral}, the conservative gradient of $P$ writes
    \begin{equation}
        \label{eq:customized-integral-gradient}
        \int_{t_{0}}^{T}J_{\theta}^{X}(t)^\top J_{x}^{p}(t)+J_{\theta}^{p}(t)\,\mathrm{d}t+p(X(T,\theta,f),\bar{x}(T),\theta,T,f)J^{T}\in D^{P}(\theta).
    \end{equation}
    Proposition \ref{prop:conservative-grad-time} gives the method for obtaining an element $J^{T}$ from $D^{T}(\theta)$. However, a direct evaluation of $J_{\theta}^{X}(t)$ requires to solve a differential equation with the variable of the size $(2n+1+n_{\theta})\times (2n+1+n_{\theta})$. We derive a computational tractable way that only need solve a differential equation with size $(2n+1+n_{\theta})\times 1$ to evaluate the integral in \eqref{eq:customized-integral-gradient}. 
    Noticing that $J^{p}_{s}(t)=(J^{p}_{x}(t)^\top,0)^\top$, we know
    \[
        J_{\theta}^{X}(t)^\top J_{x}^{p}(t)=J_{\theta}^{s}(t)^\top J_{s}^{p}(t)
    \]
    holds for any $t$. We use the fact that $J^{s}_{\theta}(t)$ satisfies the equation \eqref{eq:conservative-jacobian}, say
    \[
        \frac{\mathrm{d}}{\mathrm{d}t}J^{s}_{\theta}(t)=J^{\psi}_{s}(t)^\top J^{s}_{\theta}(t)+J^{\psi}_{\theta}(t),\quad J^{s}_{\theta}(t_{1})=0_{2n\times p}.
    \]
    Setting $t_{1}=T(\theta,f)$, for any absolutely continuous function $\omega(t)\colon [t_{0},t_{1}]\to\mathbb{R}^{n}$, we have
    \begin{equation}\label{eq:u-int-by-parts}
        \begin{aligned}
        &\int_{t_{0}}^{t_{1}}J_{\theta}^{s}(t)^\top J_{s}^{p}(t)\,\mathrm{d}t\\
        =&\int_{t_{0}}^{t_{1}}J_{\theta}^{s}(t)^\top J_{s}^{p}(t)+\left(J^{\psi}_{s}(t)^\top J^{s}_{\theta}(t)+J^{\psi}_{\theta}(t) -J^{\psi}_{s}(t)^\top J^{s}_{\theta}(t) -J^{\psi}_{\theta}(t)\right)^\top w(t)\,\mathrm{d}t\\
        =&\int_{t_{0}}^{t_{1}}J_{\theta}^{s}(t)^\top J_{s}^{p}(t)+\left(J^{\psi}_{s}(t)^\top J^{s}_{\theta}(t)+J^{\psi}_{\theta}(t)\right)^\top w(t)\,\mathrm{d}t -\int_{t_{0}}^{t_{1}}\left(\frac{\mathrm{d}}{\mathrm{d}t}J^{s}_{\theta}(t)\right)^\top w(t)\,\mathrm{d}t.\\
        \end{aligned}
    \end{equation}
    Using Lemma \ref{lem:int-parts}, we get
    \begin{equation}
        \begin{aligned}
            &\int_{t_{0}}^{t_{1}}J_{\theta}^{s}(t)^\top J_{s}^{p}(t)\,\mathrm{d}t\\
            =&\int_{t_{0}}^{t_{1}}J_{\theta}^{s}(t)^\top J_{s}^{p}(t)+\left(J^{\psi}_{s}(t)^\top J^{s}_{\theta}(t)+J^{\psi}_{\theta}(t)\right)^\top w(t)\,\mathrm{d}t\\
            &\qquad\qquad\qquad - J^{s}_{\theta}(t_{1})w(t_{1})+J^{s}_{\theta}(t_{0})w(t_{0})+\int_{t_{0}}^{t_{1}}J^{s}_{\theta}(t)^\top \dot{w}(t)\,\mathrm{d}t\\
            =&\int_{t_{0}}^{t_{1}}J^{\psi}_{\theta}(t)^\top w(t)\,\mathrm{d}t+\int_{t_{0}}^{t_{1}}J_{\theta}^{s}(t)^\top \left(J_{s}^{p}(t)+J^{\psi}_{s}(t)w(t)+\dot{w}(t)\right)\,\mathrm{d}t\\
            &\qquad\qquad\qquad - J^{s}_{\theta}(t_{1})w(t_{1})+J^{s}_{\theta}(t_{0})w(t_{0}).
        \end{aligned}
    \end{equation}
    Given $J^{s}_{\theta}(t_{0})=0_{2n\times 1}$, setting $\omega$ to the solution of \eqref{eq:sens-eq-constraints} gives
    \begin{equation*}
        \int_{t_{0}}^{T(\theta,f)}J_{\theta}^{s}(t)^\top J_{s}^{p}(t)\,\mathrm{d}t=-\int_{t_{0}}^{T(\theta,f)}J^{\psi}_{\theta}(t)^\top w(t)\,\mathrm{d}t.
    \end{equation*}
    Combining this conclusion with \eqref{eq:customized-integral-gradient} completes our proof.\qed
\end{proof}

Using Remark \ref{rmk:direct-general-adjoint} and the same argument as Proposition \ref{prop:conservative-P}, we get a direct derivation of it in the smooth case. This is because the key steps \eqref{eq:customized-integral-gradient} and \eqref{eq:u-int-by-parts} can be similarly derived by replacing the conservative gradient in the argument with the corresponding partial derivatives. The only challenge lies in deriving a smooth version of Theorem \ref{thm:path-diff-customized}, which can be addressed using the approach of Proposition \ref{prop:adjoint} and the technique mentioned in Remark \ref{rmk:direct-general-adjoint}.

\begin{corollary}
    Suppose the functions $p,q$, and $T$ are differentiable with respect to $\theta$. Then, the gradients of $P$ and $Q$ are given by the equation \eqref{eq:nabla_PQ}.
\end{corollary}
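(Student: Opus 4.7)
The plan is to derive the two formulas as the smooth analogues of Propositions \ref{prop:conservative-P} and \ref{prop:conservative-Q}, replacing conservative gradients by ordinary partial derivatives throughout and invoking classical Leibniz-type differentiation under the integral sign together with the adjoint method already used in those propositions.

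For $Q$, the argument is essentially immediate: since $q(\theta,t)$ depends only on $\theta$ and $t$ (not on the trajectory), applying Leibniz's rule to $Q(\theta,f)=\int_{t_0}^{T(\theta,f)}q(\theta,t)\,\mathrm{d}t$ yields a boundary term from the variable upper limit, namely $q(\theta,T)\cdot(\mathrm{d}T/\mathrm{d}\theta)$, plus an interior term $\int_{t_0}^{T}(\mathrm{d}q(\theta,t)/\mathrm{d}\theta)\,\mathrm{d}t$. This gives the second line of \eqref{eq:nabla_PQ}. The differentiability of $T$ with respect to $\theta$, obtained in Corollary \ref{coro:nabla_T}, justifies the Leibniz manipulation.

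For $P$, I first apply Leibniz's rule to $P(\theta,f)=\int_{t_0}^{T(\theta,f)}p(X(t,\theta,f),\bar{x}(t),\theta,t,f)\,\mathrm{d}t$, producing the boundary contribution $p(X(T),\bar{x}(T),\theta,T,f)\cdot(\mathrm{d}T/\mathrm{d}\theta)$ and an interior contribution split into a direct partial $\partial p/\partial\theta$ and an implicit term of the form $\int_{t_0}^{T}(\partial p/\partial x)(\partial X/\partial\theta)\,\mathrm{d}t$. The key step, mirroring the argument in Proposition \ref{prop:conservative-P}, is to rewrite the implicit term via the adjoint variable $w(t)$. Using the sensitivity equation for $\partial s/\partial\theta$ (the smooth version of \eqref{eq:conservative-jacobian} obtained through Remark \ref{rmk:direct-general-adjoint}), and integrating by parts against $w$ satisfying \eqref{eq:sens-eq-constraints}, the $\partial x/\partial\theta$ factor is eliminated and one obtains $\int_{t_0}^{T}(\partial\psi_\theta/\partial\theta)w(t)\,\mathrm{d}t$, which is exactly the remaining piece of the first line of \eqref{eq:nabla_PQ}.

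The main obstacle, as in the proof of Proposition \ref{prop:conservative-P}, is the integration-by-parts step that converts the implicit sensitivity $\partial X/\partial\theta$ into the adjoint representation; in the smooth setting this amounts to verifying that $s(t,\theta)$ is $\mathcal{C}^1$ in $\theta$ under the stated smoothness of $\psi_\theta$ (a standard consequence of classical ODE theory) and that the resulting adjoint ODE \eqref{eq:sens-eq-constraints} admits a $\mathcal{C}^1$ solution on $[t_0,T]$, which follows from the smoothness of $\psi_\theta$ and $p$. Everything else is a direct transcription of the arguments \eqref{eq:customized-integral-gradient}--\eqref{eq:u-int-by-parts} with conservative gradients replaced by derivatives, so no genuinely new ingredient is required beyond what was established for the conservative case.
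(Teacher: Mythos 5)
Your proposal is correct and follows essentially the same route as the paper: Leibniz differentiation of the variable-upper-limit integrals (using Corollary \ref{coro:nabla_T} for the boundary term), followed by the adjoint integration-by-parts argument of Proposition \ref{prop:conservative-P} with conservative gradients replaced by ordinary partial derivatives, the smooth sensitivity equation being supplied by Remark \ref{rmk:direct-general-adjoint}. No substantive difference from the paper's proof.
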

\begin{proof}
    Applying the chain rule and the gradient of stopping time derived in Corollary \ref{coro:nabla_T} gives
    \begin{align*}
        \frac{\mathrm{d}P(\theta,f)}{\mathrm{d} \theta} &=\int_{t_{0}}^{T(\theta,f)}\frac{\partial p(x(t),\bar{x}(t),\theta,t,f)}{\partial x}\frac{\partial X(t,\theta,f)}{\partial \theta}+ \frac{\partial p(x(t),\bar{x}(t),\theta,t,f)}{\partial \theta}\,\mathrm{d}t\\
        &\qquad\qquad\qquad\qquad\qquad +p(x(T),\bar{x}(T),\theta,T,f)\frac{\mathrm{d}T(\theta,f)}{\mathrm{d} \theta},\\
        \frac{\mathrm{d}Q(\theta,f)}{\mathrm{d} \theta} &=\int_{t_{0}}^{T(\theta,f)}\frac{\mathrm{d} q(\theta,t)}{\mathrm{d} \theta}\,\mathrm{d}t+q(\theta,T)\frac{\mathrm{d}T(\theta,f)}{\mathrm{d} \theta}.
    \end{align*}
    Then, the equation \eqref{eq:nabla_PQ} can be proved verbosely like Lemma \ref{prop:conservative-P}. One simply needs to replace the conservative gradient $J^{\psi}_{s}$ with the partial gradient $\partial \psi/\partial s$ and so forth.
    \qed
\end{proof}

\subsection{Choices of the estimators $\upLambda(x,f)$ for the local Lipschitz constant of $\nabla f$}
\label{sec:lip-estimator}

This subsection discusses the computation of $D^{p}_{x}(X(t,\theta,f),\bar{x}(t),\theta,t,f)$. The following proposition characterizes the structure of the conservative gradient $D^{p}_{x}$.

\begin{proposition}
    \label{prop:conservative-p}
    Assume $\upLambda \colon \mathbb{R}^{n} \to \mathbb{R}$ is path differentiable. Then, an element of the conservative gradient $D^{p}_{x}(X(t,\theta,f),\bar{x}(t),\theta,t,f)$ is given by
    \begin{equation}
        \label{eq:conservative-p}
        \frac{\beta_{\theta_{1}}(t)}{2\sqrt{\int_{0}^{1}\upLambda((1-\tau)X(t,\theta,f)+\tau\bar{x}(t),f)\,\mathrm{d}\tau}} \int_{0}^{1}J^{\upLambda}_{x}(\tau)\,\mathrm{d}\tau,
    \end{equation}
    where $J^{\upLambda}_{x}(\tau)$ is a measurable selection satisfying
    \[
        J^{\upLambda}_{x}(\tau) \in D^{\upLambda}_{x}((1-\tau)X(t,\theta,f)+\tau \bar{x}(t),f),\qquad \forall \tau\in[0,1].
    \]
\end{proposition}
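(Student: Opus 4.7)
The plan is to view $p(\,\cdot\,,\bar{x},\theta,t,f)$ as a composition of three layers and propagate the conservative gradient outside-in using the calculus of conservative Jacobians (Lemmas 4 and 5 of Bolte--Pauwels, which were tacitly used in the preceding propositions). Concretely, write
\begin{equation*}
    p(x,\bar{x},\theta,t,f) = [\,\beta_{\theta_{1}}(t)\sqrt{g(x)}\; - \;c(\theta,t)\,]_{+},\qquad g(x):=\int_{0}^{1}\upLambda((1-\tau)x+\tau\bar{x},f)\,\mathrm{d}\tau,
\end{equation*}
where $c(\theta,t)=\sqrt{\gamma(t)-\dot{\beta}(t)}+\sqrt{\gamma(t)-\dot{\beta}(t)-\alpha\beta(t)/t}$ is independent of $x$. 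The only $x$-dependence lives in the parametric integral $g$, so the task reduces to computing a conservative gradient of $g$ and then chaining through the smooth square-root and the convex positive-part.

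For the inner parametric integral, I would apply Theorem~\ref{thm:forward-integral} with the trivial dynamics $\dot{y}(\tau)\equiv 0$ on $[0,1]$ starting at $y(0)=x$, but with integrand $\tau\mapsto\upLambda((1-\tau)x+\tau\bar{x},f)$. Since $\upLambda(\cdot,f)$ is path differentiable by hypothesis and linear reparameterization preserves path differentiability (a direct consequence of Definition \ref{def:conservative-jacobian}), each slice is path differentiable in $x$ with conservative gradient obtained from the chain rule acting on $D^{\upLambda}_{x}((1-\tau)x+\tau\bar{x},f)$. Theorem~\ref{thm:forward-integral} then guarantees that a conservative gradient of $g$ at $x$ is given by the Lebesgue integral of a measurable selection of these slicewise conservative gradients; existence of such a selection follows from \cite[Lemma 2]{marx2022path} used earlier.

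Next I would stitch the outer layers. The map $y\mapsto\sqrt{y}$ is $\mathcal{C}^{1}$ on the open half-line where $g(x)>0$, contributing a multiplicative factor $1/(2\sqrt{g(x)})$; multiplication by $\beta_{\theta_{1}}(t)$ contributes the prefactor $\beta_{\theta_{1}}(t)$. The positive-part $[\cdot]_{+}$ is convex and admits the Clarke subdifferential as a conservative gradient, so any selection from $\{1\}$ on the positivity region, $\{0\}$ on the negativity region, and $[0,1]$ at the kink is admissible; choosing the selection equal to $1$ yields a valid element of the overall conservative gradient and gives exactly the expression \eqref{eq:conservative-p}. The product and composition rules for conservative Jacobians justify this assembly step and, in particular, the fact that the outer chain-rule multiplier that the square-root contributes is canceled neatly against the $1/2$ inside.

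The main obstacle I anticipate is the parametric-integral step: one must verify the local boundedness and graph-closedness of the integrand's conservative-gradient map as a set-valued map of $x$, together with measurability of the selection $\tau\mapsto J^{\upLambda}_{x}(\tau)$, in order to legitimately exchange the conservative differential and the Lebesgue integral. All other manipulations (chain rule through $\sqrt{\cdot}$ and $[\cdot]_{+}$, product rule with the $x$-independent factor $\beta_{\theta_{1}}(t)$) are routine once that exchange is justified.
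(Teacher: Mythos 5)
Your proposal is correct and takes essentially the same route as the paper, whose entire proof is the one-line remark that \eqref{eq:conservative-p} follows from the definition of conservative gradients and Theorem~\ref{thm:forward-integral}; you simply make explicit the layer-by-layer decomposition (parametric integral, square root, positive part) that the paper leaves implicit. One shared gloss: both you and the paper omit the factor $(1-\tau)$ that the chain rule through the linear map $x \mapsto (1-\tau)x + \tau\bar{x}(t)$ would contribute inside the integral of $J^{\upLambda}_{x}(\tau)$.
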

\begin{proof}
    The equation \eqref{eq:conservative-p} follows from the definition of conservative gradients and Theorem \ref{thm:forward-integral}.
    \qed
\end{proof}

As indicated by the proposition, the main challenge lies in computing the conservative gradient of the local Lipschitz constant estimator $\upLambda(x,f)$. We provide three choices of the estimator and derive the corresponding conservative gradients, with each choice determined by the underlying assumptions on the function \( f \). We first consider the most general case: functions with local Lipschitz continuous gradient.

\begin{proposition}
    \label{prop:conservative-largest-eigenvalue}
    Suppose $f$ is twice differentiable and $\nabla^2 f(x)$ is path differentiable. Denote $\upLambda(x,f)=\lambda_{\max}(\nabla^2 f(x))$ as the largest eigenvalue of $f$ on point $x$. Then, $\upLambda(x,f)$ is path differentiable and
    \begin{equation}
        \label{eq:direction-grad-lambda}
        \left.\frac{\mathrm{d}}{\mathrm{d}\eta_{2}}\left(\left.\frac{\mathrm{d}}{\mathrm{d}\eta_{1}}\nabla f(x+\eta_{1}z+\eta_{2}z)\right|_{\eta_{1}=0}\right)\right|_{\eta_{2}=0}\in D^{\upLambda}(x),
    \end{equation}
    where $\|z\|=1$ and $z^\top \nabla^2 f(x)z=\lambda_{\max}(\nabla^2 f(x))$.
\end{proposition}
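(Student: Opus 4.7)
The plan is to derive the stated inclusion by combining the path differentiability of $\lambda_{\max}$ on symmetric matrices with the chain rule for conservative gradients applied to the composition $\upLambda(x,f) = \lambda_{\max}(\nabla^2 f(x))$. The argument breaks into three steps: establishing that $\lambda_{\max}$ has a conservative gradient with an explicit description, composing with the path differentiable map $x \mapsto \nabla^2 f(x)$, and identifying the resulting component formula with the iterated directional derivative appearing in the statement.

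For the first step, I would observe that $\lambda_{\max}(A) = \max_{\|v\|=1} v^\top A v$ is a pointwise maximum of affine functions of $A$ over a semi-algebraic compact set, hence convex, locally Lipschitz, and definable in the semi-algebraic $o$-minimal structure. By the Bolte--Pauwels result recalled in Section~\ref{sec:conservative-grad}, $\lambda_{\max}$ is then path differentiable, and its Clarke subdifferential is a conservative gradient. A Danskin-type computation identifies this subdifferential as $\operatorname{co}\{vv^\top : \|v\|=1,\ Av = \lambda_{\max}(A) v\}$. In particular, taking $A = \nabla^2 f(x)$ and $z$ a unit top eigenvector, the rank-one matrix $zz^\top$ is an element of the conservative gradient of $\lambda_{\max}$ at $\nabla^2 f(x)$.

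For the second step, since $x \mapsto \nabla^2 f(x)$ is assumed path differentiable and its (classical) Jacobian is the third derivative tensor with $k$-th slice $\partial_k \nabla^2 f(x)$, Lemma~5 of \cite{bolte2021conservative} on products of conservative gradients yields a conservative gradient element of $\upLambda(\cdot,f)$ at $x$ whose $k$-th component is the trace pairing $\sum_{i,j} z_i z_j\, \partial_k\partial_i\partial_j f(x)$. For the third step, a direct computation of the iterated derivative in the statement gives $\tfrac{\mathrm{d}}{\mathrm{d}\eta_1}\nabla f(x+\eta_1 z+\eta_2 z)\big|_{\eta_1=0} = \nabla^2 f(x+\eta_2 z)\,z$, and differentiating again at $\eta_2 = 0$ produces a vector whose $k$-th entry is exactly $\sum_{i,j} z_i z_j\, \partial_k\partial_i\partial_j f(x)$, matching the formula obtained by the chain rule.

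The main obstacle I anticipate is the treatment of multiple eigenvalues: when $\lambda_{\max}(\nabla^2 f(x))$ is not simple, the composition $\lambda_{\max} \circ \nabla^2 f$ is generally not classically differentiable, and different choices of $z$ produce different candidate derivatives. The conservative-gradient formalism is precisely designed to absorb this non-uniqueness by stating only a set inclusion rather than an equality, so the argument goes through for every admissible $z$. A minor technical point is that Lemma~5 of \cite{bolte2021conservative} requires both factors to be path differentiable; this is why path differentiability of $\nabla^2 f$ is assumed, and it is the role of the definability of $\lambda_{\max}$ to supply the same property for the outer map.
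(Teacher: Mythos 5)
Your proposal is correct and follows essentially the same route as the paper: both establish the Clarke subdifferential of the convex function $\lambda_{\max}$ as $\operatorname{co}\{zz^\top\}$ over maximizing unit vectors, identify it with a conservative gradient, compose with the path differentiable map $x\mapsto\nabla^2 f(x)$ via the chain rule of \cite[Lemma 5]{bolte2021conservative}, and recognize the iterated directional derivative as the resulting trace pairing $zz^\top\circ J^{\nabla^2 f}$. The only cosmetic difference is that you additionally invoke definability to justify path differentiability of $\lambda_{\max}$, whereas the paper obtains it directly from convexity.
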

\begin{proof}
     A well known formula for the largest eigenvalue is
    \begin{equation}
        \lambda_{\max}(A)=\sup_{\|z\|=1}z^{\top}Az,\text{ for all }A\in\mathbb{S}^{n}.
    \end{equation}
    Using the fact that the point-wise maximum of a family of convex functions is convex, we know $\lambda_{\max}$ is convex. Furthermore, the subgradient of the supremum function at any point is the convex hull of the gradients of those functions in the family that attain the supremum at that point. This property gives the following characterization of the subgradient of $\lambda_{\max}$:
    \begin{equation}
        \label{eq:conservative-eigv}
        \partial^C \lambda_{\max}(A)=D^{\lambda_{\max}}(A)=\operatorname{co}\left(\bigcup_{z^\top Az=\lambda_{\max}(A)}\{zz^\top\}\right).
    \end{equation}
    This helpful formula also appears in Corollary 2.4 of \cite{stewart1990matrix}. Here $\partial^{C}$ denotes the Clarke subdifferential as defined in Definition \ref{def:clarke-subdiff}. The first equality arises from the property of conservative gradients, as outlined in \cite{bolte2021conservative}, where the conservative gradient \( D \) coincides with \( \partial^{C} \) for convex functions. Combining the equation \eqref{eq:conservative-eigv} and the chain rule for conservative gradients \cite[Lemma 5]{bolte2021conservative}, we get the following characterization of the conservative gradient:
    \begin{equation}\label{eq:conservative-lambda}
        \begin{aligned}
            &D^{\upLambda}(x,f):=\left\{J^{\lambda_{\max}}\circ J^{\nabla^2 f}\mid J^{\lambda_{\max}}\in D^{\lambda_{\max}}(\nabla^2 f(x)),\;J^{\nabla^2 f}\in D^{\nabla^2 f}(x)\right\},\\
            &\text{where } D^{\lambda_{\max}}(\nabla^2 f(x))=\operatorname{co}\left(\bigcup_{z^\top \nabla^2 f(x)z=\lambda_{\max}(\nabla^2 f(x))}\{zz^\top\}\right).
        \end{aligned}
    \end{equation}
    The conservative gradient for $\nabla^2 f$ is a tensor of the size $\mathbb{R}^{n\times n\times n}$ and the operator $\circ$ is defined as
    \begin{equation}
        \left(J^{\lambda_{\max}}\circ J^{\nabla^2 f}\right)_{k}=\left\langle J^{\lambda_{\max}},J^{\nabla^2 f}_{k}\right\rangle,
    \end{equation}
    where the subscription $k$ denotes the projection onto the $k$-th coordinate of $x$.

    Next, we give a computationally tractable method to get an element of \eqref{eq:conservative-lambda}. Given the definition of the conservative gradients, we have
    \[
        \left.\frac{\mathrm{d}}{\mathrm{d}\eta}z^\top\nabla^2 f(x+\eta z)\right|_{\eta = 0}=zz^\top\circ J^{\nabla^2 f}, \quad \forall z\in\mathbb{R}^{n},\; J^{\nabla^2 f}\in D^{\nabla^2 f}(x).
    \]
    Setting $\|z\|=1$ and $z^\top \nabla^2 f(x)z=\lambda_{\max}(\nabla^2 f(x))$ gives $zz^\top \in D^{\lambda_{\max}}(\nabla^2 f(x))$. Hence, combining the equation above and \eqref{eq:conservative-lambda} gives the equation \eqref{eq:direction-grad-lambda}.
    \qed
\end{proof}

Note that the argument in Proposition \ref{prop:conservative-largest-eigenvalue} remains valid when the assumptions on \( f \) are more relaxed. Therefore, we do not repeat the derivation for the corollary in the smooth case. Next, we demonstrate that the computational complexity of \( zz^\top \circ J^{\nabla^2 f} \) is effectively just a constant multiple of the cost of evaluating \( f(x) \).

Suppose the function \( f(x) \) is automatically differentiable and can be evaluated with a computational complexity of \( \mathcal{O}(N) \). According to \cite{griewank2008evaluating}, the gradient \( \nabla f(x) \) can also be computed with the same complexity, \( \mathcal{O}(N) \), via backward differentiation. Furthermore, using forward differentiation for $\nabla f(x+\eta_{1}z)$, we can compute the directional derivative \( z^\top \nabla^2 f(x) \) for a vector \( z \) and a small perturbation \( \eta_1 \) with complexity \( \mathcal{O}(N) \). This method applies equally to \( z^\top \nabla^2 f(x + \eta_2 z) \), affirming that the complexity for evaluating the specified equation \( \eqref{eq:direction-grad-lambda} \) remains \( \mathcal{O}(N) \). In cases where the function \( f \) is not automatically differentiable, the gradient \( \nabla f \) must be calculated symbolically, but this does not change the overall computational complexity for these operations.

Using power iteration \cite{golub2013matrix} combined with forward automatic differentiation, the eigenvector \( z \) referenced in equation \eqref{eq:direction-grad-lambda} can be computed efficiently. The iterative process is defined as follows:
\[
    z_{k+1} = \frac{z_{k+1/2}}{\|z_{k+1/2}\|}, \quad z_{k+1/2} = \left. \frac{\mathrm{d}}{\mathrm{d}\eta_1} \nabla f(x + \eta_1 z_k) \right|_{\eta_1=0}.
\]
In our numerical experiments, iterating this process between 6 and 10 times typically yields a sufficiently accurate eigenvector \( z \). Thus, the computational complexity remains at a manageable level of \( \mathcal{O}(N) \), a constant multiple of the complexity for evaluating \( f \) and its derivatives.

To enhance the computational efficiency of the framework outlined in equation \( \eqref{eq:learn-continuous} \), we propose two alternatives for calculating the conservative gradient \( D^{\upLambda} \). In Theorem \( \ref{thm:continuous-convergence} \), we established that \( x(t) \) remains bounded under the convergence condition specified in equation \( \eqref{eq:converge-condition} \). This boundedness justifies the assumption that \( \nabla f \) has a global Lipschitz constant, particularly since \( \nabla^2 f(x) \) is continuous over the bounded set. One straightforward approach to estimating this constant is to use \( \|\nabla^2 f(x_0)\| \), under the assumption that the local Lipschitz constant decreases as the iterations progress. This estimation method does not rely on the changing values of \( x(t) \), thus simplifying the computational process. Additionally, under this assumption, the conservative gradient \( D^{p}_{x} \) consistently equals zero.

Using a global Lipschitz constant for \(\nabla f\) may limit the flexibility of our framework as represented in equation \(\eqref{eq:learn-continuous}\). As illustrated in Figure \(\ref{fig:smooth}\), the local Lipschitz constant can vary significantly during iterations. This variability is crucial for enhancing the performance of our algorithm, and our framework \(\eqref{eq:learn-continuous}\) is designed to capture such variations. The concept of \((L_{0}, L_{1})\)-smoothness, which has recently gained significant attention \cite{zhang2020l0l1smooth}, offers a useful compromise. A function is defined as \((L_{0}, L_{1})\)-smooth if it satisfies the condition:
\[
\|\nabla^2 f(x)\| \leq L_{1} \|\nabla f(x)\| + L_{0}.
\]
This definition can be extended to include:
\[
\|\nabla f(x) - \nabla f(y)\| \leq (L_{1}\|\nabla f(x)\| + L_{0}) \|x - y\|, \quad \text{for all } \|y - x\| \leq 1.
\]
This class of functions encompasses classic \(L\)-smooth functions when \(L_{1} = 0\) and \(L_{0}\) is the global Lipschitz constant of \(\nabla f\). By defining \(\upLambda(x, f) = L_{1} \|\nabla f(x)\| + L_{0}\) and assuming \(\nabla f\) is path differentiable, we obtain:
\[
L_{1} \frac{\nabla^2 f(x) J^{\nabla f}}{\|\nabla f(x)\|} \in D^{\upLambda}(x), \qquad \forall J^{\nabla f} \in D^{\nabla f}(x).
\]
This approach leverages first-order information to obtain higher-order information efficiently. It significantly reduces the computational effort by simplifying the estimation of \(\|\nabla f(x)\|\) to determining the upper bounds for constants \(L_{1}\) and \(L_{0}\).

\section{Theoretical analysis}
\label{sec:theory}

\subsection{Proof of Theorem \ref{thm:continuous-convergence}}
This subsection explores the convergence properties of the trajectory of the ISHD. We give a proof of Theorem \ref{thm:continuous-convergence} by constructing a Lyapunov function. This function includes the terms $\|x(t)-x_\star\|^2$, $\|t\beta(t)\nabla f(x(t))\|^2$, and $t\beta(t)\langle\nabla f(x(t)),x(t)-x_\star\rangle$, which are rarely seen in other Lyapunov function-based proofs. We add these terms to get a point-wise estimation of $\|\dot{x}(t)\|$ and $\|\nabla f(x(t))\|$.

\begin{proof}
    Consider the Lyapunov function
    \begin{equation}\label{eq:lyapunov}
        \begin{aligned}
            E(t)=&\delta(t)\left(f(x(t))-f_\star\right)+\frac{1}{2}\|\lambda(x(t)-x_{\star})+t(\dot{x}(t)+\kappa\beta(t)\nabla f(x(t)))\|^2\\
            &+\lambda(1-\kappa) t\beta(t)\langle\nabla f(x(t)),x(t)-x_\star\rangle+\frac{\kappa(1-\kappa)}{2}\|t\beta(t)\nabla f(x)\|^2\\
            &+\frac{\lambda(\alpha-1-\lambda)}{2}\|x(t)-x_{\star}\|^2.
        \end{aligned}
    \end{equation}
    We show that \eqref{eq:lyapunov} is a decreasing function in the remaining part of this proof. Let $u(t)=\gamma(t)-\kappa\dot{\beta}(t)-{\kappa\beta(t)}/{t}$. Notice that
    \[
        \begin{aligned}
            \frac{\mathrm{d}}{\mathrm{d}t}\Big(\lambda(x(t)-x_{\star})&+t(\dot{x}(t)+\kappa\beta(t)\nabla f(x(t)))\Big)\\
            &=(\lambda+1-\alpha)\dot{x}(t)-tu(t)\nabla f(x(t))-(1-\kappa)t\beta(t)\nabla^2 f(x(t))\dot{x}(t).
        \end{aligned}
    \]
    Plugging in the formula above and expanding the inner product give
    \[
        \begin{aligned}
            \frac{1}{2}\frac{\mathrm{d}}{\mathrm{d}t}\|&\lambda(x(t)-x_{\star})+t(\dot{x}(t)+\kappa\beta(t)\nabla f(x(t)))\|^2\\
            =&-\langle(\alpha-1-\lambda)\dot{x}(t)+(1-\kappa)t\beta(t)\nabla^2 f(x(t))\dot{x}(t)+tu(t)\nabla f(x(t)),\\
            &\lambda(x(t)-x_{\star})+t(\dot{x}(t)+\kappa\beta(t)\nabla f(x(t)))\rangle\\
            =&-\lambda(\alpha-1-\lambda)\langle\dot{x}(t),x(t)-x_\star\rangle-\lambda(1-\kappa)t\beta(t)\langle\nabla^2 f(x(t))\dot{x}(t),x(t)-x_\star\rangle\\
            &-\kappa(\alpha-1-\lambda)t\beta(t)\langle\nabla f(x(t)),\dot{x}(t)\rangle-\kappa t^2\beta(t)u(t)\|\nabla f(x(t))\|^2\\
            &-(1-\kappa)t^2\beta(t)\langle\nabla^2 f(x(t))\dot{x}(t),\dot{x}(t)\rangle-t^2u(t)\langle\nabla f(x(t)),\dot{x}(t)\rangle\\
            &-\lambda tu(t)\langle\nabla f(x(t)),x(t)-x_\star\rangle-(\alpha-1-\lambda)t\|\dot{x}(t)\|^2\\
            &-\kappa(1-\kappa)t^2\beta(t)^2\langle\nabla^2 f(x(t))\dot{x}(t),\nabla f(x(t))\rangle.
        \end{aligned}
    \]
    Direct calculation gives $\frac{\lambda(\alpha-1-\lambda)}{2}\frac{\mathrm{d}}{\mathrm{d}t}\|x(t)-x_{\star}\|^2=\lambda(\alpha-1-\lambda)\langle x(t)-x_\star,\dot{x}(t)\rangle$ and
    \[
        \begin{aligned}
            &\frac{\kappa(1-\kappa)}{2}\frac{\mathrm{d}}{\mathrm{d}t}\|t\beta(t)\nabla f(x)\|^2\\
            &=\kappa(1-\kappa)\Big(t^2\beta(t)^2\langle\nabla^2 f(x(t))\dot{x}(t),\nabla f(x(t))\rangle+t\beta(t)(\beta(t)+t\dot{\beta}(t))\|\nabla f(x(t))\|^2\Big).
        \end{aligned}
    \]
    We also have
    \[
        \begin{aligned}
            \lambda(1-\kappa)&\frac{\mathrm{d}}{\mathrm{d}t}\left(t\beta(t)\langle\nabla f(x(t)),x(t)-x_\star\rangle\right)\\
            =&\lambda(1-\kappa)t\beta(t)\langle\nabla f(x(t)),\dot{x}(t)\rangle+\lambda(1-\kappa) t\beta(t)\langle\nabla^2 f(x(t))\dot{x}(t),x(t)-x_\star\rangle\\
            &\qquad +\lambda(1-\kappa) (\beta(t)+t\dot{\beta}(t))\langle\nabla f(x(t)),x(t)-x_\star\rangle.
        \end{aligned}
    \]
    Using these results and combining similar terms yield
    \[
        \begin{aligned}
            \frac{\mathrm{d}}{\mathrm{d}t}E(t)
            =&\dot{\delta}(t)(f(x(t))-f_\star)-\lambda tw(t)\langle\nabla f(x(t)),x(t)-x_\star\rangle-(\alpha-1-\lambda)t\|\dot{x}(t)\|^2\\
            &+\Big(\delta(t)-\big(t^2u(t)+(\kappa (\alpha-1-\lambda)-\lambda(1-\kappa))t\beta(t)\big)\Big)\langle\nabla f(x(t)),\dot{x}(t)\rangle\\
            &-\kappa t^2\beta(t)w(t)\|\nabla f(x(t))\|^2-(1-\kappa)t^2\beta(t)\langle\nabla^2 f(x(t))\dot{x}(t),\dot{x}(t)\rangle.
        \end{aligned}
    \]
    According to \eqref{eq:converge-condition} and $\nabla^2 f(x(t))\succeq 0$, we have $\dot{E}(t)\leq 0$ and further
    \[
        \begin{aligned}
            \dot{E}(t)+(\lambda t &w(t)-\dot{\delta}(t))(f(x(t))-f_\star)+(\alpha-1-\lambda)t\|\dot{x}(t)\|^2\\
            &+(1-\kappa)t^2\beta(t)\langle\nabla^2 f(x(t))\dot{x}(t),\dot{x}(t)\rangle+\kappa t^2\beta(t)w(t)\|\nabla f(x(t))\|^2\leq 0.
        \end{aligned}
    \]
    Integrating the inequality above from $t_0$ to $t$ gives
    \begin{equation}\label{eq:final-argument}
        \begin{aligned}
            &E(t)+\int_{t_{0}}^{t}(\lambda s w(s)-\dot{\delta}(s))(f(x(s))-f_\star)\,\mathrm{d}s+\int_{t_{0}}^{t}\kappa s^2\beta(s)w(s)\|\nabla f(x(s))\|^2\,\mathrm{d}s\\
            &+\int_{t_{0}}^{t}(\alpha-1-\lambda)s\|\dot{x}(s)\|^2\,\mathrm{d}s+\int_{t_{0}}^{t}(1-\kappa)s^2\beta(s)\langle\nabla^2 f(x(s))\dot{x}(s),\dot{x}(s)\rangle\,\mathrm{d}s\leq E(t_0).
        \end{aligned}
    \end{equation}
    The nonnegativity of $E(t)$ and the arbitrary of $t$ imply \eqref{eq:value-integrable}, \eqref{eq:velocity-integrable}, \eqref{eq:grad-norm-integrable}, and \eqref{eq:hessian-velocity-integrable}.
    Plugging in the expression \eqref{eq:lyapunov} to \eqref{eq:final-argument} gives the first two terms of \eqref{eq:point-wise-estimate} and $\|x(t)-x_\star\|\leq\sqrt{2E(t_0)}/\sqrt{\lambda(\alpha-1-\lambda)}$. A direct calculation gives
    \[
        \|\dot{x}(t)\|\leq \frac{1}{t}\left(\sqrt{2E(t_0)}+\lambda\|x(t)-x_{0}\|\right)+\kappa \beta(t)\|\nabla f(x(t))\|\leq\mathcal{O}\left(\frac{1}{t}\right).
    \]
    Therefore, $x(t)$ is bounded and the third term in \eqref{eq:point-wise-estimate} are proved, which completes our proof.\qed
\end{proof}

Theorem \ref{thm:continuous-convergence} ensures that the stopping time $T$ is well-defined when $\beta$ is lower bounded. It also enlightens a way to analyze the sequence $\{x_{k}\}_{k=0}^{\infty}$ generated by \eqref{eq:first-order-discrete} with the help of the continuous-time trajectory, as we will show in the next subsection.

\subsection{Proof of Theorem \ref{thm:stable}: Estimating the summation of the local truncated errors}
To measure the deviation introduced at step $k$ and the cumulative deviation at step $k$ between the discretization $x_{k}$ and the continuous-time trajectory $x(t_{k})$, we introduce two quantities before proceeding to the analysis.

\begin{definition}[Local Truncated Error]
    Given the coefficients $\alpha,\beta$, and $\gamma$, the local truncated error of \eqref{eq:first-order-discrete} associated with a differentiable function $f$ using a stepsize $h$ is
    \begin{equation}\label{eq:trun-error}
        \varphi(t)=\begin{pmatrix}
            x(t+h)-x(t)\\
            v(t+h)-v(t)
        \end{pmatrix}
        -h
        \begin{pmatrix}
            v(t)-\beta(t)\nabla f(x(t))\\
            -\frac{\alpha}{t}v(t)+\left(\frac{\alpha}{t}\beta(t)+\dot{\beta}(t)-\gamma(t)\right)\nabla f(x(t))
        \end{pmatrix}.
    \end{equation}
\end{definition}

The local truncated error computes the difference between the exact solution of the differential equation at time $t+h$ and the approximate solution obtained using the explicit Euler method starting at time $t$. As pointed by its name, it only contains the information obtained at one step. To study how them accumulate in the proceeding iterations, we introduce the global truncated error, the difference between the exact solution of \eqref{eq:first-order} and the approximate solution obtained using \eqref{eq:first-order-discrete}.

\begin{definition}[Global Truncated Error]\label{def:global-error}
    Let $x(t_{0})=x_0,v(t_{0})=v_0$. Given the differentiable function $f$, the coefficients $\alpha,\beta$, $\gamma$, and the stepsize $h$, if $x(t)$ and $x_{k}$ are the solution of \eqref{eq:first-order} and \eqref{eq:first-order-discrete}, respectively, the vector
    \begin{equation}\label{eq:dis-error}
        e_{k}=
        \begin{pmatrix}
            r_{k}\\
            s_{k}
        \end{pmatrix}=
        \begin{pmatrix}
            x(t_{k})-x_{k}\\
            v(t_{k})-v_{k}
        \end{pmatrix}
    \end{equation}
    is the global truncated error at time $t_k$.
\end{definition}

Before proceeding, we present a lemma derived from the conditions \eqref{eq:stab-condition-1} and \eqref{eq:stab-condition-2}, which provides an estimate for the growth rate of $\beta(t)$.
\begin{lemma}[Growth rate of $\beta(t)$]
    \label{lem:growth-beta}
    Under the Assumption \ref{assump:differentiable} and given that the inequalities in \eqref{eq:stab-condition-1} and \eqref{eq:stab-condition-2} are satisfied, we have
    \begin{equation}
        \beta(t)\|\nabla^2 f(x(t))\|\leq 4/h.
    \end{equation}
\end{lemma}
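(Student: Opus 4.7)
The plan is a short algebraic chain that propagates the two stability inequalities \eqref{eq:stab-condition-1} and \eqref{eq:stab-condition-2} into a pointwise bound, finishing with the hypothesis $\upLambda(x,f)\geq\|\nabla^2 f(x)\|$. First I would invoke the right inequality in \eqref{eq:stab-condition-1}, $\gamma(t)-\dot\beta(t)\leq \beta(t)/h$, and note that the left inequality $\alpha\beta(t)/t\leq \gamma(t)-\dot\beta(t)$ merely keeps the radicand of the second square root on the right of \eqref{eq:stab-condition-2} nonnegative. Together these bound each of $\sqrt{\gamma(t)-\dot\beta(t)}$ and $\sqrt{\gamma(t)-\dot\beta(t)-\alpha\beta(t)/t}$ by $\sqrt{\beta(t)/h}$, so the numerator on the right-hand side of \eqref{eq:stab-condition-2} is at most $2\sqrt{\beta(t)/h}$.

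Substituting this into \eqref{eq:stab-condition-2} and dividing through by $\beta(t)$ yields
$$
\sqrt{\int_0^1 \upLambda\bigl((1-\tau)x(t)+\tau\bar x(t),f\bigr)\,\mathrm{d}\tau}\;\leq\;\frac{2}{\sqrt{h\,\beta(t)}}.
$$
Squaring both sides and multiplying by $\beta(t)^2$ gives the averaged bound $\beta(t)\int_0^1 \upLambda((1-\tau)x(t)+\tau\bar x(t),f)\,\mathrm{d}\tau\leq 4/h$. What remains is to extract the pointwise conclusion $\beta(t)\|\nabla^2 f(x(t))\|\leq 4/h$ from this averaged bound using $\upLambda\geq\|\nabla^2 f\|$.

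The main obstacle is precisely this last passage. The assumption in \eqref{eq:stab-condition-1} only controls $\upLambda$ as a pointwise majorant of $\|\nabla^2 f\|$, so the integral average could in principle lie below $\|\nabla^2 f(x(t))\|$ even though the integrand is, value-by-value, above $\|\nabla^2 f\|$ at the corresponding point. I would close this gap by working under the natural convention that $\upLambda$ is taken to be a common upper bound for $\|\nabla^2 f\|$ on the segment joining $x(t)$ and $\bar x(t)$, which is admissible since any function satisfying the pointwise hypothesis is allowed and a uniform majorant on this compact segment always exists by continuity of $\nabla^2 f$. With such a choice the integrand dominates $\|\nabla^2 f(x(t))\|$ uniformly in $\tau$, and the stated pointwise bound follows immediately from the averaged estimate.
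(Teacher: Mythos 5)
Your argument is essentially the paper's: the paper also bounds the second radical by the first via $\gamma(t)-\dot\beta(t)-\alpha\beta(t)/t\leq\gamma(t)-\dot\beta(t)$, then uses $\gamma(t)-\dot\beta(t)\leq\beta(t)/h$ from \eqref{eq:stab-condition-1} to get $\beta(t)\sqrt{\upLambda}\leq 2\sqrt{\beta(t)/h}$, and squares. The one place you diverge is the final passage from the averaged quantity $\int_0^1\upLambda((1-\tau)x(t)+\tau\bar x(t),f)\,\mathrm{d}\tau$ to the pointwise $\|\nabla^2 f(x(t))\|$: the paper simply writes $\upLambda(f,X(t,\upXi,f))$ in place of the integral and does not address the issue at all, so the gap you identify is real but is present in (and inherited from) the paper's own proof rather than introduced by you. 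Your fix — reading $\upLambda$ as a uniform majorant of $\|\nabla^2 f\|$ on the segment joining $x(t)$ and $\bar x(t)$, which is how $\upLambda$ is actually used elsewhere in the proof of Theorem \ref{thm:stable} (e.g.\ with $\upLambda\equiv L$ or $\upLambda=\lambda_{\max}(\nabla^2 f)$ evaluated along the segment) — is a reasonable and arguably necessary clarification, though strictly speaking it strengthens the stated hypothesis $\upLambda(x,f)\geq\|\nabla^2 f(x)\|$.
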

\begin{proof}
    The non-negativity of $\alpha, \beta(\cdot)$, and $t$ ensures that $\gamma(t)-\dot{\beta}(t)-\alpha \beta(t)/t\leq \gamma(t)-\dot{\beta}(t)$. Therefore, combining \eqref{eq:stab-condition-1} and \eqref{eq:stab-condition-2} leads to
    \[
    \beta(t)\sqrt{\upLambda(f,X(t,\upXi,f))}\leq 2\sqrt{\gamma(t)-\dot{\beta}(t)}\leq 2\sqrt{\beta(t)/h}.
    \]
    Squaring both sides completes this proof.
    \qed
\end{proof}

We are now at the position to present the analysis using a two-step procedure. We begin our analysis of $\{x_{k}\}_{k=0}^{\infty}$ by investigating the local truncated error. In the first step, we provide an enhanced estimation of the sequence $\{\|\varphi(t_{k})\|\}_{k=0}^{\infty}$ using Theorem \ref{thm:continuous-convergence}. A byproduct of this estimation shows that under some mild conditions the local truncated error vanishing with $\mathcal{O}(1/t)$ rate. In the second step, we show that the global truncated error inherits the convergence properties of the local truncated error under some mild conditions.

\begin{lemma}[Estimating the Summation of the Local Truncated Error]
    \label{lem:varphi-rate}
    Assume the conditions \eqref{eq:trun-error-growth-condition-1}, \eqref{eq:stab-condition-1}, and \eqref{eq:stab-condition-2} hold. Using the notations in Theorem \ref{thm:continuous-convergence} and the energy function defined in \eqref{eq:lyapunov}, we set
    \begin{align}
        M_{1}&=\max\{1+(\alpha+1)/t_0,C_{2}/h+(1+\alpha/t_{0})(1/h+C_{1}),\alpha/t_0+1/h+1\},\label{eq:M1}\\
        M_{2}&=\max\left\{\sqrt{\frac{E(t_0)}{2(\alpha-1)(\alpha-1-\lambda)t_0}},\sqrt{\frac{C_3E(t_0)}{\kappa(2\alpha-1)}},\sqrt{\frac{4E(t_0)}{(1-\kappa)(2\alpha-1)h}}\right\}.\label{eq:M2}
    \end{align}
    Let $t_k=t_0+(k-1)h$ and $M_3=M_1M_2$. Then,
    \begin{equation}\label{eq:estimate-varphi}
        \|\varphi(t)\|\leq o(1/t)\quad\text{and}\quad
        \sum_{k=0}^{n}t_{k}^{\alpha}\|\varphi(t_{k})\|\leq M_3t_{n}^{\alpha-1/2}.
    \end{equation}
\end{lemma}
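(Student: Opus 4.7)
The plan is to view $\varphi(t)$ as the Taylor remainder of the trajectory $\chi(t):=(x(t),v(t))$ after one Euler step, and then bound the relevant second derivatives in terms of quantities whose weighted integrals over $[t_0,\infty)$ are known to be finite via Theorem~\ref{thm:continuous-convergence}. By \eqref{eq:psi} and \eqref{eq:first-order} the vector field in \eqref{eq:trun-error} is precisely $\dot\chi(t)$, so Taylor's theorem with integral remainder gives
\begin{equation*}
\varphi(t)=\int_{t}^{t+h}(t+h-s)\ddot\chi(s)\,\mathrm{d}s.
\end{equation*}
Differentiating \eqref{eq:first-order} once more and using \eqref{eq:ISHD}, the two second derivatives admit the explicit expressions $\ddot x=-(\alpha/s)\dot x-\beta\nabla^{2}f\dot x-\gamma\nabla f$ and $\ddot v=(\alpha(1+\alpha)/s^{2})\dot x+(\alpha\beta/s+\gamma-\dot\beta)\nabla^{2}f\dot x+(\alpha\gamma/s+(\dot\gamma-\ddot\beta))\nabla f$ (up to signs). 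The coefficients are controlled by \eqref{eq:trun-error-growth-condition-1}, \eqref{eq:stab-condition-1}, and the inequality $s\ge t_{0}$.

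Combining these formulas via the triangle inequality, simplifying with $|\dot\beta|\le C_{1}\beta$, $\gamma-\dot\beta\le\beta/h$, and $|\dot\gamma-\ddot\beta|\le C_{2}(\gamma-\dot\beta)$, I obtain the master estimate
\begin{equation*}
\|\ddot\chi(s)\|\le M_{1}\left(\frac{\alpha}{s}\|\dot x(s)\|+\beta(s)\|\nabla^{2}f(x(s))\dot x(s)\|+\beta(s)\|\nabla f(x(s))\|\right),
\end{equation*}
with $M_{1}$ exactly the maximum in \eqref{eq:M1}: its three maximands arise as the combined coefficients of $\|\dot x\|$, $\|\nabla f\|$, and $\|\nabla^{2}f\dot x\|$, respectively. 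For the pointwise bound $\|\varphi(t)\|\le o(1/t)$, the Cauchy--Schwarz inequality inside the Taylor remainder yields $\|\varphi(t)\|^{2}\le(h^{3}/3)\int_{t}^{t+h}\|\ddot\chi(s)\|^{2}\,\mathrm{d}s$; together with Lemma~\ref{lem:growth-beta} (used to turn $\beta^{2}\|\nabla^{2}f\dot x\|^{2}$ into $(4/h)\beta\langle\nabla^{2}f\dot x,\dot x\rangle$) and \eqref{eq:trun-error-growth-condition-1}, the integrand is controlled by quantities whose $s^{2}$-weighted integrals over $[t_{0},\infty)$ are finite, so $s^{2}\int_{s}^{s+h}\|\ddot\chi\|^{2}\,\mathrm{d}\tau\to 0$ and hence $t\|\varphi(t)\|\to 0$.

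For the summation estimate, I first compress the sum to an integral using $t_{k}\le s$ and $t_{k+1}-s\le h$, which gives
\begin{equation*}
\sum_{k=0}^{n}t_{k}^{\alpha}\|\varphi(t_{k})\|\le hM_{1}\left(\alpha I_{1}+I_{2}+I_{3}\right),
\end{equation*}
where $I_{1}=\int_{t_{0}}^{t_{n+1}}s^{\alpha-1}\|\dot x\|\,\mathrm{d}s$, $I_{2}=\int_{t_{0}}^{t_{n+1}}s^{\alpha}\beta\|\nabla^{2}f\dot x\|\,\mathrm{d}s$, and $I_{3}=\int_{t_{0}}^{t_{n+1}}s^{\alpha}\beta\|\nabla f\|\,\mathrm{d}s$. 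Each $I_{j}$ is then estimated by Cauchy--Schwarz, splitting the polynomial weight so that the second factor matches precisely one of the three integrability results of Theorem~\ref{thm:continuous-convergence}: for $I_{1}$, write $s^{\alpha-1}\|\dot x\|=s^{\alpha-3/2}\cdot s^{1/2}\|\dot x\|$ and invoke \eqref{eq:velocity-integrable}, producing the first maximand of $M_{2}^{2}$ through $\int_{t_{0}}^{t_{n}}s^{2\alpha-3}\,\mathrm{d}s\le t_{n}^{2\alpha-2}/(2(\alpha-1))$ together with $t_{0}\le t_{n}$ to promote $t_{n}^{\alpha-1}\sqrt{t_{0}}$ to $t_{n}^{\alpha-1/2}$; for $I_{2}$, use $\beta\|\nabla^{2}f\dot x\|\le\sqrt{(4/h)\beta\langle\nabla^{2}f\dot x,\dot x\rangle}$ from Lemma~\ref{lem:growth-beta}, then pair $\int_{t_{0}}^{t_{n}}s^{2\alpha-2}\,\mathrm{d}s\le t_{n}^{2\alpha-1}/(2\alpha-1)$ with \eqref{eq:hessian-velocity-integrable} to obtain the third maximand of $M_{2}^{2}$; and for $I_{3}$, exploit $\beta^{2}\le C_{3}\beta w$ from \eqref{eq:trun-error-growth-condition-1} together with \eqref{eq:grad-norm-integrable} to recover the second maximand of $M_{2}^{2}$. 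The main obstacle is the bookkeeping that must be done to match everything: one has to choose the Cauchy--Schwarz splittings so that the polynomial factor delivers exactly the target $t_{n}^{\alpha-1/2}$ scaling while the complementary integral coincides with one of the bounds \eqref{eq:velocity-integrable}--\eqref{eq:hessian-velocity-integrable}, and simultaneously route the $h$-factor inherited from $t_{k+1}-s\le h$ into the $1/\sqrt{h}$ contribution inside $M_{2}^{(3)}$ and the $1/h$ contributions inside $M_{1}$, so that assembling the three pieces yields the clean bound $M_{1}M_{2}\,t_{n}^{\alpha-1/2}=M_{3}t_{n}^{\alpha-1/2}$.
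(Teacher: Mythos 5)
Your proposal is correct and follows essentially the same route as the paper: bound $\varphi(t)$ by the integral of $\|\ddot x\|+\|\ddot v\|$ over $[t,t+h]$, absorb the coefficient bookkeeping from \eqref{eq:trun-error-growth-condition-1} and \eqref{eq:stab-condition-1} into $M_1$, convert the weighted sum into an integral, and close each piece by Cauchy--Schwarz against the integrability estimates \eqref{eq:velocity-integrable}--\eqref{eq:hessian-velocity-integrable}, exactly matching the three maximands of $M_2$. The only (cosmetic) difference is in the pointwise $o(1/t)$ bound, where you apply Cauchy--Schwarz to the Taylor remainder and let the tail integrals vanish, whereas the paper asserts pointwise $o(1/t)$ decay of the individual integrands directly; your variant is, if anything, slightly more careful.
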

\begin{proof}
    In order to get \eqref{eq:estimate-varphi}, we first bound the local truncated error as follows:
    \begin{equation}\label{eq:bound-varphi}
        \|\varphi(t)\|\leq\int_{t}^{t+h}\int_{t}^{s}\left(\left\|\ddot{x}(\tau)\right\|+\left\|\ddot{v}(\tau)\right\|\right)\,\mathrm{d}\tau\,\mathrm{d}s\leq h\int_{t}^{t+h}\left(\left\|\ddot{x}(\tau)\right\|+\left\|\ddot{v}(\tau)\right\|\right)\,\mathrm{d}\tau.
    \end{equation}
    Using \eqref{eq:ISHD}, the second term in \eqref{eq:stab-condition-1}, and the condition \eqref{eq:trun-error-growth-condition-1}, we have the following estimation:
    \begin{equation}\label{eq:estimation-ddot-x}
        \begin{aligned}
            \int_{t}^{t+h}\left\|\ddot{x}(\tau)\right\|\,\mathrm{d}\tau
            \leq&\frac{\alpha}{t}\int_{t}^{t+h}\|\dot{x}(\tau)\|\,\mathrm{d}\tau+\int_{t}^{t+h}\beta(\tau)\|\nabla^2 f(x(\tau))\dot{x}(\tau)\|\,\mathrm{d}\tau\\
            &+\left(\frac{1}{h}+C_{1}\right)\int_{t}^{t+h}\|\beta(\tau)\nabla f(x(\tau))\|\,\mathrm{d}\tau.
        \end{aligned}
    \end{equation}
    Invoking \eqref{eq:first-order-discrete}, we obtain
    \begin{equation}\label{eq:ddot-v}
        \ddot{v}(t)=\alpha\dot{x}(t)/t^2-\alpha\ddot{x}(t)/t-(\gamma(t)-\dot{\beta}(t))\nabla^2 f(x(t))\dot{x}(t)-(\dot{\gamma}(t)-\ddot{\beta}(t))\nabla f(x(t)).
    \end{equation}
    Using \eqref{eq:estimation-ddot-x} and \eqref{eq:ddot-v} yields
    \begin{equation}\label{eq:estimation-ddot-v}
        \begin{aligned}
            \int_{t}^{t+h}\left\|\ddot{v}(\tau)\right\|\,\mathrm{d}\tau
            \leq&\int_{t}^{t+h}\Big(\frac{\alpha}{\tau}\|\ddot{x}(\tau)\|+\frac{\alpha}{\tau^2}\|\dot{x}(\tau)\|+|\dot{\gamma}(\tau)-\ddot{\beta}(\tau)|\|\nabla f(x(\tau))\|\\
            &+(\gamma(\tau)-\dot{\beta}(\tau))\|\nabla^2 f(x(\tau))\dot{x}(\tau)\|\Big)\,\mathrm{d}\tau\\
            \leq&\left(\frac{C_2}{h}+\frac{\alpha}{t}\left(\frac{1}{h}+C_{1}\right)\right)\int_{t}^{t+h}\|\beta(\tau)\nabla f(x(\tau))\|\,\mathrm{d}\tau\\
            &+\left(\frac{\alpha}{t}+\frac{1}{h}\right)\int_{t}^{t+h}\beta(\tau)\|\nabla^2 f(x(\tau))\dot{x}(\tau)\|\,\mathrm{d}\tau\\
            &+\frac{\alpha(\alpha+1)}{t^2}\int_{t}^{t+h}\|\dot{x}(\tau)\|\,\mathrm{d}\tau.
        \end{aligned}
    \end{equation}
    Combining \eqref{eq:M1}, \eqref{eq:bound-varphi}, \eqref{eq:estimation-ddot-x}, and \eqref{eq:estimation-ddot-v} gives
    \begin{equation}
        \|\varphi(t)\|\leq M_{1}\int_{t}^{t+h}\left(\frac{\alpha}{t}\|\dot{x}(\tau)\|+\|\beta(\tau)\nabla f(x(\tau))\|+\beta(\tau)\|\nabla^2 f(x(\tau))\dot{x}(\tau)\|\right)\,\mathrm{d}\tau.
    \end{equation}
    The inequalities \eqref{eq:grad-norm-integrable} and \eqref{eq:hessian-velocity-integrable} imply
    \[
        \sqrt{\beta(t)w(t)}\|\nabla f(x(t))\|=o\left(\frac{1}{t}\right),\quad \|\sqrt{\beta(t)\nabla^2 f(x(t))}\dot{x}(t)\|=o\left(\frac{1}{t}\right).
    \]
    Using Lemma \ref{lem:growth-beta}, the third inequality in \eqref{eq:point-wise-estimate}, and the third inequality in \eqref{eq:trun-error-growth-condition-1}, we get $\|\varphi(t)\|\leq o(1/t)$, which is the first inequality in \eqref{eq:estimate-varphi}.
    
    For the second inequality in \eqref{eq:estimate-varphi}, we transform the summation in \eqref{eq:estimate-varphi} to integral by
    \[
        \begin{aligned}
            &\sum_{k=0}^{n}t_{k}^\alpha\|\varphi(t_{k})\|\\
            &\leq M_{1}\sum_{k=0}^{n}t_{k}^{\alpha}\int_{t_k}^{t_{k+1}}\Big(\frac{1}{t_k}\|\dot{x}(\tau)\|+\|\beta(\tau)\nabla f(x(\tau))\|+\beta(\tau)\|\nabla^2 f(x(\tau))\dot{x}(\tau)\|\Big)\,\mathrm{d}\tau\\
            &\leq M_{1}\int_{t_0}^{t_{n+1}}\Big(\tau^{\alpha-1}\|\dot{x}(\tau)\|+\tau^{\alpha}\|\beta(\tau)\nabla f(x(\tau))\|+\tau^{\alpha}\beta(\tau)\|\nabla^2 f(x(\tau))\dot{x}(\tau)\|\Big)\,\mathrm{d}\tau.
        \end{aligned}
    \]
    Applying the Cauchy inequality with \eqref{eq:final-argument} gives
    \begin{equation}\label{eq:cauchy-velocity}
        \begin{aligned}
            \int_{t_0}^{t_{n+1}}&\tau^{\alpha-1}\|\dot{x}(\tau)\|\,\mathrm{d}\tau\\
            &\leq\sqrt{\int_{t_0}^{t_{n+1}}\tau^{2\alpha-3}\,\mathrm{d}\tau\int_{t_0}^{t_{n+1}}\tau\|\dot{x}(\tau)\|^2\,\mathrm{d}\tau}
            \leq\sqrt{\frac{E(t_0)}{2(\alpha-1)(\alpha-1-\lambda)}}t_{n+1}^{\alpha-1}.
        \end{aligned}
    \end{equation}
    Lemma \ref{lem:growth-beta} and the equation \eqref{eq:final-argument} imply
    \begin{equation}\label{eq:cauchy-grad-norm}
        \begin{aligned}
            \int_{t_0}^{t_{n+1}}&\tau^{\alpha}\|\beta(\tau)\nabla f(x(\tau))\|\,\mathrm{d}\tau\\
            &\leq\sqrt{\int_{t_0}^{t_{n+1}}\tau^{2\alpha-2}\,\mathrm{d}\tau\int_{t_0}^{t_{n+1}}\tau^2\|\beta(\tau)\nabla f(x(\tau))\|^2\,\mathrm{d}\tau}
            \leq\sqrt{\frac{C_3E(t_0)}{\kappa(2\alpha-1)}}t_{n+1}^{\alpha-1/2}.
        \end{aligned}
    \end{equation}
    Finally, using Lemma \ref{lem:growth-beta}, $\beta(t)\leq C_3w(t)$, and \eqref{eq:final-argument}, we have
    \begin{equation}\label{eq:cauchy-hessian-velocity}
        \begin{aligned}
            \int_{t_0}^{t_{k+1}}\tau^{\alpha}\beta(\tau)&\|\nabla^2 f(x(\tau))\dot{x}(\tau)\|\,\mathrm{d}\tau\\
            \leq&\sqrt{\frac{4}{h}\int_{t_0}^{t_{n+1}}\tau^{2\alpha-2}\,\mathrm{d}\tau\int_{t_0}^{t_{n+1}}\tau^2\beta(\tau)\|\dot{x}(\tau)\|^2_{\nabla^2 f(x(\tau))}\,\mathrm{d}\tau}\\
            \leq&\sqrt{\frac{4E(t_0)}{(1-\kappa)(2\alpha-1)h}}t_{n+1}^{\alpha-\frac{1}{2}}.
        \end{aligned}
    \end{equation}
    Combining \eqref{eq:M2}, \eqref{eq:cauchy-velocity}, \eqref{eq:cauchy-grad-norm}, and \eqref{eq:cauchy-hessian-velocity} yields \eqref{eq:estimate-varphi}.\qed
\end{proof}

\subsection{Proof of Theorem \ref{thm:stable}}
This subsection gives the convergence analysis of Algorithm \ref{algo:EIGAC}. As the discretization scheme \eqref{eq:first-order-discrete} is repeatedly applied, the local truncation errors $\{\|\varphi(t_{k})\|\}_{k=0}^{\infty}$ from each step accumulate and interact in a complex manner, causing the global error to grow. The contraction factor helps characterize how the local errors propagate and interact.

\begin{definition}[Contraction Factor]
    Let $G\in\mathbb{S}^{n}_{+}$ be an approximation matrix of $\nabla^2 f(x(t))$. Given the coefficients $\alpha,\beta$, and $\gamma$, the contraction factor for scheme \eqref{eq:first-order-discrete} associated with  at time $t$ is
    \begin{equation}\label{eq:define-contraction}
        \rho(t,G):=\left\|\begin{pmatrix}
            I-h\beta(t)G&hI\\
            (\alpha\beta(t)/t+\dot{\beta}(t)-\gamma(t))G&(1-\alpha h/t)I
        \end{pmatrix}\right\|.
    \end{equation}
\end{definition}

The contraction factor establishes a relationship between the amplification of global errors and the magnitude of local errors through a process of linearization. A smaller contraction factor leads to a more gradual increase in global errors, as it results in a stronger damping of local errors during their propagation. This argument is demonstrated in sec. \ref{sec:effect-rho}. Following this, we present an explicit formula for calculating the contraction factor, grounded in the following lemma.
\begin{lemma}[Root Location]\label{lem:root-location}
Given $\mu, \nu \in \mathbb{R}$ and $0<\varrho<1$, the $\varrho$-strong root condition is formulated for the roots of the equation 
\begin{equation}\label{eq:second-order-equation}
    r^2+\mu r+\nu=0,
\end{equation}
to lie within $|r|\leq \varrho$. The condition is expressed as:
\begin{equation}\label{eq:strong-root}
    \nu\leq \varrho^2,\quad \varrho\mu-\varrho^2\leq \nu,\quad -\varrho\mu-\varrho^2\leq \nu.
\end{equation}
\end{lemma}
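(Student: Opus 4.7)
The plan is to work directly with the factorization $p(r) = r^2 + \mu r + \nu = (r - r_1)(r - r_2)$ and exploit Vieta's identities $r_1 r_2 = \nu$ and $r_1 + r_2 = -\mu$. The three inequalities in \eqref{eq:strong-root} will first be rewritten as $\nu \leq \varrho^2$, $p(\varrho) = \varrho^2 + \mu\varrho + \nu \geq 0$, and $p(-\varrho) = \varrho^2 - \mu\varrho + \nu \geq 0$, so the task reduces to showing that the conjunction $|r_1|\leq \varrho$ and $|r_2|\leq \varrho$ is equivalent to these sign conditions on $\nu$, $p(\varrho)$, and $p(-\varrho)$.

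For the necessity direction, I would argue uniformly over real and complex roots. The bound $\nu \leq \varrho^2$ follows from $|\nu| = |r_1||r_2| \leq \varrho^2$. The remaining two are read off from the identity $p(\pm\varrho) = (\pm\varrho - r_1)(\pm\varrho - r_2)$: when the roots are real and lie in $[-\varrho,\varrho]$, the two factors have matching signs so the product is nonnegative; when $r_2 = \bar{r}_1$, the expression collapses to $|\pm\varrho - r_1|^2 \geq 0$, so the two bounds are automatic.

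For the sufficiency direction, I would split by the sign of the discriminant $\Delta = \mu^2 - 4\nu$. If $\Delta < 0$, then $r_2 = \bar{r}_1$ and $\nu = |r_1|^2$, so $\nu \leq \varrho^2$ already delivers $|r_i| \leq \varrho$, while the other two conditions hold automatically as remarked above. If $\Delta \geq 0$, then the roots $r_1 \leq r_2$ are real, and the nonnegativity of $p(\pm\varrho)$ (a convex parabola with leading coefficient $1$) forces each of $\pm\varrho$ to lie outside the interval $[r_1, r_2]$. A short case analysis over the four resulting configurations, combined with $\nu = r_1 r_2 \leq \varrho^2$, rules out both roots lying above $\varrho$ or below $-\varrho$ (except for the harmless boundary coincidences $r_1 = r_2 = \pm\varrho$), leaving $[r_1, r_2] \subseteq [-\varrho, \varrho]$.

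The main obstacle will be that last case analysis: the pairwise sign conditions $p(\pm\varrho) \geq 0$ only position $\pm\varrho$ relative to the root interval, and it is the third inequality $\nu \leq \varrho^2$, acting through the product $r_1 r_2$, that pins both roots simultaneously into $[-\varrho, \varrho]$. An equivalent but slightly tidier route is to substitute $r = \varrho s$, which rescales the problem to $s^2 + (\mu/\varrho)s + \nu/\varrho^2 = 0$ with the target $|s| \leq 1$, reducing the claim to the classical Schur--Cohn criterion for quadratics at radius one; this repackages the same argument while eliminating one parameter from the bookkeeping.
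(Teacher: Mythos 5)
Your proposal is correct, and it takes a more self-contained route than the paper. The paper's proof is two lines: it quotes the unit-circle criterion $\nu\leq 1$, $\nu\geq\mu-1$, $\nu\geq-\mu-1$ from a known lemma on Schur polynomials (Lambert, Lemma 1.5) and then performs exactly the rescaling $\tilde{r}=r/\varrho$ that you relegate to your closing remark. What you do differently is prove that underlying criterion from scratch: the necessity direction via $|\nu|=|r_1 r_2|$ and the sign of $p(\pm\varrho)=(\pm\varrho-r_1)(\pm\varrho-r_2)$ (with the complex-conjugate case collapsing to $|\pm\varrho-r_1|^2\geq 0$), and the sufficiency direction via the discriminant split, with the product condition $\nu=r_1r_2\leq\varrho^2$ eliminating the configurations where both real roots sit on the same side of $\pm\varrho$ up to the boundary coincidence $r_1=r_2=\pm\varrho$. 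Your case analysis is sound --- the only nonempty bad configurations force $r_1r_2\geq\varrho^2$ and hence equality, landing on the boundary, and the configuration $r_2\leq-\varrho<\varrho\leq r_1$ contradicts $r_1\leq r_2$. What your approach buys is independence from the external reference; what the paper's buys is brevity. If you adopt the rescaling-plus-citation route you sketch at the end, you reproduce the paper's proof exactly.
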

\begin{proof}
    Based on the properties of Schur polynomials \cite[Lemma 1.5]{lambert1991numerical}, the necessary and sufficient condition for the roots (which can be complex) of \eqref{eq:second-order-equation} to lie within the unit circle is given by 
    \begin{equation}\label{eq:root-condition}
        \nu\leq 1, \quad \nu\geq\mu-1,\quad \nu\geq-\mu-1.
    \end{equation}
    Ensuring that the roots lie within the circle $|r|\leq \varrho$ is equivalent to the roots of $\Tilde{r}^2+\frac{\mu}{\varrho}\Tilde{r}+\frac{\nu}{\varrho^2}=0$ being within the unit circle, where $\Tilde{r}=\frac{r}{\varrho}$. Applying \eqref{eq:root-condition} to this condition yields
    \begin{equation}
        \nu\leq \varrho^2,\quad \varrho\mu-\varrho^2\leq \nu,\quad -\varrho\mu-\varrho^2\leq \nu,
    \end{equation}
    which is the desired result.
    \qed
\end{proof}

As an application of Lemma \ref{lem:root-location}, we give the explicit formula for the contraction factor.
\begin{proposition}
\label{prop:maximal-rho}
    Given the matrix $G\in \mathbb{S}^{n}_{+}$, coefficients $\alpha,\beta(\cdot)$, and $\gamma(\cdot)$ in \eqref{eq:define-contraction}, we denote the eigenvalues of $G$ as $\lambda_{1}\geq\ldots\geq \lambda_{n}\geq 0$ and introduce
    \begin{equation}
        A(t,L)=h^2(\gamma(t)-\dot{\beta}(t))L,\; B(t,L)=\frac{h}{2}(\beta(t)L+\alpha/t), \; C(t,L)=B(t,L)^2-A(t,L).
    \end{equation}
    Assuming the condition \eqref{eq:stab-condition-1} holds, the contraction factor is
    \begin{equation}\label{eq:contraction-factor}
    \begin{aligned}
    \rho(t,G)=
        \max\{\sqrt{[1-2B(t,\lambda_{n})+A(t,\lambda_{n})]_+},&B(t,\lambda_{1})-1+\sqrt{[C(t,\lambda_{1})]_+},\\
        &1-B(t,\lambda_{1})+\sqrt{[C(t,\lambda_{1})]_+}\}.
    \end{aligned}
    \end{equation}
\end{proposition}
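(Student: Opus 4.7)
The plan is to reduce the computation to scalar problems by diagonalizing the symmetric positive semidefinite matrix $G$. Writing $G = U^\top \Lambda U$ with $U$ orthogonal and $\Lambda = \operatorname{diag}(\lambda_1, \ldots, \lambda_n)$, the block-orthogonal similarity $V = \operatorname{diag}(U, U)$ followed by a suitable permutation transforms the $2n \times 2n$ matrix in \eqref{eq:define-contraction} into a block-diagonal matrix whose $2 \times 2$ diagonal blocks are
$$M_i = \begin{pmatrix} 1 - h\beta(t)\lambda_i & h \\ \bigl(\alpha\beta(t)/t + \dot{\beta}(t) - \gamma(t)\bigr)\lambda_i & 1 - \alpha h/t \end{pmatrix}.$$
Since the conjugation is orthogonal, $\rho(t, G) = \max_{1 \leq i \leq n} \|M_i\|$, and the formula to be proved reads each $\|M_i\|$ off the roots of its characteristic polynomial.

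Next I would compute $\operatorname{tr}(M_i) = 2 - 2B(t, \lambda_i)$ and, after cancellation of the cross-terms $\pm h^2 \alpha\beta(t)\lambda_i/t$ in the determinant expansion, $\det(M_i) = 1 - 2B(t, \lambda_i) + A(t, \lambda_i)$. Thus each characteristic polynomial takes the form $r^2 + \mu r + \nu = 0$ with $\mu = 2B(t, \lambda_i) - 2$ and $\nu = 1 - 2B(t, \lambda_i) + A(t, \lambda_i)$. Applying Lemma \ref{lem:root-location}, the smallest $\varrho \geq 0$ for which both roots satisfy $|r| \leq \varrho$ is determined by three simultaneous inequalities; the first yields $\varrho \geq \sqrt{[\nu]_+}$, while the other two, viewed as quadratic inequalities in $\varrho$ with discriminant $4 C(t, \lambda_i)$, yield $\varrho \geq (B(t, \lambda_i) - 1) + \sqrt{[C(t, \lambda_i)]_+}$ and $\varrho \geq (1 - B(t, \lambda_i)) + \sqrt{[C(t, \lambda_i)]_+}$, respectively. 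The operators $[\,\cdot\,]_+$ cleanly absorb the complex-root regime ($C < 0$) and the non-positive-determinant regime ($\nu < 0$).

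Finally I would maximize over the eigenvalues. The stability condition \eqref{eq:stab-condition-1} gives $h(\gamma(t) - \dot{\beta}(t)) \leq \beta(t)$, which makes $\nu$ a non-increasing affine function of $\lambda$, so $\sqrt{[\nu]_+}$ attains its supremum at the smallest eigenvalue $\lambda_n$. The remaining two terms, governed by $B(t, \lambda)$ (strictly increasing in $\lambda$) and $C(t, \lambda) = B^2 - A$ (quadratic in $\lambda$ with positive leading coefficient), are pushed toward the largest eigenvalue $\lambda_1$, which assembles the closed form \eqref{eq:contraction-factor}.

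The main obstacle is rigorously verifying that the non-monotonic expressions $(B - 1) + \sqrt{[C]_+}$ and $(1 - B) + \sqrt{[C]_+}$ attain their suprema over $\lambda \in [\lambda_n, \lambda_1]$ at the right endpoint $\lambda_1$. This requires a case analysis on the signs of $1 - B(t, \lambda)$ and $C(t, \lambda)$, leveraging \eqref{eq:stab-condition-1} to compare the quadratic growth of $B^2$ against the linear growth of $A$. Once this endpoint monotonicity is in hand, the rest of the argument is a direct application of Lemma \ref{lem:root-location} and the spectral decomposition of $G$.
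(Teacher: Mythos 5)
Your outline follows the paper's proof exactly: orthogonal block-diagonalization of $G$ into $2\times 2$ blocks, the characteristic polynomial $r^2-(2-2B)r+(1-2B+A)=0$, Lemma \ref{lem:root-location}, and then maximization over the eigenvalues. (Minor slip: your $M_i$ is missing the factor $h$ in the $(2,1)$ entry relative to \eqref{eq:Gi}; your determinant computation is only consistent with the version that has it.) However, the step you defer as ``the main obstacle'' is precisely the technical core of the proposition, and your proposal does not actually close it. Two things are missing. First, when extracting the lower bounds on $\varrho$ from the second and third inequalities of \eqref{eq:strong-root}, the quadratic $\varrho^2-(2B-2)\varrho+(1-2B+A)\geq 0$ is also satisfied for $\varrho$ \emph{below} its smaller root, so one must rule out the branch $\rho_i\leq (B-1)-\sqrt{[C]_+}$ (and its mirror); the paper does this via the identity $1-2B+A+C=(1-B)^2$, which gives $\sqrt{1-2B+A}\leq |1-B|$ whenever $C\geq 0$, so the first inequality already forces $\rho_i$ above the smaller roots. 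Second, and more importantly, the claim that $B(t,\lambda)-1+\sqrt{[C(t,\lambda)]_+}$ and $1-B(t,\lambda)+\sqrt{[C(t,\lambda)]_+}$ are maximized at $\lambda_1$ is not a consequence of $C$ being ``quadratic with positive leading coefficient'' ($1-B+\sqrt{[C]_+}$ is not obviously increasing, since $-B$ decreases). The paper establishes it by a direct derivative computation showing
\begin{equation*}
\frac{\partial}{\partial L}\Bigl(B(t,L)-\sqrt{C(t,L)}\Bigr)\leq 0,
\end{equation*}
which after simplification reduces to the sign of $\tfrac{\gamma-\dot{\beta}}{\beta^2}\bigl(\tfrac{\alpha}{t}\beta-(\gamma-\dot{\beta})\bigr)\leq 0$, i.e.\ exactly the first half of \eqref{eq:stab-condition-1}; the two target expressions are then written as $2B-1-(B-\sqrt{C})$ and $1-(B-\sqrt{C})$, each increasing in $L$ on the region $C\geq 0$, while on the region $C<0$ the inequality $\sqrt{1-2B+A}\geq |1-B|$ shows the first term of the max dominates, so those expressions never determine $\rho_i$ there. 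Without this computation the formula \eqref{eq:contraction-factor} is not proved, so as it stands the proposal is an accurate roadmap of the paper's argument with its decisive lemma left open.
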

\begin{proof}
    Since $G$ is symmetric semi-positive definite, it has the decomposition $G=Q\upLambda Q^\top$, where $Q$ is orthogonal and $\upLambda$ is a non-negative diagonal matrix. Using a block diagonal matrix $\operatorname{Diag}(Q,Q)$ to perform the similar transform to $G$, it is sufficient to consider $\operatorname{Diag}(G_{1},\ldots,G_{n})$ with
    \begin{equation}\label{eq:Gi}
        G_{i}=\begin{pmatrix}
            1-h\beta(t)\lambda_{i}&h\\
            (\alpha\beta(t)/t+\dot{\beta}(t)-\gamma(t))h\lambda_{i}&1-\alpha h/t
        \end{pmatrix}.
    \end{equation}
    This fact can be derived by switching the columns and rows to form a block diagonal matrix with $2\times 2$ sub-matrices.
    
    The characteristic equation of $G_{i}$ is
    \begin{equation}\label{eq:lambda-i}
        \lambda^2-\left(2-2B(t,\lambda_{i})\right)\lambda+1-2B(t,\lambda_{i})+A(t,\lambda_{i})=0.
    \end{equation}
    Denote the spectral radius of \eqref{eq:lambda-i} as $\rho_{i}$. Using Lemma \ref{lem:root-location} and elementary algebra, $\rho_{i}$ must satisfies
    \begin{align}
        &1-2B(t,\lambda_{i})+A(t,\lambda_{i})\leq \rho_{i}^2,\label{eq:strong-1}\\
        &C(t,\lambda_{i})\leq\left(1+\rho_{i}-B(t,\lambda_{i})\right)^2,\label{eq:strong-2}\\
        &C(t,\lambda_{i})\leq\left(1-\rho_{i}-B(t,\lambda_{i})\right)^2.\label{eq:strong-3}
    \end{align}
    Notice that
    \begin{equation}\label{eq:relation}
        1-2B(t,\lambda_{i})+A(t,\lambda_{i})+C(t,\lambda_{i})=(1-B(t,\lambda_{i}))^2.
    \end{equation}
    When $C(t,\lambda_{i})<0$, only \eqref{eq:strong-1} takes effect. We have $1-2B(t,\lambda_{i})+A(t,\lambda_{i})>0$ and 
    \begin{equation}\label{eq:c-leq-0}
        \sqrt{1-2B(t,\lambda_{i})+A(t,\lambda_{i})}\geq |1-B(t,\lambda_{i})|\geq \max\{1-B(t,\lambda_{i}),B(t,\lambda_{i})-1\}.
    \end{equation}
    When $C(t,\lambda_{i})\geq 0$, we have
    \begin{equation}\label{eq:c-geq-0}
        \sqrt{1-2B(t,\lambda_{i})+A(t,\lambda_{i})}\leq |1-B(t,\lambda_{i})|\leq \max\{1-B(t,\lambda_{i}),B(t,\lambda_{i})-1\}.
    \end{equation}
    This inequality excludes the possibility of $\rho_{i}\leq 1-B(t,L)-\sqrt{[C(t,L)]_+}$ and $\rho_{i}\leq B(t,L)-1-\sqrt{[C(t,L)]_+}$. Combining the equations \eqref{eq:c-leq-0} and \eqref{eq:c-geq-0}, the spectral radius of $G_{i}$ is
    \begin{equation}\label{eq:rho-i}
        \begin{aligned}
            \rho_{i}=
                \max\{\sqrt{[1-2B(t,\lambda_{i})+A(t,\lambda_{i})]_+},&B(t,\lambda_{i})-1+\sqrt{[C(t,\lambda_{i})]_+},\\
                &1-B(t,\lambda_{i})+\sqrt{[C(t,\lambda_{i})]_+}\}.
        \end{aligned}
    \end{equation}

    Since $\gamma(t)-\dot{\beta}(t)\leq\beta(t)/h$, it is easy to verify that $\sqrt{[1-2B(t,L)+A(t,L)]_+}$ is an decreasing function with respect to $L$. Meanwhile, the derivative of $B(t,L)-\sqrt{C(t,L)}$ with respect to $L$ is
    \[
        \begin{aligned}
            &\frac{h}{2}\bigg(\beta(t)-\frac{2\beta(t)(\beta(t)L+{\alpha}/{t})-4(\gamma(t)-\dot{\beta}(t))}{2\sqrt{\frac{4}{h^2}C(t,L)}}\bigg)\\
            &=\frac{h\beta(t)}{2\sqrt{\frac{4}{h^2}C(t,L)}}\bigg(2\sqrt{(\beta(t)L+\alpha/t)^2-4(\gamma(t)-\dot{\beta}(t)) L}\\
            &\qquad\qquad\qquad\qquad\qquad-\left(2(\beta(t)L+{\alpha}/{t})-4(\gamma(t)-\dot{\beta}(t))/\beta(t)\right)\bigg).
        \end{aligned}
    \]
    Elementary calculation gives
    \[
        \begin{aligned}
            4(&\beta(t)L+\alpha/t)^2-16(\gamma(t)-\dot{\beta}(t)) L-4\left((\beta(t)L+{\alpha}/{t})-2(\gamma(t)-\dot{\beta}(t))/\beta(t)\right)^2\\
            &=-16(\gamma(t)-\dot{\beta}(t)) L+16\frac{(\gamma(t)-\dot{\beta}(t))}{\beta(t)}(\beta(t)L+{\alpha}/{t})-16\frac{(\gamma(t)-\dot{\beta}(t))^2}{\beta(t)^2}\\
            &=16\frac{(\gamma(t)-\dot{\beta}(t))}{\beta(t)^2}\left(\frac{\alpha}{t}\beta(t)-(\gamma(t)-\dot{\beta}(t))\right)\leq 0.
        \end{aligned}
    \]
    We have \(\frac{\mathrm{d}}{\mathrm{d}L}\left(B(t,L) - \sqrt{C(t,L)}\right) \leq 0\). Considering the following relationships:
    \[
    \begin{aligned}
        &B(t,L) - 1 + \sqrt{[C(t,L)]_+} = 2B(t,L) - 1 - (B(t,L) - \sqrt{C(t,L)}), \\
        &1 - B(t,L) + \sqrt{[C(t,L)]_+} = 1 - (B(t,L) - \sqrt{C(t,L)}),
    \end{aligned}
    \]
    we observe that the functions \(B(t,L) - 1 + \sqrt{[C(t,L)]_+}\) and \(1 - B(t,L) + \sqrt{[C(t,L)]_+}\) are increasing. By using \(\rho(t,G) = \max_{i} \rho_{i}\) and the monotonicity of each component in \eqref{eq:rho-i}, we derive the formula \eqref{eq:contraction-factor}.
    \qed
\end{proof}

A useful estimation of the contraction factor can be obtained through a more detailed investigation of Proposition \ref{prop:maximal-rho}. We use the same notations as Proposition \ref{prop:maximal-rho} in the following corollary.

\begin{corollary}
    \label{coro:bound-rho}
    Given the condition \eqref{eq:stab-condition-1}, if the matrix \(G \in \mathbb{S}^{n}_{+}\) satisfies
    \begin{equation}
        \label{eq:bound-rho}
        \beta(t)\sqrt{\|G\|} \leq \sqrt{\gamma(t) - \dot{\beta}(t)} + \sqrt{\gamma(t) - \dot{\beta}(t) - \frac{\alpha}{t}\beta(t)},
    \end{equation}
    then \(\rho(t, G) \leq 1 - {\alpha h}/{(2t)}\).
\end{corollary}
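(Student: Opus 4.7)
The plan is to apply the explicit formula \eqref{eq:contraction-factor} from Proposition \ref{prop:maximal-rho}, which expresses $\rho(t,G)$ as the maximum of three quantities, and bound each of them by $1 - \alpha h/(2t)$. To streamline the algebra, I set $y := \sqrt{\gamma(t) - \dot\beta(t)}$ and $z := \sqrt{\gamma(t) - \dot\beta(t) - \alpha\beta(t)/t}$, so that hypothesis \eqref{eq:bound-rho} reads $\beta(t)\sqrt{\|G\|} \leq y + z$, and one has the identity $y^2 - z^2 = \alpha\beta(t)/t$.

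For the first term, expanding the definitions yields $1 - 2B(t,\lambda_n) + A(t,\lambda_n) = 1 - h\alpha/t + h\lambda_n\bigl(h(\gamma(t) - \dot\beta(t)) - \beta(t)\bigr)$. The inequality $\gamma(t) - \dot\beta(t) \leq \beta(t)/h$ from \eqref{eq:stab-condition-1} makes the bracketed factor nonpositive, so the expression is at most $1 - h\alpha/t$; the concavity inequality $\sqrt{1-x} \leq 1 - x/2$ then gives the desired bound.

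For the remaining two terms, the key object is $C(t, L) = B(t, L)^2 - A(t, L)$. Treating $C(t, \cdot)$ as a quadratic in $L$, its discriminant simplifies to $16 y^2 z^2$ and its roots are $L_\pm = (y \pm z)^2/\beta(t)^2$. The hypothesis is exactly $\lambda_1 \leq L_+$, and in the regime $\lambda_1 \in [L_-, L_+]$ one has $C(t, \lambda_1) \leq 0$, so $\sqrt{[C(t,\lambda_1)]_+} = 0$ and both remaining terms collapse to $|1 - B(t, \lambda_1)|$. The estimate $1 - B(t, \lambda_1) \leq 1 - \alpha h/(2t)$ is immediate since $B(t, \lambda_1) \geq \alpha h/(2t)$, and the estimate $B(t, \lambda_1) - 1 \leq 1 - \alpha h/(2t)$ follows by parameterizing $z = \theta y$ with $\theta \in [0, 1)$ and combining $\beta(t)^2\lambda_1 \leq (y + z)^2$ with $hy^2 \leq \beta(t)$ (from \eqref{eq:stab-condition-1}) to show $h\beta(t)\lambda_1 + 2h\alpha/t \leq (1 + \theta)(3 - \theta) \leq 4$.

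The main obstacle is handling the subcase $\lambda_1 < L_-$ where $C(t, \lambda_1) > 0$ and the $\sqrt{C}$ contribution does not vanish. Rewriting the critical term as $1 - A(t, \lambda_1)/(B(t, \lambda_1) + \sqrt{C(t, \lambda_1)})$ reduces the target bound to $2t\,A(t, \lambda_1) \geq \alpha h\bigl(B(t, \lambda_1) + \sqrt{C(t, \lambda_1)}\bigr)$; verifying this requires both inequalities of \eqref{eq:stab-condition-1} in combination with quantitative lower-bound information on $\lambda_1$ from the context of the iteration in which the corollary is invoked, so that the damping contributed by the $(1 - \alpha h/t)$-block of the contraction matrix is adequate even when $G$ is nearly degenerate.
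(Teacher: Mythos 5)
Your treatment of the first term and of the regime $C(t,\lambda_1)\leq 0$ is correct: the computation of the roots $L_{\pm}=(y\pm z)^2/\beta(t)^2$, the identification of hypothesis \eqref{eq:bound-rho} with $\lambda_1\leq L_+$, and the bound $\sqrt{1-2B(t,\lambda_n)+A(t,\lambda_n)}\leq\sqrt{1-\alpha h/t}\leq 1-\alpha h/(2t)$ all match the paper. Your direct verification of $B(t,\lambda_1)-1\leq 1-\alpha h/(2t)$ via $z=\theta y$ and $(1+\theta)(3-\theta)\leq 4$ is a clean alternative to the paper's route, which instead bounds both of the last two terms by $\sqrt{1-2B(t,\lambda_1)+A(t,\lambda_1)}$ using \eqref{eq:c-leq-0} and then absorbs them into the first term by monotonicity in $L$.

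The genuine gap is the subcase $\lambda_1<L_-$, which you do not close: appealing to ``quantitative lower-bound information on $\lambda_1$ from the context of the iteration'' is inadmissible, since the corollary is stated for an arbitrary $G\in\mathbb{S}^{n}_{+}$ satisfying \eqref{eq:bound-rho} and no such information is among its hypotheses. The paper's intended argument here is the monotonicity of $L\mapsto B(t,L)-\sqrt{C(t,L)}$ asserted inside Proposition \ref{prop:maximal-rho} (derived from $\alpha\beta(t)/t\leq\gamma(t)-\dot{\beta}(t)$): since $B-1+\sqrt{C}=2B-1-(B-\sqrt{C})$ and $1-B+\sqrt{C}=1-(B-\sqrt{C})$ are then both increasing in $L$ on the set where $C\geq 0$, both terms are maximized at $L=L_-$, where $C$ vanishes and, via the identity \eqref{eq:relation}, they reduce to $|1-B(t,L_-)|=\sqrt{1-2B(t,L_-)+A(t,L_-)}\leq\sqrt{1-2B(t,\lambda_n)+A(t,\lambda_n)}$, which is already controlled. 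That monotonicity reduction is the ingredient your proposal is missing. Note also that your fallback target $2tA(t,\lambda_1)\geq\alpha h\bigl(B(t,\lambda_1)+\sqrt{C(t,\lambda_1)}\bigr)$ is unprovable as stated: as $\lambda_1\to 0$ the left side vanishes while the right side tends to $\alpha^2h^2/t$, so no manipulation of \eqref{eq:stab-condition-1} will establish it pointwise; your instinct that something beyond the stated hypotheses is needed on that route is correct, and indeed the sign analysis behind the paper's own monotonicity claim merits scrutiny for small $L$, where $2\beta(t)(\beta(t)L+\alpha/t)-4(\gamma(t)-\dot{\beta}(t))$ can be negative. Nevertheless, the reduction to the boundary point $L=L_-$ is the argument the paper relies on, and it is what your write-up lacks.
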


\begin{proof}
    The roots of \(C(t, L) = 0\) are
    \[
        \begin{aligned}
            L_{\pm} &= \frac{-\left(\frac{\alpha}{t} \beta(t) - 2(\gamma(t) - \dot{\beta}(t))\right) \pm 2\sqrt{(\gamma(t) - \dot{\beta}(t))^2 - \frac{\alpha}{t} \beta(t)(\gamma(t) - \dot{\beta}(t))}}{\beta(t)^2}\\
            &= \left(\frac{\sqrt{\gamma(t) - \dot{\beta}(t)} \pm \sqrt{\gamma(t) - \dot{\beta}(t) - \frac{\alpha}{t} \beta(t)}}{\beta(t)}\right)^2.
        \end{aligned}
    \]
    When \(L_{-} \leq \lambda_{1} \leq L_{+}\), we know \(C(t, \lambda_{1}) \leq 0\). Hence, the formula \eqref{eq:c-leq-0} implies
    \[
        \begin{aligned}
            1 - B(t, \lambda_{1}) + \sqrt{[C(t, \lambda_{1})]_+} &\leq \sqrt{1 - 2B(t, \lambda_{1}) + A(t, \lambda_{1})}\\
            &\leq \sqrt{1 - 2B(t, \lambda_{n}) + A(t, \lambda_{n})}.
        \end{aligned}
    \]
    When \(L \leq L_{-}\), we have \(C(t, L) \geq 0\). The monotonicity of \(B(t, L) - \sqrt{C(t, L)}\) implies
    \[
        \begin{aligned}
            B(t, L) - 1 + \sqrt{C(t, L)} &\leq B(t, L_{-}) - 1 + \sqrt{C(t, L_{-})}\\
            &= \sqrt{1 - 2B(t, L_{-}) + A(t, L_{-})}\\
            &\leq \sqrt{1 - 2B(t, \lambda_{n}) + A(t, \lambda_{n})},
        \end{aligned}
    \]
    where the last inequality comes from \(\lambda_{n} \leq \lambda_{1} \leq L_{-}\) and the monotonicity of \(\sqrt{1 - 2B(t, L) + A(t, L)}\).

    Combining these results, we know \(\lambda_{1} \leq L_{+}\) ensures that
    \[
        \rho(t, G) \leq \sqrt{1 - 2B(t, \lambda_{n}) + A(t, \lambda_{n})} \leq 1 - \frac{\alpha h}{2t}.
    \]
    The second inequality comes from $0\leq \lambda_{n}$, \(\gamma(t) - \dot{\beta}(t) \leq {\beta(t)}/{h}\), and \(\sqrt{1 - {\alpha h}/{t}} \leq 1 - {\alpha h}/{2t}\). Simplifying \(\lambda_{1} \leq L_{+}\) gives the equation \eqref{eq:bound-rho}.
    \qed
\end{proof}

Next, we give the proof of Theorem \ref{thm:stable} by analyzing the propagation of the global truncated error.

\begin{proof}
    To investigate the global truncated error, we consider the relationship between it and the local truncated errors. Subtracting \eqref{eq:trun-error} with \eqref{eq:first-order-discrete} at time $t_{k}$ and using the definition \eqref{eq:dis-error} yield
    \begin{equation}\label{eq:error-prop}
        \begin{pmatrix}
            r_{k+1}\\
            s_{k+1}
        \end{pmatrix}
        =
        \begin{pmatrix}
            I-h\beta(t_{k})G&hI\\
            (\alpha\beta(t_{k})/t_{k}+\dot{\beta}(t_{k})-\gamma(t_{k}))G&(1-\alpha h/t_{k})I
        \end{pmatrix}
        \begin{pmatrix}
            r_{k}\\
            s_{k}
        \end{pmatrix}
        +
        h\varphi(t_{k}),
    \end{equation}
    where $G=\int_{0}^{1}\nabla^2 f(x(t_{k})+\tau r_{k})\,\mathrm{d}\tau$. Our proof is based on the error propagation equation \eqref{eq:error-prop}. Denote $\rho(t_{k})=\rho(t_{k},G)$ with $G=\int_{0}^{1}\nabla^2 f(x(t_{k})+\tau r_{k})\,\mathrm{d}\tau$. We enhance the theorem and prove the following conclusions for each $k$:
    \begin{equation}\label{eq:converge-rate}
        \rho(t_k) \leq 1 - \frac{\alpha h}{2t_k}, \quad \|e_k\| \leq \frac{M_3}{\sqrt{t_k}}, \quad \text{and} \quad f(x_k) - f_{\star} \leq \mathcal{O}\left(\frac{1}{k}\right).
    \end{equation}
    The constant $M_{3}$ come from Lemma \ref{lem:varphi-rate}. To facilitate our presentation, we designate the first inequality in \eqref{eq:converge-rate} with index $k$ as $H_0(k)$, and the second as $H_{1}(k)$. We employ a mathematical induction on $k$ to demonstrate the theorem.

    We begin by confirming the base case when $k = 0$. The restriction on $\alpha,\beta(t_0)$, and $\gamma(t_0)$ ensures $\rho(t_0)\leq 1-\alpha h/(2t_{0})$. Since, $x(t_{0})=x_{0},v(t_{0})=v_{0}$, we have $\|e_{0}\|=0$. These results guarantee that both propositions $H_{0}(0)$ and $H_{1}(0)$ are satisfied.

    Assume as our induction hypothesis that both $H_{0}(l)$ and $H_{1}(l)$ are valid for all indices less than or equal to $k$. We will now prove that they remain true for the index $k+1$. For $k\leq K$, a useful estimation of $\prod_{l=k}^{K}\rho(t_{l})$ is
    \begin{equation}
        \begin{aligned}
            \prod_{l=k}^{K}\rho(t_{l})=&\exp\left(\sum_{l=k}^{K}\ln\left(\rho(t_{l})-1+1\right)\right)
            \leq\exp\left(\sum_{l=k}^{K}(\rho(t_{l})-1)\right)\\
            \leq&\exp\left(-\sum_{l=k}^{K}\frac{\alpha h}{2t_{l}}\right)
            \leq\exp\left(-\frac{\alpha}{2}\int_{t_{k}}^{t_{K+1}}\frac{1}{t}\,\mathrm{d}t\right)=\left(\frac{t_{k}}{t_{K+1}}\right)^{\alpha/2}.
        \end{aligned}
    \end{equation}
    We first prove that $H_{1}(k+1)$ holds. Applying \eqref{eq:error-prop} successively, we have
    \[
        \begin{aligned}
            \|e_{n+1}\|\leq&\left(1-\frac{\alpha h}{2t_{k}}\right)\|e_{k}\|+\|\varphi(t_{k})\|\\
            \leq&\prod_{k=0}^{n}(1-\rho(t_{k}))\|e_0\|+\|\varphi(t_{n})\|+\sum_{k=0}^{n-1}\prod_{j=k+1}^{n}(1-\rho(t_{j}))\|\varphi(t_{k})\|\\
            \leq&\sum_{k=0}^{n}\left(\frac{t_{k+1}}{t_{n+1}}\right)^{\alpha/2} \|\varphi(t_{k})\|\leq M_{3}\frac{1}{\sqrt{t_{n+1}}}.
        \end{aligned}
    \]
    The last inequality are derived from Lemma \ref{lem:varphi-rate}.

    For $H_{0}(k+1)$, we note that the condition \eqref{eq:stab-condition-2} ensures that
    \[
        \begin{aligned}
            \beta(t)\sqrt{\left\|\int_{0}^{1}\nabla^2 f(x(t_{k})+\tau r_{k})\,\mathrm{d}\tau\right\|}
            &\leq\beta(t)\sqrt{\int_{0}^{1}\upLambda(X(t_{k},\upXi,f)+\tau \bar{x}(t_{k},\upXi,f),f)\,\mathrm{d}\tau}\\
            &\leq\sqrt{\gamma(t)-\dot{\beta}(t)}+\sqrt{\gamma(t)-\dot{\beta}(t)-\frac{\alpha}{t}\beta(t)}.
        \end{aligned}
    \]
    Using Corollary \ref{coro:bound-rho}, we get $\rho(t_{n+1})\leq 1/({2t_{n+1}})$.
    
    Combining these results, we know $H_{0}(k)$ and $H_{1}(k)$ hold for all $k$. The function value minimization rate is
    \begin{equation}
        \begin{aligned}
            f(x_{k})-f_\star\leq&|f(x_{k})-f(x(t_{k}))|+|f(x(t_{k}))-f_\star|\\
            \leq&\|\nabla f(x(t_{k}))\|\|e_{k}\|+\frac{1}{2}\left\|\int_{0}^{1}\nabla^2 f(x(t_k)+\tau e_{k})\,\mathrm{d}\tau\right\|\|e_{k}\|^2+\frac{E(t_{0})}{t_{k}^2w(t_{k})},
        \end{aligned}
    \end{equation}
    which means the convergence rate is at least $\mathcal{O}(1/k)$.
    \qed
\end{proof}
Using the same assumptions and notations as in Theorem \ref{thm:stable}, we present a simplified version of the stability condition under the Lipschitz continuity of the Hessian matrix.

\begin{corollary}
    Suppose the Hessian matrix satisfies the Lipschitz continuity condition
    \[
    \|\nabla^2 f(x) - \nabla^2 f(y)\| \leq L_H \|x - y\|.
    \]
    Using the same notations and conditions as in Theorem \ref{thm:stable}, but replacing the condition \eqref{eq:stab-condition-2} with the simplified condition \eqref{eq:stab-condition-2-simplified}, we obtain that \(f(x_k) - f_{\star} \leq \mathcal{O}(1/k)\).
\end{corollary}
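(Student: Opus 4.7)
The plan is to show that, under the $L_H$-Lipschitz continuity of $\nabla^2 f$, the simplified condition \eqref{eq:stab-condition-2-simplified} is sufficient to verify the original condition \eqref{eq:stab-condition-2} at every discretization point $t_k$ along the induction; once this implication is in place, the proof of Theorem \ref{thm:stable} carries over verbatim and yields the claimed $\mathcal{O}(1/k)$ rate. I intend to reuse the joint induction in Theorem \ref{thm:stable}, which simultaneously establishes the contraction bound $\rho(t_k) \leq 1 - \alpha h/(2 t_k)$, the global error bound $\|e_k\| \leq M_3/\sqrt{t_k}$, and the function-value bound $f(x_k) - f_\star \leq \mathcal{O}(1/k)$, and only re-justify the single step that invokes \eqref{eq:stab-condition-2}.

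Concretely, in the proof of Theorem \ref{thm:stable} the matrix
\[
G = \int_{0}^{1}\nabla^2 f\bigl((1-\tau)\bar{x}(t_{k+1}) + \tau X(t_{k+1},\upXi,f)\bigr)\,\mathrm{d}\tau
\]
arises from the mean-value form of the error-propagation identity \eqref{eq:error-prop}, and the contraction estimate $\rho(t_{k+1}) \leq 1 - \alpha h/(2 t_{k+1})$ follows from Corollary \ref{coro:bound-rho} applied to $G$. Taking $\upLambda(x,f) := \|\nabla^2 f(x)\|$ and applying the Lipschitz continuity of $\nabla^2 f$ along the segment from $\bar{x}(t_{k+1})$ to $X(t_{k+1},\upXi,f)$, I would bound
\[
\int_{0}^{1}\upLambda\bigl((1-\tau)\bar{x}(t_{k+1}) + \tau X(t_{k+1},\upXi,f),f\bigr)\,\mathrm{d}\tau \leq \upLambda(X(t_{k+1},\upXi,f),f) + \frac{L_H}{2}\,\|\bar{x}(t_{k+1}) - X(t_{k+1},\upXi,f)\|.
\]
Because $\bar{x}(t_{k+1}) = x_{k+1}$ and $X(t_{k+1},\upXi,f) = x(t_{k+1})$, the rightmost norm equals $\|r_{k+1}\| \leq \|e_{k+1}\|$, and the inductive hypothesis gives $\|e_{k+1}\| \leq M_3/\sqrt{t_{k+1}}$. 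Substituting this bound and invoking \eqref{eq:stab-condition-2-simplified} at $t = t_{k+1}$ yields
\[
\beta(t_{k+1})\sqrt{\int_{0}^{1}\upLambda\bigl((1-\tau)\bar{x}(t_{k+1}) + \tau X(t_{k+1},\upXi,f),f\bigr)\,\mathrm{d}\tau} \leq \sqrt{\gamma(t_{k+1})-\dot{\beta}(t_{k+1})} + \sqrt{\gamma(t_{k+1})-\dot{\beta}(t_{k+1}) - \frac{\alpha}{t_{k+1}}\beta(t_{k+1})},
\]
which is precisely the hypothesis \eqref{eq:bound-rho} of Corollary \ref{coro:bound-rho}.

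The main obstacle is the circular dependency inherent in the induction: verifying the contraction bound at step $k+1$ through the simplified condition requires the error bound $\|e_{k+1}\| \leq M_3/\sqrt{t_{k+1}}$, while that error bound is itself proved using the contraction bounds at the preceding steps together with the summed local-truncation estimate from Lemma \ref{lem:varphi-rate}. The resolution is to interleave the two strands in exactly the order used in Theorem \ref{thm:stable}: first derive $\|e_{k+1}\|$ from the telescoping product of $\rho(t_0),\ldots,\rho(t_k)$ and the local-truncation sum, and then feed this bound into the Lipschitz inequality above to conclude $\rho(t_{k+1}) \leq 1 - \alpha h/(2 t_{k+1})$ via Corollary \ref{coro:bound-rho}. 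All other steps, including the final estimate $f(x_k)-f_\star \leq \mathcal{O}(1/k)$ obtained by Taylor expansion around $x(t_k)$, are identical to those in Theorem \ref{thm:stable}.
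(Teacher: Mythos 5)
Your proposal is correct and follows essentially the same route as the paper: the paper likewise runs the induction of Theorem \ref{thm:stable} unchanged, and for the step $H_0(k+1)$ uses the Hessian Lipschitz bound $\|\int_0^1\nabla^2 f(x(t_k)+\tau r_k)\,\mathrm{d}\tau\|\leq \upLambda(X(t_k),\upXi,f)+\tfrac{L_H}{2}\|r_k\|$ together with the inductive error bound $\|r_k\|\leq M_3/\sqrt{t_k}$ and \eqref{eq:stab-condition-2-simplified} to invoke Corollary \ref{coro:bound-rho}. Your explicit handling of the ordering (error bound first, then contraction bound) matches the paper's resolution of the apparent circularity.
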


\begin{proof}
    We use the same arguments as in the proof of Theorem \ref{thm:stable}. The parts besides \(H_{0}(k+1)\) can be proven verbosely as Theorem \ref{thm:stable}. For \(H_{0}(k+1)\), assuming \(H_{1}(k)\) holds, the Hessian continuity and the condition \eqref{eq:stab-condition-2-simplified} implies
    \[
        \begin{aligned}
            \beta(t) &\sqrt{\left\|\int_{0}^{1}\nabla^2 f(x(t_k) + \tau r_k)\,\mathrm{d}\tau\right\|} \\
            &\leq \beta(t) \sqrt{\nabla^2 f(x(t_k)) + \frac{L_T}{2}\|r_k\|} \leq \beta(t) \sqrt{\upLambda(X(t_k), \upXi, f) + \frac{L_T M_3}{2 \sqrt{t_k}}} \\
            &\leq \sqrt{\gamma(t) - \dot{\beta}(t)} + \sqrt{\gamma(t) - \dot{\beta}(t) - \frac{\alpha}{t}\beta(t)}.
        \end{aligned}
    \]
    The second inequality follows from \(H_{1}(k)\). Corollary \ref{coro:bound-rho} establishes \(H_{0}(k+1)\).
    \qed
\end{proof}

The convergence rate given in Theorem \ref{thm:stable} is only $\mathcal{O}(1/k)$ rather than the accelerated rate $\mathcal{O}(1/k^2)$. The reason is that  our technique in the proof makes the discretization error accumulates and eventually gives a slow convergence rate. However, the flexibility of the coefficients make it suitable for deriving a large range of optimization methods. We also show that the empirical performance of \eqref{eq:first-order-discrete} is comparable to and can even be better than the discretized scheme with $\mathcal{O}(1/k^2)$ rate in theory after some tuning of the coefficients.

\subsection{Convergence analysis of Algorithm \ref{algo:SEPM}}

Before presenting the main result, we first give several useful definitions.

\begin{definition}[Clarke directional derivative]
    \label{def:clarke-directional}
    Let $\varkappa$ be Lipschitz continuous near $\bar{\theta}$. The Clarke directional derivative of $\varkappa$ at $\bar{\theta}$ along a nonzero vector $\vartheta$ is given by
    \begin{equation}
        \varkappa^{\circ}(\bar{\theta} ; \vartheta) \triangleq \limsup _{\substack{\theta \rightarrow \bar{\theta} \\ \tau \downarrow 0}} \frac{\varkappa(\theta+\tau \vartheta)-\varkappa(\theta)}{\tau}.
    \end{equation}
\end{definition}

Based on Definition \ref{def:clarke-directional}, we define the Clarke subdifferential.
\begin{definition}[Clarke subdifferential]
\label{def:clarke-subdiff}
    For a function $\varkappa$ that is Lipschitz continuous near $\theta$, the Clarke subdifferential of $\varkappa$ at $\theta$ is given by
    \[
    \partial \varkappa(\theta) \triangleq\left\{a \in \mathbb{R}^{d_{\theta}}: \varkappa^{\circ}({\theta} ; \vartheta) \geq a^{\top} \vartheta,\quad \forall \vartheta \in \mathbb{R}^{d_{\theta}}\right\}.
    \]
\end{definition}

Another definition of the Clarke subdifferential is
\[
\partial \varkappa(\theta)=\operatorname{co}\left\{\vartheta \in \mathbb{R}^{d_{\theta}}: \exists\left\{\theta_n\right\}_{n \in \mathbb{N}}\subset \operatorname{dom}\varkappa, \text { s.t. }\left(\theta_n, \nabla \varkappa\left(\theta_n\right)\right) \rightarrow(\theta, \vartheta)\right\}.
\]
For the proof of the equivalence, one may refer to \cite[Theorem 2.5.1]{clarke1990nonsmooth}. Using the Clarke subdifferential, we define the Clarke stationarity. Besides, we also define the $D$-stationarity using conservative gradient.

\begin{definition}[Clarke stationary point]
    $\theta$ is a Clarke stationary point of $\varkappa$ if $0\in \partial\varkappa(\theta)$.
\end{definition}
\begin{definition}[$D$-stationary point]
    $\theta$ is a $D$-stationary point of $\varkappa$ if $0\in D^\varkappa(\theta)$.
\end{definition}

We apply Definition \ref{def:clarke-directional} and get the following criterion for Clarke stationary point based on Clarke directional derivative.

\begin{proposition}[Criterion of Clarke stationary point]
    \label{prop:criteria-clarke}
    $\theta$ is a Clarke stationary point of $\varkappa$ if and only if $\varkappa^{\circ}({\theta} ; \vartheta) \geq 0$ for all $\vartheta\in\mathbb{R}^{d_{\theta}}$.
\end{proposition}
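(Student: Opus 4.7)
The plan is to establish both directions by directly unpacking the definition of the Clarke subdifferential given in Definition \ref{def:clarke-subdiff}. Since the criterion links $0 \in \partial\varkappa(\theta)$ with a statement about $\varkappa^{\circ}(\theta;\vartheta)$, and since $\partial\varkappa(\theta)$ is itself defined in terms of $\varkappa^{\circ}(\theta;\cdot)$, the proof essentially amounts to evaluating that definition at the specific subgradient candidate $a=0$.

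For the forward direction, I would assume $0 \in \partial\varkappa(\theta)$. By Definition \ref{def:clarke-subdiff}, every element $a$ of $\partial\varkappa(\theta)$ satisfies $\varkappa^{\circ}(\theta;\vartheta) \geq a^{\top}\vartheta$ for all $\vartheta \in \mathbb{R}^{d_{\theta}}$. Applying this with $a=0$ immediately yields $\varkappa^{\circ}(\theta;\vartheta) \geq 0$ for every $\vartheta$, which is the desired conclusion. No additional machinery (such as the convexity or nonemptiness of $\partial\varkappa$) is required here, only the defining inequality.

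For the reverse direction, I would suppose $\varkappa^{\circ}(\theta;\vartheta) \geq 0$ for every $\vartheta \in \mathbb{R}^{d_{\theta}}$. Rewriting the right-hand side as $0 = 0^{\top}\vartheta$, one sees that the vector $a=0$ fulfils the condition $\varkappa^{\circ}(\theta;\vartheta) \geq a^{\top}\vartheta$ for all directions $\vartheta$. By Definition \ref{def:clarke-subdiff} this means exactly that $0 \in \partial\varkappa(\theta)$, i.e.\ $\theta$ is Clarke stationary.

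Since both implications reduce to a single substitution into the defining inequality of $\partial\varkappa$, there is no real obstacle to overcome: the argument is a direct verification and the proof is essentially a one-line observation in each direction. The only point worth being careful about is ensuring that the Lipschitz assumption near $\theta$ (implicit in Definitions \ref{def:clarke-directional}--\ref{def:clarke-subdiff}) is in force, so that $\varkappa^{\circ}(\theta;\vartheta)$ is finite and $\partial\varkappa(\theta)$ is the well-defined object we are comparing against; but this regularity is already built into the hypotheses under which the Clarke subdifferential is being used.
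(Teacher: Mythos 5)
Your proof is correct and matches the paper's treatment: the paper states this criterion as an immediate consequence of Definition \ref{def:clarke-subdiff} without writing out a proof, and your two-line verification (substituting $a=0$ into the defining inequality $\varkappa^{\circ}(\theta;\vartheta)\geq a^{\top}\vartheta$ in both directions) is exactly the intended argument.
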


After addressing these preliminaries, we begin our proof, which is divided into two parts. In the first part, we establish a condition that ensures the feasibility of each stationary point of the penalty function \(\eqref{eq:penalty-function}\). Within the context of the probability space for parameterized functions \((\mathcal{F}, \mathcal{H}(\mathscr{A}), \mathbb{P}_{\mathcal{H}})\), we define the residual function as:
\[
R(\theta) = \mathbb{E}_{f}\left[P\left(X(T(\theta,f), \theta, T(\theta,f), f)\right)\right] + \mathbb{E}_{f}\left[Q(\theta, T(\theta,f))\right].
\]
This function quantifies the violation of constraints. The feasible set is defined by the equation:
\begin{equation}
    S = \{\theta \mid R(\theta) \leq 0\}.
\end{equation}
With these notations, we proceed to introduce the sufficient decrease condition based on Clarke directional derivatives.

\begin{assumption}[Uniform sufficient decrease condition]
    \label{assu:suff-decrease}
    For each infeasible point $\theta\in\mathbb{R}^{d_{\theta}}$, i.e. $\theta\notin S$, there exists a nonzero vector $\vartheta$, such that $R^{\circ}(\theta;\vartheta)\leq -c\|\vartheta\|$. Here the constant $c$ is uniform uniform across all $\theta$.
\end{assumption}

This condition is crucial for ensuring the effectiveness of the penalty function method as noted in the literature \cite{pang2022equilibrium,pang2023modern}. It guarantees that for every infeasible point, there is a direction in which the point can move toward the feasible set. This intuitive notion ensures that the solutions generated by Algorithm \ref{algo:SEPM} are feasible. We now formally demonstrates that the sufficient decrease condition effectively precludes any infeasible stationary points.

\begin{theorem}
    \label{thm:preclude-infeasible}
    Suppose $\mathbb{E}_{f}[T(\theta,f)]$ is globally Lipschitz continuous with Lipschitz constant $L_{T}$. Let Assumption \ref{assu:suff-decrease} hold. Given the penalty parameter $\rho>L_{T}/c$, any infeasible point of the penalty function \eqref{eq:penalty-function} can not be a $D$-stationary point.
\end{theorem}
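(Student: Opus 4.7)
The plan is to argue by contradiction. Suppose that $\theta$ is infeasible but $0\in D^{\upUpsilon}(\theta)$, where $\upUpsilon=\mathbb{E}_{f}[T(\cdot,f)]+\rho R$ and $D^{\upUpsilon}$ is obtained via the sum rule for conservative gradients (Lemma 5 of \cite{bolte2021conservative}). Then $D^{\upUpsilon}(\theta)\subseteq D^{\mathbb{E}_{f}[T]}(\theta)+\rho D^{R}(\theta)$, so $0=a+\rho b$ for some $a\in D^{\mathbb{E}_{f}[T]}(\theta)$ and $b\in D^{R}(\theta)$. The global $L_{T}$-Lipschitz continuity of $\mathbb{E}_{f}[T(\cdot,f)]$ forces the uniform bound $\|a\|\leq L_{T}$ on every element of its conservative gradient, so $\|b\|=\|a\|/\rho\leq L_{T}/\rho$, which is strictly less than $c$ as soon as $\rho>L_{T}/c$.

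To produce the contradiction I would invoke Assumption \ref{assu:suff-decrease} to pick a nonzero $\vartheta$ with $R^{\circ}(\theta;\vartheta)\leq -c\|\vartheta\|$. Combining the standard identity $R^{\circ}(\theta;\vartheta)=\max_{b'\in\partial^{C}R(\theta)}\langle b',\vartheta\rangle$ with Cauchy--Schwarz yields $\|b'\|\geq c$ for every $b'\in\partial^{C}R(\theta)$. Transferring this lower bound to the possibly larger set $D^{R}(\theta)$ — so that $\|b\|\geq c$ for the specific $b$ extracted in the previous paragraph — immediately contradicts $\|b\|<c$.

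The main obstacle is precisely this final transfer. Since $\partial^{C}R(\theta)\subseteq D^{R}(\theta)$ in general, with possible strict inclusion at nonsmooth points, a direct passage of the norm bound is not automatic. The plan is to represent an arbitrary $b\in D^{R}(\theta)$ as a limit of gradients $\nabla R(\theta_{k})$ at differentiable points $\theta_{k}\to\theta$, using graph-closedness and local boundedness of $D^{R}$ together with Rademacher's theorem, and then exploit the upper semicontinuity of the Clarke directional derivative in its first argument:
\[
    \langle b,\vartheta\rangle=\lim_{k\to\infty}\langle\nabla R(\theta_{k}),\vartheta\rangle\leq\limsup_{k\to\infty}R^{\circ}(\theta_{k};\vartheta)\leq R^{\circ}(\theta;\vartheta)\leq -c\|\vartheta\|.
\]
A further application of Cauchy--Schwarz then gives $\|b\|\geq c$ and closes the chain $c\leq\|b\|\leq L_{T}/\rho<c$, which is the required contradiction.
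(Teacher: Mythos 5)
Your proposal takes a genuinely different route from the paper, and it has a real gap at exactly the step you flag as the ``main obstacle''; the repair you sketch for that step does not work. The paper never decomposes $0=a+\rho b$ or estimates norms of individual elements of a conservative gradient. It works entirely at the level of Clarke directional derivatives: by subadditivity,
\[
\upUpsilon^{\circ}(\theta;\vartheta)\leq \mathbb{E}_{f}[T(\cdot,f)]^{\circ}(\theta;\vartheta)+\rho R^{\circ}(\theta;\vartheta)\leq L_{T}\|\vartheta\|-c\rho\|\vartheta\|<0,
\]
then invokes Proposition~\ref{prop:criteria-clarke} to rule out Clarke stationarity, and finally passes to $D$-stationarity via the minimality of the Clarke subdifferential among conservative gradients. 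This sidesteps every place where your argument needs quantitative control of individual subgradients.

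The gap: you need $\|a\|\leq L_{T}$ for every $a\in D^{\mathbb{E}_{f}[T]}(\theta)$ and $\|b\|\geq c$ for every $b\in D^{R}(\theta)$, but neither bound is valid for a general conservative gradient. Conservative gradients are only required to agree with the gradient along almost every point of absolutely continuous curves; at exceptional points they may strictly contain the Clarke subdifferential, and the extra elements obey no bound inherited from the Lipschitz constant or from $R^{\circ}$. A concrete obstruction: for the identically zero (hence $0$-Lipschitz) function on $\mathbb{R}$, the map equal to $\{0\}$ away from the origin and to $[-1,1]$ at the origin is a legitimate conservative gradient, so elements of a conservative gradient need not be bounded by the Lipschitz constant, and $0$ can be adjoined to a conservative gradient at any prescribed point. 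Your proposed transfer---representing an arbitrary $b\in D^{R}(\theta)$ as a limit of gradients $\nabla R(\theta_k)$ at nearby differentiable points---is precisely the characterization of the Clarke subdifferential, not of a conservative gradient; graph-closedness only gives $\limsup_{\theta'\to\theta}D^{R}(\theta')\subseteq D^{R}(\theta)$, which is the opposite inclusion from the one you need. Consequently the chain $c\leq\|b\|\leq L_{T}/\rho<c$ cannot be established for arbitrary $D^{R}$, and the statement has to be read (as the paper's proof implicitly does) relative to the Clarke subdifferential, i.e.\ the minimal conservative gradient, rather than proved element-by-element for a general one.
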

\begin{proof}
    Rewrite the penalty function as $\upUpsilon(\theta)=\mathbb{E}_{f}[T(\theta,f)]+\rho R(\theta)$. For any infeasible point $\theta$, using Assumption \ref{assu:suff-decrease}, there must exists a direction $\vartheta$, such that
    \[
        \upUpsilon^{\circ}(\theta;\vartheta)=
        \mathbb{E}_{f}[T(\cdot,f)]^{\circ}(\theta;\vartheta)+\rho R^{\circ}(\theta,\vartheta)\leq L_{T}\|\vartheta\|-c\rho\|\vartheta\|< 0.
    \]
    Invoking proposition \ref{prop:criteria-clarke}, the criteria of Clarke stationary point, we know $\theta$ cannot be a Clarke stationary point of $\upUpsilon$. Meanwhile, according to \cite[Corollary 1]{bolte2021conservative}, Clarke subdifferential is a minimal conservative gradient. For any conservative gradient $D^{\upUpsilon}$ of $\upUpsilon$, we have $\partial \upUpsilon\subset D^{\upUpsilon}$. Hence, $\theta$ can not be a $D$-stationary point of $\upUpsilon$.\qed
\end{proof}


Next, we show that Algorithm \ref{algo:SEPM} converges a $D$-stationary point of the penalty function \eqref{eq:penalty-function}. We take the viewpoint of differential inclusion and utilize some fundamental results in this field \cite{davis2020sgdtame,michel2005sto}. Consider the following differential inclusion
\begin{equation}
    \label{eq:general-diff-inclusion}
    \frac{\mathrm{d}\theta(t)}{\mathrm{d}t}=-D^{\upUpsilon}(\theta(t)),\quad\text{for a.e. } t\geq t_0.
\end{equation}
Then, the update scheme in Algorithm \ref{algo:SEPM} can be viewed as a noisy discretization of the differential inclusion \eqref{eq:general-diff-inclusion}
\begin{equation}
    \label{eq:sgd}
    \theta_{k+1}=\theta_{k}-\eta_{k}(\varrho_{k}+\xi_{k}),\quad\varrho_{k}\in D^{\upUpsilon}(\theta_{k}).
\end{equation}
The existence and uniqueness of solutions to equation \eqref{eq:general-diff-inclusion} are established in \cite{aubin1984differential}.  The following lemma, also found in \cite{davis2020sgdtame} but presented here using the conventions of conservative gradients, guarantees a descent property for $\upUpsilon(\zeta(t))$ when $\zeta$ is a trajectory of \eqref{eq:general-diff-inclusion} with $\mathcal{G}=D^{\upUpsilon}$. This property directly stems from the outer-semicontinuity of the conservative gradient.

\begin{lemma}
    \label{lem:descent}
    Set $\mathcal{G}=D^{\upUpsilon}$ in \eqref{eq:general-diff-inclusion}. Then the differential inclusion \eqref{eq:general-diff-inclusion} satisfies the descent property: For any trajectory $\zeta: \mathbb{R}_{+} \rightarrow \mathbb{R}^d$ of the differential inclusion \eqref{eq:general-diff-inclusion} where $0 \notin \mathcal{G}(\zeta(0))$, there exists a real $t_{1}>0$ such that
    \[
    \varkappa(\zeta(t_{1})) < \sup_{t \in [0, t_{1}]} \varkappa(\zeta(t)) \leq \varkappa(\zeta(0)).
    \]
\end{lemma}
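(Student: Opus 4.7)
The plan is to exploit the defining property of a conservative gradient along absolutely continuous curves: for any such curve $\zeta$, Definition \ref{def:conservative-jacobian} gives $\frac{\mathrm{d}}{\mathrm{d}t}\upUpsilon(\zeta(t))=\langle v,\dot{\zeta}(t)\rangle$ for \emph{every} $v\in D^{\upUpsilon}(\zeta(t))$ and almost every $t$. Since $\zeta$ solves \eqref{eq:general-diff-inclusion} with $\mathcal{G}=D^{\upUpsilon}$, there is a measurable selection $v(t)\in D^{\upUpsilon}(\zeta(t))$ with $\dot{\zeta}(t)=-v(t)$ a.e., hence $\frac{\mathrm{d}}{\mathrm{d}t}\upUpsilon(\zeta(t))=-\|v(t)\|^{2}\leq 0$ almost everywhere. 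Integrating yields monotonicity $\upUpsilon(\zeta(t))\leq \upUpsilon(\zeta(0))$ for all $t\geq 0$, which also gives $\sup_{t\in[0,t_{1}]}\upUpsilon(\zeta(t))\leq \upUpsilon(\zeta(0))$ directly.

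To obtain the strict inequality $\upUpsilon(\zeta(t_{1}))<\upUpsilon(\zeta(0))$, I would extract a uniform lower bound on $\|v(t)\|$ near $t=0$ from the hypothesis $0\notin D^{\upUpsilon}(\zeta(0))$. Because $D^{\upUpsilon}$ is nonempty, locally bounded, and graph-closed (these being the standing properties of a conservative Jacobian), the set $D^{\upUpsilon}(\zeta(0))$ is compact and does not contain $0$, so $\delta:=\tfrac{1}{2}\operatorname{dist}(0,D^{\upUpsilon}(\zeta(0)))>0$. Graph-closedness then gives a neighbourhood $V$ of $\zeta(0)$ on which $\|v\|\geq \delta$ uniformly for every $v\in D^{\upUpsilon}(\theta)$, $\theta\in V$; indeed, otherwise there would exist $\theta_{k}\to\zeta(0)$ and $v_{k}\in D^{\upUpsilon}(\theta_{k})$ with $\|v_{k}\|<\delta$, and after extracting a convergent subsequence (using local boundedness) the limit would lie in $D^{\upUpsilon}(\zeta(0))$ with norm at most $\delta$, contradicting the choice of $\delta$.

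Using continuity of $\zeta$ at $0$, pick $t_{1}>0$ so that $\zeta(t)\in V$ for all $t\in[0,t_{1}]$. Then $\|v(t)\|\geq \delta$ on this interval, so
\begin{equation*}
\upUpsilon(\zeta(t_{1}))-\upUpsilon(\zeta(0))=\int_{0}^{t_{1}}\frac{\mathrm{d}}{\mathrm{d}t}\upUpsilon(\zeta(t))\,\mathrm{d}t=-\int_{0}^{t_{1}}\|v(t)\|^{2}\,\mathrm{d}t\leq -\delta^{2}t_{1}<0,
\end{equation*}
which yields $\upUpsilon(\zeta(t_{1}))<\upUpsilon(\zeta(0))$. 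Combined with the monotonicity established in the first paragraph, this is precisely the claimed chain $\upUpsilon(\zeta(t_{1}))<\sup_{t\in[0,t_{1}]}\upUpsilon(\zeta(t))\leq \upUpsilon(\zeta(0))$.

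The main obstacle is the passage from the pointwise assumption $0\notin D^{\upUpsilon}(\zeta(0))$ to the \emph{uniform} lower bound $\|v\|\geq \delta$ in a full neighbourhood, because the selection $v(t)$ in \eqref{eq:sgd}/\eqref{eq:general-diff-inclusion} is not a priori continuous. This is exactly where the outer-semicontinuity (graph-closedness) and local boundedness of the conservative Jacobian are indispensable; without these structural properties of $D^{\upUpsilon}$ one could not rule out a selection $v(t)$ whose norm collapses to zero arbitrarily close to $t=0$, and the strict-decrease conclusion would fail.
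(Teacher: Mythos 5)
Your proposal is correct and follows essentially the same route as the paper's proof: both derive $\frac{\mathrm{d}}{\mathrm{d}t}\upUpsilon(\zeta(t))=-\|\dot{\zeta}(t)\|^{2}$ from the chain rule for conservative gradients along the trajectory, and both use the graph-closedness (outer-semicontinuity) and local boundedness of $D^{\upUpsilon}$ to propagate the pointwise condition $0\notin D^{\upUpsilon}(\zeta(0))$ to a uniform lower bound $\delta>0$ on the norm of the gradient selection over a short interval $[0,t_{1}]$, yielding strict decrease after integration. The only cosmetic difference is that the paper quotes $\|\dot{\zeta}(t)\|=\|D^{\upUpsilon}(\zeta(t))\|$ from the literature, whereas you work directly with the measurable selection $v(t)=-\dot{\zeta}(t)$; the substance is identical.
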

\begin{proof}
    We first verify the outer-semicontinuity of $D^{\upUpsilon}$. Employing the definition from \cite[sec. 5B]{rockafellar1998variational}, the outer limit of a mapping $S$ is
    \[
    \limsup _{\theta \rightarrow \bar{\theta}} S(\theta): =\bigcup_{\theta^\nu \rightarrow \bar{\theta}} \limsup _{\nu \rightarrow \infty} S\left(\theta^\nu\right)=\left\{u \mid \exists \theta^\nu \rightarrow \bar{\theta}, \exists u^\nu \rightarrow u \text { with } u^\nu \in S\left(\theta^\nu\right)\right\}.
    \]
    A set-valued mapping $S$ is outer-semicontinuous (osc) at $\bar{\theta}$ if $\limsup _{\theta \rightarrow \bar{\theta}} S(\theta) \subset S(\bar{\theta})$. The graph-closed property ensures that the conservative gradient $D^\upUpsilon$ is outer-semicontinuous.

    Suppose $\zeta$ is a solution of the differential inclusion \eqref{eq:general-diff-inclusion} with $\mathcal{G}=D^{\upUpsilon}$ and $0\notin D^{\upUpsilon}(\zeta(0))$, the outer-semicontinuity of $D^{\upUpsilon}$ guarantees that there exist $\delta>0$ and $t_{1}>0$, such that
    \[
        \|D^{\upUpsilon}(\zeta(t))\|:=\min_{\varrho\in D^{\upUpsilon}(\zeta(t))}\|\varrho\|\geq \delta,\quad \text{for all}\quad t\in [0,t_{1}].
    \]
    Using the fact $\|\dot{\zeta}(t)\|=\|D^{\upUpsilon}(\zeta(t))\|$ \cite[Lemma 5.2]{davis2020sgdtame} and the chain rule for conservative gradient we get
    \[
        \frac{\mathrm{d}\upUpsilon(\zeta(t))}{\mathrm{d}t}=\langle\varrho,\dot{\zeta}(t)\rangle=-\|\dot{\zeta}(t)\|^2=-\|D^{\upUpsilon}(\zeta(t))\|^2,\quad\varrho\in D^{\upUpsilon}(\zeta(t))\quad \text{a.e..}
    \]
    Combining both results we get the descent property.
    \qed
\end{proof}

To derive a convergence guarantee for Algorithm \ref{algo:SEPM}, we make the following assumptions of \eqref{eq:sgd}.

\begin{assumption}[Assumptions of the SGD]
    \label{assu:standing-SGD}
    \leavevmode
    \begin{enumerate}
        \item The step sizes $\left\{\eta_k\right\}_{k\geq 1}$ satisfy
        \[
        \eta_k \geq 0, \quad \sum_{k=1}^{\infty} \eta_k=\infty, \quad \text { and } \quad \sum_{k=1}^{\infty} \eta_k^2<\infty.
        \]
        \item Almost surely, the iterates $\{\theta_{k}\}_{k\geq 1}$ are bounded, i.e., $\sup_{k\geq 1}\|\theta_{k}\|<\infty$.
        \item $\{\xi_{k}\}_{k\geq 1}$ is a uniformly bounded difference martingale sequence with respect to the increasing $\sigma$-fields
        \[
            \mathcal{F}_{k}=\sigma(\theta_{j},\varrho_{j},\xi_{j}\colon j\leq k).
        \]
        In other words, there exists a constant $M_{\xi}>0$ such that
        \[
            \mathbb{E}[\xi_{k}\mid \mathcal{F}_{k}]=0\quad \text{and}\quad \mathbb{E}[\|\xi_{k}\|^2\mid \mathcal{F}_{k}]\leq M_{\xi}\quad \text{for all}\quad k\geq 1.
        \]
        \item The complementary of $\{\upUpsilon(\theta)\mid 0\in D^{\upUpsilon}(\theta)\}$ is dense in $\mathbb{R}$.
    \end{enumerate}
\end{assumption}

Assumption \ref{assu:standing-SGD} (4) is a technical assumption termed the weak Sard property. It is necessary since there exist a famous counter example \cite{whitney1935critical}. Using \cite[Lemma 4.1]{davis2020sgdtame}, we know Assumption \ref{assu:standing-SGD} (1-3) implies \cite[Assumption A]{davis2020sgdtame}  almost surely. One difference is that they assume there exists a function $\varkappa$ that bounded on bounded sets such that
\[
    \mathbb{E}[\|\xi_{k}\|^2\mid \mathcal{F}_{k}]\leq \varkappa(\theta_{k}).
\]
Given the boundedness of $\{\theta_{k}\}$, we know that this equals to Assumption \ref{assu:standing-SGD} (3). We are now at the position to give the convergence result for Algorithm \ref{algo:SEPM}.

\begin{theorem}[Convergence guarantee for Algorithm \ref{algo:SEPM}]\label{thm:converge-learning}
    Suppose Assumption \ref{assu:suff-decrease} and \ref{assu:standing-SGD} hold, $\mathbb{E}_{f}[T(\theta,f)]$ is local Lipschitz smooth, and $\{\theta_{k}\}_{k\geq 1}$ is generated by Algorithm \ref{algo:SEPM}. Then almost surely, every limit point $\theta_{\star}$ of $\{\theta_{k}\}_{k\geq 1}$ satisfies $\theta_{\star}\in S$, $0\in D^{\upUpsilon}(\theta_{\star})$ and the sequence $\{\upUpsilon(\theta_{k})\}_{k\geq 1}$ converges.
\end{theorem}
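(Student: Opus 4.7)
The plan is to combine a differential-inclusion analysis of Algorithm \ref{algo:SEPM} with the infeasibility-excluding Theorem \ref{thm:preclude-infeasible}. Writing the stochastic update as \eqref{eq:sgd}, a noisy discretization of \eqref{eq:general-diff-inclusion} with $\mathcal{G}=D^{\upUpsilon}$, the argument naturally splits into two claims: (i) almost surely every limit point $\theta_{\star}$ of $\{\theta_{k}\}_{k\geq 1}$ is $D$-stationary, and the value sequence $\{\upUpsilon(\theta_{k})\}$ converges; (ii) every such $\theta_{\star}$ is feasible, i.e.\ lies in $S$.

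For claim (i) I would invoke the tame stochastic approximation theorem of \cite{davis2020sgdtame}. The conservative gradient $D^{\upUpsilon}$ is nonempty, locally bounded, and graph-closed by Definition \ref{def:conservative-jacobian}; replacing it by its pointwise convex hull (which remains a conservative gradient) produces a convex-valued field. Assumption \ref{assu:standing-SGD} (1)--(3) together with \cite[Lemma 4.1]{davis2020sgdtame} reproduces the hypotheses required by \cite{davis2020sgdtame} almost surely, so the piecewise-affine interpolation of $\{\theta_{k}\}$ is an asymptotic pseudotrajectory of \eqref{eq:general-diff-inclusion}. Lemma \ref{lem:descent} supplies the strict Lyapunov descent off the critical set $\{\theta : 0\in D^{\upUpsilon}(\theta)\}$, and Assumption \ref{assu:standing-SGD} (4) is precisely the weak Sard condition that blocks pathological level-set plateaus. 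The standard Bena\"im--Hofbauer--Sorin chain-recurrence argument, adapted to conservative fields, then forces every accumulation point into the critical set and forces $\{\upUpsilon(\theta_{k})\}$ to converge.

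Claim (ii) follows directly from Theorem \ref{thm:preclude-infeasible}. Since the penalty parameter is chosen so that $\rho>L_{T}/c$, any infeasible point fails to be $D$-stationary, so every accumulation point produced by (i) must lie in $S$. Chaining the two claims delivers the three conclusions of the theorem.

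The principal obstacle will be discharging the technical hypotheses of \cite{davis2020sgdtame} honestly: outer-semicontinuity and convex-valuedness of $D^{\upUpsilon}$, the $L^{2}$ noise bound from Assumption \ref{assu:standing-SGD} (3) combined with iterate boundedness in (2), and a usable global Lipschitz constant for $\mathbb{E}_{f}[T(\theta,f)]$. Local Lipschitz smoothness together with almost-sure boundedness of $\{\theta_{k}\}$ lets us pass to a compact neighborhood containing the trajectory, on which a uniform $L_{T}$ is available; this is exactly what Theorem \ref{thm:preclude-infeasible} needs to convert Assumption \ref{assu:suff-decrease} into the exclusion used in claim (ii). Once these ingredients are assembled, the two claims combine to finish the proof.
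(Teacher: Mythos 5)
Your proposal is correct and follows essentially the same route as the paper: both reduce claim (i) to the tame SGD framework of \cite{davis2020sgdtame} (the paper invokes its Theorem 3.2 directly, with Assumption \ref{assu:standing-SGD} (1)--(3) supplying Assumption A via Lemma 4.1 there, and Lemma \ref{lem:descent} plus the weak Sard condition supplying Assumption B), and both obtain feasibility of limit points by upgrading local Lipschitz smoothness of $\mathbb{E}_{f}[T(\theta,f)]$ to a global constant on the bounded trajectory and then applying Theorem \ref{thm:preclude-infeasible}.
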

\begin{proof}
    When Assumption \ref{assu:standing-SGD} (1-3) hold, using our comment we know that \cite[Assumption A]{davis2020sgdtame} holds almost surely. Invoking Assumption \ref{assu:standing-SGD} (4) and Lemma \ref{lem:descent}, we know Assumption \cite[Assumption B]{davis2020sgdtame} holds. Then, applying \cite[Theorem 3.2]{davis2020sgdtame}, we get $0\in D^{\upUpsilon}(\theta_{\star})$ and the convergence of $\{\upUpsilon(\theta_{k})\}_{k\geq 1}$.

    Assumption \ref{assu:standing-SGD} (2) and the local Lipschitz smoothness of \(\mathbb{E}_{f}[T(\theta,f)]\) justify treating \(\mathbb{E}_{f}[T(\theta,f)]\) as globally Lipschitz smooth. By integrating this observation with Assumption \ref{assu:suff-decrease}, we invoke Theorem \ref{thm:preclude-infeasible} to eliminate the possibility of any infeasible points being stationary. This line of reasoning ensures that all limiting points must be feasible, that is, \(\theta_{\star} \in S\), thereby completing the proof.
    \qed
\end{proof}

To conclude this section, we note that it's possible to establish a connection between the stationary points obtained using finite sample approximations and the true stationary points at the population level. Specifically, as the number of samples increases indefinitely, the limit points of the finite-sample stationary points converge to the population-level stationary points. For a detailed discussion of this result, one can refer to Proposition 17 in \cite{pang2022chance}.

\section{Numerical results on the training process}
\label{sec:training-experiments}
This section presents the results obtained from running Algorithm \ref{algo:SEPM} and verifies Theorem \ref{thm:converge-learning} through numerical experiments. We provide a detailed description of the dataset, the approach used to construct the training problems, and the implementation details of the training process. The numerical experiments were implemented using the PyTorch platform on a Lenovo workstation equipped with an Intel i9 processor, 64 GB of RAM, and an NVIDIA RTX 4090 GPU, running the Windows Subsystem for Linux.

\subsection{Methodology for constructing minimization problems}
\label{sec:methodology}
In this subsection, we describe the methods used to construct the minimization problems referenced in \eqref{eq:unconstrained-minimization}. Given a dataset $\upSigma$, in $k$-th iteration of Algorithm \ref{algo:SEPM}, we draw finite instances from $\upSigma$ and construct the set $\mathscr{D}_{k}$. Then, the function $f_{k}:=f_{\mathscr{D}_{k}}$ is constructed and can be used in Algorithm \ref{algo:SEPM}.

We test Algorithm \ref{algo:SEPM} in two type minimization problems. Consider a finite set of instances, \(\mathscr{D}\), consisting of data pairs \(\{a_{i}, b_{i}\} \in \mathbb{R}^n \times \{0,1\}, i \in [|\mathscr{D}|]\). The first type of minimization problem is a logistic regression problem defined by:
\[
    \min_{x \in \mathbb{R}^n} f_{\mathscr{D}}(x) = \frac{1}{|\mathscr{D}|} \sum_{(a_{i}, b_i) \in \mathscr{D}} \log (1 + \exp(-b_i \langle a_i, x \rangle)),
\]
Let \(\sigma(t) = \frac{1}{1 + \exp(-t)}\) belong to the interval (0,1). The Hessian matrix of \(f_{\mathscr{D}}\) is given by:
\[
    \nabla^2 f_{\mathscr{D}}(x) = \frac{1}{|\mathscr{D}|} \sum_{(a_{i}, b_i) \in \mathscr{D}} b_i^2 a_i a_i^\top \sigma(b_i \langle a_i, x \rangle)(1 - \sigma(b_i \langle a_i, x \rangle)).
\]
Let \(A = (a_1, \ldots, a_{|\mathscr{D}|})\). The Lipschitz constant of \(\nabla f_{\mathscr{D}}\) is bounded by \(L = \|AA^\top\| / |\mathscr{D}|\).

Given an even integer \(p \geq 4\), the second type of minimization problem is the \(\ell_p^p\) minimization, defined as follows:
\[
    \min_{x \in \mathbb{R}^n} f_{\mathscr{D}}(x) = \frac{1}{|\mathscr{D}|} \sum_{(a_{i}, b_i) \in \mathscr{D}} \frac{1}{p} (\langle a_i, x \rangle - b_i)^p.
\]
The Hessian matrix of \(f_{\mathscr{D}}\) is expressed as:
\[
    \nabla^2 f_{\mathscr{D}}(x) = \frac{1}{|\mathscr{D}|} \sum_{(a_{i}, b_i) \in \mathscr{D}} (p - 1) (\langle a_i, x \rangle - b_i)^{p - 2} a_i a_i^\top.
\]
As \((\langle a_i, x \rangle - b_i)^{p - 2}\) is unbounded for each \(i\), the Lipschitz constant for \(\nabla f_{\mathscr{D}}\) cannot be globally bounded.

\subsection{Datasets for constructing minimization problems}

The datasets used in our experiments are summarized in Table \ref{tab:dataset}. In this table, $n, N_{\text{train}}$, and $N_{\text{test}}$ represent the dimension of the variable, the number of instances in the training dataset, and the number of instances in the test dataset, respectively.
\begin{table}[htbp]
    \centering
    \caption{A summary of the datasets used in experiments.}
    \label{tab:dataset}
    \resizebox{.8\textwidth}{!}{%
    \begin{tabular}{cccccc}
    \toprule
    Dataset & $n$ & $N_{\text{train}}$ & $N_{\text{test}}$ & Separable & References \\ \hline
    \texttt{a5a}            & $123$       & $6,414$   & $26,147$  & No & \cite{Dua:2019} \\
    \texttt{w3a}            & $300$       & $4,912$   & $44,837$  & No & \cite{platt1998fast} \\
    \texttt{mushrooms}      & $112$       & $3,200$   & $4,924$   & Yes & \cite{Dua:2019} \\
    \texttt{covtype}        & $54$         & $102,400$   & $478,612$   & No & \cite{Dua:2019} \\
    \texttt{phishing}       & $68$        & $8,192$     & $2,863$     & No & \cite{Dua:2019} \\
    \texttt{separable}      & $101$        & $20,480$    & $20,480$    & Yes & \cite{wilson2019accelerating} \\
    \bottomrule
    \end{tabular}
    }
\end{table}

All the datasets are designed for binary classification problems, and downloaded from the \href{https://www.csie.ntu.edu.tw/~cjlin/libsvmtools/datasets/}{LIBSVM data}, except the \texttt{separable} dataset. We construct the \texttt{separable} dataset using the code snippet downloaded from \cite{wilson2019accelerating}. They are generated by sampling $10240$ instances from $\mathcal{N}(\mu,I_{d})$ with label $b_{i}=1$ and $\mathcal{N}(\mu+\nu,I_{d})$ with label $b_{i}=0$, respectively. Here, $I_{d}\in\mathbb{R}^{d\times d}$ denotes the identity matrix. Each element of the vector $\mu\in\mathbb{R}^{d}$ is sampled from $\{0,1,\ldots,19\}$ uniformly, while the elements of the margin vector $\nu$ are drawn from $\{0,0.1,\ldots,0.9\}$ uniformly.

For each dataset, the label of each sample belongs to $\{0,1\}$. The value of each attribute are normalized to $[-1,1]$ by dividing the data-matrix $(a_1,a_2,\ldots,a_N)$ with the max absolute value of each attribute. The training and testing sets are pre-specified for \texttt{a5a} and \texttt{w3a}. For datasets that do not specify the testing set and training set, we divide them manually.

\subsection{Verifying the $(L_{0},L_{1})$-smoothness assumption}

Both the objective function used in logistic regression and the \(\ell_p^p\) minimization problem exhibit \((L_{0},L_{1})\)-smoothness. This characteristic remains consistent across various datasets, with the constants \(L_{0}\) and \(L_{1}\) depending on the specific dataset \(\mathscr{D}\). To illustrate this, we applied four different algorithms to two datasets, the details of which will be provided in subsection \ref{sec:compared-methods}. Each algorithm was executed for 300 steps, and every 30 steps, we plotted the point \((\|\nabla f(x_k)\|,\upLambda(x_k,f))\) on a log-log scale scatter plot. Despite the diversity in the methods used, all points aligned along the same line, underscoring the \((L_{0},L_{1})\)-smoothness as an intrinsic property of the function \(f_{\mathscr{D}}\). Figure \ref{fig:smooth} shows a pronounced decline in \(\upLambda(x_k,f)\) corresponding to decreases in \(\|\nabla f(x_k)\|\), confirming the arguments in sec. \ref{sec:lip-estimator}.

\begin{figure}[htbp]
    \centering
    \begin{subfigure}{.49\textwidth}
    \centering
    \includegraphics[width=\linewidth]{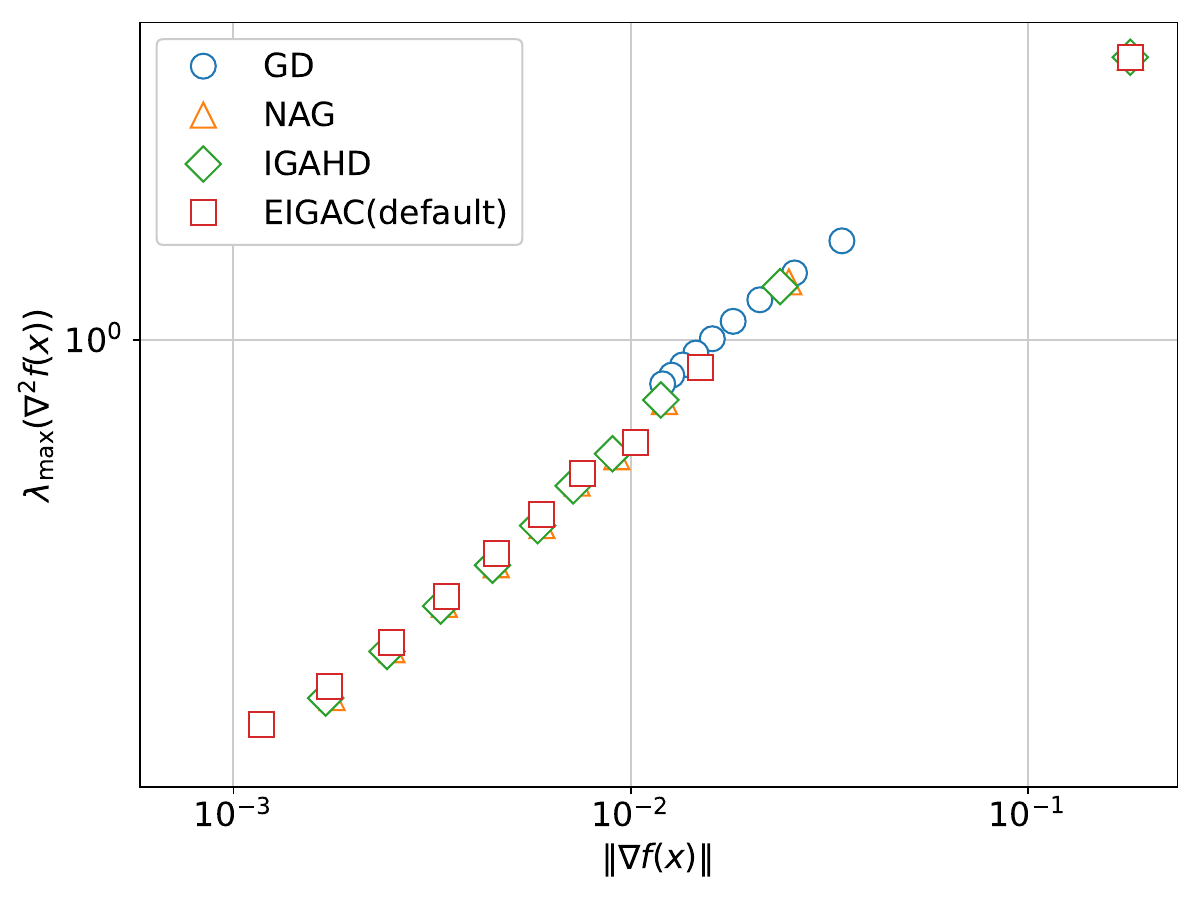}
    \caption{Logistic regression with \texttt{separable}}\label{fig:logistic-separable-smooth}
    \end{subfigure}
    \begin{subfigure}{.49\textwidth}
    \centering
    \includegraphics[width=\linewidth]{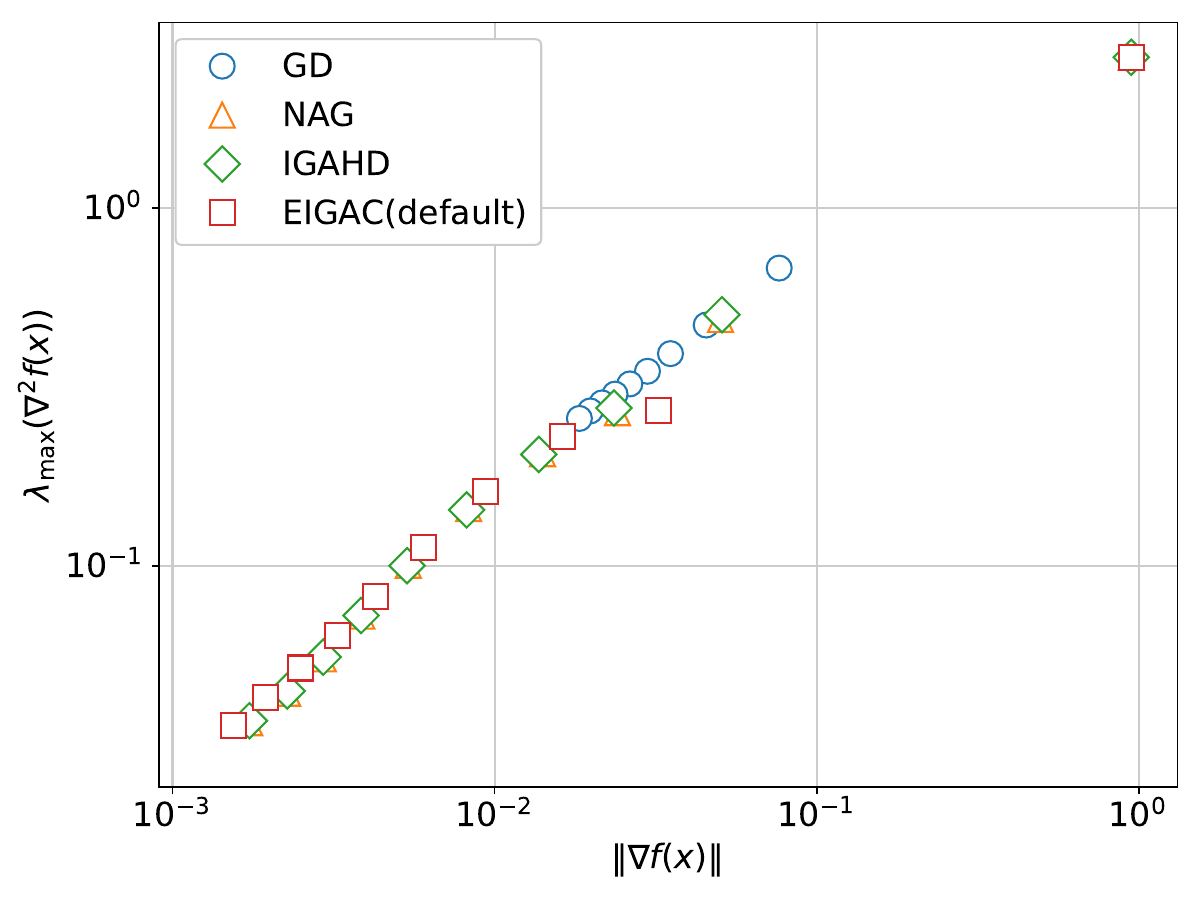}
    \caption{Logistic regression with \texttt{mushrooms}}
    \label{fig:logistic-mushrooms-smooth}
    \end{subfigure}
    \begin{subfigure}{.49\textwidth}
    \centering
    \includegraphics[width=\linewidth]{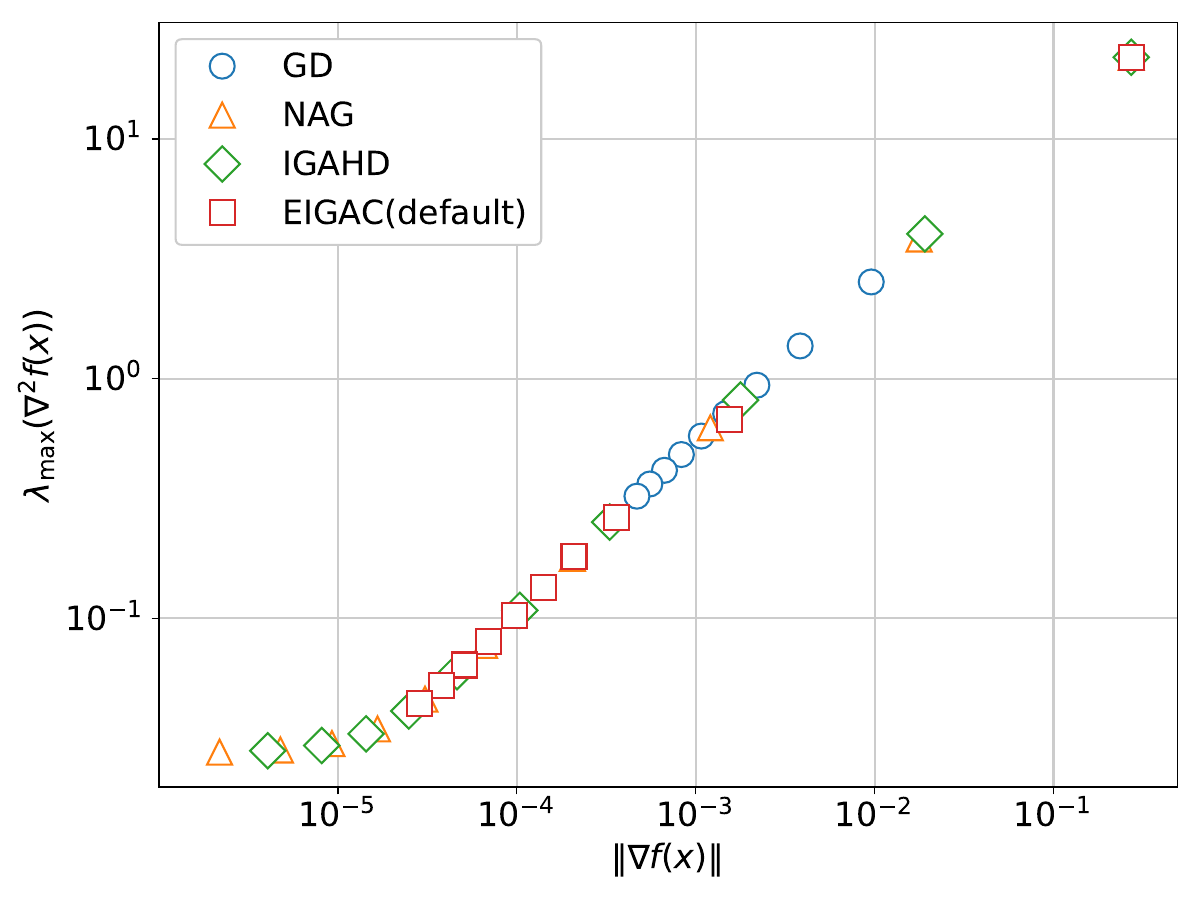}
    \caption{$\ell_p^p$ minimization with \texttt{separable}}
    \label{fig:lpp-separable-smooth}
    \end{subfigure}
    \begin{subfigure}{.49\textwidth}
    \centering
    \includegraphics[width=\linewidth]{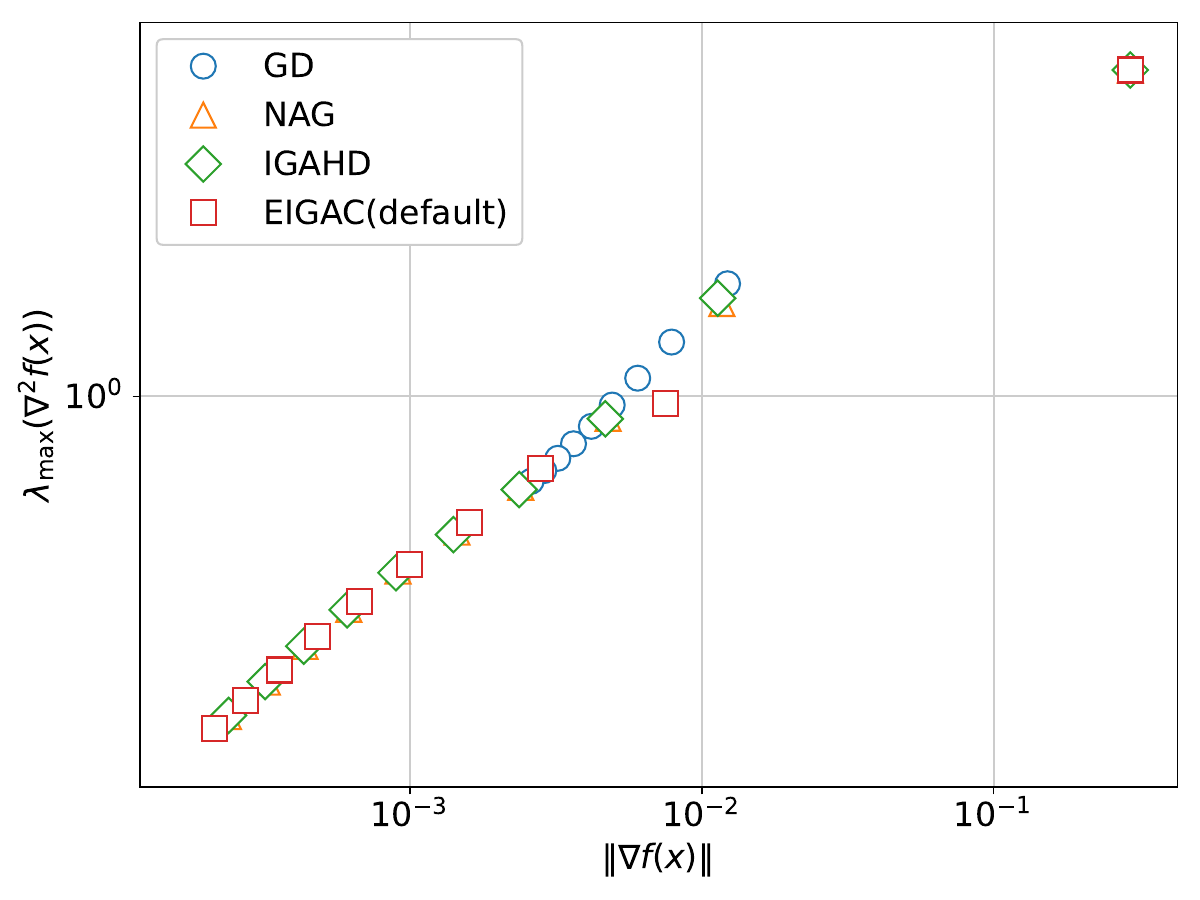}
    \caption{$\ell_p^p$ minimization with \texttt{mushrooms}}
    \label{fig:lpp-mushrooms-smooth}
    \end{subfigure}
    \caption{Numerical verification of the $(L_0,L_1)$-smoothness.\label{fig:smooth}}
\end{figure}

\subsection{Implementation details and the training process}
\label{sec:implementation-detail}
Let $\sigma(\cdot)$ represent applying the SoftPlus function element-wise, i.e., $[\sigma(x)]_{i}=\log(1+\exp(20x_{i}))/20$. Given the dimension of the hidden state $d_{\mathrm{h}}$, we parameterize $\beta$ and $\gamma$ as follows:
\[
    \beta_{\theta_{1}}(t)=W_3^\top \sigma(W_2\sigma(W_1t+b_1)+b_2)+b_3,\quad
    \gamma_{\theta_{2}}(t)=V_3^\top \sigma(V_2\sigma(V_1t+c_1)+c_2)+c_3,
\]
where $W_1, V_1, W_3, V_3,b_1,c_1,b_2,c_2\in\mathbb{R}^{d_{\mathrm{h}}}, W_2, V_2\in\mathbb{R}^{d_{\mathrm{h}}\times d_{\mathrm{h}}}$ and $b_3,c_3\in\mathbb{R}$. In this context, the SoftPlus function ensures the differentiability of $\beta_{\theta_{1}}$ and $\gamma_{\theta_{2}}$ with respect to $t$. We define \(\upLambda(x,f) := \lambda_{\max}(\nabla^2 f(x))\) to leverage the variability of the local Lipschitz constant of \(\nabla f\). To control the computational complexity for evaluating $\upLambda(x,f)$, we combine the power iteration with the forward automatic differentiation. We initiate the algorithm with a randomly generated vector $u_{0}$. In each iteration, $z_{k+1}$ is computed successively using the formula $z_{k+1}={\nabla^2 f(x)z_k}/{\|\nabla^2 f(x)z_k\|}$. The algorithm terminates when either the Euclidean norm of the difference between two successive iterations is less than or equal to $10^{-6}$, i.e., $\|z_{k+1}-z_{k}\|\leq 10^{-6}$, or the number of iterations reaches or exceeds 10, i.e., $k\geq 10$. The output of the algorithm is denoted as $u_{\star}$. We restore it for the backpropagation and use the approximation $\upLambda(x,f)\approx u_{\star}^\top\nabla^2 f(x)u_{\star}$. The penalty terms $P$ and $Q$ can be calculated by augmenting \eqref{eq:first-order} as
\[
(
    \dot{s}(t),\dot{P},\dot{Q}
)
=
\left(
    \psi_{\theta}(s(t),t), p(t,f,\theta), q(t,f,\theta)
\right).
\]
Integrating this system from $t_0$ to $T(f,\theta)$ gives the value of $P$ and $Q$. The package \texttt{torchdiffeq} implemented using \texttt{PyTroch} provides the implementation of the adjoint sensitivity method. We integrate it with our implementation of the backpropagation of $T(\theta,f)$ and $\upLambda(x,f)$. This combination enables the automatic differentiation through $\upUpsilon(\theta)$.

The initial point is chosen as $x_0=x_1=\mathbf{1}/n-\nabla f_{\mathscr{D}}(\mathbf{1}/n)/L$, and $v_{0}=x_{0}+\beta(t_{0})\nabla f(x_{0})$. All elements of the $n$-dimensional vector $\mathbf{1}$ are equal to $1$. We set $h=0.04$, $t_{0}=1$, and $L=\min\{\|A^\top A\|/N,4\upLambda(\mathbf{1}/n,f_{\mathscr{D}})\}$. Here $L$ is a tight estimate of the Lipschitz constant of $\nabla f_{\mathscr{D}}$. For each dataset, the stopping criterion is 
$\|\nabla f(x)\|=\varepsilon$ with $\varepsilon=3\times 10^{-4}$, and the penalty coefficient $\rho$ of \eqref{eq:penalty-function} is $0.1$. We adopt the \texttt{SGD} optimizer in \texttt{PyTorch} with the learning rate $0.001$ and the momentum $0.9$. The number of epochs is $n_{\mathrm{epoch}}=60$. We initialize the coefficient functions as $\alpha = 6$, $\beta(t) = (4 - 2\alpha h/t)/L$, and $\gamma(t) = \beta(t)/h$. In $k$-th step of Algorithm \ref{algo:SEPM}, we construct a problem $f_{k}:=f_{\mathscr{D}_{k}}$ by sampling $n_{\mathrm{sp}}=10240$ instances from the training dataset for \texttt{mushrooms} dataset and $n_{\mathrm{sp}}=1024$ instances for other datasets. Then, we perform one step of the Algorithm \ref{algo:SEPM} to update $\theta$.

\begin{figure}[htbp]
    \begin{subfigure}{.45\textwidth}
    \centering
    \includegraphics[width=\linewidth]{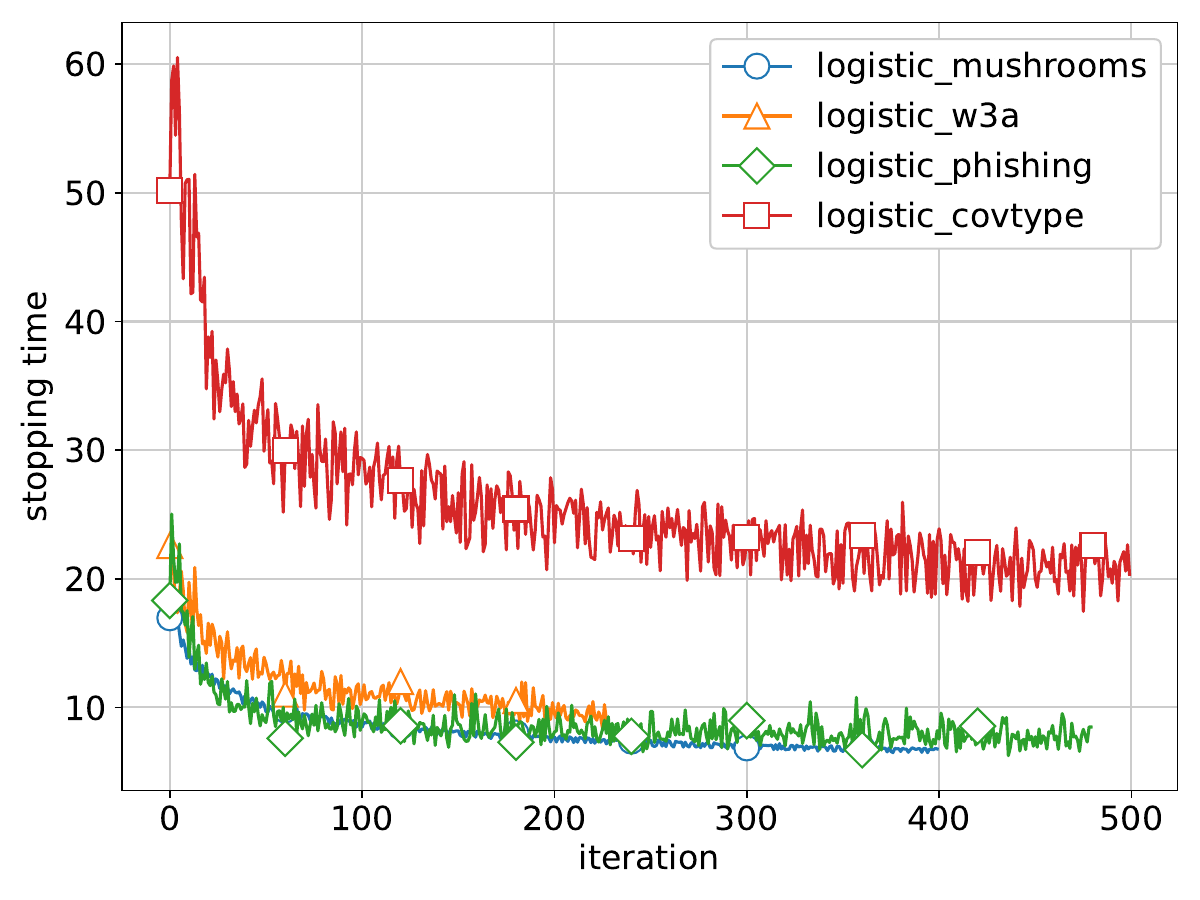}
    \caption{Stopping time on logistic regression}
    \label{fig:time-logi}
    \end{subfigure}
    \hfill
    \begin{subfigure}{.45\textwidth}
    \centering
    \includegraphics[width=\linewidth]{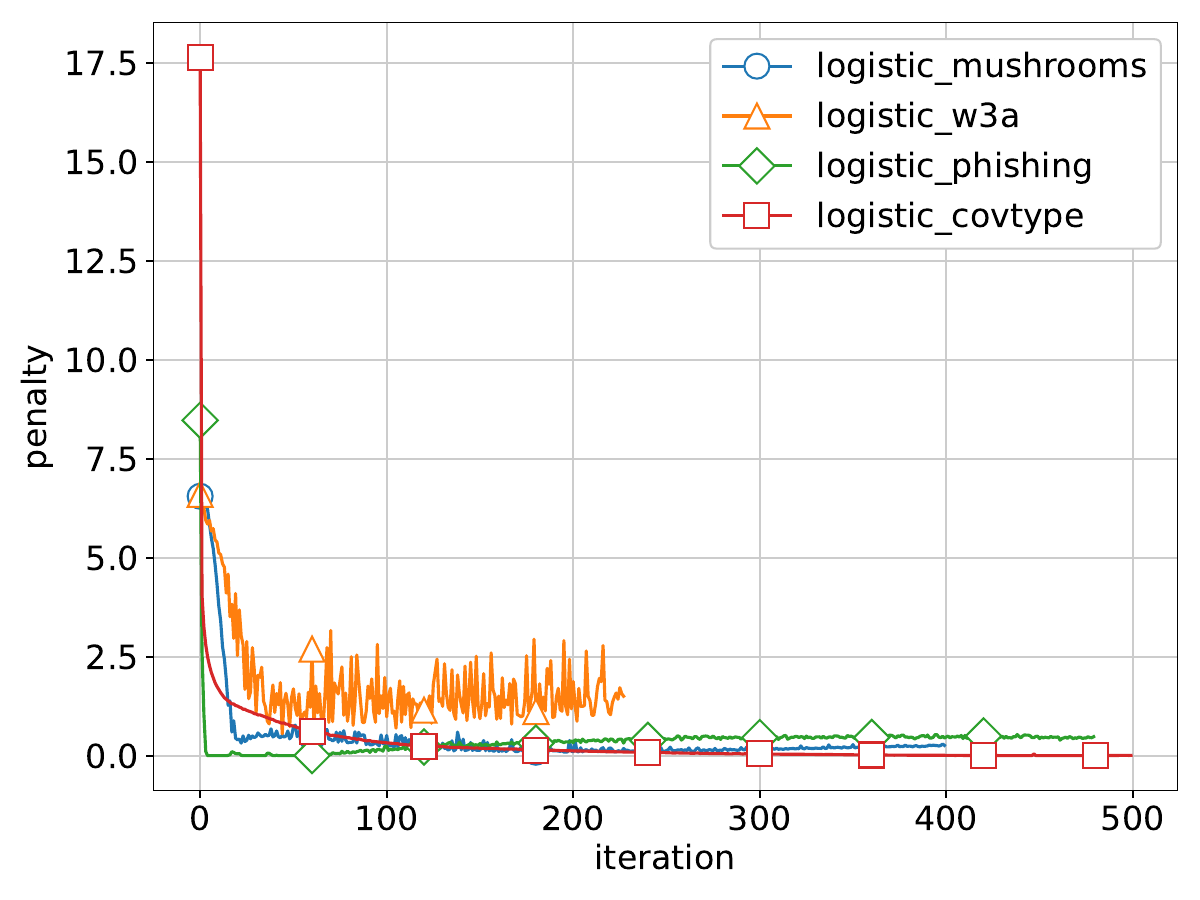}
    \caption{Penalty on logistic regression}
    \label{fig:penalty-logi}
    \end{subfigure}
    \begin{subfigure}{.45\textwidth}
    \centering
    \includegraphics[width=\linewidth]{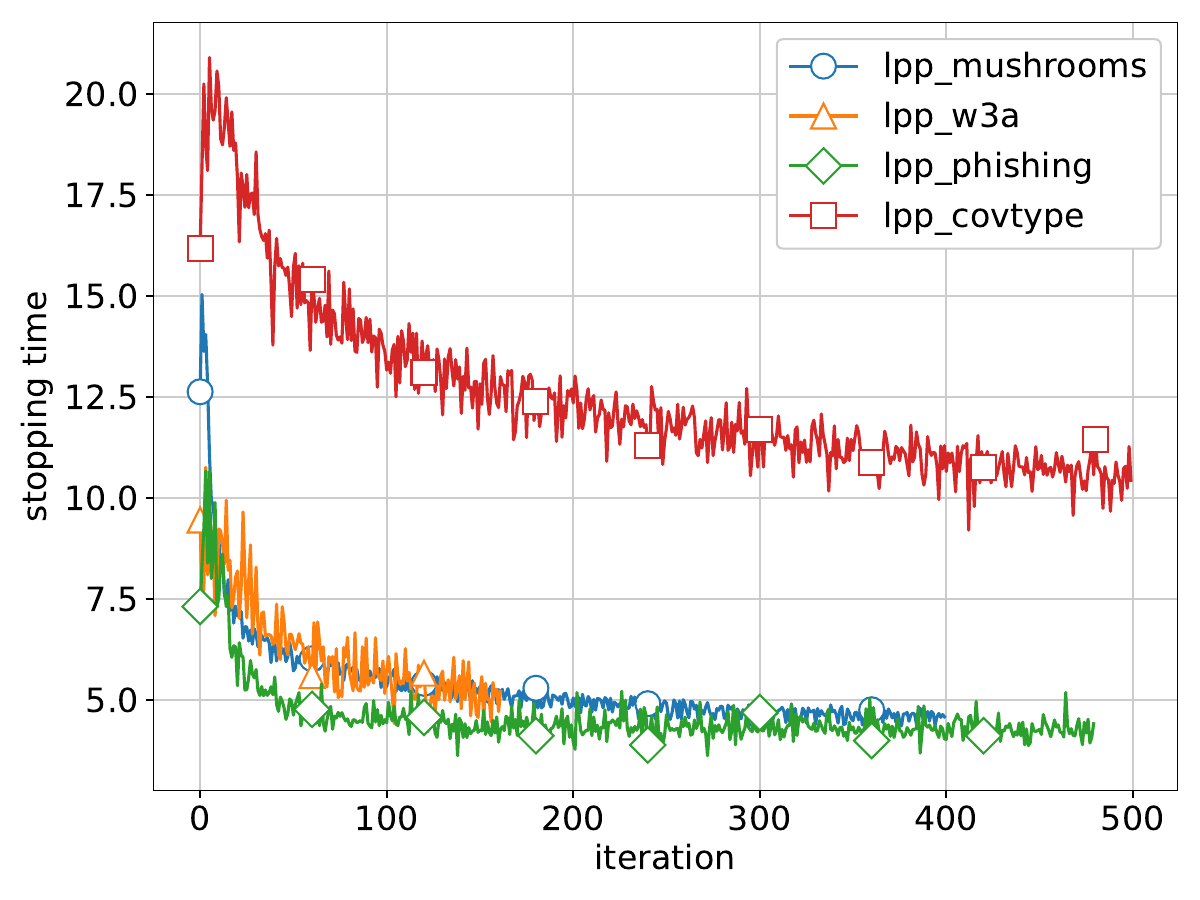}
    \caption{Stopping time on $\ell_{p}^{p}$ minimization}
    \label{fig:time-lpp}
    \end{subfigure}
    \hfill
    \begin{subfigure}{.45\textwidth}
    \centering
    \includegraphics[width=\linewidth]{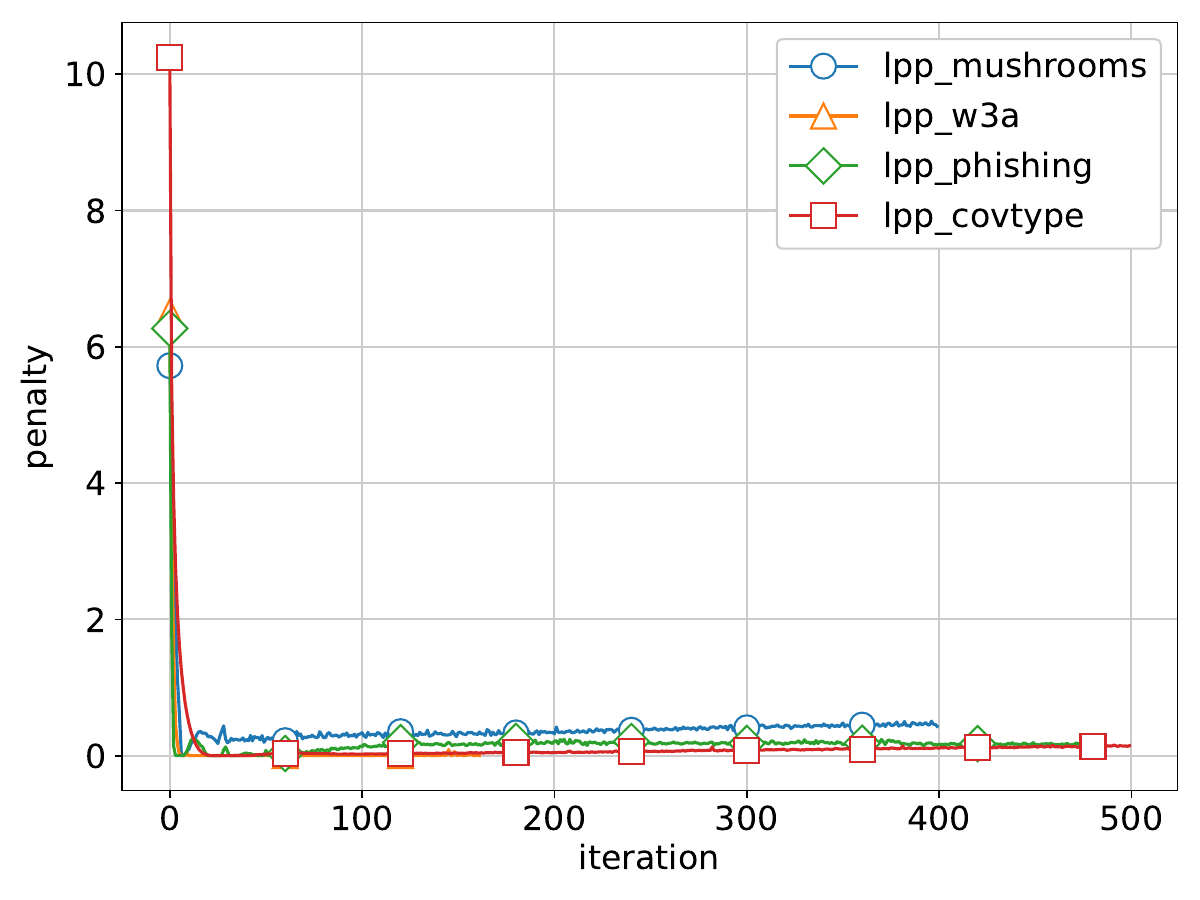}
    \caption{Penalty on $\ell_{p}^{p}$ minimization}
    \label{fig:penalty-lpp}
    \end{subfigure}
    \caption{The training process in different tasks.}
    \label{fig:training-loss}
\end{figure}

Figure \ref{fig:training-loss} illustrates the training process, where \texttt{logistic\_mushrooms} signifies logistic regression on the \texttt{mushrooms} dataset, \texttt{lpp\_mushrooms} refers to $\ell_{p}^{p}$ minimization on the same dataset, and so forth. The training loss is decomposed into the stopping time and penalty term. The subfigures within Figure \ref{fig:training-loss} clearly demonstrate a decrease in both constraint violation and stopping time across all experiments. This aligns with Theorem \ref{thm:converge-learning}, confirming that Algorithm \ref{algo:SEPM} converges to a feasible $D$-stationary point of problem \eqref{eq:learn-continuous}.

\section{Numerical results on the testing process}
\label{sec:experiments}
In this section, we present a series of numerical experiments to illustrate the advantages of our L2O framework \eqref{eq:learn-continuous}. We will demonstrate that by discretizing the learned ODE, our approach achieves acceleration compared to baseline methods.

\subsection{Effects of the contraction factor and the relay EIGAC}
\label{sec:effect-rho}
In this subsection, we investigate the effects of the contraction factor using the \texttt{lpp\_a5a} problem. A relay strategy for EIGAC is proposed to generalize the learned EIGAC to unseen scenarios while maintaining convergence. The parameterized ODE is trained for 80 epochs using Algorithm \ref{algo:SEPM}. As shown in Figure \ref{fig:epoch}, the performance of the learning-based method is compared with different training epochs on the same \texttt{lpp\_a5a} testing problem. The corresponding contraction factor at each step is plotted with the following legend descriptions:
\begin{itemize}
\item \texttt{default} represents the EIGAC method with initial coefficients $\alpha = 6$, $\beta(t) = \left({4}/{h} - {2\alpha}/{t}\right)/L$, and $\beta(t) = h\gamma(t)$, where $L = \min\left\{{\|A^\top A\|}/{N}, 4\upLambda\left({\mathbf{1}}/{n}, f\right)\right\}$;
\item \texttt{epoch\ 10} represents the EIGAC method with coefficients trained for 10 epochs, and similarly for \texttt{epoch\ 80};
\item \texttt{relay} combines the coefficients trained for 80 epochs with a safeguard strategy to extrapolate the method to untrained scenarios. When $\|\nabla f(x_{k})\| \geq 5\|\nabla f(x_{0})\|$ and $\|\nabla f(x_{k})\| < 3 \times 10^{-4}$, the coefficients $\gamma(t)$ are replaced with $\left({4}/{h} - {2\alpha}/{t}\right)/\upLambda(x_k, f)$ and $\beta(t) = h\gamma(t)$.
\end{itemize}
Here the estimator is chosen as $\upLambda(x,f)=\lambda_{\max}(\nabla^2 f(x))$. Besides the gradient norm, we also provide the figures of the contraction factor $\rho_{k}=\rho(t_{k},G_{k})$, the determination $C_{k}=C(t_{k},\|G_{k}\|)$, and $B_{k}=B(t_{k},\|G_{k}\|)$ with $G_{k}=\int_{0}^{1}\nabla^2 f((1-\tau) x(t_{k})+\tau x_{k})\,\mathrm{d}\tau$. The functions $B,C$ are defined in Proposition \ref{prop:maximal-rho}.

\begin{figure}[htbp]
    \begin{subfigure}{.49\textwidth}
    \centering
    \includegraphics[width=\linewidth]{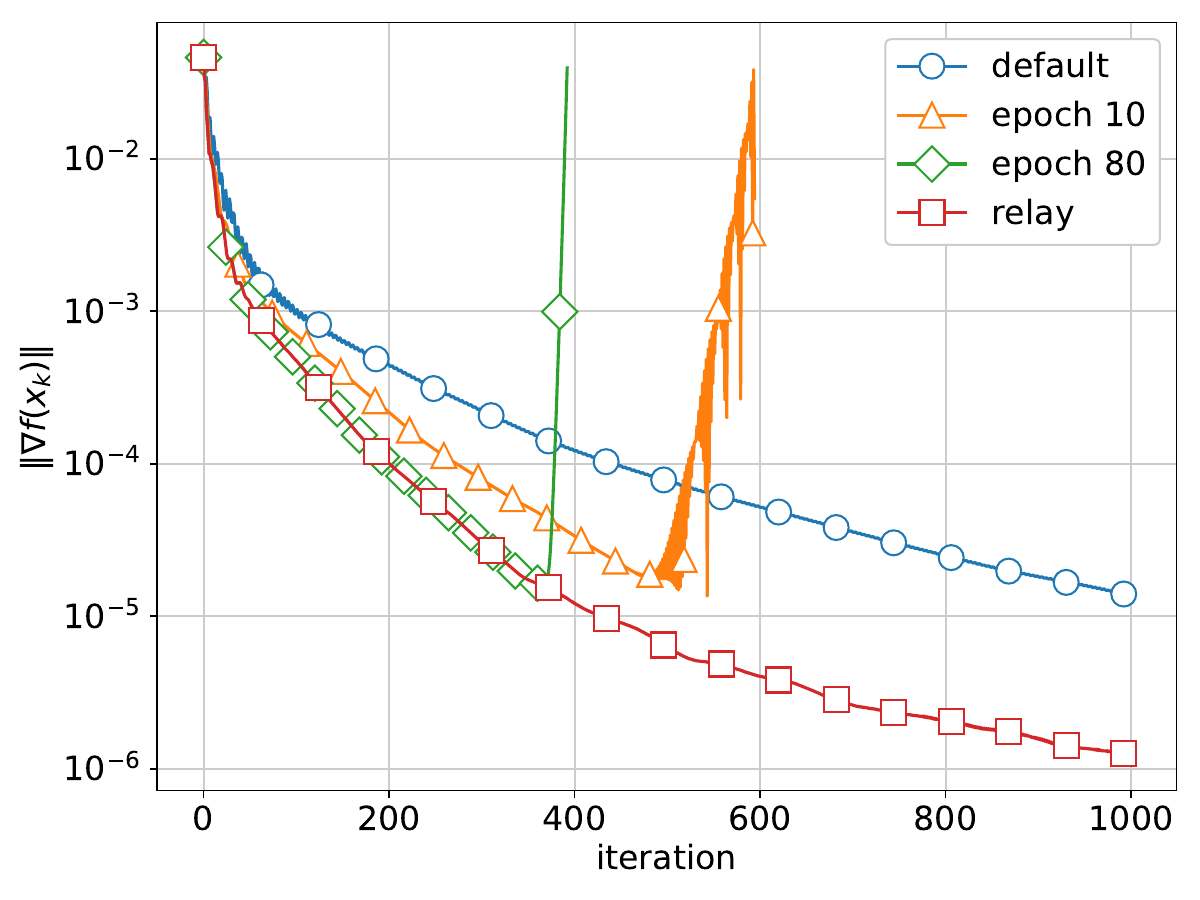}
    \caption{$\ell_{p}^{p}$ minimization on \texttt{a5a}}
    \label{fig:epoch-grad}
    \end{subfigure}
    \hfill
    \begin{subfigure}{.49\textwidth}
    \centering
    \includegraphics[width=\linewidth]{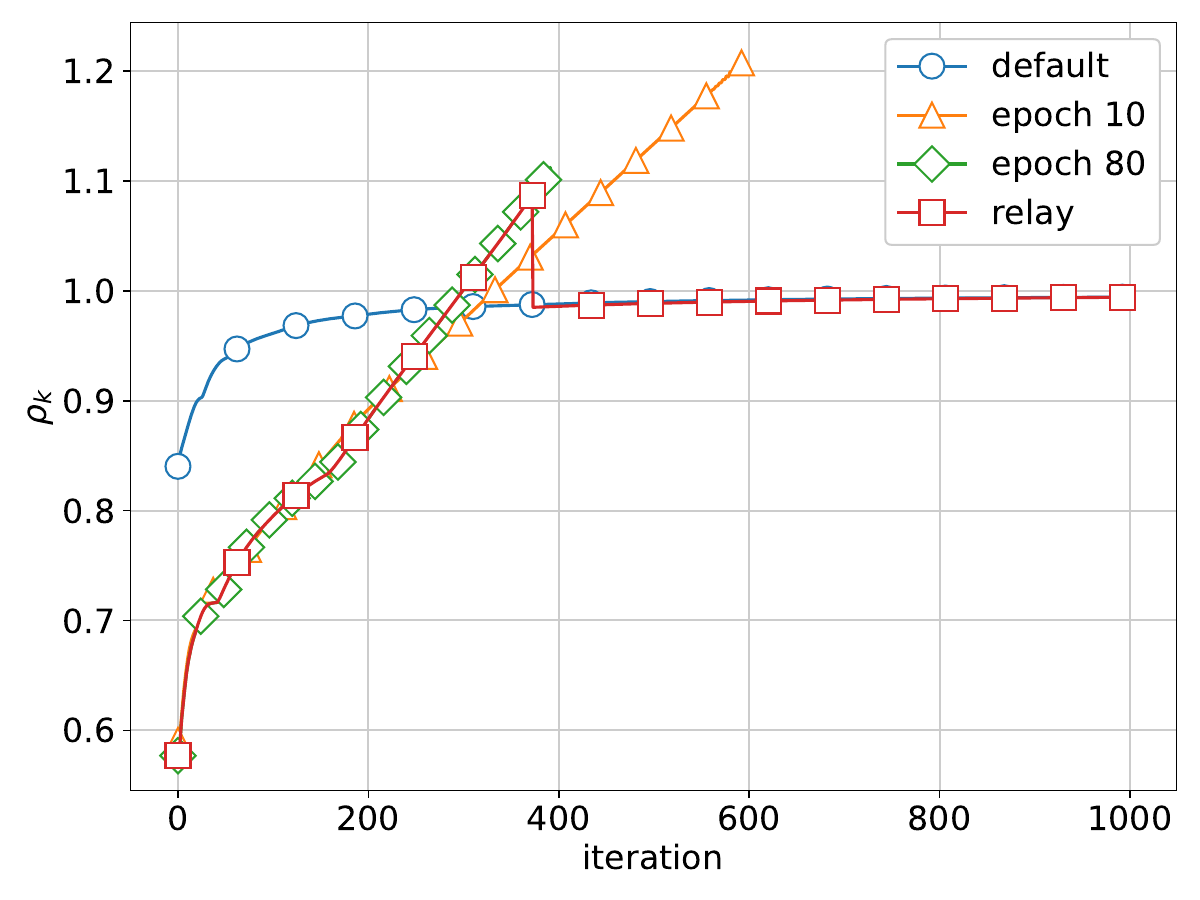}
    \caption{The contraction factor $\rho$ on \texttt{a5a}}
    \label{fig:epoch-rho}
    \end{subfigure}
    \begin{subfigure}{.49\textwidth}
    \centering
    \includegraphics[width=\linewidth]{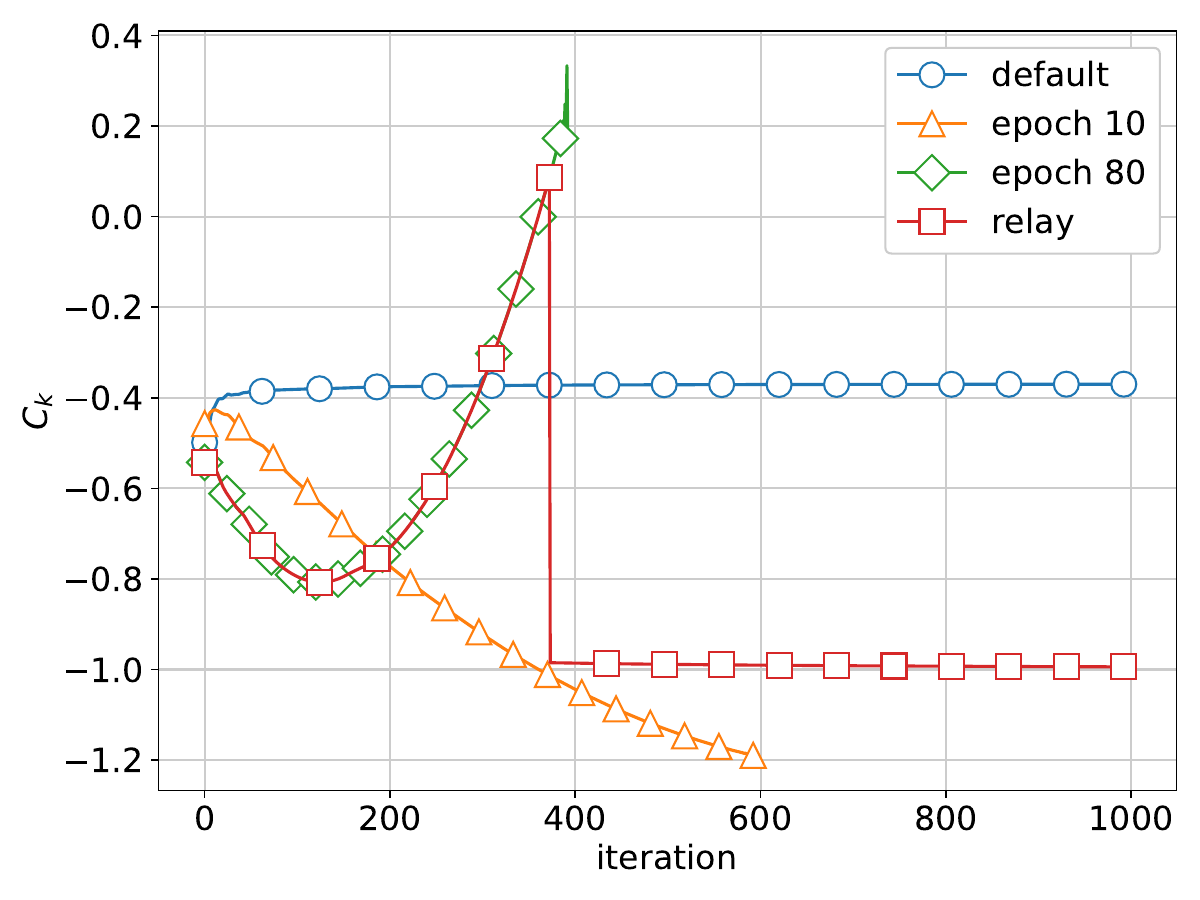}
    \caption{Determination versus iterations}
    \label{fig:epoch-delta}
    \end{subfigure}
    \hfill
    \begin{subfigure}{.49\textwidth}
    \centering
    \includegraphics[width=\linewidth]{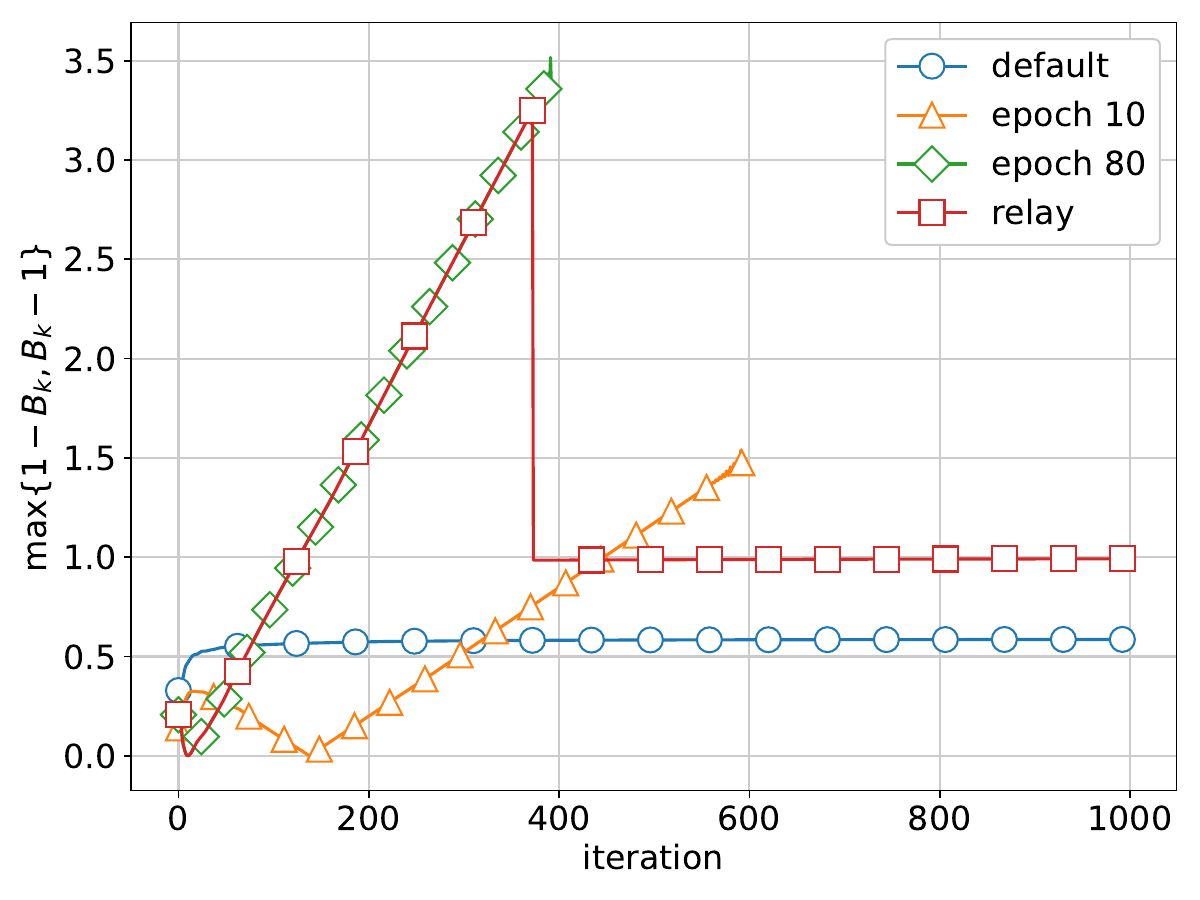}
    \caption{$\max\{B_{k}-1,1-B_{k}\}$ versus iterations}
    \label{fig:epoch-min-b}
    \end{subfigure}
    \caption{Different indicators of $\ell_{p}^{p}$ minimization problem on \texttt{a5a} dataset.\label{fig:epoch}}
\end{figure}

The results demonstrate that the contraction factor is a reliable predictor for the behavior of the sequence \(\{x_{k}\}_{k=0}^{\infty}\) generated by Algorithm \ref{algo:EIGAC}, supporting the intuition behind the proof of Theorem \ref{thm:stable}. Despite initial oscillations, the \texttt{default} method does not diverge, as the contraction factor remains strictly less than 1.

The \texttt{epoch 10} method, trained for 10 epochs, converges faster than the \texttt{default} method but diverges quickly after reaching a threshold $\|\nabla f(x)\|=3\times 10^{-5}$. This divergence occurs because its contraction factor is greater than 1 after 330 iterations, leading to an exponential accumulation of the global truncated error. The \texttt{epoch 80} method, which converges even faster, also faces divergence. However, the divergence patterns differ: the contraction factor for the \texttt{epoch 10} method increases gradually above 1, causing a gradual divergence with oscillation. In contrast, the \texttt{epoch 80} method diverges rapidly due to \(C_{k}\) becoming greater than 0, forcing \(\max\{B_{k}-1, 1 - B_{k}\}\) to quickly increase the contraction factor far above 1, leading to immediate divergence without gradual error accumulation.

The \texttt{relay} method remains stable after the \texttt{epoch 80} method diverges. This is due to the relay strategy, which reduces the contraction factor to below 1, maintaining stability. Upon detecting divergence, relay is applied to the coefficients, resulting in a quick decrease in \(C_{k}\) to ensure it stays below 0. This relay strategy method will be applied for comparison with other methods.

\subsection{Compared methods}
\label{sec:compared-methods}
In testing process, we adopt the same implementation details and convention used in the training process \ref{sec:implementation-detail}. Suppose the minimization problem used for testing is constructed from the set $\mathscr{D}$. we set $L=\min\{\|A^\top A\|/N,4\upLambda(\mathbf{1}/n,f_{\mathscr{D}})\}$, $x_0=x_1=\mathbf{1}/n-\nabla f_{\mathscr{D}}(\mathbf{1}/n)/L$, and $v_{0}=x_{0}+\beta(t_{0})\nabla f_{\mathscr{D}}(x_{0})$. The compared methods in our experiments are listed below, where we abbreviate the $f_{\mathscr{D}}$ as $f$ for simplicity.
\begin{itemize}
    \item \textbf{GD}. The vanilla gradient descent GD is the standard method in optimization. We set the stepsize as $s=1/L$ and perform $x_{k+1}=x_{k}-s\nabla f(x_{k})$ in each iteration.
    \item \textbf{NAG}. Nesterov's accelerated gradient descent method NAG is a milestone of the acceleration methods. We employ the version for convex functions
    \[
        y_{k+1}=x_{k}-s\nabla f(x_{k}),\quad x_{k+1}=y_{k+1}+\frac{k-1}{k+2}(y_{k+1}-y_{k}),
    \]
    where the stepsize is chosen as $s=1/L$.
    \item \textbf{IGAHD}. Inertial gradient algorithm with Hessian-driven damping. This method is obtained by applying a NAG inspired time discretization of
    \begin{equation}\label{eq:IGAHD}
    \ddot{x}(t)+\frac{\alpha}{t}\dot{x}(t)+\beta\nabla^2 f(x(t))\dot{x}(t)+\left(1+\frac{\beta}{t}\right)\nabla f(x(t))=0.
    \end{equation}
    Let $s=1/L$. In each iteration, setting $\alpha_{k}=1-\alpha/k$, the method performs
    \begin{equation}
        \left\{
        \begin{aligned}
        & y_k=x_k+\alpha_k\left(x_k-x_{k-1}\right)-\beta \sqrt{s}\left(\nabla f\left(x_k\right)-\nabla f\left(x_{k-1}\right)\right)-\frac{\beta \sqrt{s}}{k} \nabla f\left(x_{k-1}\right), \\
        & x_{k+1}=y_k-s \nabla f\left(y_k\right).
        \end{aligned}
        \right.
    \end{equation}
    In \cite{attouchFirstorderOptimizationAlgorithms2020}, it has been show that IGAHD owns $\mathcal{O}(1/k^2)$ convergence rate when $0\leq \beta< 2/\sqrt{s}$ and $s\leq 1/L$. Its performance may not coincide with NAG due to the existence of the gradient correction term. In our experiments, IGAHD serves as a baseline of the optimization methods derived from the ODE viewpoint without learning.
    \item \textbf{EIGAC}. Explicit inertial gradient algorithm with correction, i.e. Algorithm \ref{algo:EIGAC}. We provide two versions of EIGAC with default coefficients as described in sec. \ref{sec:effect-rho} and the coefficients learned by Algorithm \ref{algo:SEPM}. The numerical experiments effectively show that the EIGAC with default coefficients are sufficient to converge and the performance is comparable with NAG, while EIGAC with learned coefficients is superior over other methods.
\end{itemize}

\subsection{Exemplary cases}
To get a detailed observation of the behavior of each method, we show the performance of them in exemplary cases. For each problem, we generate one testing minimization problem using the method described in sec. \ref{sec:methodology}.

In Figures \ref{fig:logistic} and \ref{fig:lpp}, the gradient norm versus the number of iterations for logistic regression and $\ell_{p}^{p}$ minimization are plotted, respectively. To emphasize the improvement obtained from the training process, these methods are terminated when the gradient norm is smaller than $3\times10^{-4}$ or the number of iterations is larger than $300$. The threshold $3\times10^{-4}$ is plotted using a red dash line. In the hard cases \texttt{lpp\_phishing} and \texttt{logistic\_phishing}, EIGAC also saves at least half of the iterations. These exemplary cases suggest that EIGAC is general enough and has strong potential to be used in practice.

\begin{figure}[htbp]
    \begin{subfigure}{.47\textwidth}
    \centering
    \includegraphics[width=\linewidth]{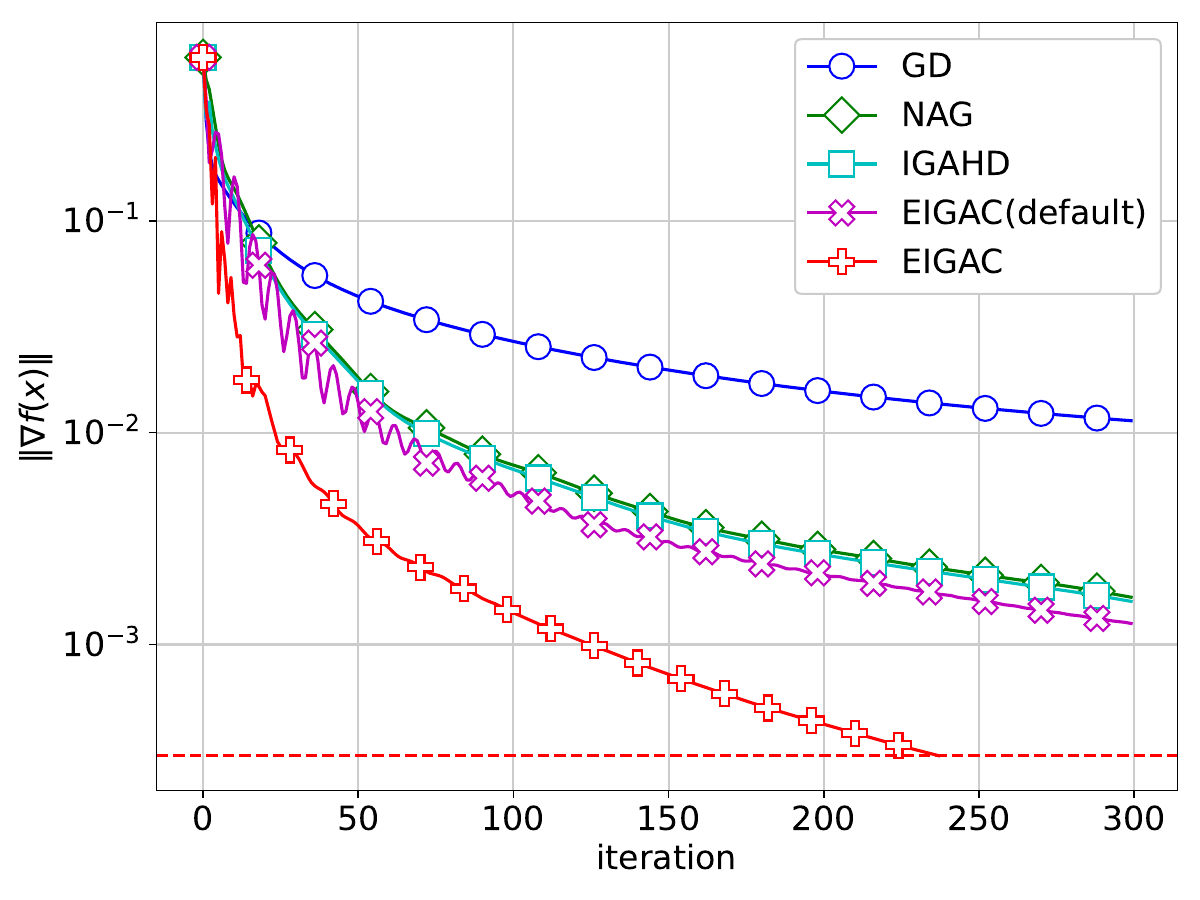}
    \caption{\texttt{a5a}}
    \label{fig:logistic-a5a}
    \end{subfigure}
    \hfill
    \begin{subfigure}{.47\textwidth}
    \centering
    \includegraphics[width=\linewidth]{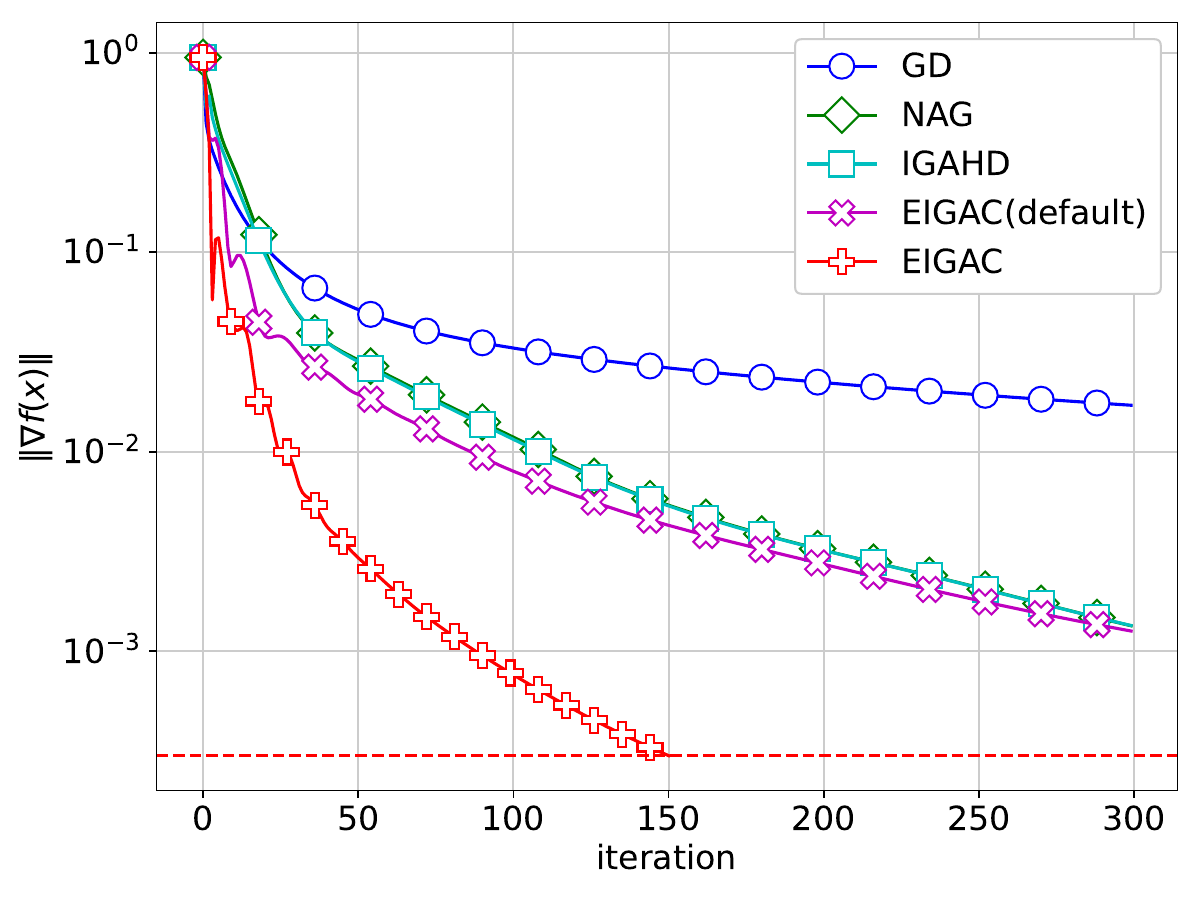}
    \caption{\texttt{mushrooms}}
    \label{fig:logistic-mushrooms}
    \end{subfigure}
    \begin{subfigure}{.47\textwidth}
    \centering
    \includegraphics[width=\linewidth]{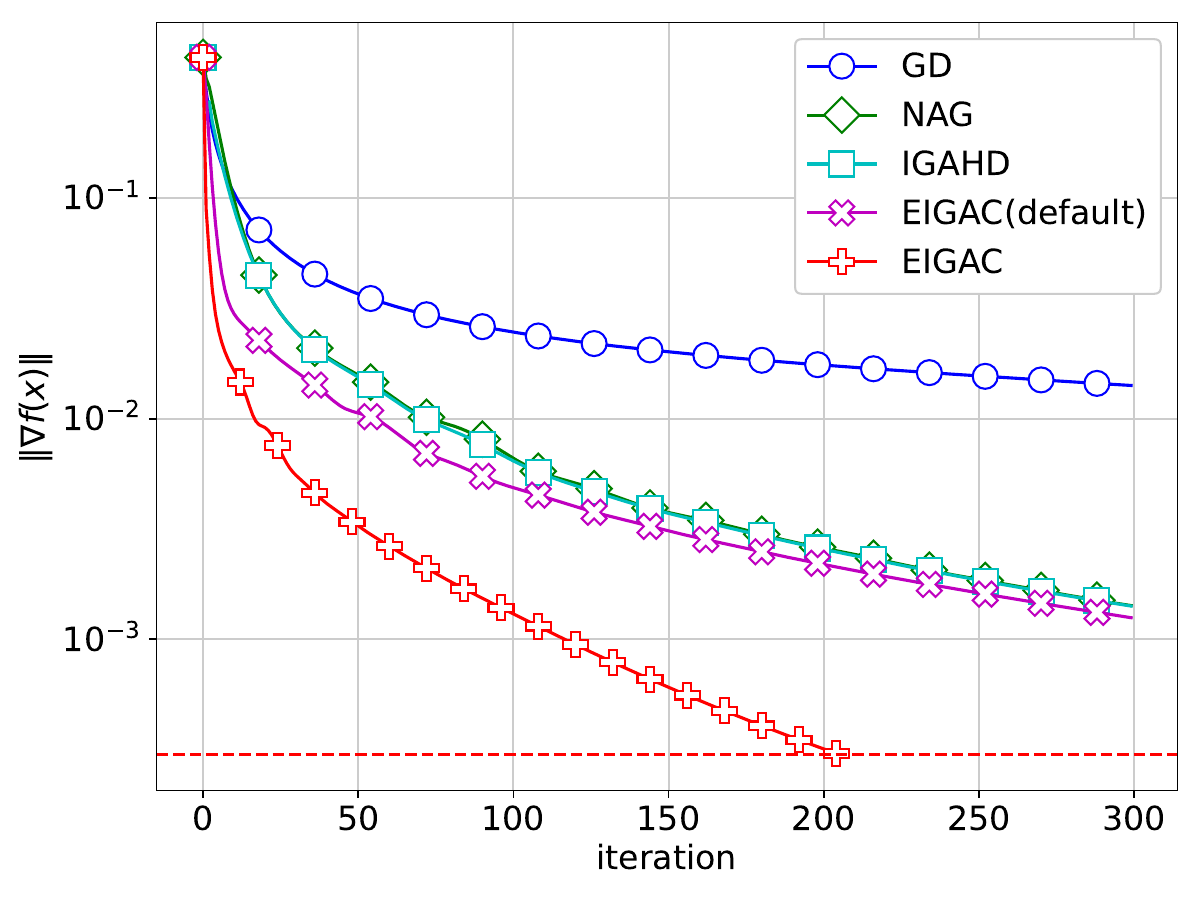}
    \caption{\texttt{w3a}}
    \label{fig:logistic-w3a}
    \end{subfigure}
    \hfill
    \begin{subfigure}{.47\textwidth}
    \centering
    \includegraphics[width=\linewidth]{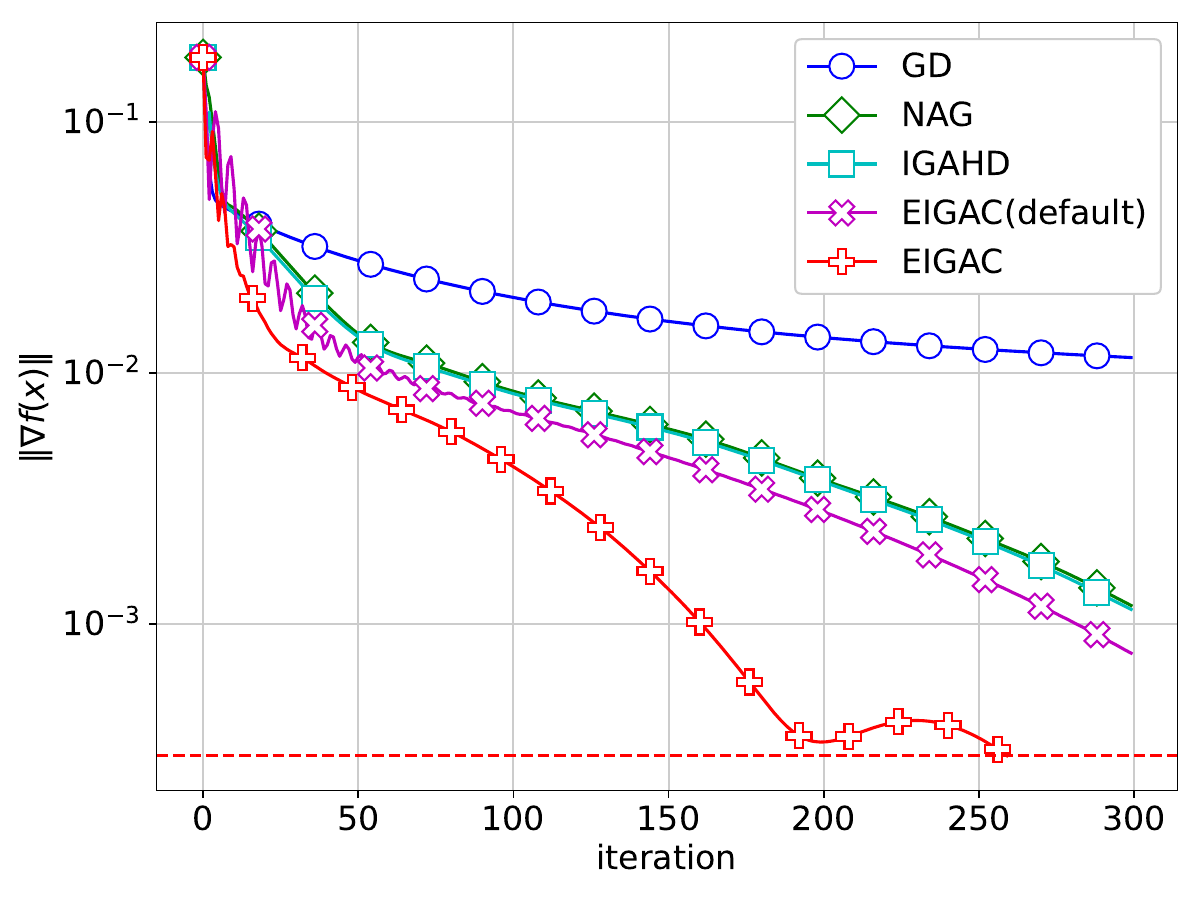}
    \caption{\texttt{covtype}}
    \label{fig:logistic-covtype}
    \end{subfigure}
    \begin{subfigure}{.47\textwidth}
    \centering
    \includegraphics[width=\linewidth]{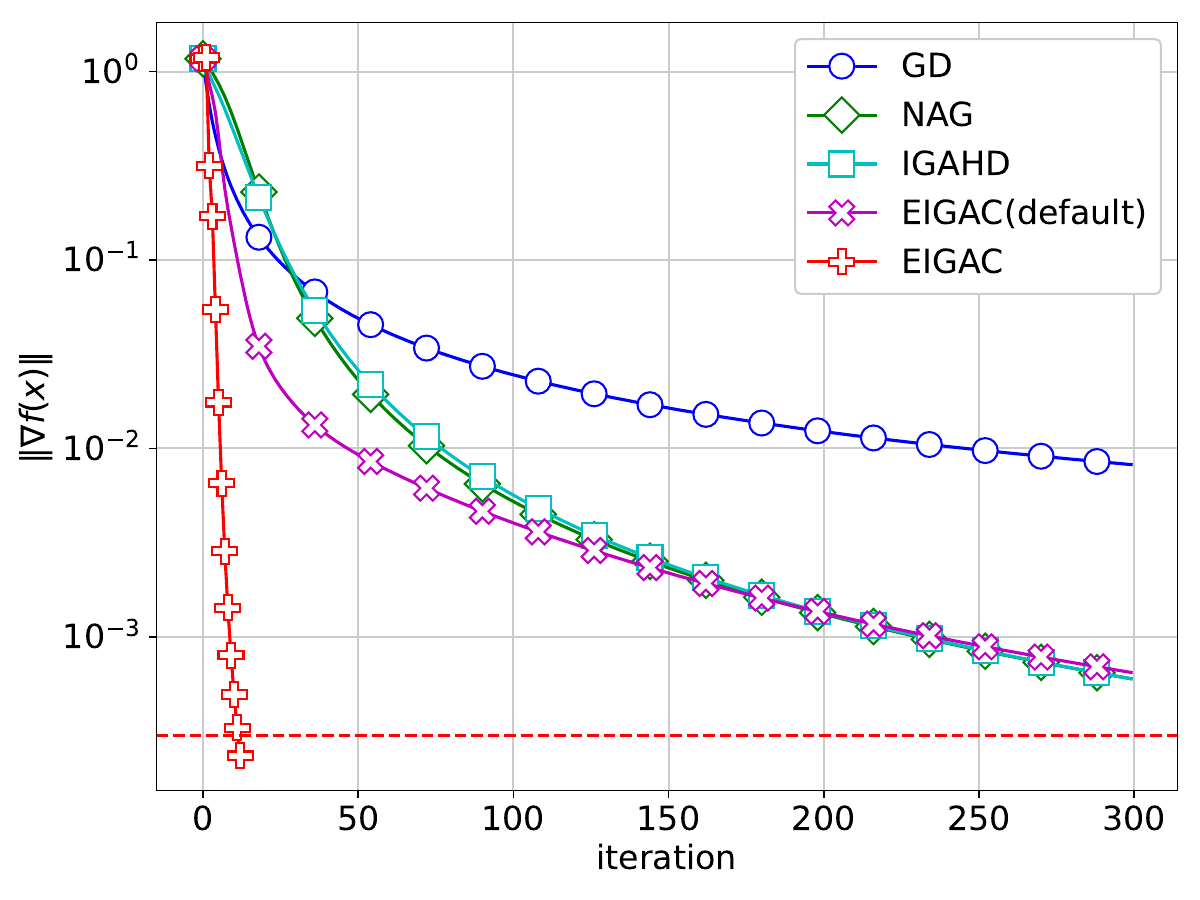}
    \caption{\texttt{separable}}
    \label{fig:logistic-separable}
    \end{subfigure}
    \hfill
    \begin{subfigure}{.47\textwidth}
    \centering
    \includegraphics[width=\linewidth]{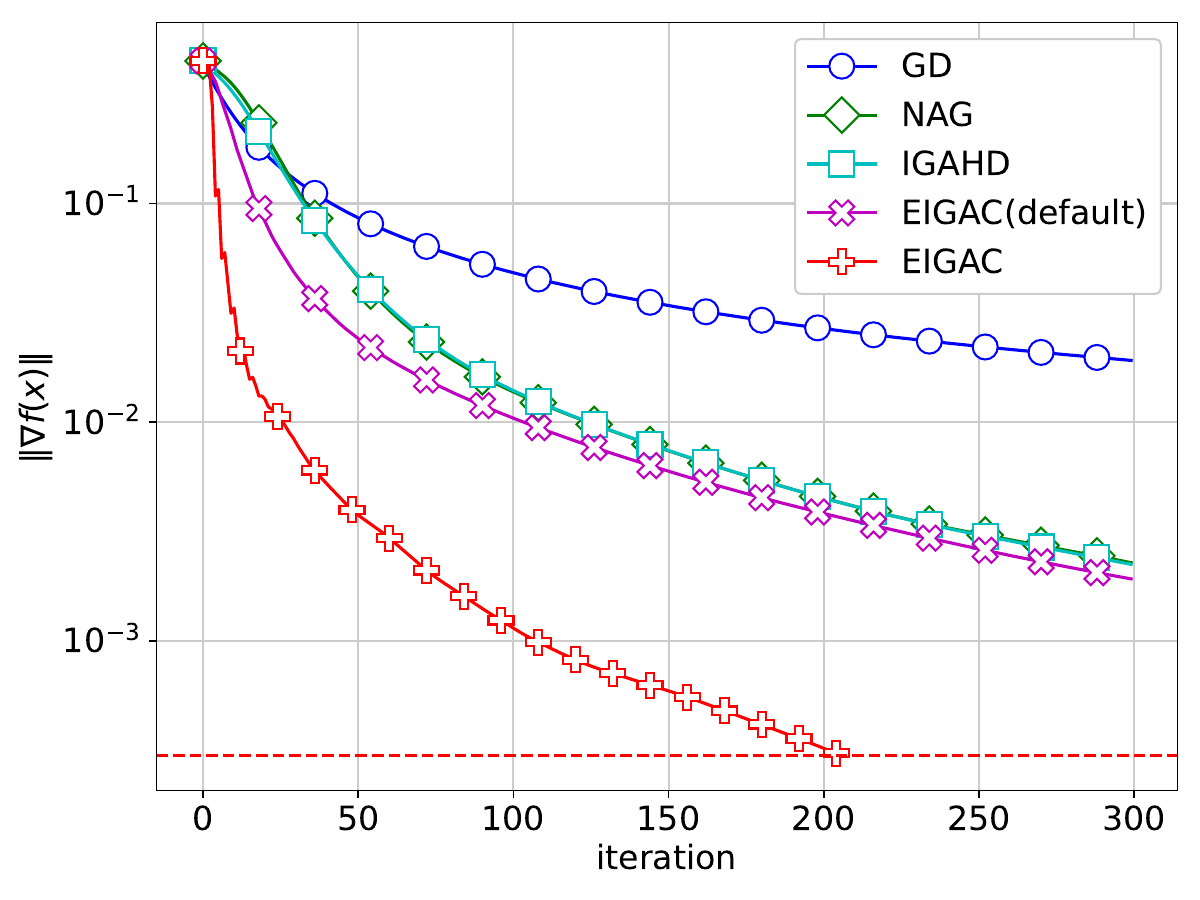}
    \caption{\texttt{phishing}}
    \label{fig:logistic-phishing}
    \end{subfigure}
    \caption{Comparison on logistic regression.\label{fig:logistic}}
\end{figure}
\begin{figure}[htbp]
    \begin{subfigure}{.47\textwidth}
    \centering
    \includegraphics[width=\linewidth]{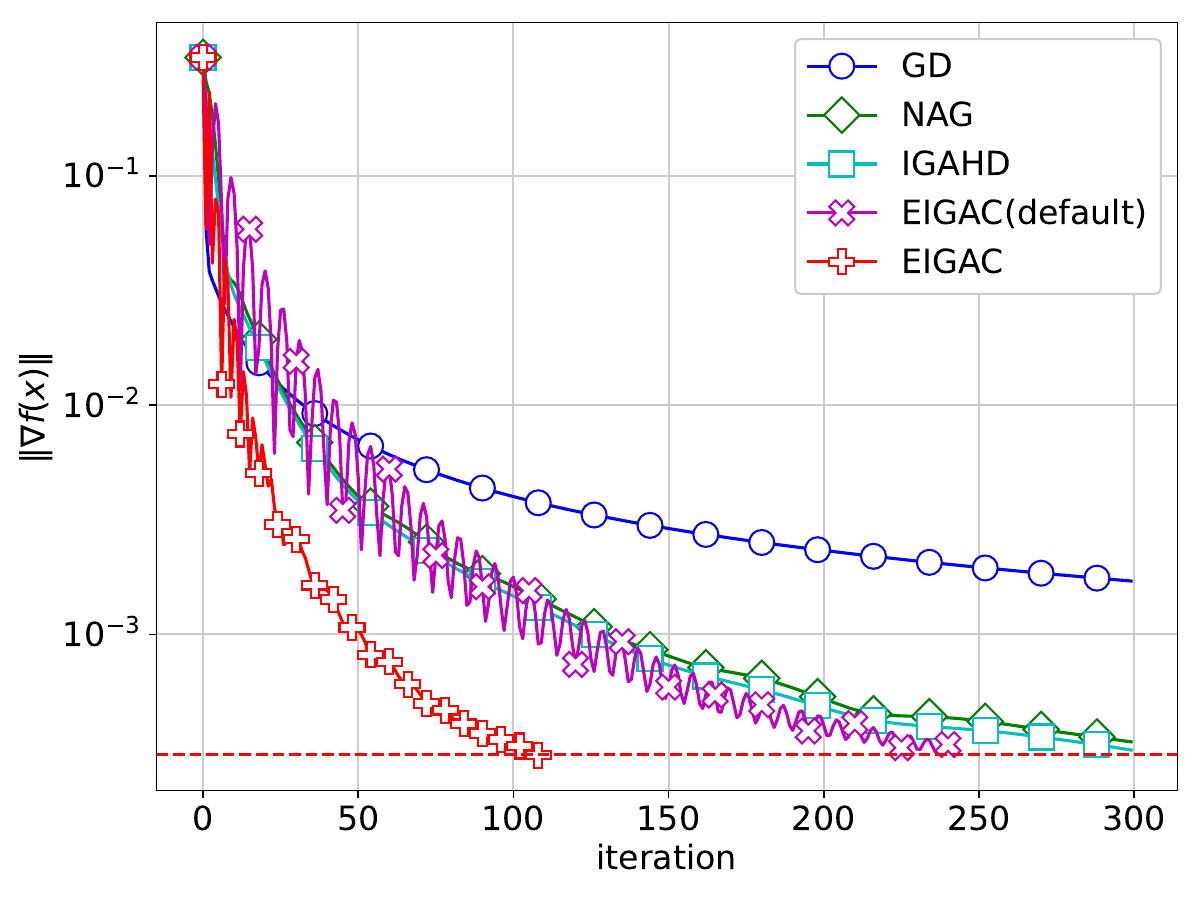}
    \caption{\texttt{a5a}}
    \label{fig:lpp-a5a}
    \end{subfigure}
    \hfill
    \begin{subfigure}{.47\textwidth}
    \centering
    \includegraphics[width=\linewidth]{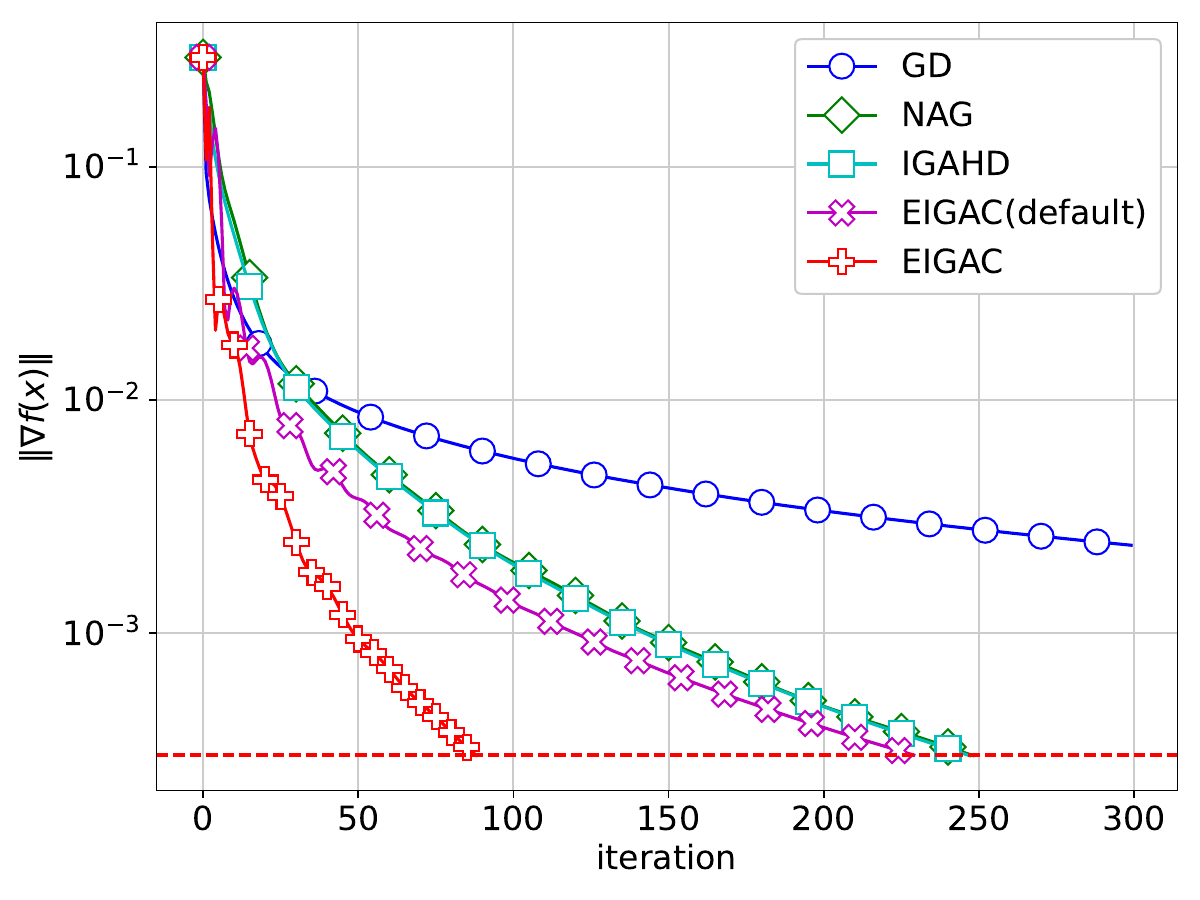}
    \caption{\texttt{mushrooms}}
    \label{fig:lpp-mushrooms}
    \end{subfigure}
    \begin{subfigure}{.47\textwidth}
    \centering
    \includegraphics[width=\linewidth]{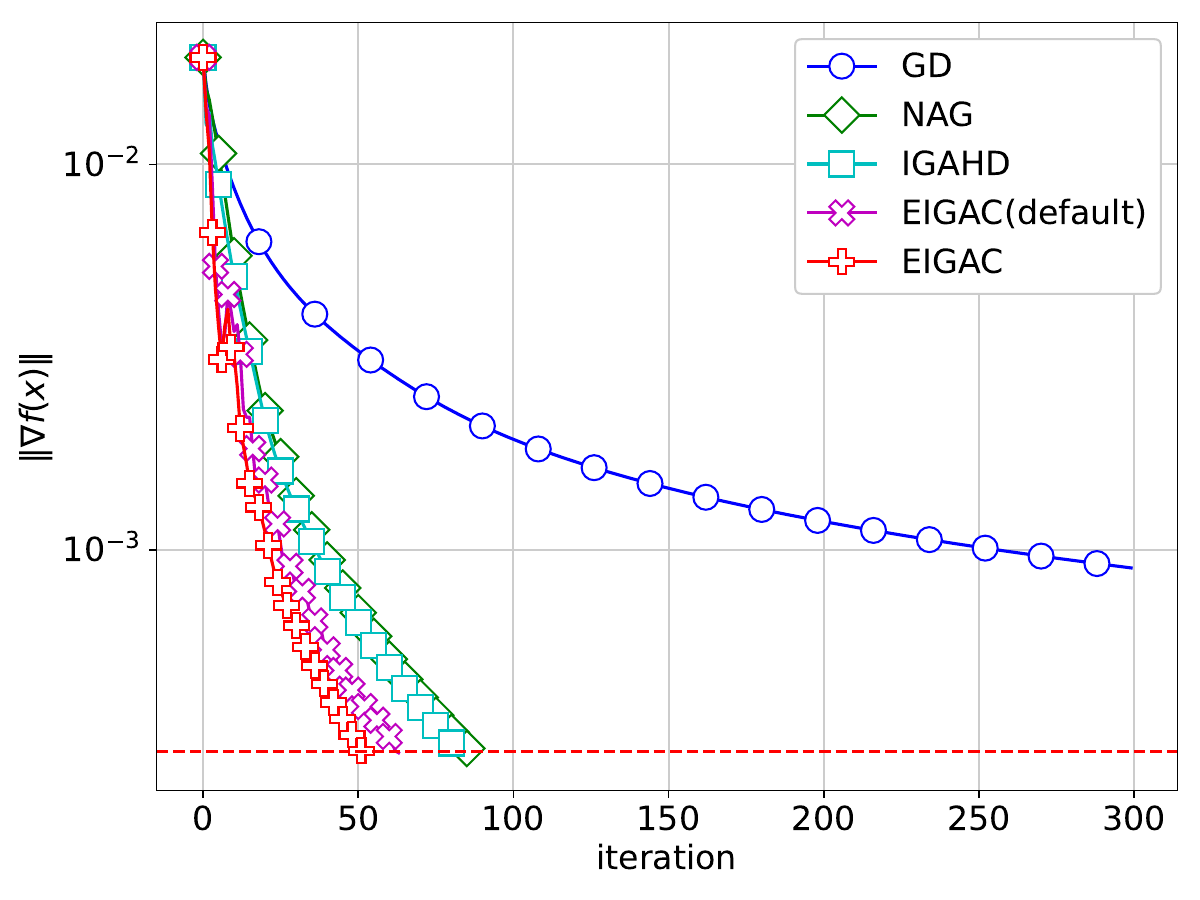}
    \caption{\texttt{w3a}}
    \label{fig:lpp-w3a}
    \end{subfigure}
    \hfill
    \begin{subfigure}{.47\textwidth}
    \centering
    \includegraphics[width=\linewidth]{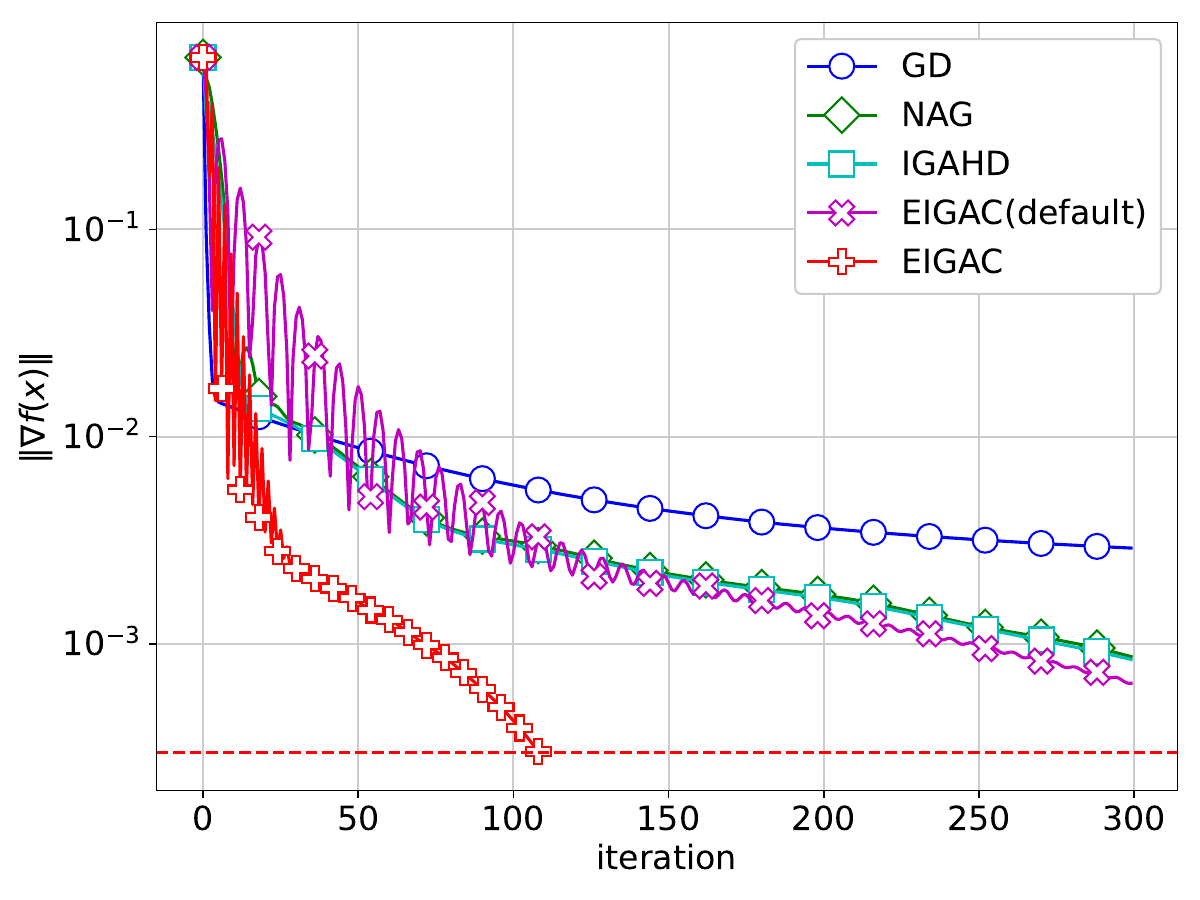}
    \caption{\texttt{covtype}}
    \label{fig:lpp-covtype}
    \end{subfigure}
    \begin{subfigure}{.47\textwidth}
    \centering
    \includegraphics[width=\linewidth]{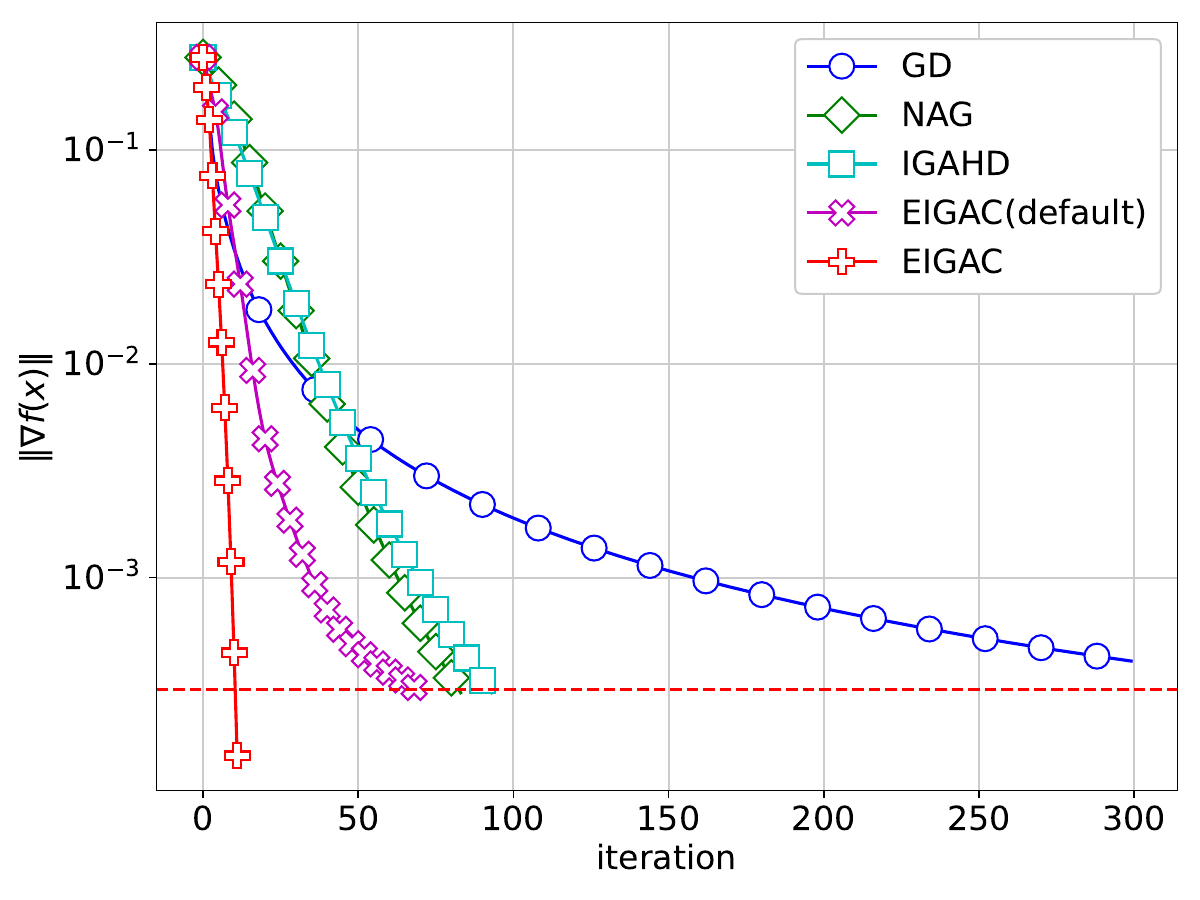}
    \caption{\texttt{separable}}
    \label{fig:lpp-separable}
    \end{subfigure}
    \hfill
    \begin{subfigure}{.47\textwidth}
    \centering
    \includegraphics[width=\linewidth]{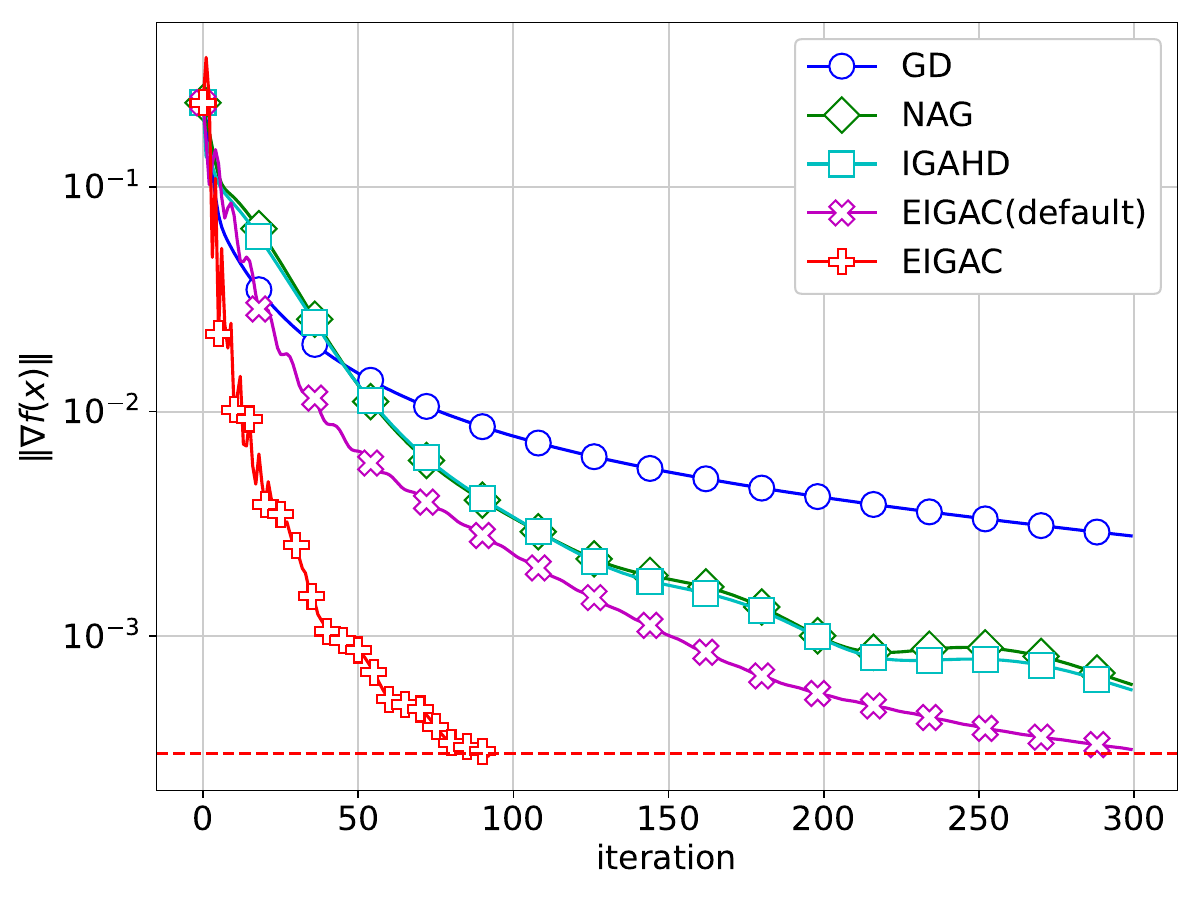}
    \caption{\texttt{phishing}}
    \label{fig:lpp-phishing}
    \end{subfigure}
    \caption{Comparison on $\ell_p^p$ Minimization.\label{fig:lpp}}
\end{figure}

Figure \ref{fig:func-compare} presents the function value versus the number of iterations of four different minimization problems. In these problems, we do not terminate the methods and show how our relay strategy helps stabilize the behavior of relay EIGAC. The results demonstrate that EIGAC surpasses other methods, even in the stage that not specifically trained for it. In the easiest case \texttt{logistic\_phishing}, EIGAC achieves the optimal value much faster than others. In Figure \ref{fig:func-lpp-phishing}, after achieving the optimal value, EIGAC does not diverge and oscillate. This shows the effectiveness of our relay strategy.

\begin{figure}[htbp]
    \begin{subfigure}{.47\textwidth}
    \centering
    \includegraphics[width=\linewidth]{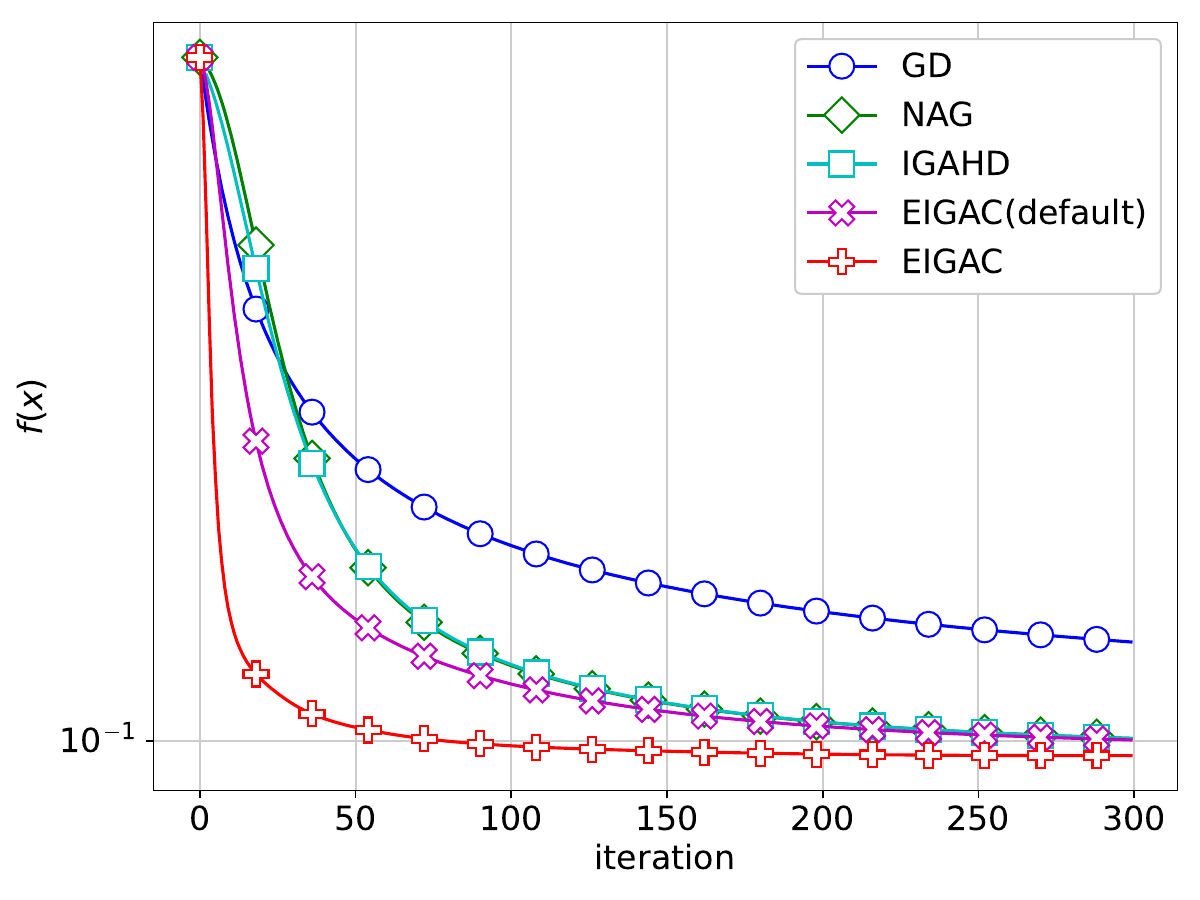}
    \caption{Logistic regression on \texttt{phishing}}
    \label{fig:func-logistic-phishing}
    \end{subfigure}
    \hfill
    \begin{subfigure}{.47\textwidth}
    \centering
    \includegraphics[width=\linewidth]{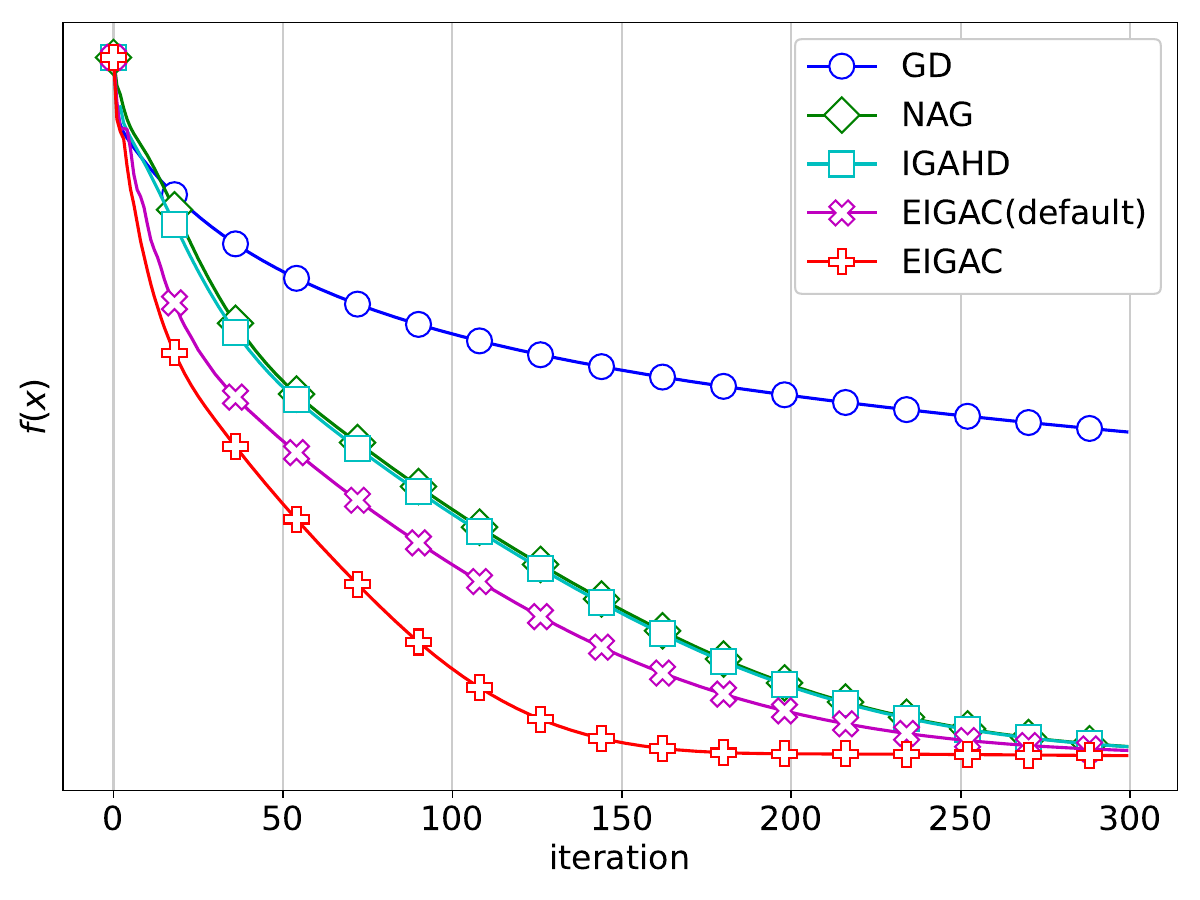}
    \caption{Logistic regression on \texttt{covtype}}
    \label{fig:func-logistic-covtype}
    \end{subfigure}
    \begin{subfigure}{.47\textwidth}
    \centering
    \includegraphics[width=\linewidth]{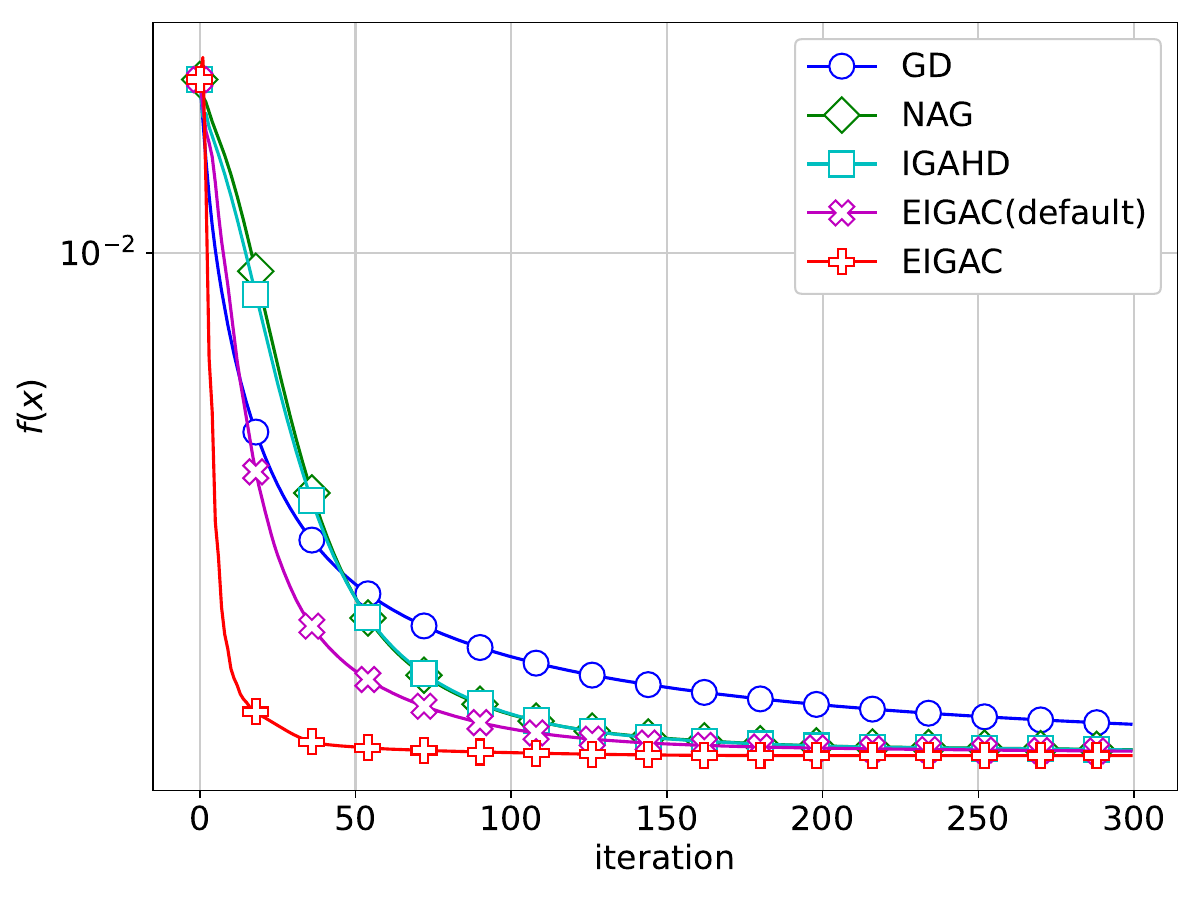}
    \caption{$\ell_{p}^{p}$ minimization on \texttt{phishing}}
    \label{fig:func-lpp-phishing}
    \end{subfigure}
    \hfill
    \begin{subfigure}{.47\textwidth}
    \centering
    \includegraphics[width=\linewidth]{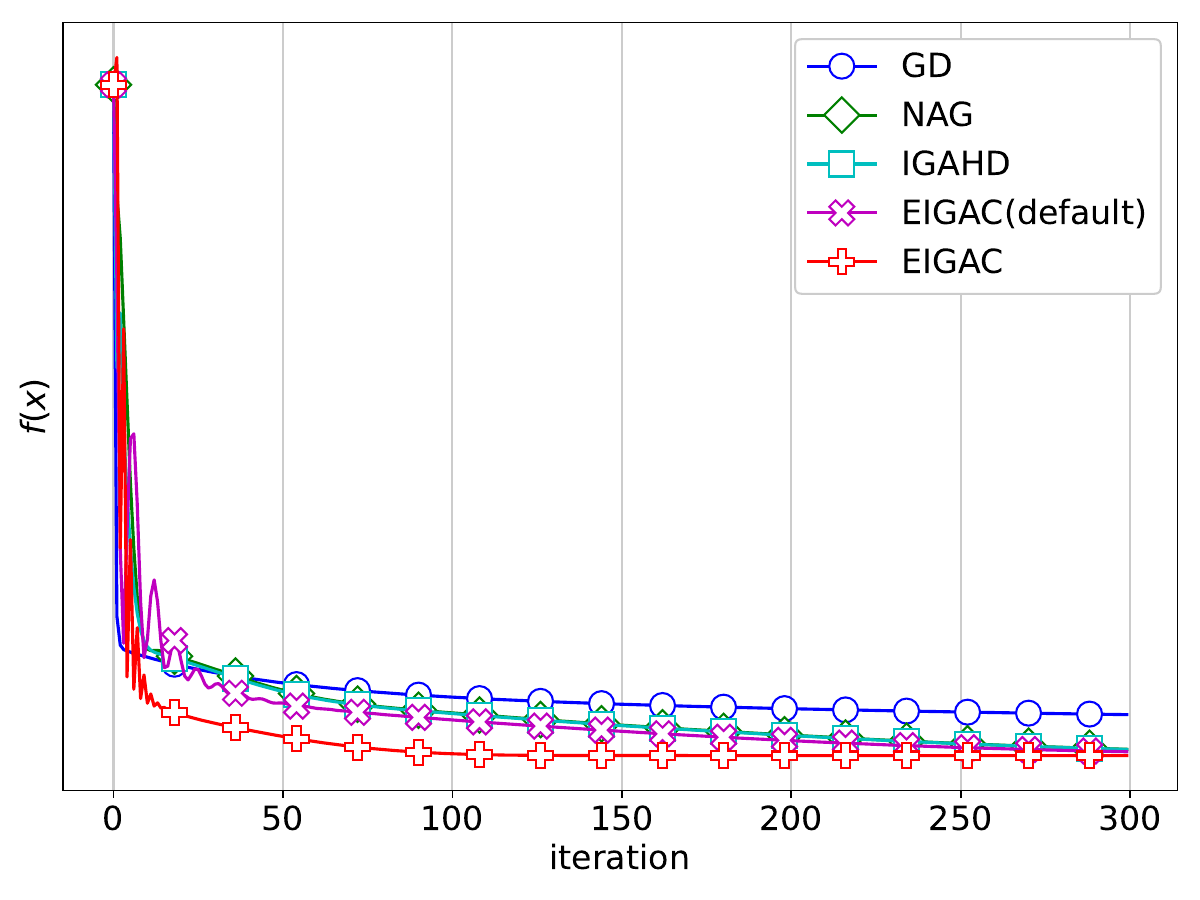}
    \caption{$\ell_{p}^{p}$ minimization on \texttt{covtype}}
    \label{fig:func-lpp-covtype}
    \end{subfigure}
    \caption{Comparison on logistic regression.\label{fig:func-compare}}
\end{figure}

\subsection{Averaged statistics}
\label{sec:avg-stat}
In this subsection, we randomly generate 100 testing minimization problems, denoted as \( \mathcal{F}_{\mathrm{test}} = \{f_i\}_{i=1}^{100} \), for each type of minimization problem using different datasets. The construction methodology follows that described in sec. \ref{sec:methodology}. A problem is defined by the dataset and the type of the minimization problem. For example, the problem \texttt{lpp\_a5a} refers to the \texttt{a5a} dataset with the \( \ell_{p}^{p} \) minimization formulation. Multiple test functions for the same problem can be generated by varying the instances from the dataset used.

Given the type of the minimization problem and the dataset, for each method, we provide two statistics. The first one is the averaged performance measure at the $N$-th iteration:
\[
    \Bar{m}(\mathcal{F}_{\mathrm{test}})=\frac{1}{|\mathcal{F}_{\mathrm{test}}|}\sum_{f\in\mathcal{F}_{\mathrm{test}}}\log\|\nabla f(x_{N})\|.
\]
We set $N=500$ for comparison. The second one is the averaged complexity:
\[
    \Bar{N}(\mathcal{F}_{\mathrm{test}})=\frac{1}{|\mathcal{F}_{\mathrm{test}}|}\sum_{f\in\mathcal{F}_{\mathrm{test}}}\inf\{n\mid \|f(x_{n})\|\leq \varepsilon\}
\]
with $\varepsilon=3\times 10^{-4}$. If the method does not reaching the threshold, we denote its complexity in this test as $500$.

The averaged performance measure with the stand error in the parenthesis is reported in Table \ref{tab:logistic-measure} and \ref{tab:lpp-measure}. The \texttt{Win Rate} row represents the rate of EIGAC outperforms other methods. In these experiments, we observed that the performance of NAG, IGAHD, and EIGAC(default) is similar, both in performance measure and complexity. The non divergence of EIGAC also verifies the effectiveness of the relay strategy. The result shows that EIGAC outperforms other methods with at least a magnitude in most cases. In the hard cases \texttt{logistic\_covtype} and \texttt{lpp\_covtype}, the improvement is still significant and shows the potential of the learning-based method.
\begin{table}[htbp]
    \centering
    \caption{The averaged performance measure in logistic regression problems.}
    \label{tab:logistic-measure}
    \resizebox{\textwidth}{!}{%
    \begin{tabular}{ccccccc}
        \toprule
        & mushrooms & a5a & w3a & phishing & covtype & separable \\
        \midrule
        GD & -1.55(0.016) & -1.81(0.031) & -1.90(0.022) & -1.35(0.007) & -1.89(0.008) & -1.56(0.000) \\
        NAG & -3.37(0.027) & -3.11(0.059) & -3.26(0.061) & -3.01(0.074) & -3.07(0.012) & -3.66(0.000) \\
        EIGAC(default) & -3.02(0.020) & -2.97(0.065) & -3.02(0.044) & -2.80(0.064) & -3.48(0.059) & -3.32(0.000) \\
        IGAHD & -3.02(0.020) & -2.97(0.065) & -3.02(0.043) & -2.80(0.064) & -3.48(0.059) & -3.31(0.000) \\
        EIGAC & -4.83(0.017) & -4.38(0.062) & -4.46(0.093) & -4.82(0.120) & -4.37(0.055) & -5.49(0.090) \\
        Win Rate & 100.00\% & 100.00\% & 100.00\% & 100.00\% & 100.00\% & 100.00\% \\
        \bottomrule
        \end{tabular}
        }
\end{table}

\begin{table}
    \centering
    \caption{The averaged performance measure in $\ell_{p}^{p}$ minimization problems.}
    \label{tab:lpp-measure}
    \resizebox{\textwidth}{!}{
        \begin{tabular}{ccccccc}
            \toprule
            & mushrooms & a5a & w3a & phishing & covtype & separable \\
            \midrule
            GD & -2.49(0.018) & -2.79(0.042) & -3.18(0.053) & -2.36(0.015) & -2.65(0.008) & -2.95(0.001) \\
            NAG & -4.35(0.033) & -4.19(0.058) & -4.72(0.085) & -4.06(0.052) & -4.43(0.091) & -6.11(0.077) \\
            EIGAC(default) & -4.16(0.022) & -3.99(0.069) & -4.66(0.085) & -4.37(0.147) & -4.47(0.043) & -6.15(0.048) \\
            IGAHD & -4.16(0.022) & -4.05(0.068) & -4.66(0.085) & -4.37(0.150) & -4.51(0.037) & -6.14(0.049) \\
            EIGAC & -5.27(0.049) & -5.11(0.081) & -5.71(0.145) & -5.65(0.091) & -5.14(0.110) & -7.55(0.044) \\
            Win Rate & 100.00\% & 100.00\% & 100.00\% & 100.00\% & 100.00\% & 100.00\% \\
            \bottomrule
            \end{tabular}
    }
\end{table}

In Tables \ref{tab:logistic-complexity} and \ref{tab:lpp-complexity}, we present the averaged complexity for logistic regression and \( \ell_{p}^{p} \) minimization problems, respectively. Since the value of \( \log_{10}(3 \times 10^{-4}) \) is approximately \(-3.52\), GD, EIGAC, and IGAHD do not reach the threshold in most logistic regression problems within 500 iterations, as indicated by the performance measure statistics. In most logistic regression problems, the averaged complexity of EIGAC is only half that of other methods. In the easiest case, \texttt{logistic\_separable}, EIGAC requires just \( \frac{1}{40} \) of the iterations needed by NAG to reach the threshold. In the relatively easier \( \ell_{p}^{p} \) minimization problems, EIGAC also shows a consistent improvement in complexity. While other methods achieve the threshold quickly, the averaged complexity of EIGAC is approximately half that of the others in each problem.

\begin{table}
    \centering
    \caption{The averaged complexity in logistic regression problems.}
    \label{tab:logistic-complexity}
    \resizebox{\textwidth}{!}{
        \begin{tabular}{ccccccc}
            \toprule
            & mushrooms & a5a & w3a & phishing & covtype & separable \\
            \midrule
            GD & 500.00(0.000) & 500.00(0.000) & 500.00(0.000) & 500.00(0.000) & 500.00(0.000) & 500.00(0.000) \\
            NAG & 500.00(0.000) & 500.00(0.000) & 500.00(0.000) & 500.00(0.000) & 500.00(0.000) & 424.71(0.456) \\
            EIGAC(default) & 500.00(0.000) & 500.00(0.000) & 500.00(0.000) & 500.00(0.000) & 497.12(5.741) & 500.00(0.000) \\
            IGAHD & 500.00(0.000) & 500.00(0.000) & 500.00(0.000) & 500.00(0.000) & 497.36(5.509) & 500.00(0.000) \\
            EIGAC & 153.48(2.525) & 227.32(17.759) & 216.42(16.462) & 182.15(21.568) & 237.88(34.565) & 11.49(0.502) \\
            Win Rate & 100.00\% & 100.00\% & 100.00\% & 100.00\% & 100.00\% & 100.00\% \\
            \bottomrule
            \end{tabular}
    }
\end{table}

\begin{table}
    \centering
    \caption{The averaged complexity in $\ell_{p}^{p}$ minimization problems.}
    \label{tab:lpp-complexity}
    \resizebox{\textwidth}{!}{
        \begin{tabular}{ccccccc}
            \toprule
            & mushrooms & a5a & w3a & phishing & covtype & separable \\
            \midrule
            GD & 500.00(0.000) & 500.00(0.000) & 500.00(0.000) & 500.00(0.000) & 500.00(0.000) & 500.00(0.000) \\
            NAG & 183.77(4.583) & 211.87(17.323) & 92.61(7.590) & 252.43(16.373) & 167.31(1.650) & 52.15(0.359) \\
            EIGAC(default) & 235.07(6.331) & 245.68(26.391) & 96.17(9.527) & 224.36(21.168) & 203.02(2.040) & 22.10(0.302) \\
            IGAHD & 235.84(6.419) & 239.49(27.857) & 96.03(9.682) & 224.53(21.373) & 204.12(1.996) & 29.98(0.141) \\
            EIGAC & 93.12(2.124) & 122.16(11.176) & 50.93(4.529) & 85.57(5.385) & 109.15(0.999) & 11.00(0.000) \\
            Win Rate & 100.00\% & 100.00\% & 100.00\% & 100.00\% & 100.00\% & 100.00\% \\
            \bottomrule
            \end{tabular}
    }
\end{table}

\section{Conclusion and future directions}
\label{sec:conclusion}
In this paper, we have introduced a framework that integrates ISHD with L2O to develop efficient optimization methods. By establishing convergence conditions and analyzing the stability of discretization schemes, we give a large set of effective optimization algorithms with solid theoretical foundation. The introduction of the learning problem, which minimizes the stopping time subject to convergence and stability constraints, marks a noteworthy step forward in L2O. Our approach, which employs penalty methods, stochastic optimization, and conservative gradients, is designed to effectively address the learning problem. The convergence of the learning process and the promising performance of the learned optimization methods, supported by extensive numerical experiments, suggest the robustness and practical potential of our framework.

However, the analysis in our framework may not be optimal. As shown in the numerical experiments, the EIGAC with default coefficients also surpasses the NAG and IGAHD, despite the former having a theoretical $\mathcal{O}(1/k)$ convergence rate based on our analysis. We aim to improve the theoretical results to better align with the numerical experiments in future work. Another future direction of this work is to generalize our methodology to other optimization problems, such as non-smooth optimization, constrained optimization, and non-convex optimization. 

\bibliographystyle{spmpsci}
\bibliography{main}

\end{document}